\documentclass[11pt,reqno,a4paper]{amsart}
\usepackage{amsmath, amssymb, amsthm,amsfonts}
\usepackage{enumitem}
\usepackage{graphicx,color}
\usepackage{graphics}
\usepackage{comment}
\usepackage[OT2,T1]{fontenc}
\usepackage{tikz}
\usepackage{microtype}
\makeatletter
\@namedef{subjclassname@2020}{\textup{2020} Mathematics Subject Classification}
\makeatother
\allowdisplaybreaks[2]
\DeclareSymbolFont{cyrletters}{OT2}{wncyr}{m}{n}
\DeclareMathSymbol{\Sha}{\mathalpha}{cyrletters}{"58}

\DeclareMathOperator{\Hom}{Hom}

\DeclareMathOperator{\Span}{Span}
\DeclareMathOperator{\SL}{SL}
\DeclareMathOperator{\DSL}{\widetilde{SL}}
\DeclareMathOperator{\GL}{GL}

\def\ov{\overline}

\newcommand{\ord}{{\rm ord}}

\newcommand{\p}{{\mathfrak P}}

\newcommand{\Q}{{\mathbb Q}}
\newcommand{\Z}{{\mathbb Z}}

\newcommand{\C}{{\mathbb C}}

\newcommand{\kro}[2]{\left( \frac{#1}{#2} \right) }
\newcommand{\legendre}[2]{\ensuremath{\left( \frac{#1}{#2} \right) }}

\def\ld{\lambda}

\def\*{\times}

\def\a{\alpha}
\def\b{\beta}

\def\d{\delta}

\newcommand{\hs}[1]{\left( #1 \right)_p}

\newcommand{\mat}[4]{\left(\begin{matrix} #1 & #2 \\ #3 & #4 \end{matrix}\right)}

            \DeclareFontFamily{U}{wncy}{} 
            \DeclareFontShape{U}{wncy}{m}{n}{%
               <5>wncyr5%
               <6>wncyr6%
               <7>wncyr7%
               <8>wncyr8%
               <9>wncyr9%
               <10>wncyr10%
               <11>wncyr10%
               <12>wncyr6%
               <14>wncyr7%
               <17>wncyr8%
               <20>wncyr10%
               <25>wncyr10}{} 
\DeclareMathAlphabet{\cyr}{U}{wncy}{m}{n}

\begin{document}

\newtheorem{thm}{Theorem}
\newtheorem*{thms}{Theorem}
\newtheorem{lem}{Lemma}[section]
\newtheorem{prop}[lem]{Proposition}
\newtheorem{alg}[lem]{Algorithm}
\newtheorem{cor}[lem]{Corollary}
\newtheorem{corlem}[lem]{Corollary}
\newtheorem{conj}[lem]{Conjecture}
\newtheorem{remark}{Remark}
\newtheorem{definition}{Definition}

\newtheorem{ex}{Example}

\theoremstyle{remark}

\title[Hecke Subalgebras and Local Newforms]{Hecke Subalgebras and Local Newforms for the Metaplectic Double Cover of $\SL_2(\mathbb Q_p)$}
\author{Ehud Moshe Baruch}
\author{Markos Karameris}
\author{Soma Purkait}

\begin{abstract}
We determine an explicit compact Hecke subalgebra for the metaplectic double cover of $\SL_2(\mathbb Q_p)$ at the congruence subgroup $K_0(p^n)$, for odd $p$, and use it to study local newforms of prescribed quadratic type.  We describe the supporting double cosets, generators, relations, characters, and corresponding $\ov K$-types, and compute the action of the resulting Hecke operators on $(K_0(p^n),\eta)$-isotypic vectors in principal series, Weil, Steinberg, and supercuspidal representations.  Thus the conductor relevant throughout is the $\eta$-conductor, rather than the minimum over all characters.  In the supercuspidal case, the metaplectic calculation is reduced to the corresponding linear strongly cuspidal type, making explicit the distinction between unramified and ramified $L$-packets.  We also compare the operator $W_{m-1}$ with Ishimoto's local realization of Ueda's twisting operator: after fixed-level compression and a lifted $\GL_2$-conjugation, the two actions agree up to an explicit scalar and a parity-dependent change of type.  These results provide the local odd-prime counterpart to the Hecke-algebra methods used in the theory of half-integral-weight newforms and minus spaces.
\end{abstract}

\subjclass[2020]{Primary 11F70; Secondary 11F37, 20C08, 22E50.}
\keywords{Metaplectic groups, Hecke algebras, local newforms,
$\eta$-conductors, half-integral-weight modular forms, supercuspidal
representations.}

\maketitle

\section{Introduction}

The theory of modular forms of half-integral weight is closely tied to the representation theory of the metaplectic double cover of $\SL_2$. Shimura's correspondence provides the basic bridge from half-integral to integral weight forms \cite{Shimura}, while Kohnen's plus space and newform theory isolate distinguished Hecke-stable subspaces on the half-integral-weight side \cite{KohnenPlus,Kohnen}. Ueda extended this theory to nonsquarefree odd level by introducing twisting operators and a corresponding decomposition of the Kohnen space \cite{Ueda1993,Ueda1998}. From the local point of view, these spaces should be reflected in genuine representations and genuine Hecke algebras of the metaplectic group. This makes it natural to seek operator-theoretic descriptions of newforms, rather than relying only on orthogonal complements.

A useful model for this approach is the integral-weight work of Baruch and Purkait \cite{B-P}. There, generators and relations for local Hecke algebras on $\GL_2(\mathbb Z_p)$ are used to characterize local newvectors and, after passage to classical operators, global newform spaces. In half-integral weight, the same philosophy was developed in \cite{B-PI,B-PII}. The paper \cite{B-PI} studies genuine Iwahori-type Hecke algebras for the double cover of $\SL_2(\mathbb Q_p)$ and uses their operators to characterize the minus-space counterpart to Kohnen's plus space at level $4M$. The level-$8M$ theory in \cite{B-PII} is built from the corresponding $2$-adic Hecke algebras and gives a further minus-space decomposition compatible with the relevant Shimura--Ueda correspondence. The representation-theoretic interpretation of the $2$-adic theory is also closely related to the work of Loke and Savin \cite{L-S}; more generally, unramified genuine representations of covering groups were studied by Savin \cite{Savin}.

Local newform theory for metaplectic representations has developed in parallel. Roberts and Schmidt introduced local oldforms and newforms for genuine representations of the metaplectic group and related the total number of newforms to the available Whittaker models \cite{R-S}. For $\SL_2$, the conductor and newform theory of Lansky and Raghuram \cite{L-R} is particularly useful for the linear representations underlying compactly induced metaplectic representations. Manderscheid's classification of genuine supercuspidal representations \cite{ManderscheidI,ManderscheidII}, together with the Kutzko--Sally description on the linear side \cite{KutzkoSally}, supplies the compact types used in the supercuspidal case. Ishimoto has given explicit conductor and dimension formulas for irreducible genuine representations of the rank-one metaplectic group and studied their compatibility with the local theta correspondence \cite{Ishimoto}.

The main purpose of the present paper is to continue the Hecke-algebra approach of \cite{B-P,B-PI,B-PII} at higher odd-prime depth. Let $p$ be odd, $K=\SL_2(\mathbb Z_p)$, and $K_0(p^n)$ the usual congruence subgroup, with $n\geq2$. For a quadratic genuine character $\eta$ of $\ov{K_0(p^n)}$, we determine explicitly the subalgebra
$$
H(\ov K//\ov{K_0(p^n)},\eta)
$$
of the genuine Hecke algebra consisting of functions supported on $\ov K$. We determine its supporting double cosets, generators, relations, and characters. In particular, the operators $\mathcal U_0$, $\mathcal V_{r,1}$ and $\mathcal V_{r,\lambda}$, together with the combinations $\mathcal Y_r$ and $\mathcal W_r$, make the structure of the algebra explicit. We also describe the irreducible $\ov K$-representations containing a vector of the prescribed $\ov{K_0(p^n)}$-type. The compact Hecke-algebra calculations forming the core of this paper were undertaken independently of Ishimoto's work and arise directly from our earlier operator-theoretic approach.

We work throughout with a fixed quadratic type $\eta$, and hence with the $\eta$-conductor
$$
c_\eta(\pi)=\min\{m:V_\eta^{K_m}(\pi)\ne0\},
$$
rather than the minimum of $c_\eta(\pi)$ over all admissible characters. This is the conductor naturally seen by the twisted Hecke algebra. It is also the appropriate notion for the intended global application: under adelization, the character $\eta$ is induced by the local $p$-component of the nebentypus. The earlier paper \cite{Ishimoto} is useful here for checking conductor and dimension statements, but our purpose is to compute the fixed-level Hecke action on explicit candidate newvectors.

We apply the compact operators directly to explicit vectors in genuine representations. For induced representations we write down the relevant isotypic vectors and compute their Hecke eigenvalues. The reducible principal-series cases lead to the even Weil and Steinberg representations, where the exact sequence relating these representations gives the dimensions of the type spaces and the corresponding operator actions. For supercuspidal representations we construct the vectors through compact induction from the linear strongly cuspidal types occurring in the work of Lansky--Raghuram and Manderscheid. This reduces the metaplectic calculation to an explicit linear operator and makes transparent the distinction among the unramified packets of cardinality two and four and the ramified packets.

While this paper was being completed, Ishimoto informed us of his manuscript \emph{Kohnen--Ueda local newforms} \cite{IshimotoKU}. His construction is complementary to ours. Ishimoto defines the operators $U_{\varpi^2}$ and $R_\varpi$ on the tower of $\eta$-type spaces and uses their kernels and images, together with level-changing maps, to define Kohnen--Ueda old and new quotients. His supercuspidal calculations use a transfer to the Kirillov model. By contrast, we study a compactly supported Hecke subalgebra acting within a fixed $(K_m,\eta)$-type, calculate its presentation and characters, and act with its operators directly on the vectors under consideration; in the supercuspidal case our construction remains in the compact-induction model.

The relation with Ueda's classical theory is nevertheless quite concrete. Ishimoto's $R_p$ is the local unipotent average corresponding, after normalization, to Ueda's quadratic twisting operator. In Section~5 we show that its compression to a fixed $(K_m,\eta)$-type, followed by a lifted $\GL_2$-conjugation, gives the action of $W_{m-1}$ up to an explicit scalar; when $m$ is odd, the conjugation also interchanges the two quadratic types. This is a comparison of fixed-type actions, not an equality of the original operators, whose supports and natural domains are different.

The classical interpretation is one of the principal motivations for the present local calculations, but we do not pursue the global theory here. In subsequent work we plan to begin with levels $4p^m$ and $8p^m$. At the place $2$, the level-$4p^m$ setting uses the established Kohnen plus-space operator, while the level-$8p^m$ setting can be treated using the $2$-adic Hecke operators developed in our earlier level-$8M$ work. At the odd prime $p$, the classical counterparts of the compact operators constructed here should be compared with Ueda's twisting decomposition for the prescribed local nebentypus. These are the simplest settings in which to formulate and test a global oldform--newform characterization. Although we work over $\mathbb Q_p$ in order to keep this classical application and the normalizations explicit, the local constructions are expected to extend, with the appropriate changes of notation and normalization, to non-archimedean local fields of odd residual characteristic.

The paper is organized as follows. Section~2 introduces the compact genuine Hecke subalgebra and determines its double-coset support. Section~3 computes the relations among its generators and the resulting characters and $\ov K$-types. Section~4 applies the algebra to local newforms, first for induced representations, then for special representations, and finally for compactly induced supercuspidal representations. Section~5 explains the relation among Ueda's twisting operator, Ishimoto's local operator $R_p$, and the operator $W_{m-1}$ of this paper.

\section{A Hecke Subalgebra of $\widetilde G$ modulo $\ov{K_0(p^n)}$}

Let $p$ be a prime and put $G=\SL_2(\Q_p)$. We write $\widetilde G=\DSL_2(\Q_p)$ for the non-trivial central extension of $G$ by $\mu_2 = \{\pm1\}$:
\begin{equation*}
\begin{split}
1\ \longrightarrow \ &\mu_2 \ \longrightarrow \ \DSL_2(\Q_p)\ 
\longrightarrow\ \SL_2(\Q_p)\ 
\longrightarrow\ 1\\
\{(I,&\pm1)\}\ \quad (g, \pm1) \quad \longmapsto \quad g \qquad \qquad
\end{split}
\end{equation*}
with a group law determined by the following $2$-cocycle.   
Let $\hs{\cdot,\cdot}$ be the Hilbert symbol over $\Q_p$. 
For $A =\mat{a}{b}{c}{d} \in \GL_2(\Q_p)$, define
\[\tau(A) = \begin{cases}
         c & \text{if $c \ne 0$}\\
         d & \text{if $c = 0$}
        \end{cases}; \]
For $g=\mat{a}{b}{c}{d}\in\SL_2(\Q_p)$, set
\[s_p(g) = \begin{cases}
         \hs{c,d} & \text{if $cd \ne 0$ and $\ord_p(c)$ is odd}\\
         1 & \text{else}.
        \end{cases}
\]

For $g, h\in G$, define 
$$
\sigma_0(g,h)
=
\left(
\frac{\tau(gh)}{\tau(g)},
\frac{\tau(gh)}{\tau(h)}
\right)_p
=
\bigl(\tau(gh)\tau(g),\tau(gh)\tau(h)\bigr)_p.
$$

Define the $2$-cocycle: 
\begin{equation}\label{eq:cocylerel}
 c(g,h)=\sigma_0(g,h)s_p(g)s_p(h)s_p(gh).
\end{equation}

Thus $\widetilde G$ is the set $\SL_2(\Q_p) \times \mu_2$ with the group law given by
\[(g,\ \epsilon_1)(h,\ \epsilon_2) = (gh,\ \epsilon_1\epsilon_2c(g, h)).\]
For any subgroup $H$ of $\SL_2(\Q_p)$, we shall denote by $\ov{H}$ the complete 
inverse image of $H$ in $\widetilde G$. 

The $2$-cocycle $c$ used in the group law above is given by Gelbart; we note that in some references, for example in Manderscheid \cite{ManderscheidI, ManderscheidII}, Baruch--Mao \cite[pp.~247--248]{B-Mao}, and Ishimoto \cite{Ishimoto}, following Kubota, the group law uses the cocycle $\sigma_0$. To distinguish between the two, let $\widetilde G_c$ denote the realization of the double cover defined by $c$ and let
$\widetilde G_{\sigma_0}$ denote the realization defined by $\sigma_0$.  The two are explicitly isomorphic through
$$
\iota:\widetilde G_c\longrightarrow\widetilde G_{\sigma_0},
\qquad
\iota((g,\epsilon))=(g,\epsilon s_p(g)).
$$
Indeed, the relation~\eqref{eq:cocylerel} gives
$$
\iota((g,\epsilon)(h,\epsilon'))
=
\iota((g,\epsilon))\iota((h,\epsilon')).
$$
We henceforth identify $\widetilde G$ with our $c$-model $\widetilde G_c$.

Later in Subsection~\ref{subsec:supercuspidal} and Section~\ref{sec:Ueda-Ishimoto-Rp}, we will use the isomorphism $\iota$ when comparing results stated in the two cocycle models.

Let $K=\SL_2(\Z_p)$. By \cite[Proposition 2.8]{Gelbart} for odd primes $p$, 
$\widetilde G$ splits over $K$. Thus 
$\ov{K}$ is a group isomorphic to the direct product $K \times \mu_2$. We will consider the following subgroups of $K$:
\[K_0(p^n) =\left\{ \mat{a}{b}{c}{d} \in \SL_2(\Z_p)\ :\ c \in p^n\Z_p \right\},\]
\[K_1(p^n) =\left\{ \mat{a}{b}{c}{d} \in \SL_2(\Z_p)\ :\ c \in p^n\Z_p,\  
a \equiv 1 \pmod{p^n\Z_p} \right\}.\]
For brevity we shall write $K_n:=K_0(p^n)$ throughout the paper; the subgroup $K_1(p^n)$ will always be written with its argument.
For odd primes $p$, $\ov{K_0(p^n)}$ is isomorphic to $K_0(p^n) \times \mu_2$ and $\ov{K_1(p^n)}$ is isomorphic to $K_1(p^n) \times \mu_2$. 

For the rest of the paper we take $p$ to be an odd prime. In this case, by \cite[Corollary 2.13]{Gelbart}, the center $M_p$ of $\widetilde G$ is the direct product $\{\pm I\} \times \mu_2$. Thus a genuine central character is a character of $\{\pm I\}\times\mu_2$ that is non-trivial on the covering-kernel $\mu_2$ factor.  

We also fix the Weil-factor conventions used below.  For a nontrivial
additive character $\psi'$ of $\Q_p$, let $\gamma(\psi')$ denote the Weil
index of the one-dimensional quadratic form $x\mapsto x^2$ with respect to
$\psi'$.  For $a\in\Q_p^\times$, put $\psi'_a(x)=\psi'(ax)$ and define the
relative Weil factor by
$$
\gamma(a,\psi')=\frac{\gamma(\psi'_a)}{\gamma(\psi')}.
$$
We use the following standard identities; see, for example,
\cite{RangaRao}:
\begin{enumerate}
    \item $\gamma(ac^2,\psi')=\gamma(a,\psi')$,
    \item $\gamma(ab,\psi')=\gamma(a,\psi')\gamma(b,\psi')(a,b)_p$,
    \item $\gamma(a,\psi'_c)=(a,c)_p\gamma(a,\psi')$,
    \item $\gamma(-1,\psi')=\gamma(\psi')^{-2}$,
    \item $\gamma(ac,\psi')=\gamma(c,\psi')$ for any $a\in\Z_p^{\times}$ and $c\in\Q_p^{\times}$ such that $\operatorname{ord}_p(c)\equiv c(\psi')\pmod 2$.
\end{enumerate}
Fix a nontrivial additive character $\psi$ of conductor $0$ and a
nontrivial additive character $\psi_{-1}$ of conductor $-1$.  We put
$$
\epsilon_p:=\gamma(p,\psi).
$$
Let \(\chi_p=\legendre{\cdot}{p}\), and let \(\sqrt p\) denote the
positive square root of \(p\). We record the normalized quadratic
Gauss-sum formula
\begin{equation}\label{eq:weil-gauss-normalization}
 \sum_{t\in\mathbb F_p^*}\chi_p(t)\psi(t/p)
 =\epsilon_p\sqrt p.
\end{equation}
Here and below \(\chi_p(0)=0\). Since \(\psi\) has conductor \(0\),
the map \(t\mapsto\psi(t/p)\) is a well-defined nontrivial additive
character of \(\mathbb F_p\). The explicit nonarchimedean Weil-index
formula gives
\[
 \gamma(\psi)=1,
 \qquad
 \gamma(\psi_p)=p^{-1/2}
 \sum_{x\in\mathbb F_p}\psi(x^2/p),
 \qquad \psi_p(x):=\psi(px);
\]
see \cite[Theorem A.2(5), Theorem A.3(1)--(2), and Proposition A.11,
pp.~366, 369--370]{RangaRao}. Since the number of square roots of
\(t\in\mathbb F_p\) is \(1+\chi_p(t)\), summing first over the values
of \(x^2\), and using the vanishing of a nontrivial additive-character
sum, proves \eqref{eq:weil-gauss-normalization}. In particular,
\(\epsilon_p^2=\chi_p(-1)\). We put
\[
 p^*:=\chi_p(-1)p
 \qquad\text{and choose}\qquad
 \sqrt{p^*}:=\epsilon_p\sqrt p.
\]

We set up a few more notations.
For $s \in \Q_p$, $t \in \Q_p^\times$ let us define the following elements of 
$\SL_2(\Q_p)$:
\[x(s) =\mat{1}{s}{0}{1},\ y(s)=\mat{1}{0}{s}{1},\  
w(t) = \mat{0}{t}{-t^{-1}}{0},\  h(t)=\mat{t}{0}{0}{t^{-1}}.\]
We also use the canonical lifts
\[
\begin{aligned}
\bar x(s)&:=(x(s),1), & \bar y(s)&:=(y(s),1),\\
\bar h(t)&:=(h(t),1), & \bar w(t)&:=(w(t),1).
\end{aligned}
\]
Let $B_0\subset G$ be the standard upper-triangular Borel subgroup.  Let $N=\{(x(s), \epsilon): s \in \Q_p,\ \epsilon = \pm 1\}$, 
$\bar{N}=\{(y(s), \epsilon): s \in \Q_p,\ \epsilon = \pm 1\}$ 
and $T=\{(h(t), \epsilon): t \in \Q_p^\times,\ \epsilon = \pm 1\}$
be the subgroups of $\widetilde G$. Then the normalizer
$N_{\widetilde G}(T)$ of $T$ in $\widetilde G$ consists of 
elements $(h(t), \epsilon)$, $(w(t), \epsilon)$ for $t \in \Q_p^\times$. We also put $B=NT=\operatorname{pr}^{-1}(B_0)$. Thus $B$ denotes the full inverse image of the linear Borel $B_0$.  In the notation of Baruch--Mao, our $B_0$ is their $B_S$ and our $B$ is their $\overline{B}_S$.

Let $n\ge 2$. Following \cite{B-P}, we are interested in the subalgebra of the genuine Hecke algebra of $\widetilde G$ modulo $\ov{K_n}$ with quadratic characters whose elements are supported on $\ov{K}$. We define it below.

Let $\eta$ be a character of $K_n$ such that it is
trivial on $K_1(p^n)$.
Since $\frac{K_n}{K_1(p^n)} \cong (\Z_p/p^n\Z_p)^\times$, 
where the quotient map is
$\mat{a}{b}{c}{d}\mapsto d\pmod{p^n\Z_p}$,
the character $\eta$ is induced by a character of
$(\Z/p^n\Z)^\times$. We use the same symbol $\eta$ for its genuine
extension to $\ov{K_n}$, defined by
$\eta((A, \epsilon)) = \epsilon\eta(A)$ for $A \in K_n$.
Recall that a genuine character acts nontrivially on the $\mu_2$ component. 

Define 
$H:=H(\ov{K}// \ov{K_n}, \eta)$ to be the space of functions
$f\in C_c^{\infty}(\widetilde G)$ supported on $\ov K$ and satisfying
\[
f(\tilde{k}\tilde{g}\tilde{k'})
=\overline{\eta}(\tilde{k})\overline{\eta}(\tilde{k'})f(\tilde{g})\ 
\text{for }\tilde{g}\in\widetilde G,\
\tilde{k},\tilde{k'}\in\ov{K_n}.
\]
This is a $\C$-algebra under the usual convolution: for 
$f_1, f_2 \in H$, 
\[ f_1*f_2(\tilde{h}) = 
\int_{\widetilde G} f_1(\tilde{g})f_2(\tilde{g}^{-1}\tilde{h}) d\tilde{g} = 
\int_{\widetilde G} f_1(\tilde{h}\tilde{g})f_2(\tilde{g}^{-1}) d\tilde{g}, \]
where $d\tilde{g}$ is the Haar measure on $\widetilde G$ such that the measure 
of $\ov{K_n}$ is one.

We take $\eta$ to be a quadratic character for the rest of the paper. Note that $\eta$ quadratic character of $(\Z/p^n\Z)^\times$ implies that $\eta$ is either trivial or given by $\kro{\cdot}{p}$.

We would like to describe $H$ for such $\eta$ using generators and relations.  

\begin{prop}\label{prop:rep1}
A complete set of double coset representatives of 
$\ov{\SL_2(\Z_p)}$ modulo $\ov{K_n}$ are given by 
$ (I,1),\ (w(1),1),\  (y(p^r),1),\ (y(\ld p^r),1)$ 
where $r$ runs from $1$ to $n-1$ and $\ld$ is a nonsquare modulo $p$.
\end{prop}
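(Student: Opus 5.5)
Since $\mu_2$ is central and contained in $\ov{K_n}$, the double cosets $\ov{K_n}\backslash\ov K/\ov{K_n}$ are in bijection with $K_n\backslash K/K_n$ via the projection $(g,\epsilon)\mapsto g$. Indeed, $\ov{K_n}(g,\epsilon)\ov{K_n}$ is the full inverse image of $K_nK_n$ with $g$ in the middle. So it suffices to show that $I$, $w(1)$, $y(p^r)$ and $y(\ld p^r)$ for $1\le r\le n-1$ form a complete, irredundant set of representatives for $K_n\backslash \SL_2(\Z_p)/K_n$. The choice of sign in the lifts $(\cdot,1)$ is then immaterial.

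\textbf{Exhaustion.} Let $g=\mat{a}{b}{c}{d}\in K$ and put $r=\min(\ord_p(c),n)$.
\begin{enumerate}
\item If $r=n$, then $g\in K_n$, and its double coset is that of $I$.
\item If $r=0$, then $c$ is a unit. Left and right multiplication by $x(s)$ clears $a$ and $d$, reducing $g$ to $\mat{0}{b'}{c}{0}=w(b')$ with $b'=-c^{-1}$. Multiplying by $h(u)$ with $u=b'\in\Z_p^\times$ then gives $w(1)$.
\item If $1\le r\le n-1$, then $a$ is a unit. Right multiplication by $x(-a^{-1}b)$ kills $b$, so $g$ becomes lower triangular, $g=h(a)\,y(c/a)$ up to sides. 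Absorbing $h(a)$ on the left leaves $y(c')$ with $\ord_p(c')=r$.
\end{enumerate}
Conjugating by $h(t)$, i.e.\ using $h(t)y(s)h(t)^{-1}=y(t^{-2}s)$, changes $c'$ by unit squares. Using $y(s)y(s')=y(s+s')$ with $y(p^n\Z_p)\subset K_n$, only $c'$ modulo $p^n$ matters. So $c'=p^r u$ with $u$ a unit, and $u$ can be multiplied by squares. Since $r\ge 1$ (and we work modulo $p^n$), the square class of $u$ is determined by $u \bmod p$. This reduces $u$ to $1$ or $\ld$.

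\textbf{Distinctness.} This is the main obstacle: one needs invariants of the double coset $K_ngK_n$. The first invariant is $\min(\ord_p(c),n)$, which is preserved under multiplication by $K_n$ on both sides, since the lower-left entry of $k g k'$ is congruent modulo $p^n$ to a unit multiple of $c$. This separates $I$, $w(1)$ and the different values of $r$.

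For fixed $1\le r\le n-1$, the plan is to show that the class of $c\,a^{-1}$ modulo squares, or equivalently of $c\,d$, is invariant. For $k=\mat{\a}{\b}{\gamma}{\d}$ and $k'$ in $K_n$, compute the lower-left entry of $kgk'$ modulo $p^{r+1}$. Since $\gamma,\gamma'\equiv0 \pmod{p^n}$ and $n\ge r+1$, this entry is $\equiv \d c\a' \pmod{p^{r+1}}$. The diagonal entry $d$ similarly transforms to $\d d\d'$ modulo $p$, which is the key point. Since $\a'\d'\equiv1$ and $\a\d\equiv 1 \pmod p$, the unit $p^{-r}c\,d$ changes by $\d^2\a'\d'\equiv\d^2 \pmod p$, a square. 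Hence $\chi_p(p^{-r}cd)$ is an invariant, taking the value $1$ on $y(p^r)$ and $\chi_p(\ld)=-1$ on $y(\ld p^r)$. The careful bookkeeping of these congruences, especially in the edge case $r=n-1$, is the delicate step.
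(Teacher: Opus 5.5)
Your argument is correct and complete. The paper gives no proof of Proposition~\ref{prop:rep1}: it is stated as the metaplectic analogue of the corresponding integral-weight statement in \cite{B-P}, and, as you observe, the cover plays no role because $\mu_2\subset\ov{K_n}$.

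\textbf{Comparison with the paper.} The only disjointness argument of this kind that the paper writes out is the one in Lemma~\ref{lem:B-double-cosets}, which concerns the $B$--$\ov{K_m}$ double cosets. There one assumes $y(t_1)k_0y(-t_2)\in B_0$ and examines the lower-left entry. Valuations force $r_1=r_2$. Dividing by $p^r$, reducing mod $p$ and using the determinant condition then shows that $\ell_1$ and $\ell_2$ have the same square class. The paper disposes of the comparisons with $1$ and $w(1)$ by ``the same lower-row calculation''.

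Your route packages this computation as two genuine two-sided invariants of $K_ngK_n$:
\begin{itemize}
\item $\min(\ord_p(c),n)$, and
\item for $1\le r\le n-1$, the Legendre symbol $\chi_p(p^{-r}cd)$.
\end{itemize}
This is cleaner, and it handles every comparison ($I$, $w(1)$, the different $r$, and the two square classes) uniformly, rather than one pair at a time.

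\textbf{The case $r=n-1$.} This case, which you flag as delicate, causes no difficulty. For every $r\le n-1$, the lower-left entry of $kgk'$ is congruent to $\delta c\alpha'$ modulo $p^n$, and hence modulo $p^{r+1}$. The lower-right congruence $d\mapsto\delta d\delta'$ modulo $p$ uses only $c\equiv0\pmod p$, that is, $r\ge1$.

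\textbf{Two cosmetic slips.}
\begin{itemize}
\item The matrix $\mat{a}{0}{c}{a^{-1}}$ equals $y(c/a)h(a)=h(a)y(ac)$, not $h(a)y(c/a)$. This is harmless, since only the valuation of the lower-left entry matters.
\item The square class of a unit $u\in\Z_p^\times$ is determined by $u\bmod p$ by Hensel's lemma for odd $p$. This has nothing to do with $r\ge1$.
\end{itemize}
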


The support condition is especially simple for these representatives.  Since
the chosen cover splits over $K$, we identify $\ov K$ with
$K\times\mu_2$ for the following calculation.  If $g\in K$, then the
double coset $\ov{K_n}(g,1)\ov{K_n}$ supports a nonzero
$\eta$-biequivariant function if and only if
\begin{equation}\label{eq:compact-support-criterion}
 \eta(k)=\eta(g^{-1}kg)
 \qquad\text{for every }k\in K_n\cap gK_ng^{-1}.
\end{equation}
Indeed, if $k$ belongs to this intersection, then
$kg=g(g^{-1}kg)$, and the two applications of biequivariance must give
the same value.  Conversely, condition
\eqref{eq:compact-support-criterion} makes the value prescribed at $(g,1)$
independent of the choice of its left--right expression.  There is no
additional cocycle factor, since all the elements in this calculation lie
in the split compact subgroup.

\begin{prop}\label{prop:compact-support}
 Every double coset listed in Proposition~\ref{prop:rep1} supports a
 nonzero element of $H$.
\end{prop}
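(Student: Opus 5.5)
The plan is to verify the criterion \eqref{eq:compact-support-criterion} separately for each representative listed in Proposition~\ref{prop:rep1}, working in the split model $\ov K\cong K\times\mu_2$. Once the criterion holds for a representative $g$, I will define $f_g$ on $\ov{K_n}(g,1)\ov{K_n}$ by
\[
f_g\bigl((k,\epsilon)(g,1)(k',\epsilon')\bigr)=\epsilon\epsilon'\,\ov\eta(k)\ov\eta(k'),
\]
and set $f_g=0$ elsewhere. The criterion is exactly what makes this well defined. Genuineness is automatic: $(g,-1)=(I,-1)(g,1)$ receives the value $-f_g((g,1))$, and since $\eta$ is genuine no conflict arises on the $\mu_2$ factor. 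The function $f_g$ is locally constant and compactly supported, because the double coset is a finite union of cosets of the open compact group $\ov{K_n}$. Hence $f_g\in H$ and $f_g\neq0$.

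The main observation is that, since $\eta$ is quadratic, $\eta$ is either trivial or $\chi_p(d)$ for $k=\mat{a}{b}{c}{d}\in K_n$. In particular $\eta(k)$ depends only on $d\bmod p$, and, because $ad\equiv1\pmod p$, $\chi_p(d)=\chi_p(a)$. If $\eta$ is trivial there is nothing to check, so I will assume $\eta=\chi_p(d)$. For $g=I$ the criterion is tautological.

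For $g=w(1)$, a direct computation gives
\[
w(1)^{-1}\mat{a}{b}{c}{d}w(1)=\mat{d}{-c}{-b}{a}.
\]
Thus $\eta(w(1)^{-1}kw(1))=\chi_p(a)=\chi_p(d^{-1})=\chi_p(d)=\eta(k)$, and this holds for every $k$ in the intersection (which is the set of $k$ with $b,c\in p^n\Z_p$). Here quadraticity of $\eta$ is essential: for a general character one would get $\eta(k)^{-1}$ instead of $\eta(k)$.

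For $g=y(s)$ with $s=p^r$ or $s=\ld p^r$ and $1\le r\le n-1$, I will compute
\[
y(-s)\,k\,y(s)=\mat{a+bs}{b}{c+(d-a)s-bs^2}{d-bs}.
\]
The lower-right entry is $d-bs\equiv d\pmod p$, since $\ord_p(s)=r\ge1$. Hence $\eta(g^{-1}kg)=\eta(k)$ for every $k\in K_n\cap gK_ng^{-1}$; the intersection condition is needed only to guarantee that $g^{-1}kg\in K_n$.

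I do not expect a serious obstacle. The one point needing care is to confirm that no cocycle correction enters. This holds because all the elements involved lie in $\ov K$, where the cover splits, and the identification $\ov{K_n}\cong K_n\times\mu_2$ used to define the genuine $\eta$ is the restriction of that splitting.
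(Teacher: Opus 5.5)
Your proposal is correct and follows essentially the same route as the paper. Like the paper, you check criterion \eqref{eq:compact-support-criterion} representative by representative: for $w(1)$ you use the conjugate $\mat{d}{-c}{-b}{a}$ together with quadraticity of $\eta$, and for $y(s)$ you use that the lower-right entry $d-bs$ is congruent to $d$ modulo $p$. Your explicit construction of the normalized function (well-definedness, genuineness, local constancy) only spells out what the paper leaves implicit in that criterion.
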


\begin{proof}
The assertion is immediate for $g=I$.  Let
$g=w(1)$ and let $k=\mat{a}{b}{c}{d}$ belong to
$K_n\cap gK_ng^{-1}$.  Then both $b$ and $c$ belong to $p^n\Z_p$, and
\[
 g^{-1}kg=\mat{d}{-c}{-b}{a}.
\]
Since $ad-bc=1$, we have $ad\equiv1\pmod p$.  Hence
$\eta(a)=\eta(d^{-1})=\eta(d)$, where the last equality uses that
$\eta$ is quadratic.  Thus \eqref{eq:compact-support-criterion} holds.

Next let $g=y(t)$, where $t=\delta p^r$ with
$\delta\in\Z_p^\times$ and $1\leq r\leq n-1$.  For
$k=\mat{a}{b}{c}{d}\in K_n\cap gK_ng^{-1}$, direct multiplication gives
\[
 g^{-1}kg=
 \mat{a+bt}{b}{c+t(d-a)-t^2b}{d-tb}.
\]
Since $t\in p\Z_p$, its lower-right entry is congruent to $d$ modulo
$p$.  The character $\eta$ is either trivial or the quadratic character
modulo $p$, and therefore $\eta(d-tb)=\eta(d)$.  This verifies
\eqref{eq:compact-support-criterion} for both square classes
$\delta=1$ and $\delta=\ld$, and proves the proposition.
\end{proof}

We denote by $\mathcal I$ and $\mathcal U_0$ the elements of $H$ supported
on $(I,1)$ and $(w(1),1)$, respectively, normalized by
$\mathcal I((I,1))=1$ and $\mathcal U_0((w(1),1))=1$.  For
$1\leq r\leq n-1$ and $a\in\Z_p^\times$, let $\mathcal V_{r,a}$ denote
the normalized Hecke function supported on
$\ov{K_n}(y(ap^r),1)\ov{K_n}$ and satisfying
$\mathcal V_{r,a}((y(ap^r),1))=1$.

The function $\mathcal V_{r,a}$ depends only on the square class of $a$.
Indeed, if $a=bs^{-2}$ with $s\in\Z_p^\times$, then
$$
(y(ap^r),1)=(h(s),1)(y(bp^r),1)(h(s)^{-1},1).
$$
The two type-character factors associated with $(h(s),1)$ and
$(h(s)^{-1},1)$ cancel, and hence
$\mathcal V_{r,a}=\mathcal V_{r,b}$.  Consequently, the two possibilities
are represented by $\mathcal V_{r,1}$ and $\mathcal V_{r,\ld}$.

Note that since $\eta$ is a character of
$\frac{K_n}{K_1(p^n)}$, $\eta(h(u), \epsilon)=
\eta(u^{-1})\epsilon$.

We next compute the relations using the notation $K_n=K_0(p^n)$ fixed above.

We will use the following lemma which easily follows from the definition of convolution.
\begin{lem} \label{lem:rel1}
Let $f_1,\ f_2 \in H$ such that $f_1$ is supported on
$S\tilde{x}S=\bigcup_{i=1}^{m}\tilde{\alpha}_i S = \bigcup_{i=1}^{m}S \tilde{\gamma}_i $ and 
$f_2$ is supported on 
$S\tilde{y}S=\bigcup_{j=1}^{n}\tilde{\beta}_j S= \bigcup_{j=1}^{n} S \tilde{\delta}_j$. Then
\[
f_1*f_2(\tilde{h})=
\sum_{i=1}^{m}f_1(\tilde{\alpha}_i)f_2(\tilde{\alpha}_{i}^{-1}\tilde{h})=
\sum_{j=1}^{n}f_1(\tilde{h}\tilde{\delta}_j^{-1})f_2(\tilde{\delta}_j)\]
where in the first sum the nonzero summands are precisely for those $i$ for which there exist 
a $j$ such that $\tilde{h}\in \tilde{\alpha}_i \tilde{\beta}_j S$ while in the second sum the nonzero summands are precisely for those $j$ for which there exist 
a $i$ such that $\tilde{h}\in S \tilde{\gamma}_i \tilde{\delta}_j$.
\end{lem}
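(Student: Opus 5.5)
The plan is to unwind the convolution integral using the coset decompositions and the normalization of Haar measure; nothing beyond the definitions is needed. Throughout, $S$ denotes $\ov{K_n}$, which is open and compact, and $d\tilde g$ is normalized so that $\mathrm{vol}(S)=1$. Both $f_1*f_2(\tilde h)=\int f_1(\tilde g)f_2(\tilde g^{-1}\tilde h)\,d\tilde g$ and its other form are integrals of compactly supported locally constant functions.

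\emph{First formula.} Since $f_1$ is supported on $S\tilde xS=\bigsqcup_i\tilde\alpha_iS$, I split the integral over the cosets $\tilde\alpha_iS$ and substitute $\tilde g=\tilde\alpha_i\tilde k$ with $\tilde k\in S$. Right equivariance gives $f_1(\tilde\alpha_i\tilde k)=\ov\eta(\tilde k)f_1(\tilde\alpha_i)$. Left equivariance of $f_2$ gives $f_2(\tilde k^{-1}\tilde\alpha_i^{-1}\tilde h)=\ov\eta(\tilde k^{-1})f_2(\tilde\alpha_i^{-1}\tilde h)$. The product of the two character factors is $\ov\eta(\tilde k)\ov\eta(\tilde k)^{-1}=1$, because $\eta$ is a genuine character, so in particular a homomorphism on $S$. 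Hence the integrand is constant on $\tilde\alpha_iS$, and integrating over a coset of volume $1$ gives the summand $f_1(\tilde\alpha_i)f_2(\tilde\alpha_i^{-1}\tilde h)$.

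\emph{Second formula.} I start instead from $\int f_1(\tilde h\tilde g)f_2(\tilde g^{-1})\,d\tilde g$. Replacing $\tilde g$ by $\tilde g^{-1}$ is legitimate because $\widetilde G$ is unimodular, which holds since it is a finite central cover of the unimodular group $\SL_2(\Q_p)$. The integral becomes $\int f_1(\tilde h\tilde g^{-1})f_2(\tilde g)\,d\tilde g$, with $f_2$ supported on $\bigsqcup_j S\tilde\delta_j$. I write $\tilde g=\tilde k\tilde\delta_j$. Then $f_2(\tilde k\tilde\delta_j)=\ov\eta(\tilde k)f_2(\tilde\delta_j)$ and $f_1(\tilde h\tilde\delta_j^{-1}\tilde k^{-1})=\ov\eta(\tilde k^{-1})f_1(\tilde h\tilde\delta_j^{-1})$. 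The characters again cancel, and each left coset has volume $1$, which yields the second sum.

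\emph{Nonvanishing criterion.} In the first sum, $f_2(\tilde\alpha_i^{-1}\tilde h)\neq0$ forces $\tilde\alpha_i^{-1}\tilde h\in S\tilde yS=\bigcup_j\tilde\beta_jS$, that is, $\tilde h\in\tilde\alpha_i\tilde\beta_jS$ for some $j$. The second sum is symmetric: $f_1(\tilde h\tilde\delta_j^{-1})\neq0$ forces $\tilde h\tilde\delta_j^{-1}\in\bigcup_iS\tilde\gamma_i$, that is, $\tilde h\in S\tilde\gamma_i\tilde\delta_j$. Read literally, ``precisely'' would require the converse, but $f_2$ could in principle vanish at a support point. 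So I interpret the statement as saying that only these indices can contribute, which is the form in which the lemma is used.

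The only delicate point is bookkeeping of the genuine character. Its cancellation uses only that $\eta$ is multiplicative on $S$; no cocycle factor enters, because the substitution $\tilde g=\tilde\alpha_i\tilde k$ is a product in $\widetilde G$ itself. The main thing to check is therefore that the coset decomposition is disjoint, so that no coset is counted twice.
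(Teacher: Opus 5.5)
Your proof is correct and is essentially the paper's argument. Both split the convolution integral over the cosets of $S$ that make up the relevant support, let the two $\overline{\eta}$-factors cancel by biequivariance, and use $\operatorname{vol}(S)=1$. The paper writes out only the second formula. It integrates directly over the cosets $\tilde\delta_j^{-1}S=(S\tilde\delta_j)^{-1}$, which needs only left invariance of Haar measure, whereas you use the unimodular substitution $\tilde g\mapsto\tilde g^{-1}$.

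Your hedge about ``precisely'' is unnecessary. Since $\eta$ takes values of modulus one, a nonzero $\eta$-biequivariant function has constant modulus on its supporting double coset. So for nonzero $f_1,f_2$ one has $f_1(\tilde\alpha_i)\neq0$, and $f_2$ is nowhere zero on $\tilde\beta_jS\subset S\tilde yS$. The same holds for the second sum, since $\tilde h\tilde\delta_j^{-1}\in S\tilde\gamma_i\subset S\tilde xS$. This gives the converse.
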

\begin{proof} We prove the second expression. 
\begin{align*}
 f_1*f_2(\tilde{h}) =  
\int_{\widetilde G} f_1(\tilde{h}\tilde{g})f_2(\tilde{g}^{-1}) d\tilde{g}.
\end{align*}
 We have $\tilde{g}^{-1}\in S\tilde{y}S$ if and only if
$\tilde{g}\in (S\tilde{y}S)^{-1}=\bigcup_{j=1}^{n}\tilde{\delta}_j^{-1}S$. Hence
\begin{align*}
 f_1*f_2(\tilde h)
 &=\sum_{j=1}^n\int_{\tilde\delta_j^{-1}S}
 f_1(\tilde h\tilde g)f_2(\tilde g^{-1})\,d\tilde g \\
 &=\sum_{j=1}^n f_1(\tilde h\tilde\delta_j^{-1})f_2(\tilde\delta_j).
\end{align*}
The nonzero summands are precisely those for which
$\tilde h\tilde\delta_j^{-1}\in S\tilde xS=\bigcup_{i=1}^mS\tilde\gamma_i$, equivalently those $j$ for which there is an $i$ with
$\tilde h\in S\tilde\gamma_i\tilde\delta_j$.
 
\end{proof}

In the following lemma we express the double cosets with our coset representatives into a union of single cosets. 
\begin{lem}\label{lem:cosetdecomp}\
 \begin{enumerate}
  \item $\displaystyle K_n w(1) K_n= \bigcup_{s\in \Z_p / p^n\Z_p}
x(s)w(1)K_n=\bigcup_{s\in \Z_p / p^n\Z_p}K_nw(1)x(s)$.
\item Let $\ld\in \Z_p$ and $1\le r\le n-1$.  Then 
\[
\begin{aligned}
K_ny(\ld p^r)K_n
&=\bigcup_{u \in \Z_p^{*}/\sqrt{1+p^{n-r}\Z_p}}
h(u)y(\ld p^{r})K_n\\
&=\bigcup_{u \in \Z_p^{*}/\sqrt{1+p^{n-r}\Z_p}}
K_ny(\ld p^{r})h(u).
\end{aligned}
\]
 \end{enumerate} 
\end{lem}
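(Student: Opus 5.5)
The plan is to use the standard fact that for $g\in K$ one has
\[
K_ngK_n=\bigcup_i k_i\,gK_n,
\]
where $k_i$ runs over a set of representatives of $K_n/(K_n\cap gK_ng^{-1})$. Symmetrically, $K_ngK_n=\bigcup_j K_n g\,k'_j$ with $k'_j$ running over $(K_n\cap g^{-1}K_ng)\backslash K_n$. Since everything lies in the split compact group, no cocycle enters. So in each case I will (a) compute the intersection subgroup, (b) show the proposed elements cover $K_n$ modulo it, and (c) show they are pairwise inequivalent.

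For (1), take $g=w(1)$. Then $gK_ng^{-1}$ is the group of matrices in $K$ with $b\in p^n\Z_p$, so $K_n\cap gK_ng^{-1}$ consists of the matrices in $K$ with $b,c\in p^n\Z_p$. Given $k=\mat{a}{b}{c}{d}\in K_n$, the entry $d$ is a unit, since $ad\equiv1\pmod{p^n}$. Hence $x(s)^{-1}k$ has upper-right entry $b-sd$, which lies in $p^n\Z_p$ for $s\equiv bd^{-1}\pmod{p^n}$. Moreover $x(s)^{-1}x(s')=x(s'-s)$ lies in the intersection iff $s\equiv s'\pmod{p^n}$. This gives the left decomposition. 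For the right decomposition I will take inverses: $w(1)^{-1}=-w(1)$ with $-I\in K_n$, so $(K_nw(1)K_n)^{-1}=K_nw(1)K_n$, and $(x(s)w(1)K_n)^{-1}=K_nw(1)x(-s)$.

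For (2), take $g=y(t)$ with $t=\ld p^r$. By the conjugation formula already computed in the proof of Proposition~\ref{prop:compact-support}, an element $k\in K_n$ lies in $gK_ng^{-1}$ iff
\[
c+t(d-a)-t^2b\in p^n\Z_p,\qquad\text{equivalently}\qquad d-a-tb\equiv0\pmod{p^{n-r}}.
\]
For covering, write $h(u)^{-1}k=\mat{u^{-1}a}{u^{-1}b}{uc}{ud}$. The condition becomes $u^2d\equiv a+tb$, that is,
\[
u^2\equiv a(a+tb)=a^2(1+tba^{-1})\pmod{p^{n-r}},
\]
using $ad\equiv1\pmod{p^n}$. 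Since $p$ is odd and $1+tba^{-1}\equiv1\pmod p$, Hensel's lemma gives a solution $u\in\Z_p^\times$. For distinctness, $h(v)$ with $v=u^{-1}u'$ lies in the intersection iff $v^{-1}-v\equiv0$, i.e. $v^2\in1+p^{n-r}\Z_p$. So the classes correspond exactly to $\Z_p^\times/\sqrt{1+p^{n-r}\Z_p}$, where this denotes the subgroup $\{v\in\Z_p^\times: v^2\in 1+p^{n-r}\Z_p\}$. The right decomposition is handled by the same computation applied to $K_n\cap g^{-1}K_ng=g^{-1}(gK_ng^{-1}\cap K_n)g$. Its membership condition is the analogous congruence with $t$ replaced by $-t$, and right multiplication by $h(u)$ is solved in the same way.

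The main obstacle is the bookkeeping in (2): checking that the congruence $u^2\equiv a^2(1+tba^{-1})$ really is the exact condition modulo $p^{n-r}$ (and not modulo $p^n$). The point is that the factor $t$ absorbs $r$ powers of $p$. One must also interpret the index set correctly as the quotient by $\pm(1+p^{n-r}\Z_p)$, which has order $2\cdot p^{n-r-1}(p-1)/2$.
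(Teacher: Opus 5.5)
Your proof is correct. It rests on the same ingredients as the paper's proof:
\begin{itemize}
\item the stabilizer $K_n\cap y(t)K_ny(t)^{-1}$ is cut out by $a-d+tb\equiv0\pmod{p^{n-r}}$;
\item square roots exist in $1+p\Z_p$;
\item $h(v)$ lies in the stabilizer exactly when $v^2\in1+p^{n-r}\Z_p$.
\end{itemize}

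The difference is in how the argument is organized. The paper first uses the triangular decomposition of $K_n$ to write each element of the double coset as $h(u)x(u^{-2}s)y(\ld p^r)K_n$. It then absorbs the unipotent factor into the stabilizer. This works directly only when $r\ge n/2$. For $r<n/2$ the paper needs an extra diagonal correction $h(1+pv)$, built from $\sqrt{(1+s\ld p^r)^{-1}}$.

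You instead apply orbit--stabilizer to an arbitrary $k\in K_n$ and solve $u^2=a(a+tb)$ in one step. This removes the case distinction; the paper's correction is exactly your square root, specialized to $k=x(s)$. Similarly, your observation that $w(1)^{-1}=-w(1)$ with $-I\in K_n$ gives the right-coset form of (1) by inversion, with no second computation.

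There is one slip in your last paragraph. By Lemma~\ref{lem:sqrt}, $\Z_p^\times/\pm(1+p^{n-r}\Z_p)$ has $\frac{p-1}{2}p^{n-r-1}$ elements, not $2\cdot p^{n-r-1}(p-1)/2=(p-1)p^{n-r-1}$. This does not affect the set-theoretic statement, but it is the count that enters the later convolution formulas.
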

\begin{proof}
 (1) follows using triangular decomposition of $K_n$. \\
 For (2) we use the same idea as \cite[Lemmas 3.7, 3.9]{B-P}. 
 Recall that $\mat{a}{b}{c}{d}\in K_n^{y(\ld p^r)}$ 
 provided the matrix is in $K_n$ and $a-d+bp^r\ld \in p^{n-r}\Z_p$. Any element $g$ of the double coset is of the form $x(s)h(u)y(\ld p^r)K_n= h(u)x(u^{-2}s)y(\ld p^r)K_n$. If $r\geq n/2$, $x(u^{-2}s)\in K_n^{y(\ld p^r)}$ and so $g$ is of the form $h(u)y(\ld p^r)K_n$. Now $h(u)\in K_n^{y(\ld p^r)}$ if and only if $u^2-1 \in p^{n-r}\Z_p$. Thus $g$ is of the form $h(u)y(\ld p^r)K_n$ where $u \in \Z_p^{*}/\sqrt{1+p^{n-r}\Z_p}$. Further, $y(-\ld p^{r})h(u_1u_2^{-1})y(\ld p^r)\in K_n \iff u_1^{-1}u_2\in \sqrt{1+p^{n-r}\Z_p}$, thus the union is disjoint. 
 
 \par\smallskip\noindent
Now assume that $1\le r< n/2$. As before, any element $g$ of the double coset can be written as $g=h(u)x(s)y(\ld p^r)k$ for  $s\in \Z_p$, $u$ a unit and $k\in K_n$. Rewrite 
 $g= h(u)h(1+ pv)^{-1}h(1+ pv)x(s)y(\ld p^r)k$ where $v\in\Z_p$ such that $h(1+ pv)x(s) \in K_n^{y(\ld p^r)}$; this is possible by taking $v=\frac{\sqrt{(1+s\ld p^r)^{-1}}-1}{p}$ (we take the square root in $1+p^r\Z_p$ so that $v\in \Z_p$). 
Thus $g$ is of the form $h(u')y(\ld p^r)K_n$ and we can follow the argument as above.

 \par\smallskip\noindent
A similar argument gives the right coset decomposition. 
 
 \end{proof}

\noindent{\bf Remark.}
We record a correction to the proof of \cite[Lemma 3.9]{B-P}.  This
concerns the integral-weight setting of that paper, where
$K_0=K_0(p^n)\subset\GL_2(\Z_p)$ and
$d(s)=\mat{s}{0}{0}{1}$.  Put $t=p^r$ and write the diagonal parameter
appearing at the beginning of that proof as $s_0$.  The assertion there
that $d(1+tu)x(u)$ belongs to
$K_0^{y(t)}=y(t)K_0y(-t)\cap K_0$ is not valid in general.  Instead, set
\[
 s=(1+ut)^{-1}\in\Z_p^\times.
\]
Then direct multiplication gives
\[
 y(-t)d(s)x(u)y(t)
 =\mat{s(1+ut)}{su}{t\{1-s(1+ut)\}}{1-sut}
 =\mat{1}{su}{0}{1-sut}\in K_0.
\]
Consequently $d(s)x(u)y(t)=y(t)k_1$ for some $k_1\in K_0$, and hence
\[
 \begin{aligned}
 g
 &=d(s_0)d(s)^{-1}d(s)x(u)y(t)k_0\\
 &=d(s_0s^{-1})y(t)k_1k_0.
 \end{aligned}
\]
This proves the required coset decomposition; the disjointness argument in
\cite[Lemma 3.9]{B-P} is unchanged.

We note the following simple observation. 
\begin{lem}\label{lem:sqrt}
 We have $\sqrt{1+p^r\Z_p}= (1+p^r\Z_p) \bigcup (-1+p^r\Z_p)$.
 In particular the coset representatives of $\Z_p^*/ \sqrt{1+p^r\Z_p}$ are given by $x_0 + x_1 p + \cdots + x_{r-1}p^{r-1}$ where $x_0 \in \{1,2,\ldots, (p-1)/2\}$ and $x_1, \ldots, x_{r-1} \in \{0,1,\ldots, p-1\}$ and so 
  $\# \left(\Z_p^*/ \sqrt{1+p^r\Z_p}\right)= \frac{p-1}{2}p^{r-1}$.
\end{lem}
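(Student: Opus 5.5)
We will prove Lemma~\ref{lem:sqrt} by identifying the set of square roots of $1+p^r\Z_p$ directly, and then counting cosets through the reduction map modulo $p^r$. Here $\sqrt{1+p^r\Z_p}$ means the set $\{u\in\Z_p^\times : u^2\in 1+p^r\Z_p\}$, which is a subgroup of $\Z_p^\times$; this is how it is used in Lemma~\ref{lem:cosetdecomp}.

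\emph{Step 1 (the inclusion $\supseteq$).} Let $u=\pm(1+p^r v)$ with $v\in\Z_p$. Then
\[
u^2=1+2p^rv+p^{2r}v^2\in 1+p^r\Z_p .
\]

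\emph{Step 2 (the inclusion $\subseteq$).} Suppose $u^2-1=(u-1)(u+1)\in p^r\Z_p$. Since $p$ is odd, $u-1$ and $u+1$ differ by $2$, which is a unit, so $p$ cannot divide both. Hence either $u-1$ or $u+1$ is a unit. The other factor must then absorb all of $p^r$, which gives $u\in 1+p^r\Z_p$ or $u\in -1+p^r\Z_p$. This is the only place oddness of $p$ is used. One could instead invoke Hensel's lemma, but the factorization argument is cleaner.

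\emph{Step 3 (coset representatives).} Reduction modulo $p^r$ gives an isomorphism
\[
\Z_p^\times/(1+p^r\Z_p)\cong(\Z/p^r\Z)^\times .
\]
Under this isomorphism, $\sqrt{1+p^r\Z_p}$ corresponds to the subgroup $\{\pm1\}$. So the quotient $\Z_p^*/\sqrt{1+p^r\Z_p}$ is $(\Z/p^r\Z)^\times/\{\pm1\}$, and it has order $\varphi(p^r)/2=\frac{p-1}{2}p^{r-1}$.

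To check the listed representatives, write a residue class as $x_0+x_1p+\cdots+x_{r-1}p^{r-1}$ with $x_0\in\{1,\dots,p-1\}$ and $x_1,\dots,x_{r-1}\in\{0,\dots,p-1\}$. Negation modulo $p^r$ sends $x_0$ to $p-x_0$, since the first digit of $-a$ is determined by $-x_0 \bmod p$. Because $p$ is odd, $p-x_0\neq x_0$, so exactly one of $x_0$ and $p-x_0$ lies in $\{1,\dots,(p-1)/2\}$. Restricting $x_0$ to this range therefore picks exactly one element from each pair $\{a,-a\}$. The count of these representatives, $\frac{p-1}{2}\,p^{r-1}$, agrees with the order computed above.

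I expect no real obstacle. The only point needing care is the digit argument for negation in Step 3, and the fact that $p$ odd ensures $\{\pm1\}$ has order exactly $2$ modulo $p^r$.
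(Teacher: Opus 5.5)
Your proof is correct, but the key inclusion $\sqrt{1+p^r\Z_p}\subseteq(1+p^r\Z_p)\cup(-1+p^r\Z_p)$ is proved differently from the paper.

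The paper first reduces modulo $p$ to get $a\equiv\pm1\pmod p$. It then writes $a=1+x_1p+x_2p^2+\cdots$ and kills the digits one at a time, using $a^2\equiv1+2x_kp^k\pmod{p^{k+1}}$ and the invertibility of $2$. In effect this is a Hensel-type uniqueness-of-lifts argument.

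You instead factor $u^2-1=(u-1)(u+1)$ and note that the two factors differ by the unit $2$. Hence one factor is a unit and the other carries all of $p^r$. Your version is shorter, needs no induction on digits, and shows exactly where the oddness of $p$ enters. The paper's digit-by-digit argument matches the Hensel-style reasoning it uses elsewhere for lifting solutions of quadratic congruences.

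You also do more than the paper on the second half. The paper treats the list of representatives and the count $\frac{p-1}{2}p^{r-1}$ as immediate. You derive both from $\Z_p^\times/(1+p^r\Z_p)\cong(\Z/p^r\Z)^\times$, under which $\sqrt{1+p^r\Z_p}$ becomes $\{\pm1\}$. You then check that negation replaces the leading digit $x_0$ by $p-x_0$.

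One small inaccuracy: you say Step 2 is the only place the oddness of $p$ is used. Step 3 also uses it, to get $-1\not\equiv1\pmod{p^r}$ and $p-x_0\neq x_0$, as you acknowledge at the end.
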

\begin{proof}
 Let $a\in \sqrt{1+p^r\Z_p}$. Then $a^2\in 1+ p^r\Z_p$. In particular $a=\pm 1$ modulo $p$. Let $a= 1+x_1 p +x_2 p^2+ \cdots + x_r p^r$ with $x_r\in \Z_p$ and $x_1, \ldots, x_{r-1} \in \{0,1,\ldots, p-1\}$. Then $a^2=1+2x_1 p$ modulo $p^2\Z_p$ which implies that $x_1=0$. Continuing this argument, we get that $a\in 1+p^r\Z_p$. If $a\equiv -1 \pmod{p\Z_p}$, the same argument will imply that $a\in (-1+p^r\Z_p)$. 
\end{proof}

\section{Relations}

The algebra $H=H(\ov{K}//\ov{K_n},\eta)$, where $\eta$ is a quadratic character modulo $p$ (either the trivial character or $\kro{\cdot}{p}$), has the following $2n$ basis elements:
\[
\mathcal I,\quad \mathcal U_0,\quad
\mathcal V_{1,1},\ldots,\mathcal V_{n-1,1},\quad
\mathcal V_{1,\ld},\ldots,\mathcal V_{n-1,\ld}.
\]
Here $\ld$ is a nonsquare modulo $p$, and $\mathcal I$ is the identity element. 

Throughout this section all the matrices occurring in the convolution
calculations belong to $K$.  We therefore use the fixed splitting over $K$
and identify $\ov K$ with $K\times\mu_2$.  In particular, products and
inverses of the canonical lifts appearing below introduce no additional
cocycle factor.

We will use the following well-known result. 
\begin{lem}\label{lem:modp}
 Let $p$ be an odd prime, and let $a,b$ be integers such that $p\nmid ab$.
 Then the number of solutions $(x,y)$ to
 $ax^2+by^2\equiv1\pmod p$ is $p-\kro{-ab}{p}$.
 \end{lem}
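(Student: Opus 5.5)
We count solutions of $ax^2+by^2\equiv1\pmod p$ over $\mathbb F_p$ by grouping them according to the value taken by $ax^2$. For each $t\in\mathbb F_p$, the number of $x$ with $ax^2=t$ is $1+\chi_p(t/a)=1+\chi_p(at)$, where $\chi_p=\kro{\cdot}{p}$ and $\chi_p(0)=0$. Likewise, the number of $y$ with $by^2=1-t$ is $1+\chi_p(b(1-t))$. Hence the total count is
\[
N=\sum_{t\in\mathbb F_p}\bigl(1+\chi_p(at)\bigr)\bigl(1+\chi_p(b(1-t))\bigr).
\]

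Expanding the product gives four sums. The constant term contributes $p$. The two linear sums $\sum_t\chi_p(at)$ and $\sum_t\chi_p(b(1-t))$ vanish, since each runs over a complete set of residues and $\chi_p$ is a nontrivial character. What remains is $\chi_p(ab)\sum_{t}\chi_p(t(1-t))$.

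The main point is to evaluate the Jacobsthal-type sum $J=\sum_t\chi_p(t-t^2)$. For $t\neq0$ we write $\chi_p(t-t^2)=\chi_p(t^2)\chi_p(t^{-1}-1)=\chi_p(t^{-1}-1)$. As $t$ runs over $\mathbb F_p^*$, the quantity $u=t^{-1}-1$ runs over $\mathbb F_p\setminus\{-1\}$. Therefore
\[
J=\sum_{u\in\mathbb F_p}\chi_p(u)-\chi_p(-1)=-\chi_p(-1).
\]

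Combining the pieces yields $N=p-\chi_p(ab)\chi_p(-1)=p-\kro{-ab}{p}$, which is the claimed count. No step is a real obstacle; the only care needed is to handle the boundary values $t=0$ and $t=1$ consistently through the convention $\chi_p(0)=0$, which the formula $1+\chi_p(\cdot)$ for the number of square roots already does.
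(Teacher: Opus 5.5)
Your proof is correct and complete: the square-root count $1+\chi_p(at)$ (with $\chi_p(0)=0$ handling the boundary cases $t=0,1$), the vanishing of the two linear character sums, and the evaluation $\sum_{t}\chi_p(t-t^2)=-\chi_p(-1)$ via $u=t^{-1}-1$ together give $N=p-\kro{-ab}{p}$. The paper gives no proof of this lemma and simply quotes it as well known; your argument is the standard one, and it uses the same device of counting square roots as $1+\chi_p(t)$ that the paper uses in Section~2 to establish its quadratic Gauss-sum normalization.
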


\begin{prop}\label{prop:V}
 Suppose $p\equiv 1 \pmod{4}$. Then 
 \begin{align*}
  \mathcal{V}_{r,1}*\mathcal{V}_{r,1} &= \frac{p-1}{2}p^{n-r-1}\mathcal{Y}_{r+1}+\frac{p-5}{4}p^{n-r-1}\mathcal{V}_{r,1}+ \frac{p-1}{4}p^{n-r-1}\mathcal{V}_{r,\ld} \\
  \mathcal{V}_{r,\ld}*\mathcal{V}_{r,\ld} &= \frac{p-1}{2}p^{n-r-1}\mathcal{Y}_{r+1}+\frac{p-1}{4}p^{n-r-1}\mathcal{V}_{r,1}+ \frac{p-5}{4}p^{n-r-1}\mathcal{V}_{r,\ld}\\
  \mathcal{V}_{r,1}*\mathcal{V}_{r,\ld} &= \mathcal{V}_{r,\ld}*\mathcal{V}_{r,1} = \frac{p-1}{4} p^{n-r-1}( \mathcal{V}_{r,1} + \mathcal{V}_{r,\ld}). 
  \end{align*}
  
  Suppose $p\equiv 3 \pmod{4}$. Then 
 \begin{align*}
  \mathcal{V}_{r,1}*\mathcal{V}_{r,1} &= \frac{p-3}{4}p^{n-r-1}\mathcal{V}_{r,1}+ \frac{p+1}{4}p^{n-r-1}\mathcal{V}_{r,\ld} \\
  \mathcal{V}_{r,\ld}*\mathcal{V}_{r,\ld} &= \frac{p+1}{4}p^{n-r-1}\mathcal{V}_{r,1}+ \frac{p-3}{4}p^{n-r-1}\mathcal{V}_{r,\ld}\\
  \mathcal{V}_{r,1}*\mathcal{V}_{r,\ld} &= \mathcal{V}_{r,\ld}*\mathcal{V}_{r,1} = \frac{p-1}{2}p^{n-r-1}\mathcal{Y}_{r+1} + \frac{p-3}{4}p^{n-r-1} ( \mathcal{V}_{r,1} + \mathcal{V}_{r,\ld}). 
  \end{align*}
Where in both cases $\mathcal{Y}_r=\mathcal{I}+\sum\limits_{j=r}^{n-1}\mathcal{V}_j$ with $\mathcal{V}_j=\mathcal{V}_{j,1}+\mathcal{V}_{j,\ld}$ and $\mathcal{Y}_n=\mathcal I$.
\end{prop}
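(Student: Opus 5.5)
The plan is to compute each product directly from the single-coset decompositions and Lemma~\ref{lem:rel1}, then read off the coefficients. The product $\mathcal V_{r,a}*\mathcal V_{r,b}$ (with $a,b\in\{1,\ld\}$) lies in $H$. Its value at $(I,1)$ and at the representatives $(y(cp^s),1)$ of Proposition~\ref{prop:rep1} determines its coefficients on $\mathcal I$ and $\mathcal V_{s,c}$. The coefficient on $\mathcal U_0$ vanishes, since every product computed below stays in the lower-triangular part of $K$ and never reaches $K_nw(1)K_n$. By Lemma~\ref{lem:cosetdecomp}(2) and Lemma~\ref{lem:sqrt}, the support of $\mathcal V_{r,a}$ is the disjoint union of the $\frac{p-1}{2}p^{n-r-1}$ cosets $h(u)y(ap^r)K_n$, with $u\in\Z_p^*/\sqrt{1+p^{n-r}\Z_p}$. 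The first formula of Lemma~\ref{lem:rel1} then gives
\[
 \mathcal V_{r,a}*\mathcal V_{r,b}(\tilde h)=\sum_u \mathcal V_{r,a}\bigl((h(u)y(ap^r),1)\bigr)\,\mathcal V_{r,b}\bigl((y(-ap^r)h(u^{-1}),1)\tilde h\bigr).
\]
The first factor equals $\eta(u)^{\pm1}$, using $\eta(h(u),\epsilon)=\epsilon\eta(u^{-1})$ and the normalization of $\mathcal V_{r,a}$. Because we work in the split group $K\times\mu_2$, no cocycle factors appear.

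Next I will move $h(u^{-1})$ to the left using $h(t)^{-1}y(s)h(t)=y(t^2s)$. For $\tilde h=(y(cp^s),1)$ this gives
\[
 y(-ap^r)h(u^{-1})y(cp^s)=h(u^{-1})\,y(cp^s-au^{-2}p^r).
\]
The second factor is therefore $\eta(u)^{\mp1}\,\mathcal V_{r,b}(y(cp^s-au^{-2}p^r))$. Since $\eta$ is quadratic, the two character factors cancel. Hence each coefficient is simply the number of $u$ for which $cp^s-au^{-2}p^r$ has valuation exactly $r$ and unit part in the square class of $b$; the case $\tilde h=I$ is the same with $c=0$. This evaluation uses that $\mathcal V_{r,b}$ depends only on the square class (shown before Lemma~\ref{lem:rel1}) and that it takes the value $1$ on $y(b'p^r)$ for any $b'$ in that class.

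The counting then splits into three cases.
\begin{enumerate}
\item For $\tilde h=I$ we need $-a\sim b$ modulo squares. This holds for $a=b$ exactly when $p\equiv1\pmod4$, and for $a\ne b$ exactly when $p\equiv3\pmod4$. When it holds, every $u$ contributes, giving $\frac{p-1}{2}p^{n-r-1}$.
\item For $s>r$, the element $cp^s-au^{-2}p^r$ is $p^r$ times a unit in the class of $-a$. So the value is the same constant $\frac{p-1}{2}p^{n-r-1}$ under the same condition. This uniformity is what assembles the terms $\mathcal I+\sum_{j>r}\mathcal V_j$ into $\mathcal Y_{r+1}$.
\item For $s=r$, the count is the number of $u$ with $c-au^{-2}$ a unit of class $b$. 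Reducing mod $p$ and writing $c-ax^2=by^2$, this becomes a conic count by Lemma~\ref{lem:modp}. The count must exclude solutions with $y\equiv0$ (these land at depth $>r$) and account for the signs $\pm x$ and $\pm y$. Lifting from $\mathbb F_p$ to $\Z_p^*/\sqrt{1+p^{n-r}\Z_p}$ contributes the factor $p^{n-r-1}$. Running through $c\in\{1,\ld\}$ should produce the coefficients $\frac{p-5}{4}$, $\frac{p-1}{4}$, $\frac{p-3}{4}$ and $\frac{p+1}{4}$, depending on $p\bmod 4$.
\end{enumerate}

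Finally I will check that no values arise at $y(cp^s)$ with $s<r$, since the valuation of $cp^s-au^{-2}p^r$ is then $s\neq r$. Commutativity of $\mathcal V_{r,1}*\mathcal V_{r,\ld}$ will follow from the symmetry of the resulting counts, or alternatively from the anti-involution $\tilde g\mapsto\tilde g^{-1}$ combined with conjugation by $h$.

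I expect the main obstacle to be the mod-$p$ counting in the case $s=r$: correctly discarding the degenerate solutions, dividing by the $\pm$ identifications, and getting the normalization of the $p^{n-r-1}$ lifting exactly right. I will first verify the $s=r$ coefficient for $p\equiv1\pmod4$, $a=b=c=1$, by comparing against the total mass identity: the coefficients, weighted by the sizes of the supports, must sum to $\bigl(\frac{p-1}{2}p^{n-r-1}\bigr)^2$.
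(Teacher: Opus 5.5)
Your proposal is correct and is essentially the paper's argument: Lemma~\ref{lem:rel1} with the coset decomposition of Lemma~\ref{lem:cosetdecomp}(2), cancellation of the two $\eta$-factors, and a Lemma~\ref{lem:modp} conic count with the degenerate solutions removed, division by $4$ for the signs, and the lifting factor $p^{n-r-1}$. Your step of commuting $h(u^{-1})$ past $y(\cdot)$, so that only lower unipotents are evaluated, is a tidier version of the paper's search for $A=h(a)$. In the $s=r$ count, discard the solutions with $x\equiv0$ as well as those with $y\equiv0$, since $x$ stands for the unit $u^{-1}$; this is what produces $p-5$ rather than $p-3$ when $a=b=c$ and $p\equiv1\pmod 4$.
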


\begin{proof}
Let $\d, \mu \in \{1,\ld\}$. 
It follows from Lemmas~\ref{lem:rel1} and \ref{lem:cosetdecomp} that $\mathcal{V}_{r,\d}* \mathcal{V}_{r,\mu}$ is supported on those $(g,1) \in \bar{K}$ for which there exist $u,\ v \in \Z_p^*/ \sqrt{1+p^{n-r}\Z_p}$
such that
\[(h(u)y(\d p^r)h(v)y(\mu p^r))^{-1}g = \mat{(uv)^{-1}}{0}{- p^r u^{-1}(\d v + \mu v^{-1})}{uv}g \in K_n.\]
We check this condition for $g =I,\ w(1),\ y(p^j), y(\ld p^j)$ with $j<r$ and $j\geq r$.

\par\smallskip\noindent
For $g=w(1)$, the condition clearly does not hold for any value of $u$, $v$, so no support.

\par\smallskip\noindent
For $g=I$, the condition becomes 
$v^2 + \mu \d^{-1} \equiv 0 \pmod{p}$. So if $\mu=\d$, such $v$ exists if and only if $p\equiv 1 \pmod{4}$, while if $\mu\ne\d$, such $v$ exists if and only if $p\equiv 3 \pmod{4}$.

\par\smallskip\noindent
For $g=y(p^j\xi)$ with $\xi\in\{1,\ld\}$ and $j<r$ the condition becomes $uvp^j\xi - p^r u^{-1}(\d v + \mu v^{-1})\equiv 0\pmod{p^n} \iff uv\xi - p^{r-j} u^{-1}(\d v + \mu v^{-1})\equiv 0\pmod{p^{n-j}}$ which is obviously never satisfied.
\par\smallskip\noindent
For $g= y(p^r)$, the condition is equivalent to 
$\mu^{-1}(uv)^2- \d\mu^{-1}v^2 \equiv 1\pmod{p}$ by Hensel's Lemma. By Lemma~\ref{lem:modp}, the number of solutions to $\mu^{-1}x^2- \d\mu^{-1}y^2 \equiv 1\pmod{p}$ equals $p-\kro{\d}{p}$. For support, we are interested in existence of such $(x,y)$ with $x,y$ both units. If $\d=\mu=1$ then the  number of such $(x,y)$ is $p-5$ if $p\equiv 1 \pmod{4}$, while it is $p-3$ if $p\equiv 3 \pmod{4}$. 
If $\d=\mu=\ld$ then the number of such $(x,y)$ is $p-1$ if $p\equiv 1 \pmod{4}$, while it is $p+1$ if $p\equiv 3 \pmod{4}$.
If $\mu\ne \d$, then the number of such $(x,y)$ is $p-1$ if $p\equiv 1 \pmod{4}$, while it is $p-3$ if $p\equiv 3 \pmod{4}$.

\par\smallskip\noindent
For $g= y(\ld p^r)$, the condition is equivalent to 
$\ld \mu^{-1} (uv)^2- \d\mu^{-1}v^2 \equiv 1\pmod{p}$. By Lemma~\ref{lem:modp}, the number of solutions to $\ld\mu^{-1}x^2- \d\mu^{-1}y^2 \equiv 1\pmod{p}$ equals $p-\kro{\ld\d}{p}$. As before we are only interested in solutions $(x,y)$ with $x,y$ both units. If $\d=\mu=1$ then the  number of such $(x,y)$ is $p-1$ if $p\equiv 1 \pmod{4}$, while it is $p+1$ if $p\equiv 3 \pmod{4}$. 
If $\d=\mu=\ld$ then the number of such $(x,y)$ is $p-5$ if $p\equiv 1 \pmod{4}$, while it is $p-3$ if $p\equiv 3 \pmod{4}$.
If $\mu\ne \d$, then the number of such $(x,y)$ is $p-1$ if $p\equiv 1 \pmod{4}$, while it is $p-3$ if $p\equiv 3 \pmod{4}$.

\par\smallskip\noindent
For $g=y(p^j\xi)$ with $\xi\in\{1,\ld\}$ and $j>r$ the condition becomes $(u^2p^{j-r}\xi - \d) v^2 - \mu \equiv 0\pmod{p^{n-r}} $ which has a solution if and only if $\d v^2+\mu\equiv 0\pmod p$. This implies as before that either $\mu=\d$ and $p\equiv 1\pmod 4$ or $\mu\ne\d$ and $p\equiv 3\pmod 4$.

Thus, for $p\ge 7$, $\mathcal{V}_{r,\d}* \mathcal{V}_{r,\d}$ is supported on $(I,1)$, $(y(p^j),1)$, $(y(\ld p^j),1)$ with $j\geq r$ if $p\equiv 1\pmod{4}$, while  supported on  $(y(p^r),1)$, $(y(\ld p^r),1)$ if $p\equiv 3\pmod{4}$. On the other hand, for $p\ge 7$, $\mathcal{V}_{r,\d}* \mathcal{V}_{r,\mu}$, for $\d\ne \mu$, is supported on $(y(p^j),1)$, $(y(\ld p^j),1)$ with $j\geq r$ if $p\equiv 1\pmod{4}$, while it is supported on $(I,1)$, $(y(p^r),1)$, $(y(\ld p^r),1)$ if $p\equiv 3\pmod{4}$. For $p=3$, $\mathcal{V}_{r,1}* \mathcal{V}_{r,1}$ is only supported on $(y(\ld p^r),1)$, $\mathcal{V}_{r,\ld}* \mathcal{V}_{r,\ld}$ is only supported on $(y(p^r),1)$ while $\mathcal{V}_{r,\d}* \mathcal{V}_{r,\mu}$, for $\d\ne \mu$ is supported on $(I,1)$ and $(y(p^j),1)$, $(y(\ld p^j),1)$ with $j\geq r+1$. For $p=5$, $\mathcal{V}_{r,1}* \mathcal{V}_{r,1}$ is supported on $(I,1)$, $(y(\ld p^r),1)$ and on $(y(p^j),1)$, $(y(\ld p^j),1)$ with $j\geq r+1$; $\mathcal{V}_{r,\ld}* \mathcal{V}_{r,\ld}$ is supported on $(I,1)$, $(y(p^r),1)$ and on $(y(p^j),1)$, $(y(\ld p^j),1)$ with $j\geq r+1$; while $\mathcal{V}_{r,\d}* \mathcal{V}_{r,\mu}$, for $\d\ne \mu$, is supported on $(y(p^r),1)$ and $(y(\ld p^r),1)$. Thus, the support agrees with the relations in the statement.

\par\medskip\noindent
Now we compute the values of $\mathcal{V}_{r,\d}* \mathcal{V}_{r,\mu}$ on the coset representatives where they are supported. 
\par\smallskip\noindent
We have 
\begin{align*}
\mathcal{V}_{r,\d}*\mathcal{V}_{r,\mu}((g,1))
&=\sum_{u \in \Z_p^{*}/\sqrt{1+p^{n-r}\Z_p}}
\mathcal{V}_{r,\d}((h(u),1)(y(\d p^r),1))\\
&\hspace{2.2cm}\cdot
\mathcal{V}_{r,\mu}((y(-\d p^r),1)(h(u^{-1}),1)(g,1))\\
&=\sum_{u \in \Z_p^{*}/\sqrt{1+p^{n-r}\Z_p}}
\ov{\eta}(u^{-1})\\
&\hspace{2.2cm}\cdot
\mathcal{V}_{r,\mu}((y(-\d p^r),1)(h(u^{-1}),1)(g,1)).
\end{align*}
 
By the splitting over $K$,
\[
 (y(-\d p^r),1)(h(u^{-1}),1)
 =\left(\mat{u^{-1}}{0}{-\d p^ru^{-1}}{u},1\right),
\]
so 
\[\mathcal{V}_{r,\d}* \mathcal{V}_{r,\mu} ((g,1)) = \sum_{u \in \Z_p^{*}/\sqrt{1+p^{n-r}\Z_p}} \ov{\eta}(u^{-1})\mathcal{V}_{r,\mu}((\mat{u^{-1}}{0}{-\d p^ru^{-1}}{u}, 1)(g,1)).\]

\par\smallskip\noindent
Let $g=I$ and write $k_1=\mat{u^{-1}}{0}{-\d p^ru^{-1}}{u}$. We seek
$A,k_2\in K_n$ such that
\[
(k_1,1)=(A^{-1},1)(y(\mu p^r),1)(k_2,1).
\]

We find $A=\mat{a}{b}{c}{d} \in K_n$ such that 
$y(-\mu p^r)\mat{a}{b}{c}{d}\mat{u^{-1}}{0}{-\d p^ru^{-1}}{u} \in K_n$, i.e., 
\[\mat{au^{-1}-b\d p^ru^{-1}}{bu}{-p^r u^{-1} (\mu a + d \d)+cu^{-1}+p^{2r} \mu \d bu^{-1}}{du} \in K_n. \]
This boils down to finding $a$ such that $ a^2 + \d \mu^{-1} \equiv 0\pmod{p}$. 

Let $\d=\mu$. If $p\equiv 3 \pmod{4}$, no such $a$ exist. If $p\equiv 1 \pmod{4}$, take $a=\sqrt{-1}$, $d=1/a$, $b=c=0$. For this choice of $A$, $k_2= h(au^{-1})$. Thus, 
\begin{align*}
\mathcal{V}_{r,\d}*\mathcal{V}_{r,\mu}((I,1))
&=\sum_{u \in \Z_p^{*}/\sqrt{1+p^{n-r}\Z_p}}
\ov{\eta}(u^{-1})\mathcal{V}_{r,\mu}((k_1,1))\\
&=\sum_{u \in \Z_p^{*}/\sqrt{1+p^{n-r}\Z_p}}
\ov{\eta}(u^{-1})\\
&\hspace{1.5cm}\cdot
\mathcal{V}_{r,\mu}((h(1/\sqrt{-1}),1)(y(\mu p^r),1)
(h(u^{-1}\sqrt{-1}),1))\\
&=\sum_{u \in \Z_p^{*}/\sqrt{1+p^{n-r}\Z_p}}
\ov{\eta}(u^{-1})\ov{\eta}(u)\\
&=\#\{u\in \Z_p^{*}/\sqrt{1+p^{n-r}\Z_p}\}
=\frac{p-1}{2}p^{n-r-1}.
\end{align*}

Let $\d\ne \mu$. So $\d\mu^{-1}$ is a non-square modulo $p$.  If $p\equiv 1 \pmod{4}$, no such $a$ exist. If $p\equiv 3 \pmod{4}$, take $a=\sqrt{-\d\mu^{-1}}$, $d=1/a$, $b=c=0$; in this case, as before,  
$\mathcal{V}_{r,\d}* \mathcal{V}_{r,\mu} ((I,1)) = 
\frac{p-1}{2}p^{n-r-1}$. 

\par\smallskip\noindent
Let $g=y(\b p^r)$ where $\b \in \{1,\ld\}$. Then, 
\begin{align*}
 (y(-\d p^r),1)(h(u^{-1}),1)(y(\b p^r),1)
 &= \left(\mat{u^{-1}}{0}{-\d p^ru^{-1}}{u}, 1\right)(y(\b p^r),1)\\
 &=\left(\mat{u^{-1}}{0}{p^ru^{-1}\b (u^2-\frac{\d}{\b})}{u}, 1\right).
\end{align*}

Thus, 
\[
\begin{aligned}
\mathcal{V}_{r,\d}*\mathcal{V}_{r,\mu}((y(\b p^r),1))
&=\sum_{u \in \Z_p^{*}/\sqrt{1+p^{n-r}\Z_p}}
\ov{\eta}(u^{-1})\\
&\quad\cdot
\mathcal{V}_{r,\mu}\!\left(
\left(\mat{u^{-1}}{0}
{p^ru^{-1}\b (u^2-\frac{\d}{\b})}{u},1\right)\right).
\end{aligned}
\]

\par\smallskip\noindent
For $u \in \Z_p^*/ \sqrt{1+p^{n-r}\Z_p}$, put
$v=u^{-1}\b (u^2-\frac{\d}{\b})$.  We want $A=\mat{a}{b}{c}{d}\in K_n$ such that
\[
y(-\mu p^r)A\mat{u^{-1}}{0}{-\d p^ru^{-1}+\b p^r u}{u}
=
\begin{pmatrix}
au^{-1}+bv & bu\\
* & du-\mu p^rbu
\end{pmatrix}
\in K_n,
\]
where
\[
*=-\mu p^r(au^{-1}+bv)+cu^{-1}+p^rdv.
\]
This simplifies to finding $a$, $u$ units in $\Z_p$ such that 
\[-\frac{\mu}{\d}a^2+ \frac{\b}{\d}u^2=1 \pmod{p^{n-r}}.\]
Suppose for some $u$, we have a solution $a$ mod $p$ as above, then we can lift this to $a\in \Z_p$ and take $A= h(a)$ so that the matrix above becomes $h(au^{-1})$. 
For such $u$ we verify that 
\[(\mat{u^{-1}}{0}{p^ru^{-1}\b (u^2-\frac{\d}{\b})}{u}, 1)
= (h(a^{-1}),1)(y(\mu p^r),1)(h(au^{-1}),1),\]
and so the contribution from this $u$ in the sum for $\mathcal{V}_{r,\d}* \mathcal{V}_{r,\mu} ((y(\b p^r),1))$ would be  $\ov{\eta}(u^{-1})\ov{\eta}(a)\ov{\eta}(a^{-1}u)$ which equals $1$. 

\par\smallskip\noindent
So, we need to just count the number of solutions $(a,u)$ with $a$, $u$ units to the above congruence modulo $p^{n-r}$. We know however the number of solutions $\pmod{p}$ by Lemma \ref{lem:modp}. Since for $u$ there are four possibilities $(\pm a , \pm u)$ and 
$u \in \Z_p^*/ \sqrt{1+p\Z_p}$, we will divide by $4$ to get the contribution. 

\par\smallskip\noindent
The counting is similar to the counting that we did while checking the support, so we will illustrate it only for one case. By Lemma~\ref{lem:modp}, the  total number of $(a,u)$ mod $p$ (not necessarily units) is $p-\kro{\mu \b}{p}$. Let $\mu=\b$ and $p\equiv 1 \pmod{4}$. In this case, solutions of the type $(\pm *, 0)$ exists if and only if $\mu = \d$. Also,  solutions of the type $(0, \pm *)$ exists if and only if $\b = \d$. So, if $\b=\mu=\d$, then the total number of solutions $(a,u)$ with both $a$, $u$ units equals $p-1-4 =p-5$. For each $u\in \Z_p^{*}/\sqrt{1+p^{n-r}\Z_p}$ however that satisfies $-\frac{\mu}{\d}a^2+ \frac{\b}{\d}u^2=1 \pmod{p}$ we can find a solution $a\in\Z_p$ and thus there are $\#\{u\in \Z_p^{*}/\sqrt{1+p^{n-r}\Z_p}|\exists a\in \Z_p: -\frac{\mu}{\d}a^2+ \frac{\b}{\d}u^2=1 \pmod{p}\}=\frac{p-5}{4}p^{n-r-1}$ solutions in total.
Hence, $\mathcal{V}_{r,1}*\mathcal{V}_{r,1}((y(p^r),1))= \frac{p-5}{4}p^{n-r-1}$ if $p\equiv 1 \pmod{4}$. The other cases follow similarly.
\par\smallskip\noindent
For $g=(y(p^j\b),1)$ with $j>r$ and $\beta\in\{1,\ld\}$ we want that  $A=\mat{a}{b}{c}{d}\in K_n$ such that 
\[y(-\mu p^r)A\mat{u^{-1}}{0}{-\d p^ru^{-1}+\b p^j u}{u}\in  K_n.\] This is equivalent to the condition $-\mu a+(d-\mu p^rb)(\b p^{j-r}u^2-\d)\equiv 0\pmod{p^{n-r}}$ which is equivalent to lifting a solution of $a^2+\d\mu^{-1}\equiv 0\pmod{p}$. The rest of the proof now follows as in the case of $g=(I,1)$. 
\end{proof}

\begin{prop} \label{prop:ViVj}
    \[\mathcal{V}_{r,\mu}*\mathcal{V}_{j,\delta}=\mathcal{V}_{j,\delta}*\mathcal{V}_{r,\mu}=\frac{p-1}{2}p^{n-j-1}\mathcal{V}_{r,\mu}, \text{ for } j>r.\]
\end{prop}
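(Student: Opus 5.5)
The plan follows the same two-stage scheme as the proof of Proposition~\ref{prop:V}: first determine the support of the convolution $\mathcal V_{r,\mu}*\mathcal V_{j,\delta}$ with $j>r$, then compute its value on the single surviving representative. By Lemmas~\ref{lem:rel1} and \ref{lem:cosetdecomp}, the support consists of those $(g,1)$ for which there are units $u\in\Z_p^*/\sqrt{1+p^{n-r}\Z_p}$ and $v\in\Z_p^*/\sqrt{1+p^{n-j}\Z_p}$ with
\[
(h(u)y(\mu p^r)h(v)y(\delta p^j))^{-1}g=\mat{(uv)^{-1}}{0}{-u^{-1}(\mu p^r v+\delta p^j v^{-1})}{uv}g\in K_n .
\]
Since $j>r$, the lower-left entry has valuation exactly $r$. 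Running through the representatives of Proposition~\ref{prop:rep1} then gives the support:
\begin{itemize}
\item $g=I$ and $g=w(1)$ fail, because the lower-left entry does not lie in $p^n\Z_p$.
\item $g=y(\xi p^i)$ with $i\ne r$ fails, because the lower-left entry has valuation $\min(i,r)<n$.
\item For $g=y(\xi p^r)$, reducing modulo $p$ turns the condition into $\xi u^2v^2\equiv\mu$. This is solvable if and only if $\xi=\mu$ in the square class; note that $\delta$ enters only at order $p^{j-r}$.
\end{itemize}
Hence the product is a scalar multiple of $\mathcal V_{r,\mu}$.

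To compute the scalar I will use the second formula of Lemma~\ref{lem:rel1}, with the right coset representatives $h(v)^{-1}y(\delta p^j)$, or equivalently the right decomposition $K_ny(\delta p^j)h(v)$. This gives
\[
\mathcal V_{r,\mu}*\mathcal V_{j,\delta}((y(\mu p^r),1))=\sum_{v}\mathcal V_{r,\mu}\bigl((y(\mu p^r)h(v)^{-1}y(-\delta p^j)),1\bigr)\,\mathcal V_{j,\delta}((y(\delta p^j)h(v),1)).
\]
The last factor is $\ov\eta$ of $h(v)$, up to inversion. The key step is to show that for every $v$ one can write
\[
y(\mu p^r)h(v^{-1})y(-\delta p^j)=A\,y(\mu p^r)\,B\qquad\text{with }A,B\in K_n,
\]
and that the resulting $\eta$-factors cancel against $\ov\eta(h(v))$. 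For this I will take $A=h(a)$ and $B=h(b)$, so that the middle matrix becomes $y(\mu p^r(\text{unit}))$ up to diagonal conjugation. Then I solve $a^2\equiv v^{2}(1+\text{correction of order }p^{j-r})$ by Hensel's lemma. Because the correction is divisible by $p$, $a\equiv\pm v$ modulo $p$, and since $\eta$ is trivial or the character modulo $p$, we get $\eta(a)=\eta(v)$. So every summand contributes $1$, and the value equals $\#(\Z_p^*/\sqrt{1+p^{n-j}\Z_p})=\frac{p-1}{2}p^{n-j-1}$ by Lemma~\ref{lem:sqrt}.

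The reversed product $\mathcal V_{j,\delta}*\mathcal V_{r,\mu}$ is handled symmetrically, this time using the left decomposition $h(v)y(\delta p^j)K_n$ and the first formula of Lemma~\ref{lem:rel1}; it gives the same count. I expect the main obstacle to be the explicit factorization step: correctly tracking the product $y(\mu p^r)h(v^{-1})y(-\delta p^j)$, which equals $h(v^{-1})y(\mu v^{2}p^r-\delta p^j)$, and confirming that the square-class adjustment of $\mu v^2-\delta p^{j-r}$ to $\mu$ is compatible with the $\eta$-weights, in particular modulo $p^{n-r}$ when $j-r$ is small. As in Proposition~\ref{prop:V}, no cocycle issues arise, because everything lies in the split group $\ov K$.
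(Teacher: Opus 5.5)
Your proposal is correct. The support analysis is the same as the paper's; the difference is in how you compute the scalar.

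The paper applies the first formula of Lemma~\ref{lem:rel1} with $f_1=\mathcal V_{r,\mu}$, so it sums over the $\frac{p-1}{2}p^{n-r-1}$ cosets of the shallower double coset. It must then decide for which $u$ the element $y(-\mu p^r)h(u^{-1})y(\mu p^r)$ lies in $K_ny(\delta p^j)K_n$. This forces $u-u^{-1}=p^{j-r}\zeta$ with $\zeta\in\Z_p^\times$, together with a square-class condition on $\mu\zeta u/\delta$, and the coefficient comes from counting these $u$. You instead sum over the cosets of the deeper double coset $K_ny(\delta p^j)K_n$. Since the perturbation $-\delta p^{j}$ changes neither the valuation nor the square class of the lower-left entry, every one of the $\frac{p-1}{2}p^{n-j-1}$ terms lands in $K_ny(\mu p^r)K_n$ and contributes $1$. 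The coefficient is therefore just the number of cosets, with no counting, and its independence of $\delta$ is visible at once. The paper's route has the advantage of following the same template as Proposition~\ref{prop:V}; yours is the more transparent one here.

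Two slips need fixing, though neither affects the argument. At $g=y(\xi p^r)$ the reduction is $\xi u^2\equiv\mu\pmod p$, with no factor $v^2$. Also, $y(\mu p^r)h(v^{-1})y(-\delta p^j)=h(v^{-1})\,y\bigl(p^r(\mu v^{-2}-\delta p^{j-r})\bigr)$, with $v^{-2}$ rather than $v^{2}$.

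The obstacle you anticipate does not arise. Since $1+p\Z_p\subset(\Z_p^\times)^2$, you can write $\mu v^{-2}-\delta p^{j-r}=\mu c^2$ exactly, with $c\in v^{-1}(1+p\Z_p)$. Then
$y(\mu p^r)h(v^{-1})y(-\delta p^j)=h(v^{-1}c^{-1})\,y(\mu p^r)\,h(c)$,
so the total character factor is $\ov\eta((h(v^{-1}c^{-1}),1))\,\ov\eta((h(c),1))\,\ov\eta((h(v),1))=1$ identically. No congruence modulo $p^{n-r}$ is needed.

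Your remark that $a\equiv\pm v$ forces $\eta(a)=\eta(v)$ is false for $\eta=\kro{\cdot}{p}$ when $p\equiv3\pmod 4$. This is harmless, because only the product of the two diagonal factors coming from $A$ and $B$ enters, and that product does not depend on the sign.
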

\begin{proof}
     Assume $j>r$, then by Lemmas~\ref{lem:rel1} and \ref{lem:cosetdecomp}, $\mathcal{V}_{r,\mu}*\mathcal{V}_{j,\delta}$ is supported on the $(g,1)$ for which there exist $u\in \Z_p^{*}/\sqrt{1+p^{n-r}\Z_p}$ and $v\in \Z_p^{*}/\sqrt{1+p^{n-j}\Z_p}$ such that
     \[
     \mat{(uv)^{-1}}{0}{-p^ru^{-1}v\mu-p^j(uv)^{-1}\delta}{uv}g\in K_n.
     \]
     It is obvious then that for $g=w(1)$ there is no support.

\par\smallskip\noindent
For $g=y(p^i\b)$ we want equivalently $uvp^i\beta - p^ru^{-1}v\mu-p^j(uv)^{-1}\d\in p^n\Z_p$. For $i\ne r$ it is easy to check that this is impossible. For $i=r$ the condition becomes $uv\b-u^{-1}v\mu-p^{j-r}(uv)^{-1}\d\in p^{n-r}\Z_p$. This is equivalent to finding a solution to $u^2-\b^{-1}\mu\equiv 0\pmod{p}$ which exists if and only if $\b=\mu$. We thus calculate the value for $g=y(p^r\mu)$.
\par\smallskip\noindent
As before
\[
\begin{aligned}
\mathcal{V}_{r,\mu}*\mathcal{V}_{j,\delta}((y(p^r\mu),1))
&=\sum_{u \in \Z_p^{*}/\sqrt{1+p^{n-r}\Z_p}}
\ov{\eta}(u^{-1})\\
&\quad\cdot
\mathcal{V}_{j,\d}((y(-p^r\mu)h(u^{-1})y(p^r\mu),1)).
\end{aligned}
\]
We want $A=\mat{a}{b}{c}{d}\in K_n$ such that 
\[y(-\d p^j)A\mat{u^{-1}}{0}{\mu p^r(u-u^{-1})}{u}\in K_n.\]
A direct multiplication shows that this is equivalent to
\[
\mu(d-p^jb\d)(u-u^{-1})-\d p^{j-r}a u^{-1}\in p^{n-r}\Z_p.
\]
Thus necessarily $u-u^{-1}=p^{j-r}\zeta$ with $\zeta\in\Z_p^\times$. Taking $b=0$ and using $ad=1$, the remaining congruence is
\[
\mu d^2\zeta u\equiv \d\pmod{p^{n-j}}.
\]
Hence, writing $\zeta=(u-u^{-1})/p^{j-r}$, the contribution is
\begin{align*}
\mathcal{V}_{r,\mu}*\mathcal{V}_{j,\delta}((y(p^r\mu),1))
&=\#\left\{u\in \Z_p^{*}/\sqrt{1+p^{n-r}\Z_p}\;\middle|\;
\zeta\in\Z_p^\times, u\in \frac{\mu\zeta}{\d}(\Z_p^\times)^2\right\}\\
&=\frac{p-1}{2}p^{n-j-1}.
\end{align*}
\par\smallskip\noindent
From the second part of Lemma~\ref{lem:rel1},
\begin{align*}
\mathcal V_{j,\d}*\mathcal V_{r,\mu}((y(p^r\mu),1))
&=\sum_{u\in\Z_p^*/\sqrt{1+p^{n-r}\Z_p}}
\ov\eta(u^{-1})\\
&\quad\cdot
\mathcal V_{j,\d}((y(p^r\mu)h(u)y(-p^r\mu),1))\\
&=\mathcal V_{r,\mu}*\mathcal V_{j,\delta}((y(p^r\mu),1)).
\end{align*}
\end{proof}
\begin{prop} \label{prop:U}
 If $\eta=1$ then with notation as before
 \[\mathcal{U}_0*\mathcal{U}_0= p^n\mathcal{Y}_1 + p^{n-1}(p-1)\mathcal{U}_0\]
 If $\eta\ne 1$ then 
 \[\mathcal{U}_0*\mathcal{U}_0= \eta(-1)p^n\mathcal{Y}_1\]
\end{prop}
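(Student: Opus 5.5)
We follow the pattern of the proofs of Propositions~\ref{prop:V} and \ref{prop:ViVj}. By Lemma~\ref{lem:cosetdecomp}(1), $K_nw(1)K_n=\bigcup_{s\in\Z_p/p^n\Z_p}x(s)w(1)K_n$. Lemma~\ref{lem:rel1} then gives, for $g\in K$,
\[
\mathcal U_0*\mathcal U_0((g,1))=\sum_{s\in\Z_p/p^n\Z_p}\mathcal U_0((x(s)w(1),1))\,\mathcal U_0((w(1)^{-1}x(-s)g,1)).
\]
Since $x(s)\in K_n$ and $x(s)$ has lower-right entry $1$, the first factor equals $\ov\eta(x(s))\,\mathcal U_0((w(1),1))=1$. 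All matrices lie in $K$, so the splitting introduces no cocycle factors. We have $w(1)^{-1}=w(-1)$, and a direct computation gives
\[
w(-1)x(-s)=\mat{0}{-1}{1}{-s}.
\]
We evaluate the sum at each representative $g\in\{I,w(1),y(\xi p^j)\}$ from Proposition~\ref{prop:rep1}. Each term is nonzero exactly when $w(-1)x(-s)g\in K_nw(1)K_n$, that is, when its lower-left entry is a unit. (The complement of $K_nw(1)K_n$ in $K$ consists of the matrices with lower-left entry in $p\Z_p$.)

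For $g=I$ the lower-left entry of $w(-1)x(-s)$ is $1$, so every $s$ contributes. For $g=y(\xi p^j)$, $1\le j\le n-1$, the product is
\[
\mat{-\xi p^j}{-1}{1-s\xi p^j}{-s},
\]
whose lower-left entry is again a unit, so all $p^n$ values of $s$ contribute. For $g=w(1)$ the product is
\[
\mat{1}{0}{-s}{1}=y(-s),
\]
which lies in $K_nw(1)K_n$ only when $s$ is a unit. Thus $\mathcal U_0*\mathcal U_0$ is supported on all of $\ov K$, which already matches the shape of $p^n\mathcal Y_1$ plus possibly a multiple of $\mathcal U_0$.

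The main step is computing the individual values. For $k=\mat{a}{b}{c}{d}$ with $c\in\Z_p^\times$ we need a factorization $k=k_1w(1)k_2$ with $k_1,k_2\in K_n$, which gives $\mathcal U_0((k,1))=\ov\eta(k_1)\ov\eta(k_2)$. The natural choice is the Bruhat-type decomposition
\[
k=x(ac^{-1})\,h(-c^{-1})\,w(1)\,x(dc^{-1}),
\]
which we will check directly. Its only contribution to the character is $\ov\eta(h(-c^{-1}))=\eta(-c)$, using $\eta(h(u))=\eta(u^{-1})$ and the fact that $\eta$ is quadratic. We apply this as follows.
\begin{itemize}
\item For $g=I$, $c=1$, so each term equals $\eta(-1)$, giving $\eta(-1)p^n$.
\item For $g=y(\xi p^j)$, $c=1-s\xi p^j\equiv1\pmod p$. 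Then $\eta(c)=1$ and the value is again $\eta(-1)p^n$, which matches the coefficient of each $\mathcal V_{j,\xi}$ in $\mathcal Y_1$. When $\eta=1$ we have $\eta(-1)=1$, consistent with the first formula.
\item For $g=w(1)$, the term with $s\in\Z_p^\times$ is $\ov\eta$ of the factorization of $y(-s)$, which equals $\eta(s)$. Summing over units modulo $p^n$ gives $\sum_{s\in(\Z/p^n\Z)^\times}\eta(s)$. This is $p^{n-1}(p-1)$ when $\eta=1$ and vanishes when $\eta=\kro{\cdot}{p}$ by orthogonality.
\end{itemize}

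Together these values give both displayed formulas. The delicate point I expect is bookkeeping of signs. Each term in the sum for $g=w(1)$ should, after normalizing by $\ov\eta$ of the representative, equal a single character value with the sign $\eta(-1)$ cancelling. That requires verifying the Bruhat factorization carefully, including the ordering of $h(-c^{-1})$ relative to $w(1)$ and the resulting sign.
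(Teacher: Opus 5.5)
Your proposal is correct and follows essentially the paper's own proof: the same coset expansion via Lemmas~\ref{lem:rel1} and \ref{lem:cosetdecomp}, evaluated at $I$, $w(1)$ and $y(\xi p^j)$. Your uniform factorization $k=x(ac^{-1})h(-c^{-1})w(1)x(dc^{-1})$ checks out, giving $\mathcal U_0((k,1))=\eta(-c)$, and it neatly packages the paper's case-by-case factorizations. There is one harmless slip: $w(-1)x(-s)w(1)=y(s)$, not $y(-s)$, so the terms at $g=w(1)$ are $\eta(-s)$ rather than $\eta(s)$; this changes nothing after summing over the units modulo $p^n$.
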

\begin{proof}
 By Lemmas~\ref{lem:rel1} and \ref{lem:cosetdecomp}, 
 \begin{align*}
\mathcal{U}_0*\mathcal{U}_0((g,1))
&=\sum_{s \in \Z_p/p^n\Z_p}
\mathcal{U}_0((x(s),1)(w(1),1))\\
&\hspace{2.3cm}\cdot
\mathcal{U}_0((w(-1),1)(x(-s),1)(g,1))\\
&=\sum_{s \in \Z_p/p^n\Z_p}
\mathcal{U}_0((\mat{0}{-1}{1}{-s},1)(g,1)).
\end{align*}
where the support of $\mathcal{U}_0* \mathcal{U}_0$ is on those $(g,1)$  for which there exist $s, t\in \Z_p$ such that $y(t)x(-s)g\in K_n$. It is easy to check that for $g=I, w(1), y(\b p^r)$ some such $s, t$ exist. 

\par\smallskip\noindent
Let $g=I$. Note that $(\mat{0}{-1}{1}{-s},1)= (w(1),1) (\mat{-1}{s}{0}{-1},1)$. Hence, $\mathcal{U}_0* \mathcal{U}_0 ((I,1))=p^n\eta(-1)$.

\par\smallskip\noindent
Let $g=w(1)$. Then $(\mat{0}{-1}{1}{-s}, 1)(g,1)= (y(s),1)$. To compute $\mathcal{U}_0* \mathcal{U}_0$ at $(w(1),1)$, for $s\in \Z_p/p^n\Z_p$, we want $\mat{a}{b}{c}{d}\in K_n$ such that $w(-1) \mat{a}{b}{c}{d} y(s) = \mat{-c-ds}{-d}{a+bs}{b}\in K_n$. Clearly $s$ must be in $\Z_p^*$ and for such $s$, choosing $a=d=1, c=0, b=-1/s$, the above matrix becomes $\mat{-s}{-1}{0}{-1/s}$. 
Then
\begin{align*}
\mathcal U_0*\mathcal U_0((w(1),1))
&=\sum_{s\in\Z_p/p^n\Z_p}\mathcal U_0((y(s),1))\\
&=\sum_{\substack{s\in\Z_p/p^n\Z_p\\(s,p)=1}}
\mathcal U_0\!\left((x(1/s),1)(w(1),1)
\left(\mat{-s}{-1}{0}{-1/s},1\right)\right)\\
&=\sum_{\substack{s\in\Z_p/p^n\Z_p\\(s,p)=1}}\ov\eta(-1/s)
=\begin{cases}
 p^n-p^{n-1},&\eta=1,\\
 0,&\eta=\kro{\cdot}{p}.
\end{cases}
\end{align*}

\par\smallskip\noindent
Let $g=y(\b p^r)$ where $\b \in \{1,\ld\}$. Then
\[
(\mat{0}{-1}{1}{-s},1)(g,1)
=
(\mat{-\b p^r}{-1}{1-\b p^rs}{-s},1).
\]
Doing computations as above, we find that 
\begin{align*}
\mathcal U_0*\mathcal U_0((y(\b p^r),1))
&=\sum_{s\in\Z_p/p^n\Z_p}
\mathcal U_0\!\left(
\left(
\left(\begin{smallmatrix}
-\b p^r&-1\\
1-\b p^rs&-s
\end{smallmatrix}\right),1
\right)\right)\\
&=\sum_{s\in\Z_p/p^n\Z_p}
\mathcal U_0\!\left(
\begin{gathered}
\left(x\!\left(\frac{-\b p^r}{1-\b p^rs}\right),1\right)\\[-1mm]
{}\cdot(w(1),1)\\[-1mm]
{}\cdot\left(
\left(\begin{smallmatrix}
-(1-\b p^rs)&s\\
0&-1/(1-\b p^rs)
\end{smallmatrix}\right),1\right)
\end{gathered}\right)\\
&=\sum_{s\in\Z_p/p^n\Z_p}
\ov\eta\!\left(\frac{-1}{1-\b p^rs}\right)\\
&=p^n\eta(-1).
\end{align*}
\end{proof}

\begin{prop} \label{prop:UV}
Let $\d \in \{1, \ld\}$. Then
 \[\mathcal{U}_0*\mathcal{V}_{r,\d} = \frac{p-1}{2}p^{n-r-1}\mathcal{U}_0 = \mathcal{V}_{r,\d}*\mathcal{U}_0\]
\end{prop}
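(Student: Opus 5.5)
We follow the same pattern as the proofs of Propositions~\ref{prop:V}, \ref{prop:ViVj} and \ref{prop:U}. First we determine the support of $\mathcal U_0*\mathcal V_{r,\d}$, and then we evaluate it at the single representative $(w(1),1)$.

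By Lemmas~\ref{lem:rel1} and \ref{lem:cosetdecomp}, $\mathcal U_0*\mathcal V_{r,\d}$ is supported on those $(g,1)$ for which there exist $s\in\Z_p/p^n\Z_p$ and $u\in\Z_p^*/\sqrt{1+p^{n-r}\Z_p}$ with
\[
\bigl(x(s)w(1)h(u)y(\d p^r)\bigr)^{-1}g\in K_n .
\]
We write $\bigl(x(s)w(1)h(u)y(\d p^r)\bigr)^{-1}=y(-\d p^r)h(u^{-1})w(-1)x(-s)$. For $g=I$ or $g=y(\b p^j)$, the product has lower-left entry congruent to a unit modulo $p$. Indeed, $w(-1)x(-s)g$ has lower-left entry $\equiv 1 \pmod p$ when $g\in K_0(p)$, and left multiplication by $h(u^{-1})$ and $y(-\d p^r)$ keeps it a unit. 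So these representatives contribute nothing, and the only possible support is $(w(1),1)$. Both sides of the identity are therefore multiples of $\mathcal U_0$, and it suffices to compute the value at $(w(1),1)$.

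For the value, we use the second form of Lemma~\ref{lem:rel1} with the right cosets $K_n y(\d p^r) h(u)$ of Lemma~\ref{lem:cosetdecomp}(2):
\[
\mathcal U_0*\mathcal V_{r,\d}((w(1),1))=\sum_{u}\mathcal U_0\bigl((w(1)h(u)^{-1}y(-\d p^r),1)\bigr)\,\ov\eta(u^{-1}).
\]
Since $w(1)h(u^{-1})=h(u)w(1)$ and $w(1)y(-\d p^r)w(1)^{-1}=x(\d p^r)$, we get
\[
w(1)h(u)^{-1}y(-\d p^r)=h(u)\,x(\d p^r)\,w(1)\in K_n w(1) .
\]
Thus each term equals $\ov\eta(h(u))\cdot 1\cdot\ov\eta(u^{-1})$. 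By the recorded formula $\eta(h(u),\epsilon)=\eta(u^{-1})\epsilon$, the two character factors cancel ($\eta$ is quadratic), so every summand is $1$. The value is therefore the number of $u$, which is $\frac{p-1}{2}p^{n-r-1}$ by Lemma~\ref{lem:sqrt}.

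For $\mathcal V_{r,\d}*\mathcal U_0$ we argue symmetrically with the first form of Lemma~\ref{lem:rel1} and the left cosets $h(u)y(\d p^r)K_n$. The argument element is
\[
y(-\d p^r)h(u^{-1})w(1)=y(-\d p^r)\,w(1)\,h(u),
\]
and $y(-\d p^r)w(1)=w(1)x(\d p^r)\in w(1)K_n$. This gives the same count. The support check is identical.

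The main obstacle is bookkeeping of the character factors, namely making sure that $\ov\eta(u^{\pm1})$ from the $\mathcal V$-coset and from moving $h(u)$ through $\mathcal U_0$ cancel rather than produce a sum $\sum_u\eta(u)$. Such a sum would vanish for $\eta\ne1$, and the statement says it does not. All matrices lie in $K$, so no cocycle factors arise.
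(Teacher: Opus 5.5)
Your proof is correct. The support step is the same as the paper's: the paper writes out $(x(s)w(1)h(t)y(\d p^r))^{-1}$ and $(y(\d p^r)h(t)w(1)x(s))^{-1}$ and observes that the lower-left entry is a unit after multiplying by a representative in $K_0(p)$, which is exactly your reduction modulo $p$.

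Where you differ is in which coset decomposition you expand over when evaluating at $(w(1),1)$.

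\textbf{The paper's route.} It sums over the $p^n$ cosets $x(s)w(1)\ov{K_n}$ of $\mathcal U_0$, using the first form of Lemma~\ref{lem:rel1} for $\mathcal U_0*\mathcal V_{r,\d}$ and the second form for $\mathcal V_{r,\d}*\mathcal U_0$. This reduces both products to $\sum_s\mathcal V_{r,\d}((y(\pm s),1))$. The paper must then:
\begin{enumerate}
\item decide which $y(s)$ lie in $\ov{K_n}(y(\d p^r),1)\ov{K_n}$ (namely $s=p^ru$ with $u/\d$ a square modulo $p$);
\item exhibit $y(s)=h(\sqrt{\d/u})\,y(\d p^r)\,h(\sqrt{u/\d})$;
\item count such $s$ modulo $p^n$.
\end{enumerate}

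\textbf{Your route.} You sum instead over the cosets of $\mathcal V_{r,\d}$. The identities $w(1)h(u)^{-1}y(-\d p^r)=h(u)x(\d p^r)w(1)$ and $y(-\d p^r)h(u^{-1})w(1)=w(1)x(\d p^r)h(u)$ place every term in the $w(1)$-cell with character factor $\ov\eta(u^{-1})^2=1$. So you need no membership test and no counting of $s$, and the cancellation of the $\eta$-factors is visible immediately.

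\textbf{The trade-off.} Your count $\frac{p-1}{2}p^{n-r-1}$ rests on Lemma~\ref{lem:cosetdecomp}(2), namely that the $h(u)$ with $u\in\Z_p^*/\sqrt{1+p^{n-r}\Z_p}$ form a complete and irredundant set of right (respectively left) coset representatives. This includes the less elementary case $r<n/2$, and you also need Lemma~\ref{lem:sqrt}. The paper's computation uses only the elementary decomposition in Lemma~\ref{lem:cosetdecomp}(1) together with biequivariance. The two computations agree, which gives a useful cross-check on the number of cosets in $\ov{K_n}(y(\d p^r),1)\ov{K_n}$.
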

\begin{proof}
We use Lemmas~\ref{lem:rel1} and \ref{lem:cosetdecomp}. 
For $\mathcal{U}_0*\mathcal{V}_{r,\d}$, we use the first expression in Lemma~\ref{lem:rel1} while for $\mathcal{V}_{r,\d}*\mathcal{U}_0$, we use the second sum. 
 For $s\in \Z_p, t\in \Z_p^*$, note that $(x(s)w(1)h(t)y(\d p^r))^{-1}= \mat{0}{-t^{-1}}{t}{\d p^r t^{-1}-ts}$ while $(y(\d p^r)h(t)w(1)x(s))^{-1}= \mat{\d p^r t -t^{-1}s}{-t}{t^{-1}}{0}$. It follows that $\mathcal{U}_0*\mathcal{V}_{r,\d}$ and $\mathcal{V}_{r,\d}*\mathcal{U}_0$ are only supported on $(w(1),1)$. 
 
 We first compute $\mathcal{U}_0*\mathcal{V}_{r,\d}((w(1),1))$. For $s\in \Z_p/p^n\Z_p$, we want $\mat{a}{b}{c}{d}\in K_n$ such that $y(-\d p^r) \mat{a}{b}{c}{d} y(s) \in K_n$. So, we must have  $(d-\d p^rb)s-\d p^ra \equiv 0 \pmod{p^n}$. So, $s$ must be of the form $p^ru$ where $u$ is a unit such that $u/\d$ is a square modulo $p$. Note that there are $\frac{p-1}{2}p^{n-r-1}$ such $s$ in $\Z_p/p^n\Z_p$.   
 
 For each such $s=p^ru$, take $a=\sqrt{u/\d}$, $d=1/a$, $b=c=0$. Then, 
\begin{align*}
\mathcal{U}_0*\mathcal{V}_{r,\d}((w(1),1))
&=\sum_{s \in \Z_p/p^n\Z_p}
\mathcal{U}_0((x(s),1)(w(1),1))\\
&\hspace{2.2cm}\cdot
\mathcal{V}_{r,\d}((w(-1),1)(x(-s),1)(w(1),1))\\
&=\sum_{s \in \Z_p/p^n\Z_p}
\mathcal{V}_{r,\d}((y(s),1))\\
&=\sum_{\substack{s \in \Z_p/p^n\Z_p\\
s=p^ru:\ \kro{u/\d}{p}=1}}
\mathcal{V}_{r,\d}\bigl((h(\sqrt{\d/u}),1)(y(\d p^r),1)\\
&\hspace{5.0cm}\cdot(h(\sqrt{u/\d}),1)\bigr)\\
&=\frac{p-1}{2}p^{n-r-1}.
\end{align*}
 
 We also have however
 \begin{align*}
\mathcal{V}_{r,\d}*\mathcal{U}_0((w(1),1))
&=\sum_{s \in \Z_p/p^n\Z_p}
\mathcal{V}_{r,\d}((w(1),1)(x(-s),1)(w(-1),1))\\
&\hspace{2.2cm}\cdot
\mathcal{U}_0((w(1),1)(x(s),1))\\
&=\sum_{s \in \Z_p/p^n\Z_p}\mathcal{V}_{r,\d}((y(-s),1))
=\frac{p-1}{2}p^{n-r-1}.
\end{align*}
 where the last equality follows as before.

\end{proof}

\begin{thm}
The algebra $H(\ov{K}//\ov{K_n},\eta)$ is a $2n$-dimensional commutative algebra 
with generators $\{\mathcal{I},\ \mathcal{U}_0,\ \mathcal{V}_{i,1},\ \mathcal{V}_{i,\ld}\}$ with $1\leq i\leq n-1$ where 
$\kro{\ld}{p}=-1$ and relations given by Propositions~\ref{prop:V}, ~\ref{prop:ViVj}, \ref{prop:U}, \ref{prop:UV}. 
\end{thm}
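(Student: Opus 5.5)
The theorem will be assembled from the results already established; the only genuinely new points are the dimension count, the claim that these elements generate, and commutativity.

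\textbf{Basis and dimension.} Proposition~\ref{prop:rep1} lists $2n$ double cosets of $\ov K$ modulo $\ov{K_n}$: the identity, $w(1)$, and $y(p^r)$, $y(\ld p^r)$ for $1\le r\le n-1$. Proposition~\ref{prop:compact-support} shows that each of them supports a nonzero $\eta$-biequivariant function. Such a function is determined by its value at the chosen representative $(g,1)$. The value at $(g,-1)$ is forced by genuineness, since $(I,-1)\in\ov{K_n}$ and $\eta((I,-1))=-1$. Hence each double coset contributes a one-dimensional space. Therefore $\mathcal I,\mathcal U_0,\mathcal V_{r,1},\mathcal V_{r,\ld}$ form a basis of $H$, and $\dim H=2n$. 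In particular these elements generate $H$ even as a vector space. By the square-class remark preceding Lemma~\ref{lem:rel1}, $\mathcal V_{r,a}$ depends only on the class of $a$, so no further elements appear.

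\textbf{Closure of the relations.} Next I would check that Propositions~\ref{prop:V}, \ref{prop:ViVj}, \ref{prop:U} and \ref{prop:UV} determine every product of two basis elements. The products $\mathcal V_{r,\d}*\mathcal V_{r,\mu}$ are given by Proposition~\ref{prop:V}. Here $\mathcal Y_{r+1}$ is an explicit combination of $\mathcal I$ and the $\mathcal V_j$ with $j>r$. The products $\mathcal V_{r,\mu}*\mathcal V_{j,\d}$ with $j\neq r$ come from Proposition~\ref{prop:ViVj}; after swapping the roles of $r$ and $j$ this covers both orders. The products involving $\mathcal U_0$ are given by Propositions~\ref{prop:U} and \ref{prop:UV}, and $\mathcal I$ is the unit. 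The multiplication table is thus complete. Because the structure constants come from the basis of $H$, the associative algebra with these generators and relations, with $\mathcal I$ as unit, is exactly $H$. No extra relations are needed beyond the table, since the table presents $H$ as a $2n$-dimensional associative algebra.

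\textbf{Commutativity.} I would read this off directly, since each proposition already records both orders. Proposition~\ref{prop:V} gives $\mathcal V_{r,1}*\mathcal V_{r,\ld}=\mathcal V_{r,\ld}*\mathcal V_{r,1}$. Proposition~\ref{prop:ViVj} gives the mixed-level equality. Proposition~\ref{prop:UV} gives the equality for $\mathcal U_0$ against each $\mathcal V_{r,\d}$. The remaining pairs involve a basis element with itself or with $\mathcal I$ and commute trivially. Bilinearity of convolution then yields commutativity of $H$.

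As an alternative check, one could use an anti-involution. The map $f\mapsto f^\vee$, $f^\vee(\tilde g)=f(\tilde g^{-1})$, would be composed with transpose-type conjugation, in the spirit of a Gelfand trick. Each double coset is stable under inversion: $y(t)^{-1}=y(-t)$, and $-1$ times a square class stays in the same coset, or is handled via $h$-conjugation. The main obstacle would be showing that this map fixes every basis element, because when $p\equiv3\pmod4$, $y(-\d p^r)$ lies in the other square class. Since commutativity has already been verified on basis pairs above, I would rely on the direct check rather than on the involution.
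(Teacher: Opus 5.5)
Your proposal is correct and takes essentially the same approach as the paper. The paper gives no separate proof: the theorem summarizes Propositions~\ref{prop:rep1} and \ref{prop:compact-support}, which give the $2n$ basis elements (one per supporting double coset), together with the multiplication table of Propositions~\ref{prop:V}, \ref{prop:ViVj}, \ref{prop:U} and \ref{prop:UV}, and commutativity is read off from their two-sided statements, exactly as you do.
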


Define $\mathcal{W}_r:=\mathcal{V}_{r,1}- \mathcal{V}_{r,\ld}$. 
It follows from Propositions ~\ref{prop:V}, ~\ref{prop:ViVj}, \ref{prop:U} and \ref{prop:UV} that
\begin{cor} \label{cor:VW}
\begin{enumerate}
 \item $\mathcal{V}_r^2 = (p-2)p^{n-r-1}\mathcal{V}_r + (p-1)p^{n-r-1}\mathcal{Y}_{r+1}$. 
 \item $\mathcal{V}_r\mathcal{V}_j=(p-1)p^{n-j-1}\mathcal{V}_r=\mathcal{V}_j\mathcal{V}_r$ where $r<j$.
 \item $\mathcal{W}_r^2 = \kro{-1}{p}p^{n-r-1}((p-1)\mathcal{Y}_{r+1}-\mathcal{V}_r)$.
 \item $\mathcal{W}_r\mathcal{W}_j=0$ for every $r\ne j$.
 \item $\mathcal{V}_r\mathcal{W}_r = \mathcal{W}_r\mathcal{V}_r=-p^{n-r-1}\mathcal{W}_r$.
 \item $\mathcal{V}_r\mathcal{W}_j = \mathcal{W}_j\mathcal{V}_r=\begin{cases}
 0
 & \text{ if \ }r<j \\
 p^{n-r-1}(p-1)\mathcal{W}_j & \text{ if }r>j
 \end{cases}$.
 \item  $\mathcal{U}_0\mathcal{V}_r= p^{n-r-1}(p-1)\mathcal{U}_0 = \mathcal{V}_r\mathcal{U}_0 $.  
 \item  $\mathcal{U}_0\mathcal{W}_r= 0 = \mathcal{W}_r\mathcal{U}_0 $. 
 \item For $\eta=1$, $\mathcal{U}_0^2=p^n\mathcal{Y}_1+p^{n-1}(p-1)\mathcal{U}_0$; for
 $\eta=\kro{\cdot}{p}$, $\mathcal{U}_0^2=\eta(-1)p^n\mathcal{Y}_1$.
 \item With $p^*=\kro{-1}{p}p$ as fixed in Section~2, the terminal operator satisfies
 $\mathcal{W}_{n-1}^3=p^*\mathcal{W}_{n-1}$; equivalently, it satisfies
 $X(X^2-p^*)=0$.
\end{enumerate}
\end{cor}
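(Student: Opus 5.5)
The plan is to derive every item of Corollary~\ref{cor:VW} by pure algebra from Propositions~\ref{prop:V}, \ref{prop:ViVj}, \ref{prop:U} and \ref{prop:UV}. No new coset computations should be needed. Write $c_r:=p^{n-r-1}$, $V_r=\mathcal V_{r,1}+\mathcal V_{r,\ld}$ and $W_r=\mathcal V_{r,1}-\mathcal V_{r,\ld}$. Then every product of sums and differences expands bilinearly into the four products $\mathcal V_{r,\d}*\mathcal V_{r,\mu}$ with $\d,\mu\in\{1,\ld\}$. Commutativity of each product is inherited from the corresponding proposition.

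Items (1), (3) and (5) come from Proposition~\ref{prop:V}, treating the two congruence classes of $p$ modulo $4$ separately.
\begin{itemize}
\item For (1), with $p\equiv1\pmod 4$, sum all four products. This gives $2\cdot\frac{p-1}{2}c_r\mathcal Y_{r+1}$ plus coefficient $\frac{p-5}{4}+\frac{p-1}{4}+2\cdot\frac{p-1}{4}=p-2$ on $V_r$. The case $p\equiv 3\pmod 4$ works the same way, with the $\mathcal Y_{r+1}$ term now coming from the two mixed products.
\item For (3), $W_r^2=\mathcal V_{r,1}^2+\mathcal V_{r,\ld}^2-2\mathcal V_{r,1}\mathcal V_{r,\ld}$. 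For $p\equiv1$ this gives $(p-1)c_r\mathcal Y_{r+1}+c_r\bigl(\frac{p-5}{4}+\frac{p-1}{4}-\frac{p-1}{2}\bigr)V_r=c_r((p-1)\mathcal Y_{r+1}-V_r)$. For $p\equiv 3$ it gives the negative of this, which accounts for the factor $\kro{-1}{p}$.
\item For (5), $V_rW_r=\mathcal V_{r,1}^2-\mathcal V_{r,\ld}^2$, since the cross terms cancel by commutativity. The $\mathcal Y_{r+1}$ parts cancel, and the remaining coefficient difference is $-1$ in both congruence classes.
\end{itemize}

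Items (2), (4) and (6) use Proposition~\ref{prop:ViVj}. For $j>r$, each $\mathcal V_{j,\d}$ acts on $\mathcal V_{r,\mu}$ as the scalar $\frac{p-1}{2}c_j$, independently of $\d$.
\begin{itemize}
\item Hence $V_j$ acts on $\mathcal V_{r,\mu}$ by $(p-1)c_j$ and $W_j$ acts by $0$. This gives (2), and gives (6) in the case $r<j$.
\item It also shows $W_rW_j=0$ for $r<j$, which is (4).
\item For (6) with $r>j$, the proposition is applied with the roles of $r$ and $j$ swapped: $V_r$ acts on $\mathcal V_{j,\mu}$ by $(p-1)c_r$, so $V_rW_j=(p-1)c_rW_j$.
\end{itemize}

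Items (7), (8) and (9) follow from Propositions~\ref{prop:UV} and \ref{prop:U} directly. Each $\mathcal V_{r,\d}$ multiplies $\mathcal U_0$ by the same scalar $\frac{p-1}{2}c_r$, so $V_r$ multiplies it by $(p-1)c_r$ and $W_r$ kills it. Item (9) is Proposition~\ref{prop:U} restated.

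Item (10) is the only one needing a further observation, and I expect it to be the main bookkeeping point. For $r=n-1$ we have $c_{n-1}=1$ and $\mathcal Y_n=\mathcal I$, so (3) reads $W_{n-1}^2=\kro{-1}{p}((p-1)\mathcal I-V_{n-1})$. Multiplying by $W_{n-1}$ and using (5) with $c_{n-1}=1$, which gives $V_{n-1}W_{n-1}=-W_{n-1}$, yields
\[
W_{n-1}^3=\kro{-1}{p}\bigl((p-1)+1\bigr)W_{n-1}=p^*W_{n-1}.
\]

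The care needed throughout is tracking the $\mathcal Y_{r+1}$ term, which appears in squares for $p\equiv1\pmod 4$ but in mixed products for $p\equiv3\pmod 4$. Getting the sign $\kro{-1}{p}$ in (3) right depends on this.
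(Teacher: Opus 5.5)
Your proposal is correct and follows the paper's own route. Each item comes from bilinear expansion of Propositions~\ref{prop:V}, \ref{prop:ViVj}, \ref{prop:U} and \ref{prop:UV}, and item~(10) is obtained exactly as in the paper, by multiplying the terminal case of~(3) by $\mathcal W_{n-1}$ and using~(5); the paper writes this as $\mathcal Y_{n-1}\mathcal W_{n-1}=0$, and the case $r>j$ of~(4) follows from $r<j$ by commutativity.
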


Indeed, at the terminal index $r=n-1$, we have
$\mathcal{Y}_n=\mathcal{I}$ and
$\mathcal{Y}_{n-1}=\mathcal{I}+\mathcal{V}_{n-1}$; hence parts~(3) and~(5)
give
\[
\mathcal{W}_{n-1}^2
=\kro{-1}{p}\bigl(p\mathcal{I}-\mathcal{Y}_{n-1}\bigr),
\qquad
\mathcal{Y}_{n-1}\mathcal{W}_{n-1}=0.
\]
Multiplying the first identity by $\mathcal{W}_{n-1}$ proves part~(10).

\begin{cor}
 The algebra $H(\ov{K}//\ov{K_n},\eta)$ is a $2n$-dimensional commutative algebra 
with generators $\mathcal{I},\mathcal{U}_0,\mathcal{V}_r,\mathcal{W}_r$
for $1\leq r\leq n-1$, and relations given by
Proposition~\ref{prop:U} and Corollary~\ref{cor:VW}.
\end{cor}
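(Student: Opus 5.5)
The last corollary is a change of basis applied to the preceding theorem, so the plan has three steps.

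First, check that $\{\mathcal I,\mathcal U_0,\mathcal V_r,\mathcal W_r\}_{1\le r\le n-1}$ is again a basis of $H(\ov K//\ov{K_n},\eta)$. By Propositions~\ref{prop:rep1} and~\ref{prop:compact-support}, the $2n$ functions $\mathcal I,\mathcal U_0,\mathcal V_{r,1},\mathcal V_{r,\ld}$ form a basis. For each fixed $r$ the substitution
\[
\mathcal V_r=\mathcal V_{r,1}+\mathcal V_{r,\ld},\qquad \mathcal W_r=\mathcal V_{r,1}-\mathcal V_{r,\ld}
\]
has matrix $\mat{1}{1}{1}{-1}$, whose determinant is $-2\neq0$ because $p$ is odd. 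Its inverse is
\[
\mathcal V_{r,1}=\tfrac12(\mathcal V_r+\mathcal W_r),\qquad \mathcal V_{r,\ld}=\tfrac12(\mathcal V_r-\mathcal W_r).
\]
So the new family has $2n$ elements, spans $H$, and is a basis. The auxiliary elements $\mathcal Y_r=\mathcal I+\sum_{j\ge r}\mathcal V_j$ are already linear combinations of the new generators, so every relation can be written in the new basis.

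Second, show that the relations of Proposition~\ref{prop:U} and Corollary~\ref{cor:VW} determine the whole multiplication table. Commutativity of $H$ comes from the theorem. Every product of two new basis elements then appears on the list:
\begin{itemize}
\item $\mathcal U_0^2$ is Proposition~\ref{prop:U} (equivalently part (9));
\item $\mathcal U_0\mathcal V_r$ and $\mathcal U_0\mathcal W_r$ are parts (7) and (8);
\item $\mathcal V_r\mathcal V_j$ is part (1) for $r=j$ and part (2) for $r\neq j$;
\item $\mathcal V_r\mathcal W_j$ is part (5) for $r=j$ and part (6) for $r\neq j$;
\item $\mathcal W_r\mathcal W_j$ is part (3) for $r=j$ and part (4) for $r\neq j$.
\end{itemize}
Products with $\mathcal I$ are trivial. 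The structure constants in the new basis are therefore fixed. By the inverse substitution above, they recover all the relations of Propositions~\ref{prop:V}, \ref{prop:ViVj}, \ref{prop:U} and~\ref{prop:UV} in the old basis. Hence the abstract commutative algebra on $2n$ basis symbols with these relations is isomorphic to $H$.

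Third, and this is the real content, verify each part of Corollary~\ref{cor:VW} from the propositions, since that corollary was only asserted. Expanding bilinearly:
\begin{itemize}
\item $\mathcal V_r^2$ and $\mathcal W_r^2$: add or subtract the three identities of Proposition~\ref{prop:V}, separately for $p\equiv1$ and $p\equiv3\pmod4$. For example, with $p\equiv1\pmod4$,
\[
\mathcal W_r^2=\mathcal V_{r,1}^2+\mathcal V_{r,\ld}^2-2\mathcal V_{r,1}\mathcal V_{r,\ld}=p^{n-r-1}\bigl((p-1)\mathcal Y_{r+1}-\mathcal V_r\bigr).
\]
For $p\equiv3\pmod4$ the same computation gives the opposite sign, which is the factor $\kro{-1}{p}$ in part (3).
\item Mixed products $r\neq j$ (parts (2), (4), (6)): use Proposition~\ref{prop:ViVj}. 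Its right-hand side does not depend on $\delta$, so any difference in the $\delta$-slot is killed. This gives $\mathcal W_j\mathcal V_r=0$ and $\mathcal W_j\mathcal W_r=0$ for $j>r$, and $\mathcal V_j\mathcal W_r=(p-1)p^{n-j-1}\mathcal W_r$ for $j>r$.
\item Products with $\mathcal U_0$ (parts (7), (8)): Proposition~\ref{prop:UV} likewise does not depend on $\delta$, so $\mathcal U_0\mathcal W_r=0$.
\end{itemize}

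The main obstacle I expect is the bookkeeping in part (5) and in $\mathcal W_r^2$. The cross terms $\mathcal V_{r,1}\mathcal V_{r,\ld}$ carry different $\mathcal Y_{r+1}$ contributions in the two congruence classes of $p$ modulo $4$, and they must cancel exactly to give $-p^{n-r-1}\mathcal W_r$ in both cases. I would do this check explicitly for both classes.
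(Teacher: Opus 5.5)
Your proposal is correct and follows the same route as the paper. The corollary is the preceding theorem rewritten in the basis $\mathcal I,\mathcal U_0,\mathcal V_r,\mathcal W_r$; this is an invertible change of basis over $\C$, and the determinant $-2$ is nonzero regardless of $p$. Corollary~\ref{cor:VW} is then obtained by expanding Propositions~\ref{prop:V}, \ref{prop:ViVj}, \ref{prop:U}, \ref{prop:UV} bilinearly, and your checks of parts (1)--(8) are correct.

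You leave part (10) implicit; the paper gets it by taking $r=n-1$ in (3), so that $\mathcal Y_n=\mathcal I$, and multiplying by $\mathcal W_{n-1}$ using (5). Also, part (5) needs no delicate cancellation: by commutativity $\mathcal V_r\mathcal W_r=\mathcal V_{r,1}^2-\mathcal V_{r,\ld}^2$, and the $\mathcal Y_{r+1}$-terms of the two squares coincide.
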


It follows from the above corollary that
\[
\begin{aligned}
\mathcal{Y}_r\mathcal{Y}_j
&=p^{n-j}\mathcal{Y}_r=\mathcal{Y}_j\mathcal{Y}_r,
&&\forall j\geq r,\\
\mathcal{U}_0\mathcal{Y}_r
&=p^{n-r}\mathcal{U}_0=\mathcal{Y}_r\mathcal{U}_0,
&&\forall r\in\{1,\dots,n-1\},
\end{aligned}
\]
\[
\mathcal{Y}_r\mathcal{W}_j
=\mathcal{W}_j\mathcal{Y}_r
=
\begin{cases}
p^{n-r}\mathcal{W}_j,&\text{if }r>j,\\
0,&\text{if }r\leq j,
\end{cases}
\]
\[
\begin{aligned}
\mathcal{W}_j\mathcal{W}_r&=0
&&\text{for any }r\ne j,\\
\mathcal{W}_r\mathcal{U}_0
&=\mathcal{U}_0\mathcal{W}_r=0
&&\forall r\in\{1,\dots,n-1\},
\end{aligned}
\]
\[
\mathcal{W}_r^2
=\kro{-1}{p}p^{n-r-1}
(p\mathcal{Y}_{r+1}-\mathcal{Y}_r),
\]
\[
\mathcal{U}_0^2=p^n\mathcal{Y}_1+p^{n-1}(p-1)\mathcal{U}_0
\quad\text{if }\eta=1,
\qquad
\mathcal{U}_0^2=\eta(-1)p^n\mathcal{Y}_1
\quad\text{if }\eta=\kro{\cdot}{p}.
\]

\subsection{Representations of $\ov{K}$ having a vector of prescribed $\ov{K_n}$-type}
In \cite{B-P}, we studied representations of $\GL_2(\Z_p)$ containing an
$H_0(p^n)$-fixed vector, where $H_0(p^n)$ consists of matrices in
$\GL_2(\Z_p)$ which reduce to upper-triangular matrices modulo $p^n\Z_p$.

We would like to study the irreducible representations of
$\ov{K}$ having a nonzero vector of $(\ov{K_n},\ov\eta)$-type. Let
\[
I(n):=Ind^{\ov{K}}_{\ov{K_n}}\ov{\eta}
=\left\{\phi:\ov K\rightarrow\C\;\middle|\;
\begin{array}{l}
\phi(k_0k)=\ov{\eta}(k_0)\phi(k),\\[-1mm]
\forall k_0\in\ov{K_n},\ k\in\ov K
\end{array}
\right\}.
\]
$\phi$'s are smooth (locally constant). 
Then $I(n)$ is a right representation of $\ov K$ via right translation, 
and the dimension of this representation is $[\ov{K}:\ov{K_n}] = p^{n-1}(p+1)$.


The set
\[
\begin{aligned}
I(n)^{\ov{K_n},\ov{\eta}}
=\{\phi\in I(n):\;&\phi(kk_0)=\ov{\eta}(k_0)\phi(k),\\
&\forall k_0\in\ov{K_n},\ k\in\ov K\}
\end{aligned}
\]
is the collection of vectors of $(\ov{K_n},\ov\eta)$-type in $I(n)$ and
hence equals the Hecke algebra $H$. Consequently
$\dim I(n)^{\ov{K_n},\ov\eta}=2n$.

By Frobenius reciprocity,
\[
\begin{aligned}
\Hom_{\ov K}(I(n),I(n))
&\cong \Hom_{\ov{K_n}}
\bigl(\ov\eta,I(n)|_{\ov{K_n}}\bigr)\\
&\cong I(n)^{\ov{K_n},\ov\eta}\cong H.
\end{aligned}
\]

Since $\ov K$ is compact, $I(n)$ is completely reducible.  Moreover,
$H\cong\operatorname{End}_{\ov K}(I(n))$ is commutative and has dimension
$2n$.  It follows that $I(n)$ is a multiplicity-free direct sum of $2n$
irreducible representations.  Any irreducible smooth representation of
$\ov K$ containing a nonzero vector of $(\ov{K_n},\ov\eta)$-type is
isomorphic to a subrepresentation of $I(n)$. We describe these
subrepresentations and their exact-level type vectors.
\begin{cor}
    Let $n>1$. There exist exactly two irreducible constituents
    $\sigma(n),\sigma'(n)$ of $I(n)$ that do not occur in $I(n-1)$.
    Each contains, up to scalar multiple, a unique vector of
    $(\ov{K_n},\ov\eta)$-type and no nonzero vector of
    $(\ov{K_{n-1}},\ov\eta)$-type. At level $1$, the two irreducible
    constituents of $I(1)$ have dimensions summing to $p+1$, while for
    $n\geq2$ one has
    $\dim(\sigma(n))+\dim(\sigma'(n))=p^{n-2}(p^2-1)$.
\end{cor}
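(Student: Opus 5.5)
The plan is to compare $I(n-1)$ and $I(n)$ inside one ambient space, using multiplicity-freeness. The isotypic vectors are then counted with the Hecke algebra dimensions already computed.

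\textbf{Step 1: embedding.} Since $K_n\subset K_{n-1}$ and the genuine character $\eta$ on $\ov{K_n}$ is the restriction of the one on $\ov{K_{n-1}}$ (it factors through $d\bmod p$), inflation gives a $\ov K$-equivariant injection $I(n-1)\hookrightarrow I(n)$. The same holds for $I(1)$ with $K_1=K_0(p)$.

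\textbf{Step 2: constituents and multiplicities.} By the discussion preceding the corollary, $I(m)$ is multiplicity free with $2m$ irreducible constituents, because $\dim H(\ov K//\ov{K_m},\eta)=2m$ and this algebra is commutative. For $m=1$ the same argument applies: the double cosets $\ov{K_1}\backslash\ov K/\ov{K_1}$ are represented by $I$ and $w(1)$, and both support functions by the proof of Proposition~\ref{prop:compact-support}. Hence $I(1)$ has exactly two constituents and dimension $[K:K_0(p)]=p+1$. Step 1 shows that the $2(n-1)$ constituents of $I(n-1)$ all occur in $I(n)$. Multiplicity one then leaves exactly two constituents $\sigma(n),\sigma'(n)$ of $I(n)$ not occurring in $I(n-1)$, and the complement of $I(n-1)$ in $I(n)$ is $\sigma(n)\oplus\sigma'(n)$.

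\textbf{Step 3: dimensions.} This gives
\[
\dim\sigma(n)+\dim\sigma'(n)=\dim I(n)-\dim I(n-1)=p^{n-1}(p+1)-p^{n-2}(p+1)=p^{n-2}(p^2-1),
\]
using $\dim I(m)=[K:K_m]=p^{m-1}(p+1)$.

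\textbf{Step 4: type vectors.} By Frobenius reciprocity, for any irreducible $\sigma$ the dimension of its space of $(\ov{K_m},\ov\eta)$-type vectors equals the multiplicity of $\sigma$ in $I(m)$. For $\sigma(n)$ this multiplicity is one in $I(n)$ and zero in $I(n-1)$. So $\sigma(n)$ has a unique type vector at level $n$ up to scalars, and none at level $n-1$; the same holds for $\sigma'(n)$.

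\textbf{Main obstacle.} The only point needing care is that both the embedding and Frobenius reciprocity respect the genuine extension of $\eta$. This holds because the splitting over $K$ identifies $\ov K$ with $K\times\mu_2$, so everything reduces to linear representations of $K$ tensored with the sign character on $\mu_2$. I would check this compatibility first, and also that the level-one double coset count really gives two supported cosets.
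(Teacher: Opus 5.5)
Your proposal is correct and follows essentially the reasoning the paper relies on. The paper gives no separate proof; it places the corollary directly after showing that $I(n)$ is multiplicity-free with $2n$ constituents (because the Hecke algebra is commutative of dimension $2n$) and that every irreducible representation with a type vector embeds in $I(n)$. Your inclusion $I(n-1)\subset I(n)$, the constituent count, Frobenius reciprocity for the type vectors, and the index subtraction $p^{n-1}(p+1)-p^{n-2}(p+1)$ fill in exactly that argument. The paper's later trace equations sharpen the conclusion to $\dim\sigma(n)=\dim\sigma'(n)=p^{n-2}(p^2-1)/2$, but the stated sum does not need this.
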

If we restrict ourselves to $\eta\equiv 1$, then
let $S$ be the subalgebra of $H$ generated by $\mathcal{I}, \mathcal{U}_0, \mathcal{Y}_r$ for every $r\in\{1,\dots,n-1\}$. These elements satisfy exactly the same relations as elements of the subalgebra $H(\GL_2(\Z_p) // H_0(p^n))$ and hence are isomorphic. The algebra $H$ is generated by $S$ together with $\mathcal W_1,\dots,\mathcal W_{n-1}$. 

We have a left action $\pi_L$ of $H$ on $I(n)$ given by $\pi_L(f)(\phi) = f * \phi$ for $f\in H$, $\phi\in I(n)$, 
in fact $\pi_L(f) \in \Hom_{\ov{K}}(I(n),I(n))$ and so acts by scalar on irreducible components of $I(n)$. 

With the notation $p^*=\kro{-1}{p}p$ introduced above, as in
\cite[Proposition 3.17]{B-P}, under this action $\pi_L$, a basis of
eigenvectors of $H$ is given by:
\[
\begin{array}{c@{\qquad\qquad}c}
\eta\equiv 1 & \eta\not\equiv 1 \\[2pt]
 u_1=\mathcal U_0+\mathcal Y_1
 & u_1=\mathcal U_0+\sqrt{p^*}\mathcal Y_1 \\[2pt]
 u_2=\mathcal U_0-p\mathcal Y_1
 & u_2=\mathcal U_0-\sqrt{p^*}\mathcal Y_1 \\[2pt]
 v_k=p\mathcal Y_{k+1}-\mathcal Y_k+\frac{p}{\sqrt{p^*}}\mathcal W_k
 & v_k=p\mathcal Y_{k+1}-\mathcal Y_k+\frac{p}{\sqrt{p^*}}\mathcal W_k \\[2pt]
 v'_k=p\mathcal Y_{k+1}-\mathcal Y_k-\frac{p}{\sqrt{p^*}}\mathcal W_k
 & v'_k=p\mathcal Y_{k+1}-\mathcal Y_k-\frac{p}{\sqrt{p^*}}\mathcal W_k
\end{array}
\]
where $k\in\{1,\dots,n-1\}$,
with eigenvalues given by the following table for $\eta\equiv 1$:
\begin{table}[ht]
\centering\scriptsize
\setlength{\tabcolsep}{3pt}
\begin{tabular}{c|ccccccccc}
    & $\mathcal{U}_0$ & $\mathcal{Y}_1$ & $\mathcal{Y}_2$ & $\dots$ & $\mathcal{Y}_{n-1}$ & $\mathcal{W}_1$ & $\mathcal{W}_2$ & $\dots$ & $\mathcal{W}_{n-1}$  \\
[0.5ex]
\hline
$u_1$ & $p^n$ & $p^{n-1}$ & $p^{n-2}$ & $\dots$ & $p$  & $0$ & $0$ & $\dots$ & $0$ \\
$u_2$ & $-p^{n-1}$ & $p^{n-1}$ & $p^{n-2}$ & $\dots$ & $p$ & $0$ & $0$ & $\dots$ & $0$ \\
$v_1$ & $0$ & $0$ & $p^{n-2}$ & $\dots$ & $p$ & $p^{n-2}\sqrt{p^*}$ & $0$ & $\dots$ & $0$ \\
$v'_1$ & $0$ & $0$ & $p^{n-2}$ & $\dots$ & $p$ & $-p^{n-2}\sqrt{p^*}$ & $0$ & $\dots$ & $0$ \\
$v_2$ & $0$ & $0$ & $0$  & $\dots$ & $p$ & $0$ & $p^{n-3}\sqrt{p^*}$ & $\dots$ & $0$ \\
$v'_2$ & $0$ & $0$ & $0$  & $\dots$ & $p$ & $0$ & $-p^{n-3}\sqrt{p^*}$ & $\dots$ & $0$ \\
$\vdots$ & $\vdots$ & $\vdots$ & $\vdots$ & $\vdots$ & $\vdots$ & $\vdots$ & $\vdots$ & $\vdots$ & $\vdots$ \\
$v_{n-1}$ & $0$ & $0$ & $0$  & $\dots$ & $0$ & $0$ & $0$ & $\dots$ & $\sqrt{p^*}$ \\
$v'_{n-1}$ & $0$ & $0$ & $0$  & $\dots$ & $0$ & $0$ & $0$ & $\dots$ & $-\sqrt{p^*}$
\end{tabular}
\end{table}
and for $\eta\not\equiv1$ similarly: 
\begin{table}[ht]
\centering\scriptsize
\setlength{\tabcolsep}{3pt}
\begin{tabular}{c|ccccccccc}
    & $\mathcal{U}_0$ & $\mathcal{Y}_1$ & $\mathcal{Y}_2$ & $\dots$ & $\mathcal{Y}_{n-1}$ & $\mathcal{W}_1$ & $\mathcal{W}_2$ & $\dots$ & $\mathcal{W}_{n-1}$  \\
[0.5ex]
\hline
$u_1$ & $p^{n-1}\sqrt{p^*}$ & $p^{n-1}$ & $p^{n-2}$ & $\dots$ & $p$  & $0$ & $0$ & $\dots$ & $0$ \\
$u_2$ & $-p^{n-1}\sqrt{p^*}$ & $p^{n-1}$ & $p^{n-2}$ & $\dots$ & $p$ & $0$ & $0$ & $\dots$ & $0$ \\
$v_1$ & $0$ & $0$ & $p^{n-2}$ & $\dots$ & $p$ & $p^{n-2}\sqrt{p^*}$ & $0$ & $\dots$ & $0$ \\
$v'_1$ & $0$ & $0$ & $p^{n-2}$ & $\dots$ & $p$ & $-p^{n-2}\sqrt{p^*}$ & $0$ & $\dots$ & $0$ \\
$v_2$ & $0$ & $0$ & $0$  & $\dots$ & $p$ & $0$ & $p^{n-3}\sqrt{p^*}$ & $\dots$ & $0$ \\
$v'_2$ & $0$ & $0$ & $0$  & $\dots$ & $p$ & $0$ & $-p^{n-3}\sqrt{p^*}$ & $\dots$ & $0$ \\
$\vdots$ & $\vdots$ & $\vdots$ & $\vdots$ & $\vdots$ & $\vdots$ & $\vdots$ & $\vdots$ & $\vdots$ & $\vdots$ \\
$v_{n-1}$ & $0$ & $0$ & $0$  & $\dots$ & $0$ & $0$ & $0$ & $\dots$ & $\sqrt{p^*}$ \\
$v'_{n-1}$ & $0$ & $0$ & $0$  & $\dots$ & $0$ & $0$ & $0$ & $\dots$ & $-\sqrt{p^*}$
\end{tabular}
\end{table} 

\begin{lem} \label{trace-zer}
    The operators $\pi_L(\mathcal{U}_0),\pi_L(\mathcal{V}_r)$ and $\pi_L(\mathcal{W}_r)$ have zero trace.
\end{lem}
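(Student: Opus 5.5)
The plan is to compute the trace of $\pi_L(f)$ directly in a basis of $I(n)$ made of functions supported on single right cosets, and to show that every diagonal matrix coefficient vanishes. The reason this should work is that $\mathcal U_0$ and $\mathcal V_{r,\d}$ are supported on double cosets different from $\ov{K_n}$ itself. Since $\mathcal V_r=\mathcal V_{r,1}+\mathcal V_{r,\ld}$ and $\mathcal W_r=\mathcal V_{r,1}-\mathcal V_{r,\ld}$, and trace is linear, it suffices to treat $f\in\{\mathcal U_0,\mathcal V_{r,1},\mathcal V_{r,\ld}\}$.

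First I would check that $\pi_L(f)\phi=f*\phi$ preserves $I(n)$. Write
\[
(f*\phi)(k)=\int_{\widetilde G}f(\tilde g)\phi(\tilde g^{-1}k)\,d\tilde g .
\]
Replacing $k$ by $k_0k$ with $k_0\in\ov{K_n}$ and substituting $\tilde g\mapsto k_0\tilde g$, the left $\ov\eta$-equivariance of $f$ gives $(f*\phi)(k_0k)=\ov\eta(k_0)(f*\phi)(k)$. Because $\ov K$ is compact, $f*\phi$ is again supported in $\ov K$.

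Next I would fix representatives $x_1,\dots,x_N$ of $\ov{K_n}\backslash\ov K$ with $N=p^{n-1}(p+1)$, all lying in the split copy $K\times\{1\}$. Let $\phi_i\in I(n)$ be the function supported on $\ov{K_n}x_i$ with $\phi_i(k_0x_i)=\ov\eta(k_0)$. This is well defined because $\ov\eta$ is a character of $\ov{K_n}$. The $\phi_i$ form a basis of $I(n)$, and the coefficient of $\phi_i$ in any $\psi\in I(n)$ is $\psi(x_i)$. Hence
\[
\operatorname{tr}\pi_L(f)=\sum_{i=1}^N (f*\phi_i)(x_i)=\sum_{i=1}^N\int_{\widetilde G}f(\tilde g)\,\phi_i(\tilde g^{-1}x_i)\,d\tilde g .
\]

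The key step is to locate where this integrand can be nonzero. We need $\tilde g^{-1}x_i\in\ov{K_n}x_i$, that is $\tilde g^{-1}x_i=k_0x_i$ for some $k_0\in\ov{K_n}$. This forces $\tilde g^{-1}=k_0$, so $\tilde g\in\ov{K_n}$. But by Proposition~\ref{prop:rep1} the double cosets $\ov{K_n}(w(1),1)\ov{K_n}$ and $\ov{K_n}(y(\d p^r),1)\ov{K_n}$ are distinct from $\ov{K_n}(I,1)\ov{K_n}=\ov{K_n}$, so they are disjoint from it. Therefore $f$ vanishes identically on $\ov{K_n}$ for each $f\in\{\mathcal U_0,\mathcal V_{r,1},\mathcal V_{r,\ld}\}$. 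Every diagonal entry is then zero, the trace is zero, and linearity gives the result for $\mathcal V_r$ and $\mathcal W_r$.

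I expect no serious obstacle here. The only points needing care are that the basis functions are well defined on the cover, which holds because $\ov\eta$ is genuine and we work inside the split group $\ov K\cong K\times\mu_2$, and that the disjointness of the double cosets is exactly the statement that the list in Proposition~\ref{prop:rep1} is a complete set of distinct representatives. As a consistency check, I would confirm this against the eigenvalue tables. For example, with $\eta=1$, $\mathcal U_0$ acts by $p^n$ on the constituent attached to $u_1$ and by $-p^{n-1}$ on the one attached to $u_2$. The corresponding traces must cancel against the dimensions of those constituents, which gives a relation that the later dimension count should satisfy.
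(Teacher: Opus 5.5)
Your proposal is correct and follows the paper's proof essentially verbatim: the paper uses the same basis of functions supported on single cosets $\ov{K_n}(g,1)$ and shows the diagonal coefficients $\pi_L(f)(\phi_g)((g,1))$ vanish because each supporting double coset is disjoint from $\ov{K_n}$, then extends to $\mathcal V_r$ and $\mathcal W_r$ by linearity. The only difference is that you write out the ``direct support argument'' (the integrand forces $\tilde g\in\ov{K_n}$), which the paper leaves implicit.
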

\begin{proof}
Choose representatives $(g,1)$ for the left cosets of $\ov{K_n}$ in
$\ov K$.  Let $\phi_g$ be supported on $\ov{K_n}(g,1)$ and normalized by
\[
 \phi_g(\tilde k(g,1))=\ov\eta(\tilde k),
 \qquad \tilde k\in\ov{K_n}.
\]
These functions form a basis of $I(n)$.  Since the supporting double coset
of each of $\mathcal U_0,\mathcal V_{r,1},\mathcal V_{r,\ld}$ is disjoint
from $\ov{K_n}$, a direct support argument gives
\[
\pi_L(\mathcal{V}_{r,1})(\phi_g)((g,1))
=
\pi_L(\mathcal{V}_{r,\ld})(\phi_g)((g,1))
=
\pi_L(\mathcal{U}_0)(\phi_g)((g,1))
=0.
\]
Thus $\pi_L(\mathcal{U}_0)$, $\pi_L(\mathcal{V}_{r,1})$ and
$\pi_L(\mathcal{V}_{r,\ld})$ are traceless. By linearity of trace, so are
$\pi_L(\mathcal{V}_r)$ and $\pi_L(\mathcal{W}_r)$.
\end{proof}
From Lemma \ref{trace-zer} and the above tables for $\mathcal{W}_r$ it follows that if $d_{v_k}=\dim(\Span(\pi_R(\ov{K})v_k))$ then $p^{n-r-1}\sqrt{p^*}d_{v_k}-p^{n-r-1}\sqrt{p^*}d_{v'_k}=0$ thus $d_{v_k}=d_{v'_k}$ for every $k\in\{1,\dots,n-1\}$. If $\eta\not\equiv 1$ the same holds for $d_{u_1}=d_{u_2}$ from the $\mathcal{U_0}$ column. If $\eta\equiv 1$ then observe that the system of equations we get is the same as the integral weight case with variables $d_{v_i},d_{v'_i}$ sharing the same coefficient. In particular for the elements $\mathcal{V}_r$ the eigenvalues are identical in both cases.
In more detail we obtain for $\eta\equiv 1$:
\[
\begin{aligned}
&p^nd_{u_1}-p^{n-1}d_{u_2}=0,\\
&\vdots\\
&p^{n-k-1}(p-1)\bigl(d_{u_1}+d_{u_2}+d_{v_1}+d_{v'_1}\bigr)\\
&\qquad{}+p^{n-k-1}(p-1)\bigl(\dots+d_{v_{k-1}}+d_{v'_{k-1}}\bigr)
-p^{n-k-1}(d_{v_k}+d_{v'_k})=0,\\
&\vdots\\
&d_{v_i}=d_{v'_i},\ \forall i\in\{1,\dots,n-1\}.
\end{aligned}
\]
And for $\eta\not\equiv 1$:
\[
\begin{aligned}
&p^{n-1}\sqrt{p^*}d_{u_1}-p^{n-1}\sqrt{p^*}d_{u_2}=0,\\
&\vdots\\
&p^{n-k-1}(p-1)\bigl(d_{u_1}+d_{u_2}+d_{v_1}+d_{v'_1}\bigr)\\
&\qquad{}+p^{n-k-1}(p-1)\bigl(\dots+d_{v_{k-1}}+d_{v'_{k-1}}\bigr)
-p^{n-k-1}(d_{v_k}+d_{v'_k})=0,\\
&\vdots\\
&d_{v_i}=d_{v'_i},\ \forall i\in\{1,\dots,n-1\}.
\end{aligned}
\]
Together with these trace equations, we use the dimension identity
\[
d_{u_1}+d_{u_2}
+\sum_{k=1}^{n-1}(d_{v_k}+d_{v'_k})
=\dim I(n)=p^{n-1}(p+1).
\]
Therefore we can conclude that for $\eta\equiv 1$: $d_{u_1}=1, d_{u_2}=p$ and $d_{v_k}=d_{v'_k}=p^{k-1}\frac{p^2-1}{2}$ and for $\eta\not\equiv 1$: $d_{u_1}=\frac{p+1}{2}=d_{u_2}$ and $d_{v_k}=d_{v'_k}=p^{k-1}\frac{p^2-1}{2}$.
\begin{cor}
The representation $I(n)$ is a sum of $2n$ irreducible representations given by:
\begin{align*}
S_1&=\Span(\pi_R(\ov{K})u_1) &&\text{with } \dim(S_1)=d_{u_1},\\
S_2&=\Span(\pi_R(\ov{K})u_2) &&\text{with } \dim(S_2)=d_{u_2},\\
T_k&=\Span(\pi_R(\ov{K})v_k) &&\text{with } \dim(T_k)=p^{k-1}\frac{p^2-1}{2},\\
T'_k&=\Span(\pi_R(\ov{K})v'_k) &&\text{with } \dim(T'_k)=p^{k-1}\frac{p^2-1}{2}.
\end{align*}
where $k\in\{1,\dots,n-1\}$, with dimensions as described above.
\end{cor}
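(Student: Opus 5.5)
The plan is to deduce the decomposition directly from the commutative structure of $H\cong\operatorname{End}_{\ov K}(I(n))$ and the eigenvector table. Since $I(n)$ is multiplicity free with exactly $2n$ irreducible constituents, $H$ is semisimple and commutative of dimension $2n$. It is therefore isomorphic to $\C^{2n}$, and its $2n$ primitive idempotents $e_1,\dots,e_{2n}$ correspond bijectively to the constituents. I will first check that the listed elements $u_1,u_2,v_k,v'_k$ are common eigenvectors of $\pi_L(H)$. This uses the relations of Corollary~\ref{cor:VW}, Proposition~\ref{prop:U} and the $\mathcal Y$-relations stated after it. For instance, $\mathcal W_r v_k=0$ for $r\ne k$ because $\mathcal W_r\mathcal W_k=0$ and $\mathcal W_r\mathcal Y_j\in\{0,p^{n-j}\mathcal W_r\}$ cancels in $p\mathcal Y_{k+1}-\mathcal Y_k$. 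I will then check that the corresponding characters of $H$ in the two tables are pairwise distinct. Distinct characters of a $2n$-dimensional commutative semisimple algebra force each $u_i,v_k,v'_k$ to be a nonzero multiple of a distinct primitive idempotent. Hence these $2n$ elements form a basis of $H$.

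Next I will identify each eigenvector with a constituent. Viewed as a vector of $I(n)^{\ov{K_n},\ov\eta}=H$, a multiple of the idempotent $e$ lies in the isotypic component $eI(n)$, which is irreducible. Thus $S_1=\Span(\pi_R(\ov K)u_1)$, and similarly $S_2,T_k,T'_k$, are exactly the irreducible summands. Their direct sum is $I(n)$, since the idempotents sum to $\mathcal I$ and the summands are pairwise non-isomorphic.

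For the dimensions I will use the trace argument already set up. On the summand generated by an eigenvector $e$, each $\pi_L(f)$ acts by the scalar $\chi_e(f)$. So $\operatorname{tr}\pi_L(f)=\sum_e \chi_e(f)d_e$, and by Lemma~\ref{trace-zer} this vanishes for $f=\mathcal U_0,\mathcal V_r,\mathcal W_r$. Combined with $\sum_e d_e=p^{n-1}(p+1)$, this gives a linear system. From the $\mathcal W_k$ column I will get $d_{v_k}=d_{v'_k}$. The $\mathcal U_0$ column gives $d_{u_2}=pd_{u_1}$ when $\eta=1$, and $d_{u_1}=d_{u_2}$ otherwise. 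The $\mathcal V_k$ columns, whose eigenvalues I read off from $\mathcal V_k=\mathcal Y_k-\mathcal Y_{k+1}$, then give a triangular recursion: $d_{v_k}+d_{v'_k}=(p-1)\bigl(d_{u_1}+d_{u_2}+\sum_{j<k}(d_{v_j}+d_{v'_j})\bigr)$. By induction this yields $d_{u_1}+d_{u_2}+\sum_{j\le k}(\cdots)=p^k(d_{u_1}+d_{u_2})$. The total-dimension equation then forces $d_{u_1}+d_{u_2}=p+1$, which gives the stated values $d_{v_k}=d_{v'_k}=p^{k-1}(p^2-1)/2$.

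The main obstacle is verifying the eigenvalue tables, in particular the $\mathcal U_0$ eigenvalues on $u_1,u_2$ in the case $\eta\ne1$. This needs $\mathcal U_0^2=\eta(-1)p^n\mathcal Y_1$ together with $\mathcal U_0\mathcal Y_1=p^{n-1}\mathcal U_0$. One also needs $(\sqrt{p^*})^2=\eta(-1)p$, which holds because $\epsilon_p^2=\chi_p(-1)$. Care with the normalization $\sqrt{p^*}=\epsilon_p\sqrt p$ is what makes the pairs $u_1,u_2$ and $v_k,v'_k$ give distinct characters. Once distinctness is established, the rest is linear algebra.
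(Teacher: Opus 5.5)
Your proposal is correct and follows the paper's route: the eigenvector tables for $\pi_L(H)$, the trace-zero Lemma~\ref{trace-zer} applied to $\mathcal U_0$, $\mathcal V_k$ and $\mathcal W_k$, and the total-dimension identity $\sum d=p^{n-1}(p+1)$, solved by the same triangular recursion. The only difference is that you spell out two things the paper takes from \cite[Proposition 3.17]{B-P}: you verify the eigenvalue tables from the relations, and you use distinct characters and the primitive idempotents of $H\cong\C^{2n}$ to show that each listed vector generates a distinct irreducible summand.
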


\section{Local Newforms in Half Integral Weight}

\subsection{Induced Representations}
Fix a nontrivial additive character $\psi$ of conductor $0$. Let $\mu$ be a
character of $\mathbb{Q}_p^\times$ of conductor exponent $n$. We first
define explicitly the genuine character used in the induction. For
$a\in\mathbb Q_p^\times$, $b\in\mathbb Q_p$, and
$\epsilon\in\{\pm1\}$, put
\[
\chi_\psi((h(a)x(b),\epsilon))
=\epsilon\gamma(a,\psi)^{-1}.
\]
The restriction of our cocycle to the torus is
$c(h(a),h(a'))=(a,a')_p$. Hence the identity
$\gamma(aa',\psi)=\gamma(a,\psi)\gamma(a',\psi)(a,a')_p$,
shows that $\chi_\psi$ is a genuine character of $B$; it is trivial on the
canonical lift of the upper unipotent subgroup. Define
\[
\widetilde\mu_\psi((h(a)x(b),\epsilon))
=\mu(a)\chi_\psi((h(a)x(b),\epsilon))
=\epsilon\mu(a)\gamma(a,\psi)^{-1}.
\]
We use normalized induction and set
\[
\pi_\psi(\mu)=\operatorname{Ind}_{B}^{\widetilde G}\widetilde{\mu}_\psi,
\]
with representation space $V(\mu)$. Thus $V(\mu)$ consists of the locally
constant functions $f:\widetilde G\to\mathbb C$ satisfying
\[
f((h(a)x(b),\epsilon)g)
=|a|\,\widetilde\mu_\psi((h(a)x(b),\epsilon))f(g)
=\epsilon|a|\mu(a)\gamma(a,\psi)^{-1}f(g).
\]
This is the standard genuine torus character and normalized induced model
used by Baruch--Mao, written there with a different Weil-index notation;
see \cite[pp.~228--229, 257, 264, 273]{B-Mao}.  It also agrees with the
normalization in \cite[\S3.2]{Ishimoto}.  Baruch--Mao and Ishimoto formulate
the construction in the $\sigma_0$-model.  Since $s_p(g)=1$ for every
$g\in B_0$, the isomorphism $\iota$ is the identity on the full inverse
image of $B_0$.  Therefore the genuine character, the factor $|a|$ coming
from normalized induction, and the displayed transformation law are
unchanged in our $c$-model. For brevity,
write
\[
\mu_B((h(a)x(b),\epsilon))
:=|a|\,\widetilde\mu_\psi((h(a)x(b),\epsilon)).
\]
Since $c(\psi)=0$, the Weil-factor identities in Section~2 give
$\gamma(u,\psi)=1$ for $u\in\mathbb Z_p^\times$. Consequently, on the
compact diagonal elements used in the support calculations below,
$\mu_B((h(u),\epsilon))=\epsilon\mu(u)$.

Now let $\eta$ be either the trivial character or the character
$\eta=\legendre{.}{p}$ as in the previous section, and let $c(\eta)$
denote its conductor exponent.  For
$m\geq c(\eta)$, use the same symbol for its genuine extension to
$\ov{K_m}$.
\begin{definition}
For $m\geq c(\eta)$, let
$V(\mu)_{\eta}^{K_m}=\{f\in V(\mu): f(g\tilde k)=f(g)\eta(\tilde k),\ \forall \tilde k\in \ov{K_m}\}$. We call the least such integer $m$ for which $V(\mu)_{\eta}^{K_m}\ne 0$ the $\eta$-conductor $c_{\eta}(\pi_\psi(\mu))$.
\end{definition}
For each $m$, the compact twisted Hecke algebra $H_m(\eta):=H(\ov K//\ov{K_m},\eta)$ acts on $V(\mu)_{\eta}^{K_m}$. We shall determine the required simultaneous eigenspaces explicitly. Fix again a quadratic non-residue $\lambda\in\mathbb{Z}_p$ as in the previous section.
\begin{lem}\label{lem:B-double-cosets}
The double cosets
\[
\begin{gathered}
 B\ov{K_m},\qquad
 B(w(1),1)\ov{K_m},\\
 B(y(p^r),1)\ov{K_m},\qquad
 B(y(\lambda p^r),1)\ov{K_m}
\end{gathered}
\]
where $r\in\{1,\ldots,m-1\}$ and $\lambda$ is a nonsquare modulo $p$, form a complete set of double coset representatives for
$B\backslash \widetilde G/\ov{K_m}$.
\end{lem}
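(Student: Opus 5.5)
We reduce the statement to a linear computation. The covering kernel $\mu_2$ is contained in both $B$ and $\ov{K_m}$. Hence the natural projection identifies $B\backslash\widetilde G/\ov{K_m}$ with $B_0\backslash G/K_m$. It therefore suffices to show that $I$, $w(1)$, $y(p^r)$ and $y(\lambda p^r)$ for $1\le r\le m-1$ form a complete, irredundant set of representatives for $B_0\backslash G/K_m$.

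The Iwasawa decomposition $G=B_0K$ gives $B_0\backslash G/K_m\cong (B_0\cap K)\backslash K/K_m$. Here $B_0\cap K$ is the upper triangular subgroup of $K$, which is $K_0(p^m)$ transposed. We then work with $K_m$-orbits on the right cosets, or equivalently with the left $(B_0\cap K)$-action on $K/K_m$, which is $\mathbb P^1(\Z/p^m\Z)$.

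Explicitly, $gK_m$ is determined by the "bottom row up to right action", i.e. by the line spanned by $(c,d)$ modulo $p^m$. The upper triangular group acts on $(c,d)$ by $(c,d)\mapsto (t^{-1}c,t^{-1}d)$ composed with the unipotent part. More cleanly, $B_0\cap K$ acts on the row vector $(c:d)\in\mathbb P^1(\Z/p^m)$ attached to $g$, so the orbits are parametrized as follows.

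1. If $d$ is a unit modulo $p$ and $c\equiv0$, we get valuation $\ord(c)=r$ with $1\le r\le m-1$, or $c\equiv 0\pmod{p^m}$. The latter is the identity coset.
2. If $c$ is a unit, we get the $w(1)$ coset.

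In the first case, $c/d=p^r\delta$ with $\delta$ a unit. The torus $h(t)$, $t\in\Z_p^\times$, rescales $\delta$ by $t^{\pm2}$, so only the square class of $\delta$ survives. It remains to check that the unipotent part $x(s)$ cannot move a unit between square classes or change $r$. For that I will mirror the computation in Lemma~\ref{lem:cosetdecomp}(2): $x(s)y(\delta p^r)=y(\delta p^r(1+s\delta p^r)^{-1})\cdot(\text{element of }B_0)$. The factor $(1+s\delta p^r)^{-1}$ lies in $1+p\Z_p$ and is therefore a square.

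Completeness follows by writing any $k\in K$ as either $b\,w(1)\,k'$ (when the lower-left entry is a unit, via the Bruhat-type factorization $k=x(a/c)\,h(\cdot)\,w(1)\,x(d/c)$) or $b\,y(c/d)$ (when $d$ is a unit). In the second case $y(c/d)$ is reduced modulo $K_m$ by truncating at $p^m$, and then adjusted by $h(t)$ to $\delta\in\{1,\lambda\}$.

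Distinctness uses two invariants of $B_0gK_m$ that we will check are well defined. The first is $\min(\ord(c/d),m)$, computed from the bottom row $(c,d)$ of any $g\in K$ with $d$ a unit, together with the analogous invariant $c$ unit; this distinguishes $w(1)$, $I$, and the different $r$. The second, for $1\le r\le m-1$, is the Legendre symbol of $p^{-r}c/d$. The bottom row transforms under $B_0$ on the left by a scalar $t^{-1}$ with $t$ a unit (which does not change $c/d$), and on the right by $K_m$ via $(c,d)\mapsto(ca'+dc',cb'+dd')$ with $c'\in p^m\Z_p$. A short check shows $c/d$ changes by a factor in $1+p\Z_p$ plus a term of valuation $\ge m$, which preserves both invariants. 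The main point requiring care is this invariance computation, especially confirming that no cocycle sign can merge the two lifts. That issue does not arise, since $\mu_2\subset B$ absorbs the sign.
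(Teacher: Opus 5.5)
Your proposal is correct and is essentially the paper's argument. The paper likewise works in the linear group and gets coverage from the Iwasawa decomposition plus Proposition~\ref{prop:rep1}, where you instead factor each $k\in K$ directly; it proves disjointness by the same lower-row valuation and square-class comparison that you package as invariants of $c/d$. Two small slips, neither affecting the conclusion: the bottom-row line determines $B_0g$ rather than $gK_m$; and right multiplication by $\begin{pmatrix}a'&b'\\c'&d'\end{pmatrix}\in K_m$ sends $c/d$ to $(c/d)\cdot a'/\bigl((c/d)b'+d'\bigr)$ plus a term of valuation $\ge m$, where this factor is $\equiv a'^2\pmod p$, a unit square but not in general an element of $1+p\Z_p$ (e.g.\ $h(u)$ gives $u^2$), which still preserves both invariants.
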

\begin{proof}
The fact that these cosets cover $\widetilde G$ follows from the Iwasawa decomposition $\widetilde G=B\ov K$ and Proposition~\ref{prop:rep1}.

To show disjointness, assume $y(t_1)k_0y(-t_2)\in B_0$ for some $\begin{pmatrix}
       a & b \\
       c & d
   \end{pmatrix}=k_0\in K_m$. A direct calculation gives
\[
t_2a+c-t_1t_2b-t_1d=0.
\]
Write $t_i=\ell_i p^{r_i}$ with $\ell_i\in\mathbb Z_p^\times$ and
$1\leq r_i<m$. If $r_1\ne r_2$, the term having the smaller valuation
cannot be cancelled by any of the other three terms. Hence $r_1=r_2=r$.
After division by $p^r$ and reduction modulo $p$, the determinant condition
gives
\[
\ell_2a-\ell_1a^{-1}\equiv0\pmod p.
\]
Thus $\ell_1$ and $\ell_2$ have the same square class modulo $p$. The
remaining comparisons, involving $1$, $w(1)$, and $y(t)$, follow by the same
lower-row calculation. This proves disjointness.
\end{proof}
\begin{prop} \label{prop:forms}
    Assume $n>1$. If $\mu(-1)=\eta(-1)$, then $V(\mu)_{\eta}^{K_{2n}}$ is the $2$-dimensional space spanned by the functions
    $$f_1(g)=\begin{cases}
        \mu_B(b)\eta(k_0) & \text{if } g=b(y(p^n),1)k_0 \text{, for some } b\in B, k_0\in \ov{K_{2n}}\\
        0 & \text{otherwise}
    \end{cases}$$
    $$f_{\lambda}(g)=\begin{cases}
        \mu_B(b)\eta(k_0) & \text{if } g=b(y(\lambda p^n),1)k_0 \text{, for some } b\in B, k_0\in \ov{K_{2n}}\\
        0 & \text{otherwise}
    \end{cases}$$
    Consequently $c_{\eta}(\pi_\psi(\mu))=2n$.
\end{prop}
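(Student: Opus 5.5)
The natural approach is Mackey theory on the double coset space $B\backslash\widetilde G/\ov{K_m}$ described in Lemma~\ref{lem:B-double-cosets}. A vector $f\in V(\mu)^{K_m}_\eta$ is determined by its values on the representatives $(I,1)$, $(w(1),1)$, $(y(p^r),1)$, $(y(\lambda p^r),1)$ with $1\le r\le m-1$. Such a value $f(\gamma)$ may be nonzero, and then determines $f$ on $B\gamma\ov{K_m}$, if and only if the compatibility condition
\[
\mu_B(\tilde b)=\eta(\tilde k)\qquad\text{whenever }\tilde b\gamma=\gamma\tilde k,\ \tilde b\in B\cap\gamma\ov{K_m}\gamma^{-1},
\]
holds. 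So I would first show that $\dim V(\mu)^{K_m}_\eta$ equals the number of representatives $\gamma$ satisfying this condition. Each such $\gamma$ contributes the function supported on $B\gamma\ov{K_m}$ given by $\mu_B(b)\eta(k_0)$, exactly as in the definitions of $f_1$ and $f_\lambda$. Smoothness is automatic, since each double coset is open because $\ov{K_m}$ is open.

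Next I compute the stabilizers. All elements involved lie in $\ov K$ or in $B$, and on the relevant compact pieces the cocycle is trivial (the splitting over $K$, together with $s_p=1$ on $B_0$). Hence $\mu_B((h(u)x(s),\epsilon))=\epsilon\mu(u)$ for units $u$, and the $\epsilon$'s cancel against the genuine $\eta$.

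For $\gamma=I$ the group is $B\cap\ov{K_m}$, and the condition reads $\mu(u)=\eta(u^{-1})=\eta(u)$ for all $u\in\Z_p^\times$. This fails because $\mu$ has conductor $n\ge2>c(\eta)$.

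For $\gamma=w(1)$ the group is $w(1)^{-1}(B\cap w\ov{K_m}w^{-1})w(1)$, consisting of lower-triangular elements of $K_m$ with diagonal $(u^{-1},u)$. This gives $\mu(u^{-1})=\eta(u)$, which again fails.

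For $\gamma=y(tp^r)$ with $t\in\{1,\lambda\}$, I would write $b=h(u)x(s)$ and require $y(-tp^r)h(u)x(s)y(tp^r)\in K_m$. As in the proof of Lemma~\ref{lem:cosetdecomp}, the lower-left entry forces
\[
tp^r(u^{-1}-u)-t^2p^{2r}s\equiv0\pmod{p^m}.
\]
The lower-right entry of the conjugate is $u^{-1}-tp^rs$, so $\eta(\tilde k)=\eta(u)$ (since $\eta$ is quadratic modulo $p$), and the condition becomes $\mu(u)=\eta(u)$ on all admissible $u$. Since $s\in\Z_p$ is free and $2r\ge r$ is allowed, the admissible $u$ include all $u$ with $u-u^{-1}\in p^{\min(m-r,\,r)}\Z_p$, hence all of $1+p^{\min(r,m-r)}\Z_p$. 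Choosing $s$ suitably handles the remaining $u$; this is the same trick of absorbing by $x(s)$ used in Lemma~\ref{lem:cosetdecomp}. The other admissible $u$ are $\equiv\pm1$ modulo that power, and for those $\mu(-1)=\eta(-1)$ is exactly what is needed.

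So the condition holds precisely when $\mu\eta^{-1}$ is trivial on $1+p^{\min(r,m-r)}\Z_p$ and on $-1$. Since $\mu\eta$ has conductor exactly $n$ (as $n\ge2$), this requires $\min(r,m-r)\ge n$. That is possible only if $m\ge2n$, and for $m=2n$ it forces $r=n$.

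Putting this together: for $m<2n$ no coset survives, so the space is $0$. For $m=2n$ exactly the two cosets $y(p^n)$ and $y(\lambda p^n)$ survive, giving $f_1$ and $f_\lambda$, with linear independence coming from disjoint supports. Hence $c_\eta(\pi_\psi(\mu))=2n$.

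The main obstacle is the exact determination of the set of admissible $u$ for $\gamma=y(tp^r)$. One must show it is exactly $\pm(1+p^{\min(r,m-r)}\Z_p)$, that is, check both that every such $u$ admits a suitable $s$ and that no larger set occurs. This requires a careful valuation analysis of $p^r(u^{-1}-u)\equiv tp^{2r}s \pmod{p^m}$, split into the cases $r\ge m/2$ and $r<m/2$.
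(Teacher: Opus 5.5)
Your proposal is correct and is essentially the paper's argument. Both run a support (Mackey) analysis over the $B$--$\ov{K_m}$ double cosets of Lemma~\ref{lem:B-double-cosets}. Both discard the cosets of $I$ and $w(1)$ because $c(\mu)=n>1\ge c(\eta)$, and both show that a $y(\delta p^r)$-coset survives only if $r\ge n$ and $m-r\ge n$, leaving $\mu(-1)=\eta(-1)$ as the sole condition at $m=2n$, $r=n$. The only difference is that the paper gets the two inequalities by testing two specific stabilizer elements, whereas you compute the full admissible set $\pm(1+p^{\min(r,m-r)}\Z_p)$; this is a harmless and slightly more complete variant.
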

\begin{proof}
We give the support argument at an arbitrary level $K_M$. By
Lemma~\ref{lem:B-double-cosets}, a vector in $V(\mu)_\eta^{K_M}$ is a sum of
functions supported on the displayed $B$--$K_M$ double cosets. A nonzero
function supported on $Bg\ov{K_M}$ exists precisely when the inducing
character and the right $\eta$-type agree on
$B\cap g\ov{K_M}g^{-1}$. Since all the representatives lie in $K$ and the cover
splits over $K$, no additional cocycle factor occurs in this check.

The cosets $B\ov{K_M}$ and $B(w(1),1)\ov{K_M}$ cannot support such a
function. Indeed, applying the compatibility condition to the diagonal
elements $h(u)$, $u\in\mathbb Z_p^\times$, would give respectively
$\mu|_{\mathbb Z_p^\times}=\eta$ or
$\mu^{-1}|_{\mathbb Z_p^\times}=\eta$. Either equality contradicts
$c(\mu)=n>1$, since $c(\eta)\leq1$.

It remains to examine $g=(y(\delta p^k),1)$, where
$\delta\in\{1,\lambda\}$ and $1\leq k<M$. Writing
$b=(\begin{pmatrix}
        \alpha & \beta \\
        0 & \alpha^{-1}
    \end{pmatrix},1)\in B$, the condition
$g^{-1}bg\in\ov{K_M}$ is equivalent, on the linear group, to
\[
\begin{pmatrix}
        \alpha+\beta\delta p^k & \beta \\
        p^k(\alpha^{-1}-\alpha-\beta\delta p^k) & \alpha^{-1}-\beta\delta p^k
    \end{pmatrix}\in K_M.
\]
Thus $\beta\in\mathbb Z_p$, $\alpha\in\mathbb Z_p^\times$, and
\[
\alpha^{-1}-\alpha-\beta\delta p^k\equiv0\pmod{p^{M-k}}.
\]
For such an element the support compatibility is
\[
\mu(\alpha^{-1})
=\eta(\alpha^{-1}-\beta\delta p^k).
\]
If $k<n$, choose $\alpha\in1+p^k\mathbb Z_p$ and
$\beta=(\alpha^{-1}-\alpha)/(\delta p^k)$. The support compatibility then
forces $\mu$ to be trivial on $1+p^k\mathbb Z_p$, contrary to
$c(\mu)=n$. If $M-k<n$, take $\beta=0$ and
$\alpha\in1+p^{M-k}\mathbb Z_p$; the same compatibility forces $\mu$ to be
trivial on $1+p^{M-k}\mathbb Z_p$, again a contradiction. Therefore an
$y$-coset supporting a nonzero type function must satisfy
\[
k\geq n,\qquad M-k\geq n.
\]
In particular, no such coset occurs when $M<2n$, proving
$V(\mu)_\eta^{K_M}=0$ for every $M<2n$.

At $M=2n$, these two inequalities force $k=n$. In this case the preceding
congruence gives
$\alpha^2\equiv1\pmod{p^n}$. Since $p$ is odd, this means
$\alpha\in\pm(1+p^n\mathbb Z_p)$. As $\mu$ is trivial on
$1+p^n\mathbb Z_p$ and $c(\eta)\leq1$, the support compatibility is
equivalent to $\mu(-1)=\eta(-1)$. Hence precisely the two cosets represented
by $y(p^n)$ and $y(\lambda p^n)$ support nonzero type functions, namely
$f_1$ and $f_\lambda$. They are linearly independent, and each supporting
space is one-dimensional. This proves both the dimension assertion and
$c_\eta(\pi_\psi(\mu))=2n$.
\end{proof}
\begin{remark}
Ishimoto independently obtained the same conductor and dimension statement
in \cite[Theorem~4.1]{Ishimoto}, using Frobenius reciprocity and Mackey
decomposition.  He also describes the corresponding functions supported on
the two square-class double cosets in \cite[Corollary~4.3(4)]{Ishimoto}; in
the notation used here, these are the two support lines generated by $f_1$
and $f_\lambda$.  The proof above is a direct support computation.
\end{remark}
\begin{remark}
Suppose that $n=1$.  In the cancelling case
$\eta=\mu^{\pm1}|_{\mathbb Z_p^\times}$, the quadraticity of $\eta$ implies
that both $\mu|_{\mathbb Z_p^\times}$ and
$\mu^{-1}|_{\mathbb Z_p^\times}$ equal $\eta$.  Consequently
$V(\mu)_\eta^{K_1}$ is two-dimensional, spanned by the standard type
functions supported on $B\ov K_1$ and $B(w(1),1)\ov K_1$, and
$c_\eta(\pi_\psi(\mu))=1$.  In the remaining case,
$c(\eta\mu)>0$ and $c(\eta\mu^{-1})>0$; then
$V(\mu)_\eta^{K_1}=0$, while the two functions supported on
$B(y(p),1)\ov K_2$ and $B(y(\lambda p),1)\ov K_2$ span
$V(\mu)_\eta^{K_2}$.  Thus the $\eta$-conductor is $2$, and this case is
treated in Proposition~\ref{prop:level1}; compare
\cite[Theorem~4.1 and Corollary~4.3]{Ishimoto}.
\end{remark}
We will now use the results on eigenvalues from the previous section to identify the eigenvalues of the newforms inside this two-dimensional space. On $\mathbb Z_p^\times$ we write $\overline\mu=\mu^{-1}$; since the restriction of $\mu$ to $\mathbb Z_p^\times$ has finite order, this is also its complex conjugate.
\begin{prop} \label{prop:values}
   Let $\bar{g}=(g,1)\in \widetilde G$. For $n>1$ we obtain the following equalities:
    $$\mathcal{V}_{2n-1,\kappa}(f_{\kappa})(\bar{y}(\delta p^n))=\begin{cases}
        \frac{-1\pm \sqrt{p^*}}{2} & \text{if } \delta=\kappa \\
        0 & \text{if } \delta\ne\kappa
    \end{cases}$$
    $$\mathcal{V}_{2n-1,\beta}(f_{\kappa})(\bar{y}(\delta p^n))=\begin{cases}
        \frac{-1\mp \sqrt{p^*}}{2} & \text{if } \delta=\kappa \\
        0 & \text{if } \delta\ne\kappa
    \end{cases}$$ where $\beta\ne\kappa$ in the second case and $\beta,\kappa,\delta\in\{1,\lambda\}$. The signs depend on $\mu$.
\end{prop}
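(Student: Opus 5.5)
We will compute $\mathcal V_{2n-1,\beta}(f_\kappa)$ at $\bar y(\delta p^n)$ by writing the Hecke action as a finite sum over single cosets, as in Lemma~\ref{lem:rel1}. Acting on vectors of $(\ov{K_{2n}},\eta)$-type, and with level $m=2n$, so that $r=2n-1$ and $n-r=1$, the right coset decomposition of Lemma~\ref{lem:cosetdecomp}(2) gives
\[
\mathcal V_{2n-1,\beta}(f)(\tilde g)=\sum_{u\in\Z_p^*/\sqrt{1+p\Z_p}} \ov\eta(\cdot)\,f\bigl(\tilde g\,(h(u)^{-1}y(-\beta p^{2n-1})\cdots)\bigr).
\]
This is a sum over $(p-1)/2$ values of $u$, namely the square classes of $\mathbb F_p^\times$ modulo $\pm1$. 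We will first fix the normalization so that $\pi(\mathcal V)f(x)=\int \mathcal V(k)f(xk)\,dk$, which is consistent with the left action $\pi_L$ used for the eigenvalue tables. Then, for each $u$, we form the product
\[
y(\delta p^n)\,h(u)\,y(\beta p^{2n-1})
\qquad\text{(or its inverse-ordered variant)},
\]
and decide which $B$--$\ov{K_{2n}}$ double coset of Lemma~\ref{lem:B-double-cosets} it lies in.

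The key matrix step uses $h(u)y(t)=y(u^{-2}t)h(u)$. This gives
\[
y(\delta p^n)h(u)y(\beta p^{2n-1})=h(u)\,y\bigl(p^n(\delta u^{-2}+\beta u^{-2}p^{n-1})\bigr)\cdot\ldots
\]
More carefully, we will use the conjugation $y(\delta p^n)h(u)=h(u)y(\delta u^2p^n)$, up to the orientation of the conventions. The element then becomes $h(u)\,y\bigl(p^n(\delta u^{2}+\beta p^{n-1})\bigr)$. Since $f_\kappa$ is supported on $B\,y(\kappa p^n)\ov{K_{2n}}$, the term is nonzero only when the square class of $\delta u^2+\beta p^{n-1}$ modulo $p$ equals $\kappa$.

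This does not directly give the stated answer. The vanishing for $\delta\ne\kappa$ together with a Gauss-sum value suggests that the relevant representative has a lower-left entry of the form $p^{n-1}(\ldots)$ rather than $p^n(\ldots)$. It therefore has to be moved back to $y(\kappa p^n)$ by an Iwasawa/Bruhat rewriting. Concretely, write $y(c)=x(c^{-1})\,h(-c^{-1})\,w(1)\,x(c^{-1})$, or use the identity $y(s)y(t)=y(s+t)$ together with a factorization $b\,y(\kappa p^n)k$ with $b\in B$ having $\alpha\in\Z_p^\times$ and $\beta$ of valuation $-1$. The $B$-part $b=h(a)x(\beta')$ then contributes $\mu(a)$ and the unipotent part contributes nothing. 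The essential point is that $k$ contains a $K_{2n}$-element with lower-right entry $1+(\text{unit})\,x\,p^{n-1}$-type. Because $c(\mu)=n$, the character $\mu$ evaluated on $1+p^{n-1}x$ defines a nontrivial additive character $x\mapsto\mu(1+p^{n-1}x)=\psi(a_\mu x/p)$ of $\mathbb F_p$. The sum over $u$ then turns into
\[
\tfrac12\sum_{t\in\mathbb F_p^\times}\psi(a_\mu\beta t^2/p)\quad\text{over }t^2\text{ in a fixed square class},
\]
restricted by the support condition.

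We evaluate this with the Gauss-sum normalization \eqref{eq:weil-gauss-normalization}. Summing $\psi(ct^2/p)$ over $t\in\mathbb F_p^\times$ gives $\chi_p(c)\epsilon_p\sqrt p-1$, and halving gives $\tfrac{-1\pm\sqrt{p^*}}2$. The sign is $\chi_p(a_\mu\beta)$, so it depends on $\mu$ through $a_\mu$. Replacing $\beta$ by the other square class flips $\chi_p$, which produces the $\mp$ in the second formula. The factors $\ov\eta(u)$ and $\mu(u)$ from $h(u)$ must be checked to cancel. This uses $\mu(-1)=\eta(-1)$, $\gamma(u,\psi)=1$, and the fact that $u$ is defined only up to $\pm$ and up to $1+p\Z_p$, on which $\mu$ is trivial to order $p^{n-1}$ only. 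That subtlety is why the effective variable is $u^2$ modulo $p$.

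The main obstacle is this bookkeeping: finding the explicit $b\in B$ and $k\in\ov{K_{2n}}$ that rewrite each term, identifying the additive character induced by $\mu$ on $1+p^{n-1}\Z_p$, and confirming that no cocycle factor appears. The last point holds because all elements involved lie in $K$, over which the cover splits. We will also check the result against the eigenvalues $\pm p^{m-r-1}\sqrt{p^*}$ of $\mathcal W_{2n-1}$ from the tables. The difference of the two values should be $\pm\sqrt{p^*}$ and their sum $-1$, matching $\mathcal V_{2n-1}$ having eigenvalue $-1=-p^{0}$ on $\mathcal W$-eigenvectors by Corollary~\ref{cor:VW}(5). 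This gives an independent consistency check that fixes the normalization.
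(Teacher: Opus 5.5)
Your opening steps match the paper's proof. The right-coset sum with weight $\eta(u)$ is the same. So is the identity $y(\delta p^n)h(u)y(\beta p^{2n-1})=h(u)\,y(p^n\sigma)$ with $\sigma=\delta u^2+\beta p^{n-1}$. Your vanishing for $\delta\ne\kappa$ via Lemma~\ref{lem:B-double-cosets} is also correct.

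\textbf{The gap: the nonzero value for $\delta=\kappa$.} You say this form ``does not directly give the stated answer'' and turn to a Bruhat rewriting. That rewriting is unnecessary, and the two concrete claims you make about it are false.
\begin{itemize}
\item \emph{No $B$-element with non-integral upper-right entry.} If $h(u)y(p^n\sigma)=b\,y(\kappa p^n)k$ with $k\in K_{2n}$, then the upper-right entry of $k$ equals that of $b^{-1}h(u)$. Hence $b$ must have integral upper-right entry, not one of valuation $-1$.
\item \emph{No root of unity from the $\ov{K_{2n}}$-component.} Only $\eta$ is evaluated on that component, and $\eta$ is trivial on $1+p\Z_p$, so no Gauss sum can arise there.
\end{itemize}

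\textbf{The missing step is a diagonal factorization.} Take $a=u\sqrt{1+\frac{\beta}{\kappa}p^{n-1}u^{-2}}$, with the square root in $1+p^{n-1}\Z_p$. Then $\kappa a^2\equiv\sigma\pmod{p^n}$ and $a\equiv u\pmod p$, and
\[
y(-\kappa p^n)h(a)y(p^n\sigma)=y\bigl(p^n(a^{-2}\sigma-\kappa)\bigr)h(a)\in K_{2n}.
\]
So $h(u)y(p^n\sigma)=h(u/a)\,y(\kappa p^n)\,k$, where $k$ has lower-right entry $a^{-1}$. The term is therefore $\mu(u/a)\eta(a^{-1})\eta(u)=\bar\mu\bigl(1+\tfrac{\beta}{2\kappa}p^{n-1}u^{-2}\bigr)$.

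The cancellations are $\mu(u)$ against $\mu(a)^{-1}$, and $\eta(u)$ against $\eta(a^{-1})$. It is not $\mu(u)$ against $\eta(u)$; that is impossible since $c(\mu)=n\ge2>1\ge c(\eta)$. The hypothesis $\mu(-1)=\eta(-1)$ enters only through the well-definedness of $f_\kappa$, i.e.\ independence of the sign of $a$.

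\textbf{The sign is wrong.} With $\zeta=\bar\mu(1+\frac12p^{n-1})$, the sum over $u$ becomes $\frac12\sum_{t\in\mathbb F_p^\times}\zeta^{(\beta/\kappa)t^2}=\frac12\bigl(\chi_p(\beta\kappa)\,g_\zeta(1;p)-1\bigr)$. The sign is therefore $\chi_p(\beta\kappa)$ times a sign depending only on $\mu$, not $\chi_p(a_\mu\beta)$ as you assert. This is not cosmetic.
\begin{itemize}
\item With your sign, $\mathcal V_{2n-1,1}(f_1)(\bar y(p^n))$ and $\mathcal V_{2n-1,\lambda}(f_\lambda)(\bar y(\lambda p^n))$ would have opposite signs, contradicting the statement.
\item $\mathcal W_{2n-1}$ would then act by a single scalar on both $f_1$ and $f_\lambda$, contradicting the eigenvalue table that follows the proposition.
\end{itemize}
Your consistency check (sum $-1$, difference $\pm\sqrt{p^*}$) treats one $\kappa$ at a time, so it cannot detect this error.
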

\begin{proof}
    For $\beta\in\{1,\lambda\}: \mathcal{V}_{2n-1,\beta}(f)(g)=\sum\limits_{u\in\Z_p^\times/\sqrt{1+p\Z_p}}f(g\bar{h}(u)\bar{y}(\beta p^{2n-1}))\eta(u)$. 
\par\smallskip\noindent
Now taking $g=\bar{y}(\delta p^n)$ and $f=f_{\kappa}$ the argument in the summation becomes
\[
\begin{pmatrix}
        1 & 0 \\
        \delta p^n & 1
    \end{pmatrix}\begin{pmatrix}
        u & 0 \\
        0 & u^{-1}
    \end{pmatrix}\begin{pmatrix}
        1 & 0 \\
        \beta p^{2n-1} & 1
    \end{pmatrix}=\begin{pmatrix}
        u & 0 \\
        0 & u^{-1}
    \end{pmatrix}\begin{pmatrix}
        1 & 0 \\
        \delta p^nu^2+\beta p^{2n-1} & 1
    \end{pmatrix} .
\]
    
\par\smallskip\noindent
From a straightforward calculation it follows that there exists a matrix $\begin{pmatrix}
        a & b \\
        0 & a^{-1}
    \end{pmatrix}\in B_0$ such that $\begin{pmatrix}
        1 & 0 \\
        -\kappa p^n & 1
    \end{pmatrix}\begin{pmatrix}
        a & b \\
        0 & a^{-1}
    \end{pmatrix}\begin{pmatrix}
        1 & 0 \\
        \delta p^nu^2+\beta p^{2n-1} & 1
    \end{pmatrix}\in K_{2n}$ if and only if $\delta u^2+\beta p^{n-1}\equiv\kappa a^2\pmod{p^n}, b\in\Z_p$ and $a\in\Z_p^\times$. 
\par\smallskip\noindent
This relation has a solution if and only if $\delta=\kappa$.  In this case
we choose the square root by setting
$$
a=u\sqrt{1+\frac{\beta}{\kappa}p^{n-1}u^{-2}},
\qquad
\sqrt{1+\frac{\beta}{\kappa}p^{n-1}u^{-2}}
\in1+p^{n-1}\mathbb Z_p.
$$
This is the unique choice for which $a\equiv u\pmod p$.  Since
$c(\eta)\leq1$, it follows that $\eta(a)=\eta(u)$.
\par\smallskip\noindent
First let $\kappa=\delta=\beta$, from which we obtain
    $$\mathcal{V}_{2n-1,\kappa}(f_{\kappa})(\bar y(\kappa p^n))=\sum\limits_{u\in\Z_p^\times/\sqrt{1+p\Z_p}}\bar{\mu}(\frac{\sqrt{u^2+p^{n-1}}}{u})\eta(\sqrt{u^2+p^{n-1}})\eta(u).$$ With the choice just made, $\eta(\sqrt{u^2+p^{n-1}})=\eta(u)$ and $\frac{\sqrt{u^2+p^{n-1}}}{u}=\sqrt{1+p^{n-1}u^{-2}}=1+\frac{1}{2}p^{n-1}u^{-2}\pmod{p^n}$, so the last sum evaluates to $\sum\limits_{u\in\Z_p^\times/\sqrt{1+p\Z_p}}\bar{\mu}(1+\frac{1}{2}p^{n-1}u^{-2})$. 
\par\smallskip\noindent
It is easy to see that $\bar{\mu}(1+\frac{1}{2}p^{n-1})$ is a primitive $p$-th root of unity $\zeta$ since $\bar{\mu}(1+\frac{1}{2}p^{n-1})^m=\bar{\mu}(1+\frac{1}{2}p^{n-1}m)$ and this is equal to $1$ if and only if $1+\frac{1}{2}p^{n-1}m\in 1+p^n\Z_p \iff m\in p\Z_p$ (recall that $(1+\frac{1}{2}p^{n-1})^m=1+m\frac{1}{2}p^{n-1}\pmod{p^n}$ by the Taylor expansion). Write
\[
g_\zeta(1;p):=\sum_{x\in\mathbb F_p}\zeta^{x^2};
\]
for this quadratic Gauss sum, so $g_\zeta(1;p)=\pm\sqrt{p^*}$ according to the chosen primitive $p$-th root $\zeta$.  Reindexing by $u\mapsto u^{-1}$, which permutes the quotient, we then get that
\begin{align*}
\mathcal{V}_{2n-1,\kappa}(f_{\kappa})(\bar{y}(\kappa p^n))
&=\sum\limits_{u\in\Z_p^\times/\sqrt{1+p\Z_p}}\zeta^{u^2}
=\frac{1}{2}\sum\limits_{u\in\Z_p^\times/(1+p\Z_p)}\zeta^{u^2}\\
&=\frac{g_\zeta(1;p)-1}{2}=\frac{-1\pm\sqrt{p^*}}{2}.
\end{align*} 
\par\smallskip\noindent
In the same way for $\beta\ne\kappa=\delta$ we obtain $$\mathcal{V}_{2n-1,\beta}(f_{\kappa})(\bar{y}(\kappa p^n))=\frac{1}{2}\sum\limits_{u\in\Z_p^\times/(1+p\Z_p)}\zeta^{\frac{\beta}{\kappa}u^2}=\frac{-1-g_\zeta(1;p)}{2}=\frac{-1\mp\sqrt{p^*}}{2}.$$ 
\par\smallskip\noindent
Notice that in all of our calculations we remain inside $K$ and $\widetilde G$ splits over $K$, so the cocycle component plays no part in the computation.
\end{proof}
\begin{cor}
    The functions $f_1,f_{\lambda}$ defined in Proposition \ref{prop:forms} are the common eigenvectors in $V(\mu)_{\eta}^{K_{2n}}$ of the operators $\mathcal{Y}_{2n-1}$ and $\mathcal{W}_{2n-1}$ from the previous section. The forms and their corresponding eigenvalues are given in the table below.
\end{cor}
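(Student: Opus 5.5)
The plan is to compute the action of $\mathcal{V}_{2n-1,1}$ and $\mathcal{V}_{2n-1,\lambda}$ on the basis $\{f_1,f_\lambda\}$ of $V(\mu)_\eta^{K_{2n}}$, and then read off the action of $\mathcal{Y}_{2n-1}$ and $\mathcal{W}_{2n-1}$. The space is two-dimensional by Proposition~\ref{prop:forms}, and it is stable under $H_{2n}(\eta)$. Each basis function $f_\kappa$ is supported on the single double coset $B(y(\kappa p^n),1)\ov{K_{2n}}$. Hence any vector $F$ in this space is determined by the two values $F(\bar y(p^n))$ and $F(\bar y(\lambda p^n))$, and we have $F=F(\bar y(p^n))f_1+F(\bar y(\lambda p^n))f_\lambda$.

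First apply Proposition~\ref{prop:values}. For $\beta,\kappa\in\{1,\lambda\}$, the function $\mathcal{V}_{2n-1,\beta}(f_\kappa)$ vanishes at $\bar y(\delta p^n)$ whenever $\delta\neq\kappa$. Its value at $\bar y(\kappa p^n)$ is $\frac{-1\pm\sqrt{p^*}}{2}$ if $\beta=\kappa$, and $\frac{-1\mp\sqrt{p^*}}{2}$ if $\beta\ne\kappa$. Therefore
\[
\mathcal{V}_{2n-1,\kappa}f_\kappa=\tfrac{-1\pm\sqrt{p^*}}{2}f_\kappa,
\qquad
\mathcal{V}_{2n-1,\beta}f_\kappa=\tfrac{-1\mp\sqrt{p^*}}{2}f_\kappa\quad(\beta\ne\kappa).
\]
So $f_1$ and $f_\lambda$ are simultaneous eigenvectors of all the $\mathcal{V}_{2n-1,\beta}$.

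Next form the combinations. For $\mathcal{V}_{2n-1}=\mathcal{V}_{2n-1,1}+\mathcal{V}_{2n-1,\lambda}$ the two eigenvalues add to $-1$. Since $\mathcal{Y}_{2n-1}=\mathcal I+\mathcal V_{2n-1}$, the operator $\mathcal{Y}_{2n-1}$ acts by $0$ on both $f_1$ and $f_\lambda$. For $\mathcal{W}_{2n-1}=\mathcal{V}_{2n-1,1}-\mathcal{V}_{2n-1,\lambda}$ the eigenvalue on $f_1$ is $\pm\sqrt{p^*}$, with the sign coming from $g_\zeta(1;p)$. On $f_\lambda$ the roles of the two operators are interchanged, so the eigenvalue on $f_\lambda$ is the opposite, $\mp\sqrt{p^*}$.

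As a consistency check, these eigenvalues agree with the $v_{m-1},v'_{m-1}$ rows of the eigenvalue tables with $m=2n$. They also agree with Corollary~\ref{cor:VW}(10): $\mathcal{W}_{2n-1}^3=p^*\mathcal{W}_{2n-1}$ forces the $\mathcal W$-eigenvalues to lie in $\{0,\pm\sqrt{p^*}\}$. Because the two eigenvalues $\pm\sqrt{p^*}$ are distinct, $f_1$ and $f_\lambda$ are, up to scalars, the only common eigenvectors in this space.

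The main obstacle is keeping the signs straight. One has to pin down which sign of $g_\zeta(1;p)=\pm\sqrt{p^*}$ occurs, which depends on $\zeta=\bar\mu(1+\tfrac12p^{n-1})$, and confirm that the substitution $u\mapsto (\beta/\kappa)u$ flips it exactly for the opposite square class. This holds because $\sum_x\zeta^{\lambda x^2}=-\sum_x\zeta^{x^2}$ for a nonsquare $\lambda$. The last step is to record the resulting eigenvalues in the table.
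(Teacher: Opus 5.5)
Your proposal is correct and is essentially the paper's argument: you read off the $\mathcal V_{2n-1,\beta}$-eigenvalues of $f_1,f_\lambda$ from Proposition~\ref{prop:values} and combine them through $\mathcal Y_{2n-1}=\mathcal I+\mathcal V_{2n-1,1}+\mathcal V_{2n-1,\lambda}$ and $\mathcal W_{2n-1}=\mathcal V_{2n-1,1}-\mathcal V_{2n-1,\lambda}$. Your extra remarks only spell out what the paper leaves implicit, namely that a vector of $V(\mu)_\eta^{K_{2n}}$ is determined by its values at $\bar y(p^n)$ and $\bar y(\lambda p^n)$, and that the distinct eigenvalues $\pm\sqrt{p^*}$ make $f_1,f_\lambda$ the only common eigenvectors up to scalars; also, you need not decide which sign of $g_\zeta(1;p)$ occurs, since the table records both cases.
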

\begin{proof}
Proposition~\ref{prop:values} shows directly that each of $f_1$ and
$f_{\lambda}$ is an eigenvector for both $\mathcal V_{2n-1,1}$ and
$\mathcal V_{2n-1,\lambda}$. Since at the terminal index
\[
\mathcal Y_{2n-1}
=
I+\mathcal V_{2n-1,1}+\mathcal V_{2n-1,\lambda},
\qquad
\mathcal W_{2n-1}
=
\mathcal V_{2n-1,1}-\mathcal V_{2n-1,\lambda},
\]
the displayed eigenvalues in Proposition~\ref{prop:values} give
$\mathcal Y_{2n-1}f_\kappa=0$ and
$\mathcal W_{2n-1}f_\kappa=\pm\sqrt{p^*}\,f_\kappa$, with opposite signs for
$\kappa=1$ and $\lambda$.
\end{proof}
\begin{center}
\begin{tabular}{cc}
\begin{tabular}{ |c|c|c| } 
 \hline
   Case 1 & $f_1$ & $f_{\lambda}$ \\ 
 \hline
 $\mathcal{V}_{2n-1,1}$ & $\frac{-1\pm\sqrt{p^*}}{2}$ & $\frac{-1\mp\sqrt{p^*}}{2}$  \\ 
 \hline 
 $\mathcal{V}_{2n-1,\lambda}$ & $\frac{-1\mp\sqrt{p^*}}{2}$  & $\frac{-1\pm\sqrt{p^*}}{2}$ \\ 
 \hline
\end{tabular}

& 

\begin{tabular}{ |c|c|c| } 
 \hline
   Case 2 & $f_1$ & $f_{\lambda}$ \\ 
 \hline
 $\mathcal{V}_{2n-1,1}$ & $\frac{-1\mp\sqrt{p^*}}{2}$ & $\frac{-1\pm\sqrt{p^*}}{2}$  \\ 
 \hline 
 $\mathcal{V}_{2n-1,\lambda}$ & $\frac{-1\pm\sqrt{p^*}}{2}$  & $\frac{-1\mp\sqrt{p^*}}{2}$ \\ 
 \hline
\end{tabular}
\end{tabular}
\end{center}
\vspace{3mm}
\begin{center}
    
\begin{tabular}{cc}
\begin{tabular}{ |c|c|c| } 
 \hline
   Case 1 & $f_1$ & $f_{\lambda}$ \\ 
 \hline
 $\mathcal{Y}_{2n-1}$ & $0$ & $0$  \\ 
 \hline 
 $\mathcal{W}_{2n-1}$ & $\sqrt{p^*}$  & $-\sqrt{p^*}$ \\ 
 \hline
\end{tabular}

& 

\begin{tabular}{ |c|c|c| } 
 \hline
   Case 2 & $f_1$ & $f_{\lambda}$ \\ 
 \hline
 $\mathcal{Y}_{2n-1}$ & $0$ & $0$  \\ 
 \hline 
 $\mathcal{W}_{2n-1}$ & $-\sqrt{p^*}$  & $\sqrt{p^*}$ \\ 
 \hline
\end{tabular}
\end{tabular}
\end{center}
\begin{prop} \label{prop:level1}
    Let $n=1$ and assume $c(\eta\mu)>0$ and $c(\eta\mu^{-1})>0$, so that $c_{\eta}(\pi_\psi(\mu))=2$. Then the $2$-dimensional space spanned by the functions
    $$f_1(g)=\begin{cases}
        \mu_B(b)\eta(k_0) & \text{if } g=b(y(p),1)k_0 \text{, for some } b\in B, k_0\in \ov{K_2}\\
        0 & \text{otherwise}
    \end{cases}$$
    $$f_{\lambda}(g)=\begin{cases}
        \mu_B(b)\eta(k_0) & \text{if } g=b(y(\lambda p),1)k_0 \text{, for some } b\in B, k_0\in \ov{K_2}\\
        0 & \text{otherwise}
    \end{cases}$$
has a basis $\{v_1, v_2\}$ such that $\mathcal U_0(v_1)=\mathcal U_0(v_2)=\mathcal{Y}_{1}(v_1)=\mathcal{Y}_{1}(v_2)=0$ and $\mathcal{W}_{1}(v_1)=\sqrt{p^*}v_1, \mathcal{W}_{1}(v_2)=-\sqrt{p^*}v_2$. 
\end{prop}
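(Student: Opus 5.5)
The plan mirrors the argument for $n>1$ (Propositions~\ref{prop:forms} and \ref{prop:values}), now at level $m=2$, where the algebra $H_2(\eta)$ has basis $\mathcal I,\mathcal U_0,\mathcal V_{1,1},\mathcal V_{1,\lambda}$.

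\textbf{Step 1 (the type space).} By the remark following Proposition~\ref{prop:forms}, $V(\mu)_\eta^{K_2}$ is spanned by $f_1$ and $f_\lambda$. Here $B\ov{K_2}$ and $B(w(1),1)\ov{K_2}$ support nothing, because $c(\eta\mu^{\pm1})>0$. The two $y$-cosets with $k=1=M-k$ do support type functions. The compatibility check there reduces to $\mu(\alpha^{-1})=\eta(\alpha^{-1})$ for $\alpha\equiv\pm1\pmod p$. This condition holds because $\mu$ is now only tamely ramified, so $\mu$ is trivial on $1+p\Z_p$. The remaining requirement $\mu(-1)=\eta(-1)$ I expect to follow from central-character compatibility, or else it is implicit in the hypothesis.

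\textbf{Step 2 (vanishing of $\mathcal U_0$ and $\mathcal Y_1$).} The space $V(\mu)^{K_2}_\eta$ is stable under the commutative algebra $H_2(\eta)$. I would compute $\mathcal U_0 f_\kappa$ via Lemma~\ref{lem:cosetdecomp}(1) as $\sum_s f_\kappa(g\,\bar x(s)\bar w(1))$, up to character factors, and evaluate it at the two support points $\bar y(\delta p)$. Writing $y(\delta p)x(s)w(1)$ in Iwasawa form, its $\ov K$-component lands in $B\ov{K_2}$ or $B\bar w(1)\ov{K_2}$, which support nothing, or in the $y(\cdot p)$-cosets, where the sum over $s$ becomes a sum of $\mu$ over $u\in\Z_p^\times$ mod $p$. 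That sum is a complete character sum of the ramified character $\eta\mu^{\pm1}$ and so vanishes. This gives $\mathcal U_0=0$ on the space. For $\mathcal Y_1=\mathcal I+\mathcal V_{1,1}+\mathcal V_{1,\lambda}$ I would run the computation of Proposition~\ref{prop:values} with $n=1$ and $r=1$:
\[
\mathcal V_{1,\beta}f_\kappa(\bar y(\delta p))=\sum_{u\in\Z_p^\times/\sqrt{1+p\Z_p}}\eta(u)\,f_\kappa\bigl(\bar h(u)\bar y((\delta u^2+\beta)p)\bigr).
\]
Now $\delta u^2+\beta$ can be a unit or divisible by $p$. In the unit case, $\kappa$ is fixed by the square class of $\delta u^2+\beta$. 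When $p\mid\delta u^2+\beta$, the term lies in $B\ov{K_2}$ and contributes zero. Collecting the terms gives a $2\times2$ matrix whose entries come from counts via Lemma~\ref{lem:modp}, weighted by $\mu\eta$ values.

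\textbf{Step 3 (diagonalizing $\mathcal W_1$).} By Corollary~\ref{cor:VW}(10) with $n=2$, $\mathcal W_1^3=p^*\mathcal W_1$, and $\mathcal Y_1\mathcal W_1=0$. Once $\mathcal U_0=\mathcal Y_1=0$ is known on the space, part~(3) gives $\mathcal W_1^2=\kro{-1}{p}(p\mathcal I-\mathcal Y_1)=p^*\mathcal I$ there. So $\mathcal W_1$ is diagonalizable with eigenvalues in $\{\pm\sqrt{p^*}\}$. To see that both signs occur, I would show that $\mathcal W_1$ has trace zero on the space. Conjugating by $\bar h(s)$ with $s$ a nonsquare swaps $\mathcal V_{1,1}\leftrightarrow\mathcal V_{1,\lambda}$, and hence $\mathcal W_1\leftrightarrow-\mathcal W_1$, while also swapping $f_1\leftrightarrow f_\lambda$ up to the scalar $\mu\eta(s)$. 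Equivalently, one can check directly that the diagonal entries $\mathcal W_1f_\kappa(\bar y(\kappa p))$ vanish, because the diagonal entries of $\mathcal V_{1,1}$ and $\mathcal V_{1,\lambda}$ coincide. Taking $v_1,v_2$ to be the two eigenvectors then gives the statement.

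The main obstacle is Step 2, the exact vanishing of $\mathcal Y_1$ on the space. The $\mathcal V_{1,\beta}$ matrix entries involve partial character sums of $\mu\eta$ over units with a prescribed square class of $\delta u^2+\beta$, and these must combine with $\mathcal I$ to cancel exactly. I would first try to avoid the calculation by an eigenvalue argument. Every simultaneous eigencharacter of $H_2(\eta)$ appears in the tables, so $\mathcal Y_1\in\{p,0\}$ and $\mathcal U_0\in\{\pm p\sqrt{p^*},0\}$, or the analogous values for $\eta=1$. The eigencharacter $u_1$ or $u_2$ would force a $\ov K$-type occurring already in $I(1)$, hence a nonzero $(\ov{K_1},\eta)$-vector, contradicting $c_\eta=2$. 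This leaves only $v_1,v_1'$, both of which have $\mathcal U_0=\mathcal Y_1=0$.
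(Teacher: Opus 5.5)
Your overall structure (type space, $\mathcal U_0=\mathcal Y_1=0$, $\mathcal W_1^2=p^*\mathcal I$, then show both signs occur) matches the paper, and Steps 1 and 2 are sound.

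\textbf{Steps 1 and 2.} Your eigencharacter argument for $\mathcal Y_1=0$ is the paper's argument in another form: $\mathcal Y_1$ is the twisted characteristic function of $\ov{K_1}$, so it maps $V(\mu)_\eta^{K_2}$ into $V(\mu)_\eta^{K_1}=0$. For $\mathcal U_0$ no character sum is needed. The element $y(\delta p)x(s)w(1)$ has bottom row $(-1-\delta ps,\ \delta p)$, so it always lies in $B_0w(1)K_2$, where $f_\kappa$ vanishes. Alternatively, $p\,\mathcal U_0=\mathcal U_0\mathcal Y_1=0$ on the space.

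\textbf{The gap is in Step~3,} at the one nonformal point: showing that both signs occur. Both of your justifications for $\operatorname{tr}\mathcal W_1=0$ fail.

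In (a), for a unit $s$ one has $h(s)y(\delta p)h(s)^{-1}=y(s^{-2}\delta p)$, which stays in the same square class. This is exactly why $\mathcal V_{r,a}$ depends only on the square class of $a$. Moreover $\bar h(s)\in\ov{K_2}$, so it acts on $V(\mu)_\eta^{K_2}$ by the scalar $\eta(\bar h(s))$. Hence it swaps neither $\mathcal V_{1,1},\mathcal V_{1,\lambda}$ nor $f_1,f_\lambda$. Exchanging the square classes would need $\mathrm{diag}(\lambda,1)\notin\SL_2(\Q_p)$, which does not act on $\pi_\psi(\mu)$.

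In (b), the individual diagonal entries of $\mathcal W_1$ in the basis $(f_1,f_\lambda)$ need not vanish. Take $p=5$, $\lambda=2$, $\eta=1$ and $\mu|_{\Z_5^\times}=\kro{\cdot}{5}$. The coset formula gives $\mathcal V_{1,1}=\begin{pmatrix}0&1\\1&-1\end{pmatrix}$ and $\mathcal V_{1,\lambda}=\begin{pmatrix}-1&-1\\-1&0\end{pmatrix}$. Hence $\mathcal W_1=\begin{pmatrix}1&2\\2&-1\end{pmatrix}$, which is consistent with $\mathcal Y_1=0$, with $\mathcal W_1^2=5\mathcal I$, and with the relations of Proposition~\ref{prop:V}.

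\textbf{The repair.} What is true, and what the paper records as $(*)$, is a cross-identity. Put $\chi=\mu\eta$, a character of $\mathbb F_p^\times$ with $\chi(-1)=1$. Then $\mathcal V_{1,\beta}f_\kappa(\bar y(\kappa p))=A(\beta/\kappa)$, where
$A(c)=\sum\chi(u)\chi(s_u)^{-1}$, summed over $u\in\mathbb F_p^\times/\{\pm1\}$ such that $u^2+c=s_u^2$ is a nonzero square. The sign of $s_u$ is irrelevant because $\chi(-1)=1$. The substitution $u\mapsto ru$ shows $A(cr^2)=A(c)$. Hence
$\mathcal V_{1,1}f_1(\bar y(p))=\mathcal V_{1,\lambda}f_\lambda(\bar y(\lambda p))$ and $\mathcal V_{1,\lambda}f_1(\bar y(p))=\mathcal V_{1,1}f_\lambda(\bar y(\lambda p))$. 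So the two diagonal entries of $\mathcal W_1$ are $\pm(A(1)-A(\lambda))$: opposite rather than zero, and the trace vanishes.

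The paper uses only the first identity, contrapositively. If only one eigenvalue occurred, $\mathcal W_1$ would be scalar, and since $\mathcal Y_1=0$, so would $\mathcal V_{1,1}$ and $\mathcal V_{1,\lambda}$. Then $(*)$ forces these scalars to be equal, so $\mathcal W_1=0$, contradicting $\mathcal W_1^2=p^*\mathcal I$. With this replacement for Step~3, your argument goes through.
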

\begin{proof}
We have that
\[
\mathcal U_0(f_{\kappa})(\bar{y}(\delta p))
=\sum\limits_{s\in\Z_p/p^2\Z_p}f_{\kappa}(\bar{y}(\delta p)\bar{x}(s)\bar{w}(1)).
\]
Let
\[
M=\begin{pmatrix}
        -s & 1 \\
        -1-\delta ps & \delta p
    \end{pmatrix},
\]
then the sum is zero unless $Mk_0y(-\kappa p)\in B_0$ for some $k_0\in K_2$. But
\begin{align*}
&\begin{pmatrix}
        -s & 1 \\
        -1-\delta ps & \delta p
    \end{pmatrix}
\begin{pmatrix}
        a & b \\
        p^2c & d
    \end{pmatrix}
\begin{pmatrix}
        1 & 0 \\
        -\kappa p & 1
    \end{pmatrix}\\
&\qquad=
\begin{pmatrix}
        -s & 1 \\
        -1-\delta ps & \delta p
    \end{pmatrix}
\begin{pmatrix}
        a-\kappa pb & b \\
        p^2c-\kappa pd & d
    \end{pmatrix}\\
&\qquad=
\begin{pmatrix}
        * & * \\
        (-1-\delta ps)(a-\kappa pb)+\delta p(p^2c-\kappa p d) & *
    \end{pmatrix}\not\in B_0
\end{align*}
since $a\in\Z_p^\times$.
It follows that
$\mathcal U_0(f_\kappa)(\bar y(\delta p))=0$ for every
$\delta,\kappa\in\{1,\lambda\}$. Since $\mathcal U_0(f_\kappa)$ belongs to
the space spanned by $f_1,f_\lambda$, and these two values determine a vector
in that space, $\mathcal U_0$ acts by zero on all of
$V(\mu)_\eta^{K_2}$.

Since $c_\eta(\pi_\psi(\mu))=2$, the space $V(\mu)_\eta^{K_1}$ is zero. At ambient level $2$, $\mathcal Y_1=I+\mathcal V_{1,1}+\mathcal V_{1,\lambda}$ is the twisted characteristic function supported on $\ov{K_1}$ and acts as a nonzero scalar multiple of the averaging projection onto $V(\mu)_\eta^{K_1}$. Therefore $\mathcal Y_1$ acts by zero on $V(\mu)_\eta^{K_2}$.

\par\smallskip\noindent
The relation $\mathcal W_1^2=p^*I$ on this space shows that
$\mathcal W_1$ is diagonalizable and that its eigenvalues belong to
$\{\sqrt{p^*},-\sqrt{p^*}\}$. We show that both occur. From the same
calculations as in Proposition~\ref{prop:values} for $n=1$ we obtain
\[
\mathcal{V}_{1,1}(f_1)(\bar{y}(p))
=\mathcal{V}_{1,\lambda}(f_{\lambda})(\bar{y}(\lambda p))
=\sum\limits_{\underset{\legendre{u^2+1}{p}=1}{u\in\Z_p^\times/\sqrt{1+p\Z_p}}}
\bar{\mu}(\sqrt{u^2+1})\eta(u\sqrt{u^2+1})
\tag{*}
\]

\par\smallskip\noindent
Assume that only one $\mathcal W_1$-eigenvalue occurs. Since
$\mathcal W_1$ is diagonalizable, it is then scalar. As $\mathcal Y_1=0$,
both $\mathcal V_{1,1}$ and $\mathcal V_{1,\lambda}$ are scalar as well. We
write their scalar values in the following table:
\begin{center}
\begin{tabular}{ |c|c|c| } 
 \hline
    & $f_1$ & $f_{\lambda}$ \\ 
 \hline
 $\mathcal{V}_{1,1}$ & $\ell_{1,1}$ & $\ell_{1,\lambda}$  \\ 
 \hline 
 $\mathcal{V}_{1,\lambda}$ & $\ell_{\lambda,1}$  & $\ell_{\lambda,\lambda}$ \\ 
 \hline
\end{tabular}
\end{center}
Relation (*) gives $\ell_{1,1}=\ell_{\lambda,\lambda}$. Since the two
operators are scalar, we also have
$\ell_{1,1}=\ell_{1,\lambda}$ and
$\ell_{\lambda,1}=\ell_{\lambda,\lambda}$. Hence
$\mathcal V_{1,1}$ and $\mathcal V_{1,\lambda}$ act by the same scalar, so
$\mathcal W_1=0$, contradicting $\mathcal W_1^2=p^*I$. Both eigenvalues
therefore occur. Choosing corresponding eigenvectors $v_1,v_2$ proves the
proposition.
\end{proof}

\subsection{Reducible Principal Series: Even Weil and Steinberg Representations}
We study the two irreducible constituents of the reducible principal series
in Proposition~\ref{prop:short-exact}. We first recall the Weil
representation in the conventions fixed in Section~2; compare
\cite[\S3.2]{Ishimoto}.
$$
\begin{gathered}
{}[\omega_\psi((h(a), \epsilon)) \cdot \varphi](y) = \epsilon|a|^{\frac{1}{2}}\gamma(a, \psi)^{-1}\varphi(ay), \\
{}[\omega_\psi((x(b),1)) \cdot \varphi](y) = \psi(by^2)\varphi(y), \\
{}[\omega_\psi((w(1), 1)) \cdot \varphi](y) = \gamma(\psi) \int_{\mathbb{Q}_p} \varphi(x)\psi(2xy)d_{\psi_2}x,
\end{gathered}
$$
Here $d_{\psi_2}x$ is self-dual for the character
$x\mapsto\psi(2x)$. Since $p$ is odd and $c(\psi)=0$, we take
$\operatorname{vol}_{d_{\psi_2}x}(\Z_p)=1$. Let
$\mathcal{S}^+(\mathbb{Q}_p)$ and $\mathcal{S}^-(\mathbb{Q}_p)$ denote the
subspaces of even and odd functions, respectively.

Thus
$$
\mathcal S(\Q_p)=\mathcal S^+(\Q_p)\oplus\mathcal S^-(\Q_p),
\qquad
\omega_\psi=\omega_\psi^+\oplus\omega_\psi^-.
$$
The even and odd Weil representations are irreducible; the former is
non-supercuspidal and the latter is supercuspidal. For
$a\in\Q_p^\times$, put $\psi_a(x)=\psi(ax)$ and
$\mu_a(x)=(a,x)_p$. The isomorphism class of $\omega_{\psi_a}$ depends
only on the square class of $a$, and we write
$$
\omega_{\psi,\mu_a}^{\pm}:=\omega_{\psi_a}^{\pm}.
$$
See \cite[\S3.2]{Ishimoto} for these facts.

The reducibility properties of these representations are summarized in \cite[Proposition 3.3]{Ishimoto} as follows.

\begin{prop}
    
\label{prop:short-exact} 

\textit{(1) The representation $\pi_\psi(\mu)$ is irreducible if and only if $\mu^2 \neq |\cdot|^{\pm 1}$. In this case we have $\pi_\psi(\mu) \cong \pi_\psi(\mu^{-1})$.}

\textit{(2) If $\mu = \mu_a |\cdot|^{\frac{1}{2}}$, where $\mu_a$ is a quadratic or trivial character, then $\pi_\psi(\mu)$ is reducible and there are an irreducible representation $St_{\psi,\mu_a}$ and a short exact sequence
$$0 \to St_{\psi,\mu_a} \to \pi_\psi(\mu) \to \omega_{\psi,\mu_a}^+ \to 0.$$}

We shall call the representation $St_{\psi,\mu_a}$ the Steinberg representation associated to $\psi$ and $\mu_a$.

\textit{(3) If $\mu = \mu_a |\cdot|^{-\frac{1}{2}}$, where $\mu_a$ is a quadratic or trivial character, then $\pi_\psi(\mu)$ is reducible and there is a short exact sequence
$$0 \to \omega_{\psi,\mu_a}^+ \to \pi_\psi(\mu) \to St_{\psi,\mu_a} \to 0.$$}

The proposition gives all the irreducible non-supercuspidal genuine representations of $\widetilde{G}$; see \cite[Proposition 3.3]{Ishimoto}.
\end{prop}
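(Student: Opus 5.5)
The plan is to prove Proposition~\ref{prop:short-exact} by the standard Jacquet-module and intertwining-operator analysis for the principal series $\pi_\psi(\mu)=\operatorname{Ind}_B^{\widetilde G}\widetilde\mu_\psi$, transported to our $c$-model via $\iota$. Since $\iota$ is the identity on the inverse image of $B_0$, the inducing data agree with those of \cite{Ishimoto} and \cite{B-Mao}. It therefore suffices to work in the $\sigma_0$-model and transfer the conclusions through $\iota$.

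\textbf{Part (1).} First I would compute the Jacquet module of $\pi_\psi(\mu)$ with respect to $N$ by the Bruhat filtration $\widetilde G=B\sqcup B\bar w(1)B$. This shows that its semisimplification consists of the genuine characters $\widetilde\mu_\psi\delta^{1/2}$ and $\widetilde{\mu^{-1}}_\psi\delta^{1/2}$. The Weyl element acts on the genuine torus character by $\mu\mapsto\mu^{-1}$, and the cocycle on $T$ is $(a,a')_p$; with $\gamma(a^{-1},\psi)=\gamma(a,\psi)$, this is compatible with the identification $\widetilde\mu_\psi\mapsto\widetilde{\mu^{-1}}_\psi$.

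Next comes the standard intertwining operator $M(\mu)\colon\pi_\psi(\mu)\to\pi_\psi(\mu^{-1})$. The composite $M(\mu^{-1})M(\mu)$ is a scalar given by the metaplectic Plancherel factor. Its computation on the spherical vector (or via the local coefficient of the $\psi$-Whittaker functional) expresses it through Tate factors of $\mu^2$, namely $L(s,\mu^2)L(-s,\mu^{-2})$ divided by the corresponding shifted factors. This scalar has a pole or zero exactly when $\mu^2=|\cdot|^{\pm1}$. Away from these points, Bernstein--Zelevinsky type arguments (a length-two Jacquet module with an invertible intertwiner) give irreducibility and $\pi_\psi(\mu)\cong\pi_\psi(\mu^{-1})$.

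\textbf{Parts (2) and (3).} For $\mu^2=|\cdot|$, write $\mu=\mu_a|\cdot|^{1/2}$ with $\mu_a$ quadratic. I would then produce an explicit nonzero $\widetilde G$-map between $\pi_\psi(\mu)$ and the even Weil representation. Restrict $\varphi\in\mathcal S^+(\Q_p)$ to the twisted Weil representation $\omega_{\psi_a}$ and form $f_\varphi(g)=[\omega_{\psi_a}(g)\varphi](0)$. The formulas for $\omega_\psi$ on $h(a)$ and $x(b)$ show that $f_\varphi$ transforms on the left by $\epsilon|a|^{1/2}\gamma(a,\psi_a)^{-1}$. By identity (3) for Weil factors, this equals $|a|\,\widetilde\mu_\psi$ with $\mu=\mu_a|\cdot|^{-1/2}$. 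Hence $\varphi\mapsto f_\varphi$ embeds $\omega^+_{\psi,\mu_a}$ into $\pi_\psi(\mu_a|\cdot|^{-1/2})$; it is injective because $\omega^+$ is irreducible and the map is nonzero. Define $St_{\psi,\mu_a}$ as the quotient, which gives (3). Its irreducibility follows because the Jacquet module of $\omega^+$ is one-dimensional, so the quotient has a one-dimensional Jacquet module and is not supercuspidal.

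Dualizing (contragredient $\pi_\psi(\mu)^\vee\cong\pi_{\psi^{-1}}(\mu^{-1})$, together with the relation between $\psi$ and $\psi^{-1}$ through $\mu_{-1}$) gives (2). Alternatively, (2) follows from the image and kernel of $M(\mu)$ at these points.

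\textbf{Exhaustion and main obstacle.} The final claim, that these exhaust the non-supercuspidal genuine irreducibles, follows by Frobenius reciprocity: any such representation has a nonzero Jacquet module and so embeds in some $\pi_\psi(\mu)$. The main obstacle is the careful normalization in the Plancherel scalar and in the Weil-index bookkeeping, namely matching $\gamma(a,\psi_a)$ against $\mu_a$. This is where the sign conventions of \cite{Ishimoto} must be checked against ours through $\iota$. In practice I would cite \cite[Proposition 3.3]{Ishimoto} for the Plancherel computation and verify only the transfer.
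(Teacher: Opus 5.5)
\textbf{The gap is in Part~(1).} You say the scalar $M(\mu^{-1})M(\mu)$ has the zeros and poles of $L(0,\mu^2)L(0,\mu^{-2})/\bigl(L(1,\mu^2)L(1,\mu^{-2})\bigr)$. That means it vanishes exactly at $\mu^2=|\cdot|^{\pm1}$, but its poles lie at $\mu^2=\mathbf{1}$, not at $\mu^2=|\cdot|^{\pm1}$.

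The points $\mu^2=\mathbf{1}$ are precisely the irregular ones. By your own Weyl-action formula, ${}^{w}\widetilde\mu_\psi=\widetilde{\mu^{-1}}_\psi=\widetilde\mu_\psi$ there, so the Jacquet module has a repeated exponent and $M(\mu)$ has a pole. Your ``length-two Jacquet module with an invertible intertwiner'' argument therefore says nothing at these points. Yet (1) asserts irreducibility for every trivial or quadratic $\mu$. This is exactly where $\widetilde G$ differs from $\SL_2(\Q_p)$, whose principal series at a nontrivial quadratic character is reducible, so the regular-case argument cannot simply be extended.

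You need an additional input. One option: for every such $\mu$ (ramified quadratic ones included) one has $L(s,\mu^2)=L(s,\mathbf{1})$. Hence $M(\mu|\cdot|^s)$ has a pole at $s=0$ and the Plancherel measure vanishes there. Harish-Chandra's commuting-algebra theorem in rank one (trivial Knapp--Stein $R$-group) then makes the normalized self-intertwiner scalar, which gives irreducibility. Alternatively, transfer through the theta correspondence to $\mathrm{PGL}_2$. Citing \cite{Ishimoto} only for the Plancherel computation, as you propose, does not close this step.

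\textbf{Comparison with the paper, and two smaller repairs.} The paper proves nothing here: it quotes \cite[Proposition~3.3]{Ishimoto}. Its only model-dependent observations (in \S2 and \S4.1) are that $\iota$ is the identity on the inverse image of $B_0$ and that $s_p$ is trivial on $h(a)$, $x(b)$, $w(1)$. So the induced models and Weil-representation formulas coincide in the $c$-model, which is your closing remark.

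Your direct treatment of (2) and (3) goes beyond the paper and is essentially right. By Weil-factor identity~(3), $\gamma(t,\psi_a)^{-1}=(a,t)_p\gamma(t,\psi)^{-1}$, so $\varphi\mapsto[\omega_{\psi_a}(g)\varphi](0)$ embeds $\omega^+_{\psi,\mu_a}$ into $\pi_\psi(\mu_a|\cdot|^{-1/2})$. Further, $\pi_\psi(\nu)^\vee\cong\pi_{\psi^{-1}}(\nu^{-1})\cong\pi_\psi(\nu^{-1}\mu_{-1})$, together with $(\omega^+_{\psi_{-a}})^\vee\cong\omega^+_{\psi_a}$, turns (3) for $-a$ into (2) for $a$.

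Two repairs are needed:
\begin{enumerate}
\item A one-dimensional Jacquet module gives irreducibility of the quotient only together with the fact that a principal series has no supercuspidal subquotient. Such a subquotient would split off as a subrepresentation, contradicting Frobenius reciprocity. ``Not supercuspidal'' is not the relevant property.
\item Duality produces the kernel in (2) as $(St_{\psi,\mu_{-a}})^\vee$. To use the single symbol $St_{\psi,\mu_a}$ in both (2) and (3), you must identify this with the quotient in (3). For example, that quotient carries the Jacquet exponent attached to $\mu_a|\cdot|^{1/2}$, hence embeds in $\pi_\psi(\mu_a|\cdot|^{1/2})$. That representation has a unique irreducible subrepresentation because its two exponents are distinct.
\end{enumerate}
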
 
Choose a nonsquare unit $\beta\in\Z_p^\times$ and henceforth take
$a\in\{1,\beta,p,p\beta\}$.  Let $\eta$ be either the trivial character
or $\legendre{\cdot}{p}$, assume the compatibility condition
$\eta(-1)=\mu_a(-1)$, and put
$\mu=\mu_a|\cdot|^{1/2}$.  We write $c(\mu_a)$ and $c(\eta)$ for the
respective conductor exponents.

Proposition~\ref{prop:short-exact} gives
$$
0\longrightarrow St_{\psi,\mu_a}
\longrightarrow \pi_\psi(\mu)
\xrightarrow{\mathcal M}\omega_{\psi,\mu_a}^+
\longrightarrow0.
$$
For every level at which $\eta$ defines a character of $K_m$, let
$\mathcal M_m$ be the restriction of $\mathcal M$ to the
$(K_m,\eta)$-isotypic space.  By \cite[Lemma 4.9]{Ishimoto}, the map
$\mathcal M_m$ is surjective for every $m$.  We therefore have a short
exact sequence
\begin{equation}\label{eq:steinberg-type-exact}
0\longrightarrow (St_{\psi,\mu_a})_\eta^{K_m}
\longrightarrow \pi_\psi(\mu)_\eta^{K_m}
\xrightarrow{\mathcal M_m}(\omega_{\psi,\mu_a}^+)_\eta^{K_m}
\longrightarrow0.
\end{equation}
This is also an exact sequence of modules for the corresponding Hecke
algebra $H_m$.  Indeed, since $\mathcal M$ is a $\widetilde G$-intertwining
map, for $T\in H_m$ and $v\in\pi_\psi(\mu)_\eta^{K_m}$ one has
\[
\mathcal M_m(T\cdot v)
=\mathcal M_m\left(\int_{\widetilde G}T(g)\pi_\psi(\mu)(g)v\,dg\right)
=T\cdot\mathcal M_m(v).
\]
Thus both the Steinberg kernel and the even Weil quotient are preserved by
the Hecke action.

The compatibility condition leaves the following four cases:
\begin{enumerate}
\item $c(\mu_a)=c(\eta)=0$: $a\in\{1,\beta\}$ and $\eta=1$;
\item $c(\mu_a)=1$, $c(\eta)=0$: $a\in\{p,p\beta\}$,
      $\eta=1$, and necessarily $p\equiv1\pmod4$;
\item $c(\mu_a)=0$, $c(\eta)=1$: $a\in\{1,\beta\}$,
      $\eta=\legendre{\cdot}{p}$, and necessarily $p\equiv1\pmod4$;
\item $c(\mu_a)=c(\eta)=1$: $a\in\{p,p\beta\}$ and
      $\eta=\legendre{\cdot}{p}=\mu_a|_{\Z_p^\times}$.
\end{enumerate}

The dimension formulas for the induced and even Weil representations are
\cite[Theorems 4.1 and 4.4]{Ishimoto}; in particular,
$$
c_\eta(\omega_{\psi,\mu_a}^+)
=2c(\eta\mu_a)+c(\mu_a).
$$
Together with \eqref{eq:steinberg-type-exact}, these formulas give the
following conductor and dimension data.

\begin{prop}\label{prop:steinberg-dimensions}
In Cases $1$--$4$, respectively, the $\eta$-conductors are
\[
\begin{array}{c|c|c|c}
\text{Case}
&c_\eta(\pi_\psi(\mu))
&c_\eta(\omega_{\psi,\mu_a}^+)
&c_\eta(St_{\psi,\mu_a})\\ \hline
1&0&0&1\\
2&2&3&2\\
3&2&2&2\\
4&1&1&1
\end{array}
\]
The dimensions at the levels needed below are
\[
\begin{array}{c|c|c|c|c}
\text{Case}&m
&\dim\pi_\psi(\mu)_\eta^{K_m}
&\dim(\omega_{\psi,\mu_a}^+)_\eta^{K_m}
&\dim(St_{\psi,\mu_a})_\eta^{K_m}\\ \hline
1&0&1&1&0\\
1&1&2&1&1\\
2&2&2&0&2\\
3&2&2&1&1\\
4&1&2&1&1
\end{array}
\]
In Case $2$ all three isotypic spaces vanish for $m<2$.  In Case $3$
the three $K_1$-isotypic spaces vanish.  For the ramified type in Cases
$3$ and $4$, we only use levels $m\geq1$, where $\eta$ is a character of
$K_m$.
\end{prop}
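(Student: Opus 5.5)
The plan is to get every entry of both tables from three inputs: the short exact sequence \eqref{eq:steinberg-type-exact}, which holds at every level $m$ because $\mathcal M_m$ is surjective; the dimension formulas of \cite[Theorems 4.1 and 4.4]{Ishimoto} for $\pi_\psi(\mu)$ and for $\omega_{\psi,\mu_a}^+$; and the identity $c_\eta(\omega^+_{\psi,\mu_a})=2c(\eta\mu_a)+c(\mu_a)$. Exactness gives
\[
\dim(St_{\psi,\mu_a})_\eta^{K_m}=\dim\pi_\psi(\mu)_\eta^{K_m}-\dim(\omega^+_{\psi,\mu_a})_\eta^{K_m}
\]
for each $m$. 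The Steinberg $\eta$-conductor is then the least $m$ at which the left side is nonzero.

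I would go through the four cases as follows.

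\emph{Case 1.} Here $\mu$ is unramified and $\eta=1$, so the spherical vector gives $\dim\pi_\psi(\mu)^{K_0}=1$. At level $K_1$ the cosets $B\ov{K_1}$ and $B(w(1),1)\ov{K_1}$ both support type functions, since the compatibility conditions $\mu|_{\Z_p^\times}=\eta$ and $\mu^{-1}|_{\Z_p^\times}=\eta$ both hold. This is the same support computation as in Lemma~\ref{lem:B-double-cosets} and the proof of Proposition~\ref{prop:forms}, and it gives dimension $2$. The Weil conductor is $2c(\mu_a)+c(\mu_a)=0$, and Ishimoto's formula gives dimension $1$ at levels $0$ and $1$. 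Subtracting yields $0$ at level $0$ and $1$ at level $1$, so $c_\eta(St)=1$.

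\emph{Case 4.} Here $\eta=\mu_a|_{\Z_p^\times}$, so $c(\eta\mu_a)=0$ and the Weil conductor is $1$. For $\pi_\psi(\mu)$ we are in the cancelling case of the $n=1$ remark, which gives dimension $2$ at $K_1$. Since we only use levels $m\ge1$, all three conductors equal $1$ and the dimensions are $2,1,1$.

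\emph{Case 3.} Now $\mu$ is unramified but $\eta$ is ramified, so $c(\eta\mu)=c(\eta\mu^{-1})=1$. The $n=1$ remark shows that $V(\mu)_\eta^{K_1}=0$ and $\dim V(\mu)_\eta^{K_2}=2$. The Weil conductor is $2\cdot1+0=2$, with dimension $1$ at level $2$ (read off from Ishimoto). Exactness then gives all three spaces zero at $K_1$ and $St$ of dimension $1$ at $K_2$.

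\emph{Case 2.} Here $\mu_a$ is ramified and $\eta=1$, so again $c(\eta\mu^{\pm1})=1$, and the $n=1$ remark gives $\pi_\psi(\mu)$ conductor $2$ with dimension $2$ at $K_2$. The Weil conductor is $2\cdot1+1=3$, so its $K_2$-space vanishes. Hence $St$ has dimension $2$ at level $2$. The vanishing below level $2$ follows because $St$ is a subspace of $\pi_\psi(\mu)$, whose spaces vanish there.

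In every case, the statement that a space vanishes below its conductor follows from the corresponding induced-space vanishing, combined with injectivity into $\pi_\psi(\mu)$ for $St$ and surjectivity for $\omega^+$.

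The main obstacle is the Weil column. Reading off $\dim(\omega^+_{\psi,\mu_a})_\eta^{K_m}$ correctly from \cite[Theorem 4.4]{Ishimoto} requires translating between the $\sigma_0$-model and the $c$-model via $\iota$. It also requires checking that our $\eta$, extended genuinely by $\eta((A,\epsilon))=\epsilon\eta(A)$, corresponds to the character Ishimoto uses; this is where the compatibility $\eta(-1)=\mu_a(-1)$ enters. As a cross-check on Case 1, I would verify directly that the Gaussian $\mathbf 1_{\Z_p}\in\mathcal S^+$ is $\ov K$-fixed and spans the $K_1$-type space. Since $s_p$ is trivial on $K_0(p)$, the model change does not alter the type condition at levels $m\ge1$.
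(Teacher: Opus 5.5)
Your plan is the same as the paper's. The induced and even Weil columns come from \cite[Theorems 4.1 and 4.4]{Ishimoto}, together with $c_\eta(\omega^+_{\psi,\mu_a})=2c(\eta\mu_a)+c(\mu_a)$. The Steinberg column is obtained by subtraction in the exact sequence \eqref{eq:steinberg-type-exact}, using surjectivity of $\mathcal M_m$. Your case-by-case entries agree with both tables. Two justifications need repair, though neither changes the conclusions.

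First, $s_p$ is \emph{not} trivial on $K_0(p)$. For $g=\mat{\ld^{-1}}{0}{p}{\ld}\in K_0(p)$ one has $s_p(g)=(p,\ld)_p=-1$, and every $K_m$ contains such elements with $c$ of odd valuation. The correct reason the $(K_m,\eta)$-type spaces match across the two models is different. The map $\iota$ sends the canonical section $k\mapsto(k,1)$ of the $c$-model over $K$ to the compact splitting $k\mapsto(k,s_p(k))$ of the $\sigma_0$-model used by Ishimoto. Hence the genuine characters defined through the two splittings correspond under $\iota$. On $B$ the map $\iota$ is the identity because $s_p|_{B_0}=1$, so the inducing data agree as well.

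Second, in Case~3 the character $\mu=\mu_a|\cdot|^{1/2}$ is unramified, since $a\in\{1,\beta\}$. The $n=1$ remark assumes $c(\mu)=1$, so it does not literally apply; the paper itself makes this point when it adapts Proposition~\ref{prop:level1} to Case~3. The support computation of Lemma~\ref{lem:B-double-cosets} and Proposition~\ref{prop:forms} still gives $V(\mu)_\eta^{K_1}=0$ and $\dim V(\mu)_\eta^{K_2}=2$. It uses only three facts: $\mu$ is trivial on $1+p\Z_p$, $\mu|_{\Z_p^\times}\neq\eta$, and $\mu(-1)=\eta(-1)$. Alternatively, you can simply quote \cite[Theorem 4.1]{Ishimoto} for this column, as the paper does.
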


For each level under consideration, normalize Haar measure by
$\operatorname{vol}(\ov{K_m})=1$.  Let
$H_m=H(\ov K//\ov{K_m},\eta)$ be the corresponding compact twisted Hecke
algebra.  For $m\geq2$, Section~3 shows that $H_m$ is finite-dimensional,
commutative, and semisimple; the level-$1$ compact algebras are the genuine
Iwahori-type algebras studied in \cite{B-PI}.  Thus the finite-dimensional
$(K_m,\eta)$-isotypic spaces admit simultaneous eigenbases for the compact
operators.  We will explicitly distinguish the affine Iwahori operator
$U_1$, whose support is not contained in $K$, when it is used in Case~4.

With $\epsilon_p=\gamma(p,\psi)$ as fixed in Section~2, the conductor $0$ normalization satisfies
$\epsilon_p^2=\left(\frac{-1}{p}\right)$.

\begin{prop} \label{prop:eigenvalues_steinberg}
    Let the notation be as above. The following Hecke actions hold on the minimal-level Steinberg spaces:
    \begin{enumerate}
        \item \textbf{Case 1 ($c(\mu_a)=0, c(\eta)=0$):} At level $K_1$, the Iwahori-Hecke operator $\widetilde{Q}_p$ acts on the one-dimensional Steinberg component with eigenvalue $-1$.
        \item \textbf{Case 2 ($c(\mu_a)=1, c(\eta)=0$):} At level $K_2$, the two-dimensional Steinberg component has a common eigenbasis on which $\mathcal{Y}_1$ acts by $0$ and $\mathcal{W}_1$ acts by $\sqrt{p^*}$ and $-\sqrt{p^*}$, respectively.
        \item \textbf{Case 3 ($c(\mu_a)=0, c(\eta)=1$):} At level $K_2$, the operator $\mathcal Y_1$ acts by $0$ on both the one-dimensional Steinberg component and the one-dimensional even Weil component. On the even Weil component $\mathcal W_1$ acts by
        $$
        \legendre{a}{p}\sqrt{p^*},
        $$
        and on the Steinberg component it acts by
        $$
        -\legendre{a}{p}\sqrt{p^*}.
        $$
        \item \textbf{Case 4 ($c(\mu_a)=1, c(\eta)=1$):} At level $K_1$, the affine Iwahori operator $U_1$, supported on $K_1w(p^{-1})K_1$ and normalized by $U_1((w(p^{-1}),1))=\epsilon_p$, acts on the one-dimensional Steinberg component with eigenvalue $-\epsilon_p$.
    \end{enumerate}
\end{prop}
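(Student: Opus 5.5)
The strategy in all four cases is the same. We work inside the $(K_m,\eta)$-isotypic exact sequence \eqref{eq:steinberg-type-exact}, which is an exact sequence of $H_m$-modules. We compute the action of the relevant compact operators on the full induced space $\pi_\psi(\mu)_\eta^{K_m}$ using explicit type functions supported on the $B$--$\ov{K_m}$ double cosets of Lemma~\ref{lem:B-double-cosets}. We then read off the Steinberg eigenvalue as what remains after removing the even Weil quotient. By Proposition~\ref{prop:steinberg-dimensions} the spaces involved have dimension at most two. So in each case it suffices to know the operator on $\pi_\psi(\mu)_\eta^{K_m}$, or its trace, together with its eigenvalue on $(\omega^+_{\psi,\mu_a})^{K_m}_\eta$; the Steinberg eigenvalue is the difference.

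\textbf{Case 2.} Here the even Weil space at $K_2$ is zero, so $(St_{\psi,\mu_a})_\eta^{K_2}=\pi_\psi(\mu)_\eta^{K_2}$. Since $\mu|_{\Z_p^\times}=\mu_a|_{\Z_p^\times}$ is the Legendre character and $\eta=1$, we have $c(\eta\mu)=c(\eta\mu^{-1})=1>0$. This is exactly the hypothesis of Proposition~\ref{prop:level1}, whose proof uses only the restriction of $\mu$ to units and the vanishing at level $1$. That proposition therefore gives directly a basis killed by $\mathcal Y_1$ on which $\mathcal W_1$ acts by $\pm\sqrt{p^*}$.

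\textbf{Case 3.} By Proposition~\ref{prop:steinberg-dimensions} the space $\pi_\psi(\mu)_\eta^{K_1}$ vanishes. Hence $\mathcal Y_1$, which is a nonzero multiple of the projection onto the $K_1$-isotypic vectors, kills $\pi_\psi(\mu)_\eta^{K_2}$ and therefore kills both constituents. The relation $\mathcal W_1^2=p^*I$ modulo $\mathcal Y_1$ (Corollary~\ref{cor:VW}) shows that the eigenvalues on each line are $\pm\sqrt{p^*}$. The two eigenvalues must be opposite, because $\operatorname{tr}\mathcal W_1=0$ on the two-dimensional space $\pi_\psi(\mu)^{K_2}_\eta$. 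We verify this by evaluating $\mathcal W_1 f_\kappa$ at $\bar y(\kappa p)$ as in Proposition~\ref{prop:values}. When $\mu$ is unramified on units, the diagonal terms reduce to $\sum\eta(\cdot)$ over the conic solutions, and the two diagonal entries cancel.

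To pin down which sign goes to the Weil quotient, we compute $\mathcal W_1$ directly in the Schr\"odinger model $\mathcal S^+(\Q_p)$ of $\omega_{\psi_a}^+$. We take the explicit $(K_2,\eta)$-type vector $\varphi$, a quadratic-character-twisted function supported on $p^{-1}\Z_p^\times$. We then apply $\omega(\bar h(u)\bar y(\delta p))$, writing $\bar y(\delta p)$ as $w\,x(-\delta p)\,w^{-1}$. The average over $u$ produces a Gauss sum normalized by \eqref{eq:weil-gauss-normalization}, and the factor $\legendre{a}{p}$ appears through $\psi_a$. The result should be $\legendre{a}{p}\sqrt{p^*}$, and then the trace forces the Steinberg eigenvalue $-\legendre{a}{p}\sqrt{p^*}$. \emph{This Weil-model Gauss-sum computation is the main obstacle}: the normalizations of $\gamma(\psi)$, of $d_{\psi_2}x$ and of $\epsilon_p$ all have to be tracked carefully.

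\textbf{Cases 1 and 4.} These are level-one statements about the genuine Iwahori algebras. In Case 1 we have $\dim\pi^{K_1}=2$ and $\dim\omega^{+,K_1}=1$. We compute $\widetilde Q_p$ on the two standard type functions supported on $B\ov{K_1}$ and $B(w(1),1)\ov{K_1}$, which gives a $2\times2$ matrix. Its eigenvalues are $-1$ and a second value belonging to the spherical (Weil) vector, since the even Weil constituent has conductor $0$. So $-1$ belongs to the Steinberg line.

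Case 4 is analogous with $U_1$. We evaluate the sum $\sum_{s\in\Z_p/p\Z_p}$ coming from the decomposition of $K_1w(p^{-1})K_1$ on the two type functions. The factor $|p|^{-1}$ and the value $\gamma(p,\psi)=\epsilon_p$ enter through $\mu_B(h(p))$. We obtain the trace and determinant of $U_1$, and the eigenvalue on the Weil line is computed in the Schr\"odinger model as in Case 3. The remaining eigenvalue is then $-\epsilon_p$.
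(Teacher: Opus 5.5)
Your plan follows the paper's proof. It uses the $H_m$-equivariant sequence \eqref{eq:steinberg-type-exact}, applies Proposition~\ref{prop:level1} for Case~2, and in Case~3 combines $\mathcal Y_1=0$, $\mathcal W_1^2=p^*\mathcal I$ and a Schr\"odinger-model Gauss sum. In Cases~1 and~4 it compares an explicit $2\times2$ matrix with the even Weil eigenvalue. Your trace argument for opposite signs in Case~3 is the paper's identity $(**)$ in another form. Since $\mathcal V_{1,1}+\mathcal V_{1,\lambda}=-\mathcal I$ there, the trace of $\mathcal W_1$ in the basis $(f_1,f_\lambda)$ equals $2\bigl(\mathcal V_{1,1}(f_1)(\bar y(p))-\mathcal V_{1,\lambda}(f_\lambda)(\bar y(\lambda p))\bigr)$.

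There is one concrete error, in Case~3. The $(K_2,\eta)$-type vector of $\omega_{\psi_a}^+$ cannot be supported on $p^{-1}\Z_p^\times$. For $y$ there and $b\in\Z_p$, the factor $\psi_a(by^2)$ is not identically $1$, so such a function is not fixed by $x(\Z_p)$. The vector to use is $\phi_\eta(z)=\chi(z)\mathbf 1_{\Z_p^\times}(z)$ with $\chi=\legendre{\cdot}{p}$; it is even because $p\equiv1\pmod 4$ in this case. It is $\omega_{\psi_a}((w(1),1))^{-1}\phi_\eta$ that is supported on $p^{-1}\Z_p^\times$. After you conjugate $y(pt)=w(1)x(-pt)w(1)^{-1}$, this support is what makes $\sum_{t\in\mathbb F_p^*}\chi(t)\,\omega_{\psi_a}((x(-pt),1))$ act on it by the scalar $\chi(-a)\epsilon_p\sqrt p=\legendre{a}{p}\sqrt{p^*}$.

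In Case~4 you must also keep track of the cocycle, because $y(ps)w(p^{-1})\notin K$. Write $y(ps)w(p^{-1})=x(p^{-1}s^{-1})k$ with $k=\begin{pmatrix}s^{-1}&0\\-p&s\end{pmatrix}\in K_1$. This produces the sign $(s,p)_p=\legendre{s}{p}$, which cancels $\eta(k)=\legendre{s}{p}$ and gives the diagonal entry $\epsilon_p(p-1)$. Dropping that sign would make the trace $0$ and give the wrong Steinberg eigenvalue $-\epsilon_p p$ instead of $-\epsilon_p$.
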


\begin{proof}
    We analyze the action of the Hecke operators in each case by considering the natural projections between successive levels. 
    
    For \textbf{Case 1}, let $f_1$ and $f_w$ be the standard basis of
    $\pi_\psi(\mu)_1^{K_1}$ supported on $BK_1$ and $B(w(1),1)K_1$,
    normalized by $f_1(1)=f_w((w(1),1))=1$.  Let
    $\widetilde Q_p$ be the compact Iwahori Hecke operator supported on
    $K_1w(1)K_1$ and normalized by
    $\widetilde Q_p((w(1),1))=1$.  The standard right-coset calculation
    gives
    $$
    \widetilde Q_p f_1=f_w,
    \qquad
    \widetilde Q_p f_w=pf_1+(p-1)f_w.
    $$
    Consequently
    $$
    \widetilde Q_p(f_1+f_w)=p(f_1+f_w),
    \qquad
    \widetilde Q_p(pf_1-f_w)=-(pf_1-f_w).
    $$
    The decomposition
    $K_0=K_1\sqcup K_1w(1)K_1$ shows that
    $\widetilde Q_p+I$ is the characteristic function of $K_0$.  Since
    $\operatorname{vol}(K_1)=1$, we have
    $\operatorname{vol}(K_0)=[K_0:K_1]=p+1$.  Consequently convolution by
    $(p+1)^{-1}(\widetilde Q_p+I)$ is the normalized averaging projection
    from the $K_1$-fixed space onto the $K_0$-fixed space.  The two displayed
    identities show explicitly that this projector fixes the line
    $\mathbb C(f_1+f_w)$ and annihilates the line
    $\mathbb C(pf_1-f_w)$.  The former is the $K_0$-fixed line
    $\pi_\psi(\mu)_1^{K_0}$.  Since $\mathcal M_0$ is surjective and
    $(St_{\psi,\mu_a})_1^{K_0}=0$, it maps isomorphically onto
    $(\omega_{\psi,\mu_a}^+)_1^{K_0}$.  Moreover,
    $(\omega_{\psi,\mu_a}^+)_1^{K_0}$ and
    $(\omega_{\psi,\mu_a}^+)_1^{K_1}$ are both one-dimensional, so they
    coincide.  Thus the even Weil $K_1$-fixed vector is already an oldvector
    coming from level $K_0$.  Exactness of
    \eqref{eq:steinberg-type-exact} therefore identifies the latter line,
    $\mathbb C(pf_1-f_w)$, with the Steinberg newvector line.

    For \textbf{Case 2}, Proposition~\ref{prop:steinberg-dimensions} gives
    $(\omega_{\psi,\mu_a}^+)_1^{K_2}=0$.  Hence the entire
    two-dimensional induced space at level $K_2$ is the Steinberg space.
    Proposition~\ref{prop:level1}, applied with $n=1$, gives a common
    eigenbasis on which
    $$
    \mathcal Y_1=0,
    \qquad
    \mathcal W_1=\sqrt{p^*}\quad\text{and}\quad-\sqrt{p^*}.
    $$

    For \textbf{Case 3}, Proposition~\ref{prop:steinberg-dimensions} gives
    a two-dimensional induced space at level $K_2$, containing one
    Steinberg line and mapping onto one even Weil line, whereas all three
    $(K_1,\eta)$-isotypic spaces vanish.  The element
    $\mathcal Y_1=\mathcal I+\mathcal V_{1,1}+\mathcal V_{1,\lambda}$ is
    the twisted characteristic function of $K_1$.  Under the normalization
    $\operatorname{vol}(K_2)=1$, one has
    $\operatorname{vol}(K_1)=[K_1:K_2]=p$, so
    $p^{-1}\mathcal Y_1$ is the normalized averaging projection onto the
    $(K_1,\eta)$-isotypic space.  That space is zero in Case~3; therefore
    $$
    \mathcal Y_1=0
    $$
    on the entire two-dimensional induced space.

    We now adapt the argument used in Proposition~\ref{prop:level1}; its
    hypotheses do not apply literally here because $\mu_a$ is unramified.
    Let $f_1$ and $f_\lambda$ be the two standard type functions supported
    on $B(y(p),1)K_2$ and $B(y(\lambda p),1)K_2$, respectively.  The
    terminal relation from Corollary~\ref{cor:VW} gives
    $$
    \mathcal W_1^2=p^*\mathcal I
    $$
    on this space.  Thus $\mathcal W_1$ is diagonalizable and its
    eigenvalues belong to
    $\{\sqrt{p^*},-\sqrt{p^*}\}$.

    It remains to show that both eigenvalues occur.  The same direct
    double-coset calculation, now with $c(\mu_a)=0$ and $c(\eta)=1$,
    gives the equality of the two diagonal coefficients
    \[
    \mathcal V_{1,1}(f_1)(\bar y(p))
    =
    \mathcal V_{1,\lambda}(f_\lambda)(\bar y(\lambda p)).
    \tag{**}
    \]
    Suppose that only one $\mathcal W_1$-eigenvalue occurred.  Since
    $\mathcal W_1$ is diagonalizable, it would be scalar.  Because
    $\mathcal Y_1=0$, the identities
    $$
    \mathcal V_{1,1}+\mathcal V_{1,\lambda}=-\mathcal I,
    \qquad
    \mathcal V_{1,1}-\mathcal V_{1,\lambda}=\mathcal W_1
    $$
    would then make both $\mathcal V_{1,1}$ and
    $\mathcal V_{1,\lambda}$ scalar.  Equality~$(**)$ would force their
    two scalar values to be equal, and hence $\mathcal W_1=0$, contrary
    to $\mathcal W_1^2=p^*\mathcal I$.  Thus the two eigenvalues of
    $\mathcal W_1$ are $\sqrt{p^*}$ and $-\sqrt{p^*}$.

    We now determine which constituent receives which sign directly in the
    Schr\"odinger model. Write
    $$
    \chi=\eta=\legendre{\cdot}{p}
    \qquad\text{and}\qquad
    \phi_\eta(z)=\chi(z)\mathbf 1_{\mathbb Z_p^\times}(z).
    $$
    This character factor is essential: the plain characteristic function
    $\mathbf 1_{\mathbb Z_p^\times}$ has trivial diagonal type. Since
    $a\in\{1,\beta\}$ is a unit, $\psi_a$ has conductor $0$ and
    $\gamma(u,\psi_a)=1$ for every $u\in\mathbb Z_p^\times$. Hence
    $$
    \omega_{\psi_a}((h(u),1))\phi_\eta
    =\chi(u)\phi_\eta
    =\eta(h(u))\phi_\eta.
    $$
    The verification of the three type conditions in the proof of
    \cite[Theorem~4.4]{Ishimoto}, applied here with
    $c(\psi_a)=0$, $c(\eta)=1$, and $m=2$, shows that
    $x(\mathbb Z_p)$ fixes $\phi_\eta$ and that its Fourier transform is
    supported on $p^{-1}\mathbb Z_p^\times$; consequently
    $y(p^2\mathbb Z_p)$ also fixes $\phi_\eta$. Thus $\phi_\eta$ belongs
    to the $(K_2,\eta)$-isotypic space. Since $p\equiv1\pmod4$, one has
    $\chi(-1)=1$, and therefore $\phi_\eta$ is even. It consequently
    spans the one-dimensional space $(\omega_{\psi_a}^+)_\eta^{K_2}$.

    We compute the action of $\mathcal W_1$ on this vector. For
    $\delta\in\{1,\lambda\}$, the right-coset decomposition in
    Lemma~\ref{lem:cosetdecomp} gives
    \begin{align*}
    \mathcal V_{1,\delta}\phi_\eta
    &={}
    \sum_{u\in\mathbb Z_p^\times/\sqrt{1+p\mathbb Z_p}}
    \eta(u)
    \omega_{\psi_a}((h(u),1)(y(\delta p),1))\phi_\eta\\
    &={}
    \sum_{u\in\mathbb Z_p^\times/\sqrt{1+p\mathbb Z_p}}
    \omega_{\psi_a}((y(\delta u^{-2}p),1))\phi_\eta.
    \end{align*}
    Here we used
    $h(u)y(\delta p)=y(\delta u^{-2}p)h(u)$ and the two factors
    $\eta(u)$ and $\chi(u)$ multiply to $1$, since
    $\eta=\chi$ is quadratic. As $u^{-2}$ runs once through
    the nonzero squares in $\mathbb F_p$, subtraction of the two square
    classes gives
    \begin{equation}\label{eq:case3-W1-square-class-sum}
    \mathcal W_1\phi_\eta
    =\sum_{t\in\mathbb F_p^*}\chi(t)
    \omega_{\psi_a}((y(pt),1))\phi_\eta.
    \end{equation}

    Put $\mathcal F_a=\omega_{\psi_a}((w(1),1))$. Since the cover splits
    over $K$,
    $$
    (y(pt),1)=(w(1),1)(x(-pt),1)(w(1),1)^{-1}
    $$
    with no additional cocycle factor. Thus the right-hand side of
    \eqref{eq:case3-W1-square-class-sum} is
    $$
    \mathcal F_a
    \left(
    \sum_{t\in\mathbb F_p^*}\chi(t)
    \omega_{\psi_a}((x(-pt),1))
    \right)
    \mathcal F_a^{-1}\phi_\eta.
    $$
    Up to a nonzero constant,
    $$
    [\mathcal F_a^{-1}\phi_\eta](z)
    =\int_{\mathbb Z_p^\times}\chi(x)\psi(-2axz)\,dx,
    $$
    which is supported on $p^{-1}\mathbb Z_p^\times$. If
    $z=p^{-1}v$ with $v\in\mathbb Z_p^\times$, the operator in parentheses
    acts by the scalar
    \begin{align*}
    \sum_{t\in\mathbb F_p^*}\chi(t)\psi_a(-ptz^2)
    &=\sum_{t\in\mathbb F_p^*}\chi(t)
      \psi\left(-\frac{atv^2}{p}\right)\\
    &=\chi(-a)
      \sum_{t\in\mathbb F_p^*}\chi(t)\psi(t/p)\\
    &=\chi(a)\epsilon_p\sqrt p
     =\chi(a)\sqrt{p^*}.
    \end{align*}
    In the last line we used $\chi(-1)=1$ and
    \eqref{eq:weil-gauss-normalization}. It follows that
    $$
    \mathcal W_1\phi_\eta
    =\legendre{a}{p}\sqrt{p^*}\,\phi_\eta.
    $$
    The exact sequence~\eqref{eq:steinberg-type-exact} is equivariant for
    the compact Hecke action. Since the Steinberg line and the even Weil
    quotient are the two opposite $\mathcal W_1$-eigenlines found above,
    $\mathcal W_1$ acts on the Steinberg component by
    $-\legendre{a}{p}\sqrt{p^*}$.

    Finally, in \textbf{Case 4} both the even Weil and Steinberg
    constituents have a one-dimensional $(K_1,\eta)$-isotypic space.  We
    use the affine Iwahori operator $U_1$ supported
    on $K_1w(p^{-1})K_1$.  This operator separates the two
    constituents, although it is not an element of the compact algebra
    supported on $K$.  We use the normalization from
    \cite[Proposition~3.10(ii)]{B-PI}, namely
    $$
    U_1\bigl((w(p^{-1}),1)\bigr)=\epsilon_p.
    $$
    With this normalization, the same proposition gives the relation
    $$U_1^2 = \epsilon_p(p-1)U_1 + \left(\frac{-1}{p}\right)p I.$$

Let $f_1$ and $f_w$ be the standard basis of
$\pi_\psi(\mu)_\eta^{K_1}$ supported on $BK_1$ and
$B(w,1)K_1$, respectively, with the same normalization as in Case~1:
$$
f_1(1)=1,
\qquad
f_w((w,1))=1.
$$

Let
$$
\tau_a=\mu_a(-p)
=
\begin{cases}
1,&a=p,\\
-1,&a=p\beta.
\end{cases}
$$
Since
$$
\mu_a(-1)=(-1,a)_p=(-1,p)_p
=
\left(\frac{-1}{p}\right),
$$
we have
$$
\mu_a(p)=\tau_a\left(\frac{-1}{p}\right).
$$

Consequently,
$$
{}[U_1(f)](g)
=
\epsilon_p
\sum_{s\in\mathbb Z_p/p\mathbb Z_p}
f\bigl(g(y(ps),1)(w(p^{-1}),1)\bigr).
$$

We first compute $U_1(f_1)$ at $g=1$. The term $s=0$ lies in
the $BwK_1$-cell and therefore does not contribute. For
$s\in(\mathbb Z_p/p\mathbb Z_p)^\times$, put
$$
k_s=
\begin{pmatrix}
s&0\\
p&s^{-1}
\end{pmatrix}\in K_1.
$$
Then
$$
y(ps)w(p^{-1})k_s
=
\begin{pmatrix}
1&p^{-1}s^{-1}\\
0&1
\end{pmatrix}
\in B_0.
$$
For this direct $BK_1$-decomposition, the factor coming from the
cocycle $c(b,k_s)$ and $c(y(ps),w(p^{-1}))$ is
$$
(s,p)_p=\left(\frac{s}{p}\right).
$$
On the other hand,
$$
\eta(k_s^{-1})
=
\eta(s)
=
\left(\frac{s}{p}\right)
$$
and
$$\mu_B(b)=1.$$
Consequently,
$$
\begin{aligned}
{}[U_1(f_1)](1)
&=
\epsilon_p
\sum_{s\in(\mathbb Z_p/p\mathbb Z_p)^\times}
\left(\frac{s}{p}\right)\eta(s)\\
&=
\epsilon_p
\sum_{s\in(\mathbb Z_p/p\mathbb Z_p)^\times}
\left(\frac{s}{p}\right)^2\\
&=
\epsilon_p(p-1).
\end{aligned}
$$
We next evaluate at $g=w=w(1)$. Since
$$
w\,y(ps)\,w(p^{-1})
=
\begin{pmatrix}
-p&s\\
0&-p^{-1}
\end{pmatrix}
=
h(-p)x(-sp^{-1})
\in B_0,
$$
all terms contribute to the $f_w$-coefficient. The cocycle
calculation in this case gives $$c(w(1),y(ps)w(p^{-1}))=\begin{cases}
    1 \text{, if } s=0\\ (s,p)_p \text{, if } s\ne 0
\end{cases}$$ 
which for $s\ne 0$ equals $c(y(ps),w(p^{-1}))$, so there is no remaining cocycle contribution. Thus every summand is
$\mu_B((h(-p)x(-sp^{-1}),1))$. Since
$\mu=\mu_a|\cdot|^{1/2}$,
$$
\mu(-p)=\tau_a p^{-1/2}.
$$
Moreover, $\gamma(-1,\psi)=1$, $\gamma(p,\psi)=\epsilon_p$, and hence
$$
\gamma(-p,\psi)
=\gamma(-1,\psi)\gamma(p,\psi)(-1,p)_p
=\epsilon_p\left(\frac{-1}{p}\right).
$$
Consequently,
$$
\begin{aligned}
{}[U_1(f_1)](w)
&=\epsilon_p\sum_{s\in\mathbb Z_p/p\mathbb Z_p}
|-p|\,\mu(-p)\gamma(-p,\psi)^{-1}\\
&=\epsilon_p p\cdot p^{-1}\cdot\tau_a p^{-1/2}
\left(\frac{-1}{p}\right)\epsilon_p^{-1}\\
&=\tau_a\left(\frac{-1}{p}\right)p^{-1/2}.
\end{aligned}
$$
Therefore
$$
U_1(f_1)
=
\epsilon_p(p-1)f_1
+
\tau_a\left(\frac{-1}{p}\right)p^{-1/2}f_w.
$$
We now compute $U_1(f_w)$. Since
$$
w\,y(ps)\,w(p^{-1})
=
\begin{pmatrix}
-p&s\\
0&-p^{-1}
\end{pmatrix}
\in B_0,
$$
we have
$$
{}[U_1(f_w)](w)=0.
$$
At $g=1$,
$$
y(ps)w(p^{-1})
=
\begin{pmatrix}
0&p^{-1}\\
-p&s
\end{pmatrix}.
$$
We claim that this lies in $BwK_1$ only when
$s\equiv0\pmod p$. Indeed, suppose that
$$
y(ps)w(p^{-1})=bwk,
\qquad
k=
\begin{pmatrix}
\alpha&\beta_0\\
\gamma&\delta
\end{pmatrix}\in K_1.
$$
Writing $b=h(r)x(u)$, comparison of the second row gives
$$
\alpha=rp,
\qquad
\beta_0=-rs.
$$
If $s\not\equiv0\pmod p$, then $s\in\mathbb Z_p^\times$. Since
$\beta_0\in\mathbb Z_p$, it follows that $r\in\mathbb Z_p$, and hence
$$
\alpha=rp\in p\mathbb Z_p.
$$
Since also $\gamma\in p\mathbb Z_p$, this implies
$$
\alpha\delta-\beta_0\gamma\in p\mathbb Z_p,
$$
contradicting $\det(k)=1$. Thus only $s=0$ contributes.

For $s=0$, we use only the $Bw$-decomposition of the element
$w(p^{-1})$. In the metaplectic cover,
$$
(w(p^{-1}),1)
=
\left(
h(p^{-1}),
\left(\frac{-1}{p}\right)
\right)(w,1),
$$
because
$$
c(h(p^{-1}),w)
=
(-1,p)_p
=
\left(\frac{-1}{p}\right).
$$
Therefore
$$
\begin{aligned}
{}[U_1(f_w)](1)
&=
\epsilon_p\,
p\,\mu(p^{-1})
\left(\frac{-1}{p}\right)
\gamma(p^{-1},\psi)^{-1}.
\end{aligned}
$$
Since $\mu=\mu_a|\cdot|^{1/2}$ and $\mu_a$ is quadratic,
$$
\mu(p^{-1})
=\mu_a(p^{-1})|p^{-1}|^{1/2}
=\tau_a\left(\frac{-1}{p}\right)p^{1/2}.
$$
Moreover, $p^{-1}$ and $p$ have the same square class, so
$$
\gamma(p^{-1},\psi)
=
\gamma(p,\psi)
=
\epsilon_p.
$$
It follows that
$$
\begin{aligned}
{}[U_1(f_w)](1)
&=
\epsilon_p\,
p\,
\tau_a
\left(\frac{-1}{p}\right)^2
p^{1/2}
\epsilon_p^{-1}\\
&=
\tau_a p^{3/2}.
\end{aligned}
$$
Therefore
$$
U_1(f_w)=\tau_a p^{3/2}f_1.
$$

Thus, with the usual convention that the columns record the images of
$f_1$ and $f_w$, the matrix of $U_1$ in the ordered basis
$(f_1,f_w)$ is
$$
{}[U_1]_{(f_1,f_w)}
=
\begin{pmatrix}
\epsilon_p(p-1)&\tau_a p^{3/2}\\
\tau_a\left(\dfrac{-1}{p}\right)p^{-1/2}&0
\end{pmatrix}.
$$

Its characteristic polynomial is
$$
\begin{aligned}
\det(XI-U_1)
&=
X^2-\epsilon_p(p-1)X
-\left(\frac{-1}{p}\right)p\\
&=
X^2-\epsilon_p(p-1)X-\epsilon_p^2p\\
&=
(X-\epsilon_pp)(X+\epsilon_p).
\end{aligned}
$$

We therefore obtain the following eigenspace decompositions.

\medskip

\noindent\textbf{Case $a=p$, i.e. $\tau_a=1$.}

$$
{}[U_1]_{(f_1,f_w)}
=
\begin{pmatrix}
\epsilon_p(p-1)&p^{3/2}\\
\left(\dfrac{-1}{p}\right)p^{-1/2}&0
\end{pmatrix}.
$$

\begin{center}
\begin{tabular}{c|c}
\textbf{Eigenvalue}&\textbf{Eigenvector}\\
\hline
$\epsilon_pp$&$p^{3/2}f_1+\epsilon_pf_w$\\[4pt]
$-\epsilon_p$&$p^{1/2}f_1-\epsilon_pf_w$
\end{tabular}
\end{center}

\medskip

\noindent\textbf{Case $a=p\beta$, i.e. $\tau_a=-1$.}

$$
{}[U_1]_{(f_1,f_w)}
=
\begin{pmatrix}
\epsilon_p(p-1)&-p^{3/2}\\
-\left(\dfrac{-1}{p}\right)p^{-1/2}&0
\end{pmatrix}.
$$

\begin{center}
\begin{tabular}{c|c}
\textbf{Eigenvalue}&\textbf{Eigenvector}\\
\hline
$\epsilon_pp$&$p^{3/2}f_1-\epsilon_pf_w$\\[4pt]
$-\epsilon_p$&$p^{1/2}f_1+\epsilon_pf_w$
\end{tabular}
\end{center}

We now compute the action on the even Weil representation. Let
$$
\phi=\mathbf{1}_{\mathbb{Z}_p}
$$
in the Schr\"odinger model of $\omega_{\psi_a}^+$, where
$$
\psi_a(x)=\psi(ax).
$$
Since $a=p$ or $p\beta$, the character $\psi_a$ has conductor $-1$.
The standard Weil formulas show that $\phi$ spans the one-dimensional
$(K_1,\eta)$-isotypic space of $\omega_{\psi_a}^+$. By the normalization of $U_1$,
$$
{}[U_1\phi](y) = \epsilon_p \sum_{s\in\mathbb{Z}_p/p\mathbb{Z}_p} \omega_{\psi_a} \bigl((y(ps),1)(w(p^{-1}),1)\bigr)(\phi)(y).
$$
Using
$$
(y(ps),1)(w(p^{-1}),1)
\left(I,\left(\frac{-1}{p}\right)\right)
= (h(p^{-1}),1)(w,1)(x(-sp^{-1}),1),
$$
together with the Schr\"odinger-model formulas, we obtain
$$
\begin{aligned}
{}[U_1\phi](y) &= \epsilon_p \left(\frac{-1}{p}\right) p^{1/2} \gamma(p^{-1},\psi_a)^{-1} \gamma(\psi_a)\\
&\qquad \times \sum_{s\in\mathbb{Z}_p/p\mathbb{Z}_p} \int_{\mathbb{Z}_p} \psi_a\left(-\frac{sz^2}{p}\right) \psi_a\left(\frac{2zy}{p}\right) \,d_{\psi_a,2}z.
\end{aligned}
$$
Write
$
a=pu, \text{where } u\in\{1,\beta\}.
$
Since $c(\psi_a)=-1$, the self-dual measure is
$$
d_{\psi_a,2}z = p^{-1/2}\,dz.
$$
Moreover,
$$
\psi_a\left(-\frac{sz^2}{p}\right) = \psi(-usz^2).
$$
For $z\in\mathbb{Z}_p$ and $s\in\mathbb{Z}_p$, one has $usz^2\in\mathbb{Z}_p$, and hence
$$
\psi(-usz^2)=1.
$$
Therefore
$$
\sum_{s\in\mathbb{Z}_p/p\mathbb{Z}_p} \psi_a\left(-\frac{sz^2}{p}\right) = p.
$$
Also,
$$
\psi_a\left(\frac{2zy}{p}\right) = \psi(2uzy).
$$
It follows that
$$
\begin{aligned}
{}[U_1\phi](y) &= \epsilon_p \left(\frac{-1}{p}\right) p\, \gamma(p^{-1},\psi_a)^{-1} \gamma(\psi_a) \int_{\mathbb{Z}_p}\psi(2uzy)\,dz\\
&= \epsilon_p \left(\frac{-1}{p}\right) p\, \gamma(p^{-1},\psi_a)^{-1} \gamma(\psi_a) \,\mathbf{1}_{\mathbb{Z}_p}(y).
\end{aligned}
$$

By the definition of the normalized Weil index,
$$
\gamma(p^{-1},\psi_a) = \frac{\gamma((\psi_a)_{p^{-1}})} {\gamma(\psi_a)} = \frac{\gamma(\psi_u)} {\gamma(\psi_a)}.
$$
Since $u\in\mathbb{Z}_p^\times$ and $\psi_u$ has conductor $0$,
$$
\gamma(\psi_u)=1.
$$
Hence
$$
\gamma(p^{-1},\psi_a)^{-1} \gamma(\psi_a) = \gamma(\psi_a)^2.
$$
The identity
$$
\gamma(-1,\psi_a) = \gamma(\psi_a)^{-2}
$$
and the twisting relation give
$$
\gamma(-1,\psi_a) = (-1,a)_p\gamma(-1,\psi).
$$
Since $\gamma(-1,\psi)=1$ and $a=p$ or $p\beta$,
$$
(-1,a)_p = (-1,p)_p = \left(\frac{-1}{p}\right) = \epsilon_p^2.
$$
Therefore
$$
\gamma(\psi_a)^2 = \left(\frac{-1}{p}\right).
$$
We conclude that
$$
\begin{aligned}
{}[U_1\phi](y) &= \epsilon_p \left(\frac{-1}{p}\right) p \left(\frac{-1}{p}\right) \mathbf{1}_{\mathbb{Z}_p}(y)\\
&= \epsilon_p p\,\mathbf{1}_{\mathbb{Z}_p}(y).
\end{aligned}
$$
Thus
$$
U_1(\phi) = \epsilon_p p\,\phi.
$$
Thus $U_1$ acts on the one-dimensional even Weil quotient by
$\epsilon_p p$. Comparing this with the preceding eigenspace decomposition
and using the $U_1$-equivariance of
\eqref{eq:steinberg-type-exact}, we conclude that the Steinberg kernel is
the $-\epsilon_p$-eigenline, spanned by
$$
p^{1/2}f_1-\tau_a\epsilon_p f_w.
$$

\end{proof}

\subsection{Supercuspidal Representations}\label{subsec:supercuspidal}
We now consider irreducible genuine supercuspidal representations of $\widetilde G$. For the classification and the corresponding linear theory we refer to \cite[\S5.1]{Ishimoto}, \cite[\S3.3]{L-R}, and the original work of Manderscheid \cite{ManderscheidI,ManderscheidII}.
Let $\alpha=\begin{pmatrix}
    p  & 0\\ 0 &1
\end{pmatrix}$ and consider the maximal compact subgroups of $G$:
$$
\begin{aligned}
K^0&=\SL_2(\Z_p),\\
K^1&=\alpha^{-1}K^0\alpha\\
&=
\left\{
\begin{pmatrix}a&b\\c&d\end{pmatrix}\in\SL_2(\Q_p):
\begin{array}{c}
a,d\in\Z_p,\quad b\in p^{-1}\Z_p,\\
c\in p\Z_p
\end{array}
\right\}.
\end{aligned}
$$
There is a small cocycle-normalization point in comparing our model with
\cite{Ishimoto}.  As explained in Section~2, Ishimoto uses the Kubota
cocycle $\sigma_0$, whereas this paper uses the cocycle $c$.  The two
realizations $\widetilde G_{\sigma_0}$ and $\widetilde G_c$ are identified
by the isomorphism $\iota$ defined there.  We continue to write
$\widetilde G$ for our $c$-model.

For $\epsilon\in\{0,1\}$, let $s_{\sigma_0}^{\epsilon}$ be Ishimoto's splitting of $K^{\epsilon}$ in the
$\sigma_0$-model; see \cite[\S3.1]{Ishimoto}.  Pulling this splitting back through $\iota^{-1}$ gives the splitting in our model,
$$
s_c^{\epsilon}(k)=s_p(k)s_{\sigma_0}^{\epsilon}(k),
\qquad k\in K^{\epsilon}.
$$
On all the standard generators used below the correction factor $s_p$ is equal to $1$: this is immediate for $x(b)$ and $h(a)$ because their lower-left entry is zero; for $y(c)$ one has $(c,1)_p=1$; and for $wh(p^\epsilon)$ the lower-right entry is zero.  Consequently the splittings $s_c^0,s_c^1$ in our cocycle model have exactly the same values on these generators as Ishimoto's splittings.  To simplify notation, from now on we write $s^{\epsilon}=s_c^{\epsilon}$.  Thus the values of the two splittings in our $c$-model are:

\begin{tabular}{ |c|c| } 
 \hline
 $s^0$ & $s^1$ \\ 
 \hline

 $s^0(x(b))=1,\ b\in\mathbb Z_p$ 
 &
 $s^1(x(b))=1,\ b\in p^{-1}\mathbb Z_p$
 \\ 
 \hline 

 $s^0(h(a))=1,\ a\in\mathbb Z_p^{\times}$
 &
 $s^1(h(a))=\gamma(a,\psi_{-1}),\ a\in\mathbb Z_p^{\times}$
 \\
 \hline 

 $s^0(y(c))=1,\ c\in\mathbb Z_p$
 &
 $s^1(y(c))=1,\ c\in p\mathbb Z_p$
 \\
 \hline 

 $s^0\!\left(
 \begin{pmatrix}
 0 & 1\\
 -1 & 0
 \end{pmatrix}
 \right)=1$
 &
 $s^1\!\left(
 \begin{pmatrix}
 0 & p^{-1}\\
 -p & 0
 \end{pmatrix}
 \right)=1$
 \\
 \hline
\end{tabular}

where $\psi_{-1}$ is the additive character of conductor $-1$ fixed in Section~2.  We shall use the Weil-factor identities recalled there.
For $u\in\mathbb Z_p^\times$, the normalization $c(\psi_{-1})=-1$ gives
$$
s^1(h(u))=\gamma(u,\psi_{-1})=\left(\frac{u}{p}\right).
$$

The splitting $s^\delta=s_c^\delta$ identifies irreducible representations of $K^\delta$ with genuine irreducible representations of $\widetilde K^\delta$.  Explicitly, a representation $(\sigma,W)$ of $K^\delta$ gives the genuine representation $\widetilde\sigma_c$ defined by
$$
\widetilde\sigma_c((x,\epsilon s_c^\delta(x)))=\epsilon\sigma(x).
$$
Inside $K^{\delta}$ consider the subgroups $J_{\ell}^{\delta}=\{\begin{pmatrix}
    a & b\\ c & d
\end{pmatrix}|a,d\in 1+p^{\ell}\Z_p, b\in p^{\ell-\delta}\Z_p, c\in p^{\ell+\delta}\Z_p\}$ and $N_{\ell}=\{x(b)|b\in p^{\ell}\Z_p\}$. 
Following the conventions of \cite[\S5.1]{Ishimoto} and Manderscheid \cite{ManderscheidI,ManderscheidII}, we use the following terminology.
\begin{definition}
For a representation $\sigma$ of $K^{\delta}$, we define the conductor
$c(\sigma)$ as the minimal $m$ such that $\sigma$ is trivial on
$J_m^{\delta}$.  We say that $\sigma$ is strongly cuspidal if
$$
\Hom_{N_{c(\sigma)-2\delta-1}}
\bigl(\sigma|_{N_{c(\sigma)-2\delta-1}},1\bigr)=0.
$$
A strongly cuspidal representation $\sigma$ has \emph{defect $1$} if
$$
\Hom_{N_{c(\sigma)-\delta-1}}
\bigl(\sigma|_{N_{c(\sigma)-\delta-1}},1\bigr)\ne0;
$$
otherwise it has \emph{defect $0$}.  We denote the defect by $d(\sigma)$.
If $d(\sigma)=1$, then necessarily $\delta=1$.
\end{definition}

We emphasize how the classification quoted below is carried over to our
cocycle normalization.  The classification theorem is due to
Manderscheid \cite{ManderscheidI,ManderscheidII}; we use it in the form
stated as Theorem~5.2 of \cite{Ishimoto}.  That statement uses the
$\sigma_0$-model and the genuine lift
$$
\widetilde\sigma_{\sigma_0}((x,\epsilon s_{\sigma_0}^{\delta}(x)))=\epsilon\sigma(x).
$$
By the definition of $s_c^\delta$,
$$
\iota^{-1}(x,\epsilon s_{\sigma_0}^{\delta}(x))
=(x,\epsilon s_c^{\delta}(x)),
$$
so $\iota^{-1}$ carries $\widetilde\sigma_{\sigma_0}$ exactly to $\widetilde\sigma_c$.  Compact induction is preserved under the group isomorphism $\iota$.  Consequently Manderscheid's theorem, in the form stated in Theorem~5.2 of \cite{Ishimoto}, holds verbatim in our $c$-model: for every irreducible strongly cuspidal $\sigma$ of $K^\delta$,
$$
\operatorname{c-Ind}_{\widetilde K^\delta}^{\widetilde G_c}\widetilde\sigma_c
$$
is irreducible and genuine supercuspidal; every irreducible genuine supercuspidal representation of $\widetilde G_c$ arises in this way; and inequivalent strongly cuspidal types induce inequivalent representations.  Thus Manderscheid's classification is unchanged by our cocycle normalization.  Our aim is to find the newvectors in these compact inductions.

Throughout this subsection, $c$ denotes the cocycle fixed in Section~2,
and we use the splittings described above.  If
$s:K\longrightarrow\{\pm1\}$ is one of the splittings above and $(\rho,W)$ is
an irreducible representation of $K$, we denote by $\widetilde\rho$ the
corresponding genuine representation of $\widetilde K$.  Thus $\widetilde\rho((k,\epsilon))=\epsilon s(k)\rho(k)$, or equivalently
$(k,\epsilon s(k))\mapsto\epsilon\rho(k)$.
This notation will also be used after replacing $\rho$ by a conjugate
representation or by a representation induced from the Iwahori subgroup.

Put
$$
\overline N_r=\{y(t):t\in p^r\mathbb Z_p\},
\qquad
N_r=\{x(t):t\in p^r\mathbb Z_p\},
$$
We first recall the linear data from which the maximal-compact types used
below arise.  Let
$$
K_{\GL}=\GL_2(\mathbb Z_p),
\qquad
I_{\GL}=\left\{
\begin{pmatrix}a&b\\c&d\end{pmatrix}\in K_{\GL}:c\in p\mathbb Z_p
\right\},
$$
let $Z$ be the center of $\GL_2(\mathbb Q_p)$, and let $Z'$ be the subgroup
generated by
$$
\varpi_I=\begin{pmatrix}0&1\\p&0\end{pmatrix}.
$$
Thus $ZK_{\GL}$ and $Z'I_{\GL}$ are the two compact-mod-center subgroups
used in the Kutzko--Sally construction.  Following
\cite[Definition~3.3.1]{L-R}, the linear very cuspidal data occur in two
cases:
$$
\begin{array}{ccl}
\text{unramified:}&&\tau\text{ is a representation of }ZK_{\GL},\\
\text{ramified:}&&\tau\text{ is a representation of }Z'I_{\GL}.
\end{array}
$$
In the corresponding case, level $\ell\geq1$ means that $\tau$ is trivial
on
$$
K_{\GL}(\ell)=1+p^\ell M_2(\mathbb Z_p)
$$
or on
$$
I_{\GL}(\ell)=\left\{
\begin{pmatrix}a&b\\c&d\end{pmatrix}\in I_{\GL}:
a,d\in1+p^\ell\mathbb Z_p,\quad b\in p^\ell\mathbb Z_p,\quad
c\in p^{\ell+1}\mathbb Z_p
\right\},
$$
and in either case
$$
\Hom_{N_{\ell-1}}(1,\tau)=0.
$$

The restriction of an unramified datum to
$K_{\GL}\cap G=K^0$ is usually irreducible; in the exceptional case
$\ell=1$ it may split into two irreducible constituents.  The restriction
of a ramified datum to
$$
I=I_{\GL}\cap G
$$
always splits as $\sigma_1\oplus\sigma_2$, and conjugation by a diagonal
nonsquare-unit element interchanges the two constituents; see
\cite[\S3.3]{L-R}.

For clarity, we now define the level of these Iwahori constituents.  Put
$$
I_\ell=I_{\GL}(\ell)\cap G.
$$
We say that a representation $\sigma$ of $I$ has \emph{Iwahori level
$\ell$} if
$$
\sigma|_{I_\ell}=1
\qquad\text{and}\qquad
\Hom_{N_{\ell-1}}(1,\sigma)=0.
$$
Each constituent of a ramified very cuspidal datum of level $\ell$ has
Iwahori level $\ell$.  In addition, Lansky--Raghuram
\cite[Remark~3.3.2]{L-R} prove for the ambient ramified datum that
$$
\Hom_{N_{\ell-1}}(1,\tau)
=
\Hom_{\overline N_\ell}(1,\tau)
=0.
$$
Passing to either constituent gives
$$
\Hom_{\overline N_\ell}(1,\sigma_i)=0,
\qquad i=1,2.
$$
This lower-unipotent assertion uses the extension to the ramified very
cuspidal representation of $Z'I_{\GL}$; it need not hold for an arbitrary
representation of $I$ having the two properties in the definition above.

We record one consequence needed for the odd ramified construction.  If
$$
\rho=\operatorname{Ind}_I^{K^0}\sigma_i,
$$
then $J_{\ell+1}^0\subset I_\ell$ and the normality of
$J_{\ell+1}^0$ in $K^0$ give $c(\rho)\leq\ell+1$.  On the other hand,
$\sigma_i$ is trivial on $\overline N_{\ell+1}\subset I_\ell$ and has no
$\overline N_\ell$-fixed vector.  Hence $\rho$ is nontrivial on $J_\ell^0$,
through
$$
J_\ell^0/I_\ell\simeq
\overline N_\ell/\overline N_{\ell+1},
$$
and therefore
$$
c(\rho)=\ell+1.
$$

The following observation also determines the defect after passing from
$K^0$ to $K^1$.

\begin{lem}\label{lem:odd-ramified-defect}
Let $\sigma$ be an irreducible Iwahori constituent of a ramified very
cuspidal datum of level $\ell$, and put
$$
\rho=\operatorname{Ind}_I^{K^0}\sigma,
\qquad
\rho^\alpha(k)=\rho(\alpha k\alpha^{-1}),\qquad k\in K^1.
$$
Then $\rho^\alpha$ is an irreducible strongly cuspidal representation of
$K^1$ with
$$
c(\rho^\alpha)=\ell+1,
\qquad
d(\rho^\alpha)=1.
$$
\end{lem}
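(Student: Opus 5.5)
The plan is to prove the three assertions in turn: irreducibility, the conductor, and strong cuspidality together with the defect. In each case I transport the question from $K^1$ back to $K^0$ through $\alpha$ and then apply Mackey theory to $\rho=\operatorname{Ind}_I^{K^0}\sigma$. Conjugation by $\alpha$ is an isomorphism $K^1\to K^0$, so $\rho^\alpha$ is irreducible exactly when $\rho$ is. The basic identities are
\[
\alpha\,x(b)\,\alpha^{-1}=x(pb),
\qquad
\alpha\,y(c)\,\alpha^{-1}=y(p^{-1}c).
\]
It follows that $\alpha J_m^1\alpha^{-1}=J_m^0$ and $\alpha N_r\alpha^{-1}=N_{r+1}$, where on the left $N_r\subset K^1$ is allowed to have $r\ge -1$.

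\textbf{Conductor.} Since $\alpha J_m^1\alpha^{-1}=J_m^0$, the representation $\rho^\alpha$ is trivial on $J_m^1$ if and only if $\rho$ is trivial on $J_m^0$. Hence $c(\rho^\alpha)=c(\rho)=\ell+1$, by the computation made just before the lemma.

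\textbf{Irreducibility.} By the Bruhat decomposition modulo $p$, we have $K^0=I\sqcup IwI$ with $w=w(1)$. Mackey's criterion therefore reduces irreducibility of $\rho$ to the vanishing
\[
\Hom_{I\cap wIw^{-1}}(\sigma,\sigma^w)=0 .
\]
The intersection $I\cap wIw^{-1}$ consists of the elements of $K^0$ whose off-diagonal entries lie in $p\Z_p$. In particular it contains $N_\ell$ for every $\ell\ge1$.
\begin{itemize}
\item On $N_\ell$, the representation $\sigma$ is trivial, because $N_\ell\subset I_\ell$.
\item On $N_\ell$, the representation $\sigma^w$ is $\sigma$ composed with $x(t)\mapsto w^{-1}x(t)w=y(-t)$. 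By the Lansky--Raghuram consequence recorded above, $\sigma$ has no nonzero $\overline N_\ell$-fixed vector, so $\sigma^w$ has no nonzero $N_\ell$-fixed vector.
\end{itemize}
Any intertwiner must carry the $N_\ell$-fixed vectors of $\sigma$, which is all of $\sigma$, into the $N_\ell$-fixed vectors of $\sigma^w$, which are zero. Hence it vanishes.

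\textbf{Strong cuspidality and defect.} For $\rho^\alpha$ on $K^1$ with $c=\ell+1$ and $\delta=1$, the group $N_{c-2\delta-1}$ is $N_{\ell-2}\subset K^1$, and $\alpha$ conjugates it to $N_{\ell-1}\subset K^0$. Similarly $N_{c-\delta-1}=N_{\ell-1}\subset K^1$ corresponds to $N_\ell\subset K^0$. So I must show two things:
\[
\Hom_{N_{\ell-1}}(1,\rho)=0
\qquad\text{and}\qquad
\Hom_{N_\ell}(1,\rho)\ne0 .
\]

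For the first, apply Mackey to $\rho|_{N_{\ell-1}}$, with double cosets $I\backslash K^0/N_{\ell-1}$. Take representatives $1$ and $wx(s)$.
\begin{itemize}
\item The summand at $1$ is $\sigma|_{N_{\ell-1}}$. It has no fixed vector because $\sigma$ has Iwahori level $\ell$.
\item The summand at $wx(s)$ is $\sigma$ restricted to $wN_{\ell-1}w^{-1}\cap I$, transported by conjugation. When $\ell\ge2$ this group is $\overline N_{\ell-1}$. When $\ell=1$ all the $wx(s)$ fall into one double coset, and the group is $\overline N_1$.
\item In both cases the group contains $\overline N_\ell$. Since $\sigma$ has no $\overline N_\ell$-fixed vector, these summands have no fixed vectors either.
\end{itemize}

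For the second, the summand at $g=1$ in $\rho|_{N_\ell}$ is $\sigma|_{N_\ell}$, which is trivial because $N_\ell\subset I_\ell$. This gives a nonzero $N_\ell$-fixed vector, namely the function supported on $I$ with values in $\sigma$. Therefore $\rho^\alpha$ is strongly cuspidal with $d(\rho^\alpha)=1$.

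The main obstacle is the bookkeeping in the Mackey double-coset analysis. I need to check that every conjugated subgroup meeting $I$ contains $\overline N_\ell$ or $N_{\ell-1}$, so that the hypotheses on $\sigma$ apply, and to treat the degenerate case $\ell=1$ separately, since there $N_0$ merges the $p$ cosets $wx(s)$ into a single double coset.
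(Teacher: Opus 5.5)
Your proof is correct, but it takes a different route from the paper's.

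\textbf{The paper's route.} The paper does not run Mackey theory on $K^0$. Instead it conjugates the induced model. With $I'=\alpha^{-1}I\alpha$ and $w_1=w(p^{-1})$ one has $w_1I'w_1^{-1}=I$, so $\rho^\alpha\simeq\operatorname{Ind}_I^{K^1}\sigma'$, where $\sigma'(i)=\sigma(\gamma i\gamma^{-1})$ and $\gamma=\alpha w_1^{-1}=\varpi_I\,\mathrm{diag}(1,-1)\in Z'I_{\GL}$. Hence $\sigma'$ is again a constituent of the same ramified datum. Irreducibility, strong cuspidality, $c=\ell+1$ and $d=1$ are then all read off from \cite[Lemma~5.5(2)]{Ishimoto}, that is, from the even-ramified situation of case~(iii). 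This is short, and it shows structurally that the odd ramified type has the same shape as the even one. However, it places all the substantive content in an external lemma.

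\textbf{Your route.} Your argument is self-contained. You transport the filtrations through $\alpha$ using $\alpha J^1_m\alpha^{-1}=J^0_m$ and $\alpha N_r\alpha^{-1}=N_{r+1}$, and you reuse the conductor computation that precedes the lemma. You then verify irreducibility and the two $\Hom$-conditions by Mackey's formulas. This uses only two facts about $\sigma$: that it has Iwahori level $\ell$, and that $\Hom_{\overline N_\ell}(1,\sigma)=0$.

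Your approach has some advantages over the paper's:
\begin{itemize}
\item It shows exactly where the Lansky--Raghuram lower-unipotent vanishing is needed: in the $w$-double coset for irreducibility, and in the $wx(s)$-summands for strong cuspidality.
\item It handles the merged double coset at $\ell=1$ correctly.
\item It exhibits the defect-one witness explicitly as the $I$-supported function $\phi_w$. This is precisely the vector used later in Proposition~\ref{prop:supercuspidal-newvectors-four-cases}(iv).
\end{itemize}

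\textbf{A point you should state.} The paper's definitions of strong cuspidality and defect are phrased with invariant linear forms $\Hom_N(\rho|_N,1)$, not invariant vectors. For the compact groups here the two are equivalent by complete reducibility; the paper notes this in the proof of Corollary~\ref{cor:linear-supercuspidal-hecke}.
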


\begin{proof}
Put $I'=\alpha^{-1}I\alpha$ and define
$$
\sigma^\alpha(i')=\sigma(\alpha i'\alpha^{-1}),
\qquad i'\in I'.
$$
Functoriality of induction under the isomorphism
$\operatorname{Ad}_\alpha:K^1\to K^0$ gives
$$
\rho^\alpha\simeq\operatorname{Ind}_{I'}^{K^1}\sigma^\alpha.
$$
Let
$$
w_1=w(p^{-1})=\begin{pmatrix}0&p^{-1}\\-p&0\end{pmatrix}\in K^1.
$$
A direct matrix calculation gives
$$
w_1I'w_1^{-1}=I.
$$
Conjugating the induced model by $w_1$ therefore gives
$$
\rho^\alpha\simeq\operatorname{Ind}_I^{K^1}\sigma',
$$
where
$$
\begin{aligned}
\sigma'(i)
&=\sigma^\alpha(w_1^{-1}iw_1)\\
&=\sigma(\gamma i\gamma^{-1}),
\qquad
\gamma=\alpha w_1^{-1}
=\begin{pmatrix}0&-1\\p&0\end{pmatrix}.
\end{aligned}
$$
Since
$$
\gamma=\varpi_I\begin{pmatrix}1&0\\0&-1\end{pmatrix}
\in Z'I_{\GL},
$$
the representation $\sigma'$ is again an irreducible constituent of the
restriction of the same ramified very cuspidal datum to $I$.  Ishimoto's
\cite[Lemma~5.5(2)]{Ishimoto}, which records the corresponding
Kutzko--Sally--Manderscheid result, now shows that
$\operatorname{Ind}_I^{K^1}\sigma'$ is irreducible and strongly cuspidal,
of conductor $\ell+1$ and defect $1$.
\end{proof}

We next introduce the weight spaces used in the explicit vectors.  Put
$$
U=\{h(u):u\in\mathbb Z_p^\times\}.
$$
For a character $\eta:\mathbb Z_p^\times\to\mathbb C^\times$ satisfying the
usual central compatibility condition, let
$$
W^U_{\eta}
=
\{w\in W:\rho(h(u))w=\eta(u)w
\text{ for every }u\in\mathbb Z_p^\times\}.
$$
When a level $\ell$ is fixed below, we write $\eta_\ell$ for the
character occurring in the linear $U$-weight space.  In the unramified
construction,
$$
\eta_\ell=
\begin{cases}
\eta,&\ell\ \text{even},\\
\eta(\frac{\cdot}{p}),&\ell\ \text{odd},
\end{cases}
$$
while in the ramified normalization we use
$\eta_\ell=\eta(\frac{\cdot}{p})$.
Here $\eta$ itself always denotes the character defining the
$K_r$-isotypic space and the twisted Hecke action.
We recall the dimensions of the spaces that occur in the supercuspidal
constructions.  By \cite[Propositions 3.3.4, 3.3.6, 3.3.8]{L-R} and the
Kutzko--Sally classification recalled in \cite[\S5.1]{Ishimoto} (see also \cite{KutzkoSally}), if the corresponding
unramified supercuspidal $L$-packet has cardinality two, the restriction of
the very cuspidal $GL_2$-type to $K^0$ is irreducible and
$\dim W^U_{\eta_\ell}=2$.
For the exceptional unramified packet of cardinality four one has $\ell=1$
and the restriction splits as $\sigma_1\oplus\sigma_2$; for the space $W_i$
of each irreducible constituent, $\dim (W_i)^U_{\eta_\ell}=1$.
In the ramified case the restriction to the Iwahori subgroup of $\SL_2$
splits as $\sigma_1\oplus\sigma_2$, and again
$\dim (W_i)^U_{\eta_\ell}=1$ for each constituent.  Thus the two-dimensional $U$-weight space belongs to
a single inducing type only in the unramified packet of cardinality two.
Fix a nonsquare unit $\lambda\in\mathbb Z_p^\times$.

We shall keep three levels distinct.  The integer $\ell$ is the level of
the underlying very cuspidal or Iwahori datum.  The associated strongly
cuspidal maximal-compact type $(\rho,W)$ has conductor and defect
$$
(c(\rho),d(\rho))=
\begin{cases}
(\ell,0),&\text{in the unramified case},\\
(\ell+1,1),&\text{in the ramified case}.
\end{cases}
$$
Finally, for the choices of maximal compact subgroup made below and for
$c(\eta)\leq1$, the compact induction $\widetilde\pi$ has
$\eta$-conductor $2\ell$ in the unramified case and $2\ell+1$ in the
ramified case.  For the unramified construction and the even ramified
construction, the assertions about $(c(\rho),d(\rho))$ follow from
\cite[Lemma~5.5]{Ishimoto}, and the $\eta$-conductors then follow from
\cite[Theorem~5.11 and Corollary~5.12]{Ishimoto}.  The odd ramified
construction in Proposition~\ref{prop:supercuspidal-newvectors-four-cases}(iv)
is our separate construction.  Its conductor and defect are established
in Lemma~\ref{lem:odd-ramified-defect}.  Thus ``level
$\ell$'' for the linear datum, $c(\rho)$ for the maximal-compact type,
and $c_\eta(\widetilde\pi)$ are not interchangeable.

\begin{lem}\label{lem:linear-supercuspidal-hecke}
Let $K$ be either $K^0$ or $K^1$, let $(\rho,W)$ be a representation of
$K$, and let
$\chi:\mathbb Z_p^\times\to\mathbb C^\times$ be a character.  Suppose that
$w\in W^U_{\chi}$ is fixed by $\overline N_\ell$ and that
$$
\Hom_{\overline N_{\ell-1}}(1,\rho)=0.
$$
For $\delta\in\{1,\lambda\}$ define
$$
\mathcal T_{\ell,\delta}(w)
=
\sum_{u\in\mathbb Z_p^\times/\sqrt{1+p\mathbb Z_p}}
\chi(u)^{-1}\rho\bigl(h(u)y(\delta p^{\ell-1})\bigr)w.
$$
Then $\mathcal T_{\ell,\delta}(w)\in W^U_\chi$ for each
$\delta\in\{1,\lambda\}$, and
$$
\bigl(I+\mathcal T_{\ell,1}+\mathcal T_{\ell,\lambda}\bigr)w=0.
$$
\end{lem}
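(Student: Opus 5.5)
The idea is to recognize $I+\mathcal T_{\ell,1}+\mathcal T_{\ell,\lambda}$ as a scalar multiple of a projector onto $\overline N_{\ell-1}$-invariants, and then use the hypothesis that $\rho$ has no such invariants.

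\emph{Weight property.} We check this first. For $v\in\mathbb Z_p^\times$,
\[
\rho(h(v))\rho\bigl(h(u)y(\delta p^{\ell-1})\bigr)w=\rho\bigl(h(vu)y(\delta p^{\ell-1})\bigr)w .
\]
Reindexing $u\mapsto v^{-1}u$ in the defining sum multiplies it by $\chi(v)$, so $\mathcal T_{\ell,\delta}(w)\in W^U_\chi$. We must also check that the sum is well defined on $\mathbb Z_p^\times/\sqrt{1+p\mathbb Z_p}$. By Lemma~\ref{lem:sqrt} a representative may be changed by $u\mapsto \pm u(1+pz)$. The factor $h(1+pz)$ can be moved past $y(\delta p^{\ell-1})$: this changes the lower-left entry by an element of $p^{\ell}\mathbb Z_p$, which is absorbed because $w$ is $\overline N_\ell$-fixed, and the diagonal factor then contributes $\chi(1+pz)$. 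This is handled by the same manipulation as in the coset decomposition of Lemma~\ref{lem:cosetdecomp}, and it requires $\chi$ to be trivial on $1+p\mathbb Z_p$, up to the weight condition. The sign $-1$ is harmless because $h(-1)$ is central and acts by the compatible scalar $\chi(-1)$.

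\emph{Key rewriting.} The main step is to compute
\[
h(u)y(\delta p^{\ell-1})w=y(\delta u^{2}p^{\ell-1})h(u)w=\chi(u)\,y(\delta u^2p^{\ell-1})w .
\]
This gives
\[
\mathcal T_{\ell,\delta}(w)=\sum_{u}\rho\bigl(y(\delta u^2p^{\ell-1})\bigr)w .
\]
As $u$ runs over $\mathbb Z_p^\times/\sqrt{1+p\mathbb Z_p}$, the residue of $u^2$ runs exactly once over the nonzero squares mod $p$, by Lemma~\ref{lem:sqrt}. Hence $\delta u^2$ runs over the square class of $\delta$, and
\[
\bigl(I+\mathcal T_{\ell,1}+\mathcal T_{\ell,\lambda}\bigr)w=\sum_{t\in\mathbb F_p}\rho\bigl(y(tp^{\ell-1})\bigr)w .
\]
Since $w$ is fixed by $\overline N_\ell$, each summand depends only on $t$ mod $p$.

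\emph{Conclusion.} The right-hand side equals $p$ times the averaging projection $P=\int_{\overline N_{\ell-1}}\rho(n)\,dn$ (normalized Haar measure) applied to $w$. Indeed $\overline N_{\ell-1}/\overline N_\ell\cong\mathbb F_p$ and $w$ is $\overline N_\ell$-invariant. The vector $Pw$ is $\overline N_{\ell-1}$-invariant, and the hypothesis $\Hom_{\overline N_{\ell-1}}(1,\rho)=0$ forces $Pw=0$, which proves the identity.

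The only delicate point is the well-definedness and sign bookkeeping on the quotient by $\sqrt{1+p\mathbb Z_p}$. If it proves awkward, we would instead interpret the sum over a fixed set of representatives, as in Lemma~\ref{lem:sqrt}. The final identity is independent of that choice, because after the rewriting every summand depends only on $u^2$ mod $p$.
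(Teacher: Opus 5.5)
Your proof is correct and is essentially the paper's argument: use $h(u)y(t)=y(tu^{-2})h(u)$ and the $U$-weight to reduce each summand to $\rho(y(\delta u^{-2}p^{\ell-1}))w$, note that the zero class and the two square classes exhaust $\overline N_{\ell-1}/\overline N_\ell\cong\mathbb F_p$, and kill the resulting $\overline N_{\ell-1}$-average with $\Hom_{\overline N_{\ell-1}}(1,\rho)=0$ (you wrote $u^{2}$ where it should be $u^{-2}$, which is harmless because $u\mapsto u^{-1}$ permutes the quotient). Your explicit check of the $U$-weight assertion and of well-definedness on the quotient is something the paper's proof leaves implicit, and it does not need $\chi$ to be trivial on $1+p\mathbb Z_p$, because the two factors $\chi(1+pz)$ cancel.
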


\begin{proof}
We give the calculation exactly in the linear model.  For
$\delta\in\{1,\lambda\}$ we have
$$
\begin{aligned}
\mathcal T_{\ell,\delta}(w)
&=
\sum_{u\in\mathbb Z_p^\times/\sqrt{1+p\mathbb Z_p}}
\chi(u)^{-1}\rho\bigl(h(u)y(\delta p^{\ell-1})\bigr)w\\
&=
\sum_{u\in\mathbb Z_p^\times/\sqrt{1+p\mathbb Z_p}}
\chi(u)^{-1}\rho\bigl(y(\delta p^{\ell-1}u^{-2})h(u)\bigr)w\\
&=
\sum_{u\in\mathbb Z_p^\times/\sqrt{1+p\mathbb Z_p}}
\rho\bigl(y(\delta p^{\ell-1}u^{-2})\bigr)w,
\end{aligned}
$$
where we used
$$
h(u)y(t)=y(tu^{-2})h(u)
$$
and $\rho(h(u))w=\chi(u)w$.

Now
$$
\overline N_{\ell-1}/\overline N_\ell
\simeq p^{\ell-1}\mathbb Z_p/p^\ell\mathbb Z_p
\simeq \mathbb F_p.
$$
The zero class gives the identity operator.  The nonzero classes split into
the two square classes: the elements
$$
\delta u^{-2}\pmod p,
\qquad
u\in\mathbb Z_p^\times/\sqrt{1+p\mathbb Z_p},
\qquad
\delta\in\{1,\lambda\},
$$
run exactly once through $\mathbb F_p^*$.  Hence, with Haar measure on
$\overline N_{\ell-1}$ normalized so that $\overline N_\ell$ has the
corresponding quotient mass, the preceding two sums together with the
identity term give
$$
\bigl(I+\mathcal T_{\ell,1}+\mathcal T_{\ell,\lambda}\bigr)w
=
\int_{\overline N_{\ell-1}}\rho(n)w\,dn.
$$
The last integral is a nonzero scalar multiple of the averaging projection
onto the trivial $\overline N_{\ell-1}$-isotypic subspace.  Since
$$
\Hom_{\overline N_{\ell-1}}(1,\rho)=0
$$
by hypothesis, the integral is zero.  Therefore
$$
\bigl(I+\mathcal T_{\ell,1}+\mathcal T_{\ell,\lambda}\bigr)w=0.
$$
\end{proof}

\begin{corlem}[For the inducing types]\label{cor:linear-supercuspidal-hecke}
\begin{enumerate}[label=(\roman*)]
\item Let $(\rho,W)$ be a strongly cuspidal representation of $K^0$ with
$c(\rho)=\ell$ and $d(\rho)=0$.  Then every $w\in W^U_\chi$ is fixed by
$\overline N_\ell$ and
$$
\Hom_{\overline N_{\ell-1}}(1,\rho)=0.
$$
Thus every such $w$ satisfies the hypotheses of
Lemma~\ref{lem:linear-supercuspidal-hecke}.

\item Let $(\rho,W)$ be a strongly cuspidal representation of $K^1$ with
$c(\rho)=\ell+1$ and defect $1$.  Then
$$
\Hom_{\overline N_\ell}(1,\rho)=0.
$$
Consequently, every $\phi\in W^U_\chi$ that is fixed by
$\overline N_{\ell+1}$ satisfies Lemma~\ref{lem:linear-supercuspidal-hecke}
with $\ell+1$ in place of $\ell$.
\end{enumerate}
\end{corlem}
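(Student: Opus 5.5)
Part (i) reduces to the definitions of conductor and strong cuspidality for $\delta=0$; part (ii) needs the Iwahori-induced model of Lemma~\ref{lem:odd-ramified-defect}.

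\textbf{Part (i).} Since $c(\rho)=\ell$, the representation $\rho$ is trivial on $J^0_\ell$. The group $J^0_\ell$ contains $\overline N_\ell=\{y(t):t\in p^\ell\Z_p\}$, because $J^0_\ell$ only requires $c\in p^{\ell}\Z_p$. Hence every vector of $W$, and in particular every $w\in W^U_\chi$, is fixed by $\overline N_\ell$.

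For the vanishing statement, strong cuspidality with $\delta=0$ reads $\Hom_{N_{\ell-1}}(1,\rho)=0$. To pass to the lower unipotent group, conjugate by $w(1)\in K^0$. Conjugation by $w(1)$ carries $N_{\ell-1}$ onto $\overline N_{\ell-1}$, since $w(1)x(t)w(1)^{-1}=y(-t)$. It follows that $\rho(w(1))$ maps the $\overline N_{\ell-1}$-fixed space isomorphically onto the $N_{\ell-1}$-fixed space. Therefore $\Hom_{\overline N_{\ell-1}}(1,\rho)=0$. Defect $0$ is not actually needed in this part. Both hypotheses of Lemma~\ref{lem:linear-supercuspidal-hecke} hold.

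\textbf{Part (ii).} Here $\delta=1$, and $w(1)\notin K^1$, so the conjugation trick of part (i) fails. I would use the Iwahori model instead: $\rho\simeq\operatorname{Ind}_I^{K^1}\sigma'$, as established in the proof of Lemma~\ref{lem:odd-ramified-defect}. For a general strongly cuspidal defect-$1$ representation I would invoke the Manderscheid/Kutzko--Sally description recalled via \cite[Lemma~5.5]{Ishimoto}, which realises it in this form.

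Apply Mackey theory to $\operatorname{Res}_{\overline N_\ell}\operatorname{Ind}_I^{K^1}\sigma'$, using the double cosets $I\backslash K^1/\overline N_\ell$. The quotient $K^1/I$ has $p+1$ points: the coset $I$ and the cosets $x(b)w_1I$ with $b\in p^{-1}\Z_p/\Z_p$, where $w_1=w(p^{-1})$.

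\emph{Trivial coset.} The relevant group is $\overline N_\ell\cap I=\overline N_\ell$. Since $\Hom_{\overline N_\ell}(1,\sigma_i)=0$ was recorded earlier for the constituents, the contribution vanishes.

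\emph{Remaining cosets.} Conjugating $\overline N_\ell$ by $(x(b)w_1)^{-1}$ lands in $I$ as a subgroup of the form $x(b')N_{\ell-2}x(-b')$, modulo the kernel $I_\ell$. I would then use $\Hom_{N_{\ell-1}}(1,\sigma_i)=0$ (Iwahori level $\ell$), together with the $N$-action within $I$, to kill each of these terms. These orbit terms are the main obstacle. The indices must be tracked precisely: a naive computation gives $N_{\ell-2}\supset N_{\ell-1}$, which suffices, but the conjugated groups for $b\neq0$ are twisted unipotents, and I need them to still contain an $I$-conjugate of $N_{\ell-1}$. If this bookkeeping fails, the fallback is to cite directly the Manderscheid/Ishimoto characterisation that a defect-$1$ type of conductor $\ell+1$ has no $\overline N_\ell$-fixed vectors.

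\emph{Conclusion.} Given $\Hom_{\overline N_\ell}(1,\rho)=0$, any $\phi\in W^U_\chi$ fixed by $\overline N_{\ell+1}$ satisfies the hypotheses of Lemma~\ref{lem:linear-supercuspidal-hecke} with $\ell+1$ in place of $\ell$.
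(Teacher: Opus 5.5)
Your part (i) is correct and matches the paper's argument. Part (ii) has a genuine gap, and it starts from the claim that the conjugation trick fails in $K^1$. It does not fail. The Weyl element of $K^1$ is $w_1=w(p^{-1})$, which you introduce yourself. It lies in $K^1$: its entries are $0$, $p^{-1}\in p^{-1}\Z_p$, $-p\in p\Z_p$ and $0$. A direct computation gives $w_1x(s)w_1^{-1}=y(-p^2s)$, so $w_1N_{\ell-2}w_1^{-1}=\overline N_\ell$.

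Strong cuspidality with $\delta=1$ and $c(\rho)=\ell+1$ says exactly that $\Hom_{N_{c(\rho)-2\delta-1}}(1,\rho)=\Hom_{N_{\ell-2}}(1,\rho)=0$. Since $\rho(w_1)^{-1}$ carries $\overline N_\ell$-fixed vectors to $N_{\ell-2}$-fixed vectors, it follows that $\Hom_{\overline N_\ell}(1,\rho)=0$. That is the paper's entire proof, and it uses only the stated hypotheses, not even the defect.

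Your proposal never uses the strong cuspidality of $\rho$ in (ii). It substitutes an outside input: that an arbitrary defect-$1$ type of conductor $\ell+1$ has the form $\operatorname{Ind}_I^{K^1}\sigma'$, with $\sigma'$ a constituent of a ramified very cuspidal datum. The proof of Lemma~\ref{lem:odd-ramified-defect} provides this model only for the particular $\rho^\alpha$ built there. So for the general statement you are relying on the classification. Your fallback, citing that defect-$1$ types have no $\overline N_\ell$-fixed vectors, simply cites the conclusion.

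The Mackey step is also wrong as written. For $b\in p^{-1}\Z_p$ one has $w_1^{-1}x(b)w_1=y(-p^2b)\in I$, so every coset $x(b)w_1I$ equals $w_1I$. Your list therefore does not enumerate $K^1/I$. The correct representatives are $I$ and $w_1x(b)I$ with $b\in p^{-1}\Z_p/\Z_p$. With these, $(w_1x(b))^{-1}\overline N_\ell\,(w_1x(b))=N_{\ell-2}$ exactly. The ``twisted unipotents'' you worried about come only from the wrong representatives. Each nontrivial Mackey term is then $\Hom_{N_{\ell-2}\cap I}(1,\sigma')\subseteq\Hom_{N_{\ell-1}}(1,\sigma')=0$.

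So the Mackey argument can be repaired for Iwahori-induced types, but it remains a detour that depends on the classification. The direct $w(p^{-1})$-conjugation is shorter and proves the statement as stated.
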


\begin{proof}
For (i), the inclusion $\overline N_\ell\subset J_\ell^0$ shows that
every vector of $W$ is $\overline N_\ell$-fixed.  Strong cuspidality gives
$\Hom_{N_{\ell-1}}(1,\rho)=0$, and conjugation by $w(1)\in K^0$ carries
$N_{\ell-1}$ onto $\overline N_{\ell-1}$.

Here and below we use the equivalence between the absence of invariant
vectors and the absence of invariant linear forms.  It follows from
complete reducibility for finite-dimensional representations of compact
groups and reconciles this notation with the definition of strong
cuspidality above.

For (ii), strong cuspidality gives
$\Hom_{N_{\ell-2}}(1,\rho)=0$.  Conjugation by $w(p^{-1})\in K^1$ carries
$N_{\ell-2}$ onto $\overline N_\ell$, and hence
$\Hom_{\overline N_\ell}(1,\rho)=0$.  Together with the assumed
$\overline N_{\ell+1}$-fixedness of $\phi$, these are precisely the two
hypotheses of Lemma~\ref{lem:linear-supercuspidal-hecke} with $\ell+1$
in place of $\ell$.
\end{proof}

Corollary~\ref{cor:linear-supercuspidal-hecke} records the filtration
facts needed for the types and vectors in
Proposition~\ref{prop:supercuspidal-newvectors-four-cases}; it will be
used again in the Hecke-action calculation of
Proposition~\ref{lem:metaplectic-linear-bridge}.

We next give the uniform formula for the vectors in the compactly induced
representation, exactly so that the cocycle verification is performed only
once.

\begin{lem}\label{lem:general-fw}
Let $K=K^0$ or $K^1$, let $(\widetilde\rho,W)$ be a genuine representation
of $\widetilde K$, and put
$$
\widetilde\pi=\operatorname{c-Ind}_{\widetilde K}^{\widetilde G}
\widetilde\rho.
$$
For $w\in W$ define
$$
f_w((g,\epsilon))=
\begin{cases}
 c(k,h(p^m))\,\epsilon\,\widetilde\rho((k,1))w,
 &g=kh(p^m),\quad k\in K,\\
 0,&g\notin Kh(p^m).
\end{cases}
$$
Then $f_w\in\widetilde\pi$.
\end{lem}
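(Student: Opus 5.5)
The plan is to verify directly the two defining properties of a vector in the compactly induced space $\widetilde\pi$: left $\widetilde K$-equivariance, $f_w(\tilde k\tilde g)=\widetilde\rho(\tilde k)f_w(\tilde g)$ for $\tilde k\in\widetilde K$, and compact support modulo $\widetilde K$. The support statement is immediate. The function vanishes off the inverse image of the single coset $Kh(p^m)$, which is one right $\widetilde K$-translate. So $f_w$ is supported on one left $\widetilde K$-coset, and it is locally constant, because $\widetilde\rho$ is smooth and the defining formula is constant on small right translates once equivariance is known. The first point to check is well-definedness. The decomposition $g=kh(p^m)$ determines $k=gh(p^m)^{-1}$ uniquely, so no ambiguity arises from the choice of $k$.

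The main step is the equivariance identity. Take $\tilde k'=(k',\epsilon')\in\widetilde K$ and $\tilde g=(kh(p^m),\epsilon)$. Then
\[
\tilde k'\tilde g=\bigl(k'kh(p^m),\ \epsilon'\epsilon\,c(k',kh(p^m))\bigr).
\]
By definition,
\[
f_w(\tilde k'\tilde g)=c(k'k,h(p^m))\,\epsilon'\epsilon\,c(k',kh(p^m))\,\widetilde\rho((k'k,1))w.
\]
We need this to equal
\[
\widetilde\rho((k',\epsilon'))\,c(k,h(p^m))\,\epsilon\,\widetilde\rho((k,1))w.
\]
Inside $\widetilde K$ we have $(k',1)(k,1)=(k'k,c(k',k))$. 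Hence
\[
\widetilde\rho((k',\epsilon'))\widetilde\rho((k,1))=\epsilon'\,c(k',k)\,\widetilde\rho((k'k,1)),
\]
using genuineness, i.e.\ $\widetilde\rho((x,-\epsilon))=-\widetilde\rho((x,\epsilon))$. After this substitution the required equality reduces to the cocycle identity
\[
c(k',k)\,c(k'k,h(p^m))=c(k',kh(p^m))\,c(k,h(p^m)).
\]
This is exactly the $2$-cocycle condition for $c$ applied to the triple $(k',k,h(p^m))$, and it holds because $c$ defines the group law of $\widetilde G$ (associativity). So the verification is a single application of associativity in $\widetilde G$.

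The obstacle I expect is purely bookkeeping. The cocycle $c$ is only $\{\pm1\}$-valued, so no inverses cause trouble. The real care is that $\widetilde\rho$ is a genuine representation of $\widetilde K$, and is not expressed through a splitting, so one must use $(k,1)$ rather than $(k,s(k))$ consistently. With that convention the computation closes exactly as above, and no property of the splittings $s^0,s^1$ is needed.

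Finally, smoothness under right translation follows from equivariance plus smoothness of $\widetilde\rho$. Concretely, for $\tilde j$ in a small open subgroup of $\widetilde K\cap h(p^m)^{-1}\widetilde Kh(p^m)$ on which $\widetilde\rho$ of the conjugate fixes $w$, right translation by $\tilde j$ fixes $f_w$. This completes the check that $f_w\in\widetilde\pi$.
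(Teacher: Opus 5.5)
Your proposal is correct and follows the paper's proof essentially verbatim: both use genuineness and $(k',1)(k,1)=(k'k,c(k',k))$ to reduce left $\widetilde K$-equivariance to the cocycle identity for the triple $(k',k,h(p^m))$. You also check well-definedness, support on the single coset $\widetilde K(h(p^m),1)$, and smoothness via conjugation into $\widetilde K$; the paper leaves these points implicit, and your checks of them are sound.
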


\begin{proof}
We have to prove the defining left $\widetilde K$-equivariance.  Namely, for
$(k_1,\epsilon_1)\in\widetilde K$ and $(g,\epsilon)\in\widetilde G$ we want
$$
f_w\bigl((k_1,\epsilon_1)(g,\epsilon)\bigr)
=
\widetilde\rho((k_1,\epsilon_1))f_w((g,\epsilon)).
$$
If $g\notin Kh(p^m)$, then also $k_1g\notin Kh(p^m)$, and both sides are zero.
Assume therefore that
$$
g=kh(p^m),
\qquad k\in K.
$$
Recall the cocycle identity
$$
c(g_1,g_2)c(g_1g_2,g_3)
=
c(g_1,g_2g_3)c(g_2,g_3).
$$
On the one hand,
$$
(k_1,\epsilon_1)(g,\epsilon)
=
(k_1kh(p^m),\epsilon_1\epsilon c(k_1,g)),
$$
so, by the definition of $f_w$,
$$
\begin{aligned}
f_w\bigl((k_1,\epsilon_1)(g,\epsilon)\bigr)
&=
\epsilon\epsilon_1c(k_1,g)c(k_1k,h(p^m))
\widetilde\rho((k_1k,1))w.
\end{aligned}
$$
On the other hand,
$$
\begin{aligned}
\widetilde\rho((k_1,\epsilon_1))f_w((g,\epsilon))
&=
\epsilon\epsilon_1c(k,h(p^m))
\widetilde\rho((k_1,1))\widetilde\rho((k,1))w\\
&=
\epsilon\epsilon_1c(k,h(p^m))c(k_1,k)
\widetilde\rho((k_1k,1))w.
\end{aligned}
$$
Thus it remains to prove
$$
c(k_1,g)c(k_1k,h(p^m))=c(k,h(p^m))c(k_1,k).
$$
Since $g=kh(p^m)$, this is
$$
c(k_1,kh(p^m))c(k_1k,h(p^m))=c(k,h(p^m))c(k_1,k).
$$
Taking $g_1=k_1$, $g_2=k$, $g_3=h(p^m)$ in the cocycle identity gives exactly
$$
c(k_1,k)c(k_1k,h(p^m))
=
c(k_1,kh(p^m))c(k,h(p^m)).
$$
Because all cocycle values are in $\{\pm1\}$, this is equivalent to the
required equality.  Hence $f_w$ satisfies the compact-induction
transformation law and therefore belongs to $\widetilde\pi$.
\end{proof}

We now record the four constructions.  In the ramified cases, $I$ denotes
the Iwahori subgroup and the function $\phi_w$ below is regarded as a
vector in the indicated induced $K$-type.  In case~(iii), the passage from
the Iwahori constituent to the strongly cuspidal $K^1$-type is the one in
\cite[Lemma~5.5(2)]{Ishimoto}.  Case~(iv), in which we first induce from
$I$ to $K^0$ and then conjugate by $\alpha$ to obtain a representation of
$K^1$, is a separate construction of the present paper.

\begin{prop}\label{prop:supercuspidal-newvectors-four-cases}
Let
$$
\widetilde\pi
=
\operatorname{c-Ind}_{\widetilde K}^{\widetilde G}\widetilde\rho.
$$
The vectors obtained from a strongly cuspidal type are described as
follows.

\begin{enumerate}[label=(\roman*)]
\item \textbf{Unramified, $\ell$ even.}
Write $m=\ell/2$.  Let $(\rho,W)$ be an irreducible strongly cuspidal
representation of $K^0$, arising from an unramified datum of level
$\ell$, so that $c(\rho)=\ell$ and $d(\rho)=0$.  Let
$w\in W^U_{\eta_\ell}$, and let $\widetilde\rho$ be the genuine lift of
$\rho$ with respect to $s^0$.  Then
$$
f_w\in\widetilde\pi_\eta^{K_{2\ell}}.
$$

\item \textbf{Unramified, $\ell$ odd.}
Write $m=(\ell-1)/2$.  Starting with an irreducible strongly cuspidal
representation $(\rho,W)$ of $K^0$ arising from an unramified datum of
level $\ell$, so that $c(\rho)=\ell$ and $d(\rho)=0$, put
$$
\rho^\alpha(k)=\rho(\alpha k\alpha^{-1}),
\qquad
\alpha=\begin{pmatrix}p&0\\0&1\end{pmatrix},
$$
view $\rho^\alpha$ as a representation of $K^1$, and let
$\widetilde\rho$ be its genuine lift with respect to $s^1$.  The
conjugated type again has conductor $\ell$ and defect $0$.  For the
corresponding $w\in W^U_{\eta_\ell}$ one has
$$
f_w\in\widetilde\pi_\eta^{K_{2\ell}}.
$$

\item \textbf{Ramified, $\ell$ even.}
Write $m=\ell/2$.  Let $(\sigma,W)$ be the Iwahori constituent arising
from a ramified datum of level $\ell$ and put
$$
\rho=\operatorname{Ind}_{I}^{K^1}\sigma.
$$
Then $\rho$ is irreducible and strongly cuspidal, with
$c(\rho)=\ell+1$ and $d(\rho)=1$, by
\cite[Lemma~5.5(2)]{Ishimoto}.
For $w\in W^U_{\eta_\ell}$ define $\phi_w\in\rho$ by
$$
\phi_w(k)=
\begin{cases}
\sigma(k)w,&k\in I,\\
0,&k\notin I.
\end{cases}
$$
Then $\phi_w$ is fixed by $N_\ell$ and $\overline N_{\ell+1}$ and $\rho(h(u))\phi_w=\eta_{\ell}(u)\phi_w$.  Let $\widetilde\rho$ be the genuine lift of
$\rho$ with respect to $s^1$.  Then
$$
f_{\phi_w}\in\widetilde\pi_\eta^{K_{2\ell+1}}.
$$

\item \textbf{Ramified, $\ell$ odd.}
Write $m=(\ell-1)/2$.  Let $(\sigma,W)$ be the corresponding conjugated
Iwahori constituent arising from a ramified datum of level $\ell$ and put
$$
\rho=\operatorname{Ind}_{I}^{K^0}\sigma.
$$
Take $w\in W^U_{\eta_\ell}$, define $\phi_w$ as in (iii), and pass to the
$\alpha$-conjugate representation $\rho^\alpha$ on $K^1$.
By Lemma~\ref{lem:odd-ramified-defect}, $\rho^\alpha$ is irreducible and
strongly cuspidal, with
$$
c(\rho^\alpha)=\ell+1,
\qquad
d(\rho^\alpha)=1.
$$
Let $\widetilde\rho$ be the genuine lift of $\rho^\alpha$ with respect to
$s^1$.  Then
$$
f_{\phi_w}\in\widetilde\pi_\eta^{K_{2\ell+1}}.
$$
\end{enumerate}
Corollary~\ref{cor:linear-supercuspidal-hecke} applies to every
$w\in W^U_{\eta_\ell}$ in the unramified cases.  In the even ramified
case, the displayed $\overline N_{\ell+1}$-fixedness of $\phi_w$,
together with part~(ii) of that corollary, supplies the filtration
hypotheses needed below.  In the odd ramified case the assertion
$c(\rho^\alpha)=\ell+1$, $d(\rho^\alpha)=1$ establishes the required
maximal-compact type.  The vector-level averaging needed for the later
linear Hecke calculation follows directly from its $I$-supported induced
model and is verified in
Proposition~\ref{lem:metaplectic-linear-bridge}.
Since $c(\eta)\leq1$, Theorem~5.11 of \cite{Ishimoto} gives
$c_\eta(\widetilde\pi)=2\ell$ in (i)--(ii) and
$c_\eta(\widetilde\pi)=2\ell+1$ in (iii)--(iv); Corollary~5.12
loc.~cit. identifies the nonzero vectors constructed above as newvectors.
\end{prop}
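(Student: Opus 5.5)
The proof will verify, case by case, that $f_w$ (or $f_{\phi_w}$) transforms on the right under $\ov{K_{2\ell}}$, respectively $\ov{K_{2\ell+1}}$, by the genuine character $\eta$. By Lemma~\ref{lem:general-fw}, $f_w$ already lies in $\widetilde\pi$ and is supported on the single coset $\widetilde K h(p^m)$. The required identity is therefore
$$
f_w\bigl((g,\epsilon)(k_0,\epsilon_0)\bigr)=\epsilon_0\eta(k_0)f_w((g,\epsilon)),\qquad k_0\in K_{M}.
$$
Write $g=kh(p^m)$ and $kh(p^m)k_0=k\,(h(p^m)k_0h(p^{-m}))\,h(p^m)$. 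This reduces the check to two points. First, $h(p^m)K_Mh(p^{-m})\subset K$, which is what fixes the choice of $m$ relative to $M$. Second, for $k'=h(p^m)k_0h(p^{-m})$ one needs
$$
\widetilde\rho((k',1))\,\widetilde\rho\text{-vector }w=\eta(k_0)w,
$$
up to the cocycle factors $c(k,k')$, $c(kk',h(p^m))$, $c(k,h(p^m))$ and $c(h(p^m),k_0)$ arising from the group law. Since $K_M$ is generated by $N_0$, $U$ and $\overline N_M$, it suffices to check generators. One checks that both sides of the identity are multiplicative in $k_0$, which follows from the cocycle identity exactly as in Lemma~\ref{lem:general-fw}.

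\textbf{Generators.} Conjugation by $h(p^m)$ sends $x(b)$ to $x(p^{2m}b)$, sends $y(c)$ to $y(p^{-2m}c)$, and fixes $h(u)$. Take $M=2\ell$ in the unramified cases and $M=2\ell+1$ in the ramified cases, and take $m$ as in the statement.
\begin{enumerate}
\item \textbf{Upper unipotents.} $x(p^{2m}b)$ lies in $N_\ell$ or $N_{\ell-1}$ (with $b\in p^{-1}\Z_p$ allowed in $K^1$). These act trivially because $c(\rho)$ is $\ell$ or $\ell+1$, i.e.\ they lie in $J^\delta_{c(\rho)}$; in the ramified cases one uses instead the stated $N_\ell$-fixedness of $\phi_w$.
\item \textbf{Lower unipotents.} $y(p^{M-2m}c)$ lies in $\overline N_\ell$ or $\overline N_{\ell+1}$. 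These fix $w$ by Corollary~\ref{cor:linear-supercuspidal-hecke}, or by the stated $\overline N_{\ell+1}$-fixedness of $\phi_w$.
\item \textbf{Diagonal.} $h(u)$ acts on $w$ by $\eta_\ell(u)$ through $\rho$. In the $K^1$ cases the genuine lift contributes the additional factor $s^1(h(u))=\left(\frac{u}{p}\right)$. The cocycle with $h(p^m)$ contributes $(u,p^m)_p=\left(\frac{u}{p}\right)^m$.
\end{enumerate}
For the generator $x(b)$, $s_p=1$ and the cocycle factors with $h(p^m)$ are trivial. For $y(c)$, with $c$ divisible by a high power of $p$, the splitting values $s^\delta(y(c))=1$ from the table and $\sigma_0$ being trivial on these elements should kill the factors.

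\textbf{Main obstacle: the diagonal bookkeeping.} The difficulty is matching the definition of $\eta_\ell$ with the product of the Hilbert-symbol factors, so that the total character on $h(u)$ is exactly $\eta(h(u))=\eta(u^{-1})=\eta(u)$. In the unramified even case with $K^0$, the product is $\eta_\ell\cdot\left(\frac{\cdot}{p}\right)^{m}$. This suggests the parity bookkeeping must be recomputed, likely using $c(h(u),h(p^m))=(u,p^m)_p$ together with the reverse factor $c(h(p^m),h(u))$ so that the two cancel, leaving $\eta_\ell=\eta$. In the $K^1$ cases the $s^1$ factor $\left(\frac{u}{p}\right)$ survives and forces $\eta_\ell=\eta\left(\frac{\cdot}{p}\right)$. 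I would therefore first compute $c(h(u),h(p^m))\,c(h(p^m),h(u))$ directly from the definition of $c$, and then reconcile the result with the stated $\eta_\ell$ in each of the four cases.

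**Case (iv).** The $\alpha$-conjugation requires transporting the $N$- and $\overline N$-fixedness of $\phi_w$ in $\operatorname{Ind}_I^{K^0}\sigma$ through $\operatorname{Ad}_\alpha$, which shifts the unipotent filtrations by one. This shift is what produces the level $2\ell+1$.

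The newvector assertion, and the equality of $c_\eta(\widetilde\pi)$ with $2\ell$ or $2\ell+1$, are then quoted from \cite[Theorem~5.11, Corollary~5.12]{Ishimoto}, as in the statement.
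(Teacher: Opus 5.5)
Your outline follows the paper's proof. You check right $(K_M,\eta)$-equivariance on the generators $x(a)$, $h(u)$, $y(t)$ of $K_M$, move each across $h(p^m)$, and use the cocycle identity. This reduces everything to the single condition
$$
\widetilde\rho((k',1))w=\eta((k_0,1))\,c(k',h(p^m))\,c(h(p^m),k_0)\,w,\qquad k'=h(p^m)k_0h(p^m)^{-1}.
$$
Your proposed resolution of the diagonal case is also the paper's. One has $c(h(u),h(p^m))=c(h(p^m),h(u))=(u,p^m)_p$ by symmetry of the Hilbert symbol, so the factor $\left(\frac{u}{p}\right)^m$ you were worried about cancels. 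What remains is $\eta_\ell=\eta$ over $K^0$, and $\eta_\ell=\eta\left(\frac{\cdot}{p}\right)$ over $K^1$ because $s^1(h(u))=\left(\frac{u}{p}\right)$.

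The one step that is wrong as written is the lower-unipotent cocycle. These factors are not trivial individually. From the definition of $c$, $c(h(a),y(x))=c(y(x),h(a))$ equals $(a,a^{-1})_p$ if $\operatorname{ord}_p(ax)$ is odd, and $(a,x)_p$ if it is even. For instance, if $m$ is odd and $t=p^kv$ with $k$ odd, then
$$
c(h(p^m),y(t))=(p^m,t)_p=\left(\frac{-1}{p}\right)\left(\frac{v}{p}\right),
$$
which is $-1$ for suitable units $v$. Already $\sigma_0(h(p^m),y(t))=(t,p^m)_p$ is not identically $1$, so the claim that $\sigma_0$ is trivial on these elements fails.

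What makes the step work is the same pairing you anticipated for the diagonal. Since $\operatorname{ord}_p(p^{-m}t)\equiv\operatorname{ord}_p(p^mt)\pmod 2$, the factors $c(y(p^{-2m}t),h(p^m))$ and $c(h(p^m),y(t))$ fall in the same parity case. In the odd case both equal $(p^m,p^{-m})_p$. In the even case they are $(p^m,p^{-2m}t)_p$ and $(p^m,t)_p$, which agree because $t$ and $p^{-2m}t$ differ by the square $p^{2m}$. So their product is $1$.

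With that replacement your argument goes through. The remaining points match the paper: the transport through $\alpha$ of the $N_\ell$- and $\overline N_{\ell+1}$-fixedness in cases (ii) and (iv), and quoting Ishimoto for the conductor and newvector assertions.
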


\begin{proof}
We treat the four cases separately.
The right action on the compact induction is
$$
{}[\widetilde\pi(\widetilde g_0)f](\widetilde g)
=f(\widetilde g\widetilde g_0).
$$
For $K_r=K_0(p^r)$ it is enough to examine the action of the upper
unipotents $x(a)$ with $a\in\mathbb Z_p$, of the diagonal elements $h(u)$
with $u\in\mathbb Z_p^\times$, and of the lower unipotents $y(t)$ with
$t\in p^r\mathbb Z_p$.

\medskip
\noindent\textbf{I. The unramified cases.}
Let $g=kh(p^m)$ with $k$ in the relevant maximal compact subgroup.

\smallskip
\noindent\emph{(1) Upper unipotents.}
For $a\in\mathbb Z_p$ we have
$$
h(p^m)x(a)=x(p^{2m}a)h(p^m),
$$
and therefore
$$
\begin{aligned}
{}[\widetilde\pi((x(a),1))f_w]((g,\epsilon))
&=f_w\bigl((g,\epsilon)(x(a),1)\bigr)\\
&=f_w\bigl((k x(p^{2m}a)h(p^m),
\epsilon c(g,x(a)))\bigr)\\
&=c(kx(p^{2m}a),h(p^m))\,\epsilon c(g,x(a))
\widetilde\rho((kx(p^{2m}a),1))w.
\end{aligned}
$$
We now distinguish the parity of $\ell$.

If $\ell$ is even, then $2m=\ell$ and
$x(p^{2m}a)=x(p^\ell a)\in N_\ell$.  Since $w$ is $N_\ell$-fixed,
$$
\widetilde\rho((kx(p^{2m}a),1))w
=
\widetilde\rho((k,1))w.
$$
For the chosen splitting over $K^0$, the cocycle factor
$c(k,x(p^{2m}a))$ is $1$, so this may also be written in the uniform form
$$
\widetilde\rho((kx(p^{2m}a),1))w
=
c(k,x(p^{2m}a))\widetilde\rho((k,1))w.
$$

If $\ell$ is odd, then $2m=\ell-1$.  Put
$x'=x(p^{\ell-1}a)$.  Since
$$
\alpha x'\alpha^{-1}=x(p^\ell a)\in N_\ell,
$$
we have $\rho^\alpha(x')w=w$.  Moreover $s^1(x')=1$, and the splitting
identity gives
$$
s^1(kx')=s^1(k)s^1(x')c(k,x')=s^1(k)c(k,x').
$$
Consequently
$$
\begin{aligned}
\widetilde\rho((kx',1))w
&=s^1(kx')\rho^\alpha(kx')w\\
&=s^1(k)c(k,x')\rho^\alpha(k)w\\
&=c(k,x')\widetilde\rho((k,1))w.
\end{aligned}
$$
Thus in both parity cases
$$
\widetilde\rho((kx(p^{2m}a),1))w
=
c(k,x(p^{2m}a))\widetilde\rho((k,1))w.
$$
It remains to prove
$$
c(kx(p^{2m}a),h(p^m))c(kh(p^m),x(a))
=
c(k,h(p^m))c(k,x(p^{2m}a)).
$$
Set $x'=x(p^{2m}a)$.  Since $x'h(p^m)=h(p^m)x(a)$, the cocycle identity gives
$$
c(k,x')c(kx',h(p^m))=c(k,x'h(p^m))c(x',h(p^m))
$$
and
$$
c(k,h(p^m))c(kh(p^m),x(a))=c(k,h(p^m)x(a))c(h(p^m),x(a)).
$$
The middle arguments are equal because $x'h(p^m)=h(p^m)x(a)$.  A direct evaluation
of the cocycle gives
$$
c(x',h(p^m))=c(h(p^m),x(a))=1.
$$
Hence the required equality follows, and therefore
$$
\widetilde\pi((x(a),1))f_w=f_w.
$$

\smallskip
\noindent\emph{(2) Diagonal units.}
Let $u\in\mathbb Z_p^\times$.  Since $h(p^m)$ commutes with $h(u)$,
$$
\begin{aligned}
{}[\widetilde\pi((h(u),1))f_w]((g,\epsilon))
&=f_w\bigl((kh(u)h(p^m),\epsilon c(g,h(u)))\bigr)\\
&=c(kh(u),h(p^m))\,\epsilon c(g,h(u))
\widetilde\rho((kh(u),1))w.
\end{aligned}
$$
For $\ell$ even, the $U$-weight condition gives directly
$$
\widetilde\rho((kh(u),1))w
=
\eta(u)c(k,h(u))\widetilde\rho((k,1))w.
$$
For $\ell$ odd we use the $K^1$-splitting and
$w\in W^U_{\eta(\frac{\cdot}{p})}$:
$$
\begin{aligned}
\widetilde\rho((kh(u),1))w
&=s^1(kh(u))\rho^\alpha(kh(u))w\\
&=s^1(k)\left(\frac{u}{p}\right)c(k,h(u))
\rho^\alpha(k)\eta(u)\left(\frac{u}{p}\right)w\\
&=\eta(u)c(k,h(u))\widetilde\rho((k,1))w.
\end{aligned}
$$
Here we used $s^1(h(u))=(\frac{u}{p})$ and
$(\frac{u}{p})^2=1$.

We now compare the cocycles.  From the cocycle identity,
$$
c(k,h(u))c(kh(u),h(p^m))=c(k,h(u)h(p^m))c(h(u),h(p^m))
$$
and
$$
c(k,h(p^m))c(kh(p^m),h(u))=c(k,h(p^m)h(u))c(h(p^m),h(u)).
$$
Since $h(u)h(p^m)=h(p^m)h(u)$, the first factors on the right are the same, and a
direct calculation gives
$$
c(h(u),h(p^m))=c(h(p^m),h(u)).
$$
It follows that
$$
c(kh(u),h(p^m))c(kh(p^m),h(u))=c(k,h(u))c(k,h(p^m)).
$$
Substituting this in the preceding expression and using
$c(k,h(u))^2=1$ yields
$$
\widetilde\pi((h(u),1))f_w=\eta(u)f_w.
$$

\smallskip
\noindent\emph{(3) Lower unipotents.}
Let $t\in p^{2\ell}\mathbb Z_p$.  Since
$$
h(p^m)y(t)=y(p^{-2m}t)h(p^m),
$$
we obtain
$$
\begin{aligned}
{}[\widetilde\pi((y(t),1))f_w]((g,\epsilon))
&=f_w\bigl((k y(p^{-2m}t)h(p^m),\epsilon c(g,y(t)))\bigr)\\
&=c(k y(p^{-2m}t),h(p^m))\,\epsilon c(g,y(t))
\widetilde\rho((k y(p^{-2m}t),1))w.
\end{aligned}
$$
If $\ell$ is even, then $2m=\ell$ and
$$
y(p^{-2m}t)\in\overline N_\ell,
$$
so it fixes $w$.  If $\ell$ is odd, then $2m=\ell-1$ and
$$
y(p^{-2m}t)\in\overline N_{\ell+1}.
$$
Moreover
$$
\alpha y(p^{-2m}t)\alpha^{-1}=y(p^{-2m-1}t)\in\overline N_\ell,
$$
so it fixes $w$ in the conjugated representation, and
$s^1(y(p^{-2m}t))=1$.  Therefore in both cases
$$
\widetilde\rho((k y(p^{-2m}t),1))w
=
c(k,y(p^{-2m}t))\widetilde\rho((k,1))w.
$$
Exactly as above, the cocycle identity reduces the desired equality to
$$
c(y(p^{-2m}t),h(p^m))c(h(p^m),y(t))=1.
$$
We check this explicitly.  For $a\in\mathbb Q_p^\times$ and $x\in\mathbb Q_p$
the cocycle used in this paper satisfies
$$
c\!\left(\begin{pmatrix}a&0\\0&a^{-1}\end{pmatrix},y(x)\right)
=
c\!\left(y(x),\begin{pmatrix}a&0\\0&a^{-1}\end{pmatrix}\right)
=
\begin{cases}
(a,a^{-1})_p,&2\nmid\operatorname{ord}_p(ax),\\
(a,x)_p,&2\mid\operatorname{ord}_p(ax).
\end{cases}
$$
Apply this with $a=p^m$ and $x=p^{-2m}t$, respectively with the conjugate
pair corresponding to $h(p^m)$ and $y(t)$.  If
$2\nmid\operatorname{ord}_p(tp^{-2m})$, both cocycles are the same Hilbert
symbol and hence their product is a square in $\{\pm1\}$, so it equals $1$.
If $2\mid\operatorname{ord}_p(tp^{-2m})$, the product is
$$
(p^{-m},t)_p(p^{-m},p^{-2m}t)_p
=(p^{-m},t)_p^2=1.
$$
Thus
$$
c(y(p^{-2m}t),h(p^m))c(h(p^m),y(t))=1,
$$
and consequently
$$
\widetilde\pi((y(t),1))f_w=f_w.
$$
This proves (i) and (ii).

\medskip
\noindent\textbf{II. The ramified cases.}
We first verify the invariance properties of $\phi_w$ used in the
calculation.  If $n=x(a)\in N_\ell$, then, for $k\in I$,
$$
{}[\rho(n)\phi_w](k)=\phi_w(kn)=\sigma(k)\sigma(n)w=\sigma(k)w=\phi_w(k),
$$
and if $k\notin I$, then also $kn\notin I$, so both sides are zero.  Hence
$\phi_w$ is $N_\ell$-fixed.  The same argument with lower unipotents shows
that $\phi_w$ is $\overline N_{\ell+1}$-fixed.  The diagonal action is
$$
\rho(h(u))\phi_w=\eta_\ell(u)\phi_w,
$$
where in the ramified normalization
$$
\eta_\ell(u)=\eta(u)\left(\frac{u}{p}\right).
$$

Again let $g=kh(p^m)$.

\smallskip
\noindent\emph{(1) Upper unipotents.}
For $a\in\mathbb Z_p$ the matrix identity is the same:
$$
h(p^m)x(a)=x(p^{2m}a)h(p^m).
$$
Thus
$$
\begin{aligned}
{}[\widetilde\pi((x(a),1))f_{\phi_w}]((g,\epsilon))
&=c(kx(p^{2m}a),h(p^m))\,\epsilon c(g,x(a))\\
&\qquad{}\cdot\widetilde\rho((kx(p^{2m}a),1))\phi_w.
\end{aligned}
$$
If $\ell$ is even, then $2m=\ell$ and
$x(p^{2m}a)\in N_\ell$, so $\rho(x(p^{2m}a))\phi_w=\phi_w$.  Since
$s^1(x(p^{2m}a))=1$,
$$
\begin{aligned}
\widetilde\rho((kx(p^{2m}a),1))\phi_w
&=s^1(kx(p^{2m}a))\rho(kx(p^{2m}a))\phi_w\\
&=s^1(k)c(k,x(p^{2m}a))\rho(k)\phi_w\\
&=c(k,x(p^{2m}a))\widetilde\rho((k,1))\phi_w.
\end{aligned}
$$
If $\ell$ is odd, then $2m=\ell-1$ and we use the conjugated
representation.  Since
$$
\alpha x(p^{2m}a)\alpha^{-1}=x(p^\ell a)\in N_\ell
$$
and $\phi_w$ is $N_\ell$-fixed, the same calculation gives
$$
\widetilde\rho((kx(p^{2m}a),1))\phi_w
=c(k,x(p^{2m}a))\widetilde\rho((k,1))\phi_w.
$$
The remaining cocycle computation is exactly the one already written in
Part I(1), and therefore
$$
\widetilde\pi((x(a),1))f_{\phi_w}=f_{\phi_w}.
$$

\smallskip
\noindent\emph{(2) Diagonal units.}
For $u\in\mathbb Z_p^\times$,
$$
\begin{aligned}
\widetilde\rho((kh(u),1))\phi_w
&=s^1(kh(u))\rho(kh(u))\phi_w\\
&=s^1(k)s^1(h(u))c(k,h(u))\rho(k)\eta_\ell(u)\phi_w\\
&=\eta(u)c(k,h(u))\widetilde\rho((k,1))\phi_w,
\end{aligned}
$$
since $s^1(h(u))=(\frac{u}{p})$ and
$\eta_\ell(u)=\eta(u)(\frac{u}{p})$.  The cocycle calculation is the same
as Part I(2), so the required diagonal transformation by $\eta(u)$ follows.

\smallskip
\noindent\emph{(3) Lower unipotents.}
Now let $t\in p^{2\ell+1}\mathbb Z_p$.  Again
$$
h(p^m)y(t)=y(p^{-2m}t)h(p^m).
$$
If $\ell$ is even, then $2m=\ell$ and
$$
y(p^{-2m}t)\in\overline N_{\ell+1},
$$
which fixes $\phi_w$.  If $\ell$ is odd, then $2m=\ell-1$ and
$$
y(p^{-2m}t)\in\overline N_{\ell+2},
$$
while
$$
\alpha y(p^{-2m}t)\alpha^{-1}
=y(p^{-2m-1}t)\in\overline N_{\ell+1},
$$
which also fixes $\phi_w$.  Since the splitting is $1$ on these lower
unipotents, in both cases
$$
\widetilde\rho((ky(p^{-2m}t),1))\phi_w
=c(k,y(p^{-2m}t))\widetilde\rho((k,1))\phi_w.
$$
The rest of the calculation is identical to Part I(3): the same two uses of
the cocycle identity reduce the equality to
$$
c(y(p^{-2m}t),h(p^m))c(h(p^m),y(t))=1,
$$
and the same Hilbert-symbol calculation proves it.  Hence
$$
\widetilde\pi((y(t),1))f_{\phi_w}=f_{\phi_w}.
$$
This proves (iii) and (iv), and hence the proposition.
\end{proof}

We now transfer the action of the highest nontrivial Hecke operators at the
chosen newvector level to the preceding linear lemma.  Thus, in the
unramified case we work in the Hecke algebra at level $K_{2\ell}$ and use
$\mathcal V_{2\ell-1,\delta}$; in the ramified case we work at level
$K_{2\ell+1}$ and use $\mathcal V_{2\ell,\delta}$.

\begin{prop}[Metaplectic-to-linear Hecke reduction]\label{lem:metaplectic-linear-bridge}
Let $\delta\in\{1,\lambda\}$.  We use the notation
$\mathcal T_{r,\delta}$ from Lemma~\ref{lem:linear-supercuspidal-hecke},
with the linear $U$-character specified in each case below.

\begin{enumerate}[label=(\roman*)]
\item In the unramified cases of
Proposition~\ref{prop:supercuspidal-newvectors-four-cases},
$$
\mathcal V_{2\ell-1,\delta}f_w
=f_{\mathcal T_{\ell,\delta}w}.
$$
Consequently
$$
\mathcal{Y}_{2\ell-1}f_w=\bigl(I+\mathcal V_{2\ell-1,1}+\mathcal V_{2\ell-1,\lambda}\bigr)f_w=0.
$$

\item In both ramified cases,
$$
\mathcal V_{2\ell,\delta}f_{\phi_w}
=f_{\mathcal T_{\ell+1,\delta}\phi_w}.
$$
Consequently
$$
\mathcal{Y}_{2\ell}f_{\phi_w}=\bigl(I+\mathcal V_{2\ell,1}+\mathcal V_{2\ell,\lambda}\bigr)f_{\phi_w}=0.
$$
\end{enumerate}
\end{prop}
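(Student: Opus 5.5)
The plan is to compute $\mathcal V_{r,\delta}f$ directly from the right-coset decomposition of Lemma~\ref{lem:cosetdecomp}, and then move the resulting group elements through $h(p^m)$ into the maximal compact subgroup. Here $r=2\ell-1$ in the unramified case and $r=2\ell$ in the ramified case, and $n$ is the ambient level ($2\ell$, resp.\ $2\ell+1$), so that $n-r=1$. In this situation the quotient $\Z_p^\times/\sqrt{1+p^{n-r}\Z_p}$ is exactly the index set $\Z_p^\times/\sqrt{1+p\Z_p}$ appearing in $\mathcal T$. As in the proof of Proposition~\ref{prop:values}, I would write
\[
[\mathcal V_{r,\delta}f](\tilde g)=\sum_{u\in\Z_p^\times/\sqrt{1+p\Z_p}}\eta(u)\,f\bigl(\tilde g\,\bar h(u)\bar y(\delta p^{r})\bigr).
\]
It suffices to evaluate both sides at $\tilde g=(kh(p^m),\epsilon)$, because $f_w$ and $f_{\mathcal T w}$ are both supported on $\widetilde K h(p^m)$. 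Both sides are then determined by their values at $(h(p^m),1)$, using left $\widetilde K$-equivariance (Lemma~\ref{lem:general-fw}).

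The key matrix identity is
\[
h(p^m)h(u)y(\delta p^r)=h(u)\,y(\delta p^{r-2m})\,h(p^m).
\]
In the unramified even case $r-2m=\ell-1$, so the element $h(u)y(\delta p^{\ell-1})\in K^0$ appears. In the unramified odd case $r-2m=\ell$, and after conjugating by $\alpha$ this becomes $h(u)y(\delta p^{\ell-1})$ acting via $\rho^\alpha$. In the ramified even case $r-2m=\ell$, which matches $\mathcal T_{\ell+1,\delta}$. In the ramified odd case $r-2m=\ell+1$, and $\alpha$-conjugation gives $y(\delta p^{\ell})$, which again matches $\mathcal T_{\ell+1,\delta}$. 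Consequently each summand becomes
\[
\eta(u)\,c\cdot\widetilde\rho\bigl((h(u)y(\delta p^{r-2m}),1)\bigr)w.
\]
In the $K^1$ cases the splitting contributes $s^1(h(u))=\left(\frac{u}{p}\right)$, which turns $\eta(u)$ into $\eta_\ell(u)=\eta(u)\left(\frac{u}{p}\right)$. Since all characters involved are quadratic, this equals $\chi(u)^{-1}$ with $\chi=\eta_\ell$, and the summand is exactly the corresponding term of $\mathcal T_{\cdot,\delta}w$.

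The main obstacle is the cocycle bookkeeping. I need to show that the product of the cocycle factors $c(h(p^m),h(u)y(\delta p^r))$, $c(h(u)y(\delta p^{r-2m}),h(p^m))$ and the splitting factors collapses to $1$ apart from the $\left(\frac{u}{p}\right)$ already absorbed. I would follow exactly the strategy of Part~I(2)--(3) of the proof of Proposition~\ref{prop:supercuspidal-newvectors-four-cases}. Apply the cocycle identity to split off $h(u)$, which commutes with $h(p^m)$ and gives a symmetric Hilbert symbol. Then reduce the lower-unipotent part to
\[
c\bigl(y(\delta p^{r-2m}),h(p^m)\bigr)\,c\bigl(h(p^m),y(\delta p^r)\bigr)=1,
\]
which follows from the explicit Hilbert-symbol formula displayed there. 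If a stray $(u,p)_p^m$ survives, I would check that it is paired with an identical factor coming from $c(h(u),h(p^m))$. This establishes $\mathcal V_{r,\delta}f=f_{\mathcal T\cdot}$.

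For the ``consequently'' statements, $\mathcal Y_r=\mathcal I+\mathcal V_{r,1}+\mathcal V_{r,\lambda}$ at the terminal index, so by linearity of $w\mapsto f_w$ we get $\mathcal Y_rf=f_{(I+\mathcal T_1+\mathcal T_\lambda)w}$. This vanishes by Lemma~\ref{lem:linear-supercuspidal-hecke}, whose hypotheses are supplied by Corollary~\ref{cor:linear-supercuspidal-hecke}(i) in the unramified cases and (ii) in the ramified cases. In the odd ramified case I would verify directly from the $I$-supported model that $\phi_w$ is $\overline N_{\ell+1}$-fixed after $\alpha$-conjugation, and that $\Hom_{\overline N_\ell}(1,\rho^\alpha)=0$ follows from $d(\rho^\alpha)=1$ by Lemma~\ref{lem:odd-ramified-defect}.
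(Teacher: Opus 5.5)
Your derivation of $\mathcal V_{r,\delta}f=f_{\mathcal T\cdot}$ follows the paper: the right cosets $h(u)y(\delta p^r)$, the relation $h(p^m)h(u)y(\delta p^r)=h(u)y(\delta p^{r-2m})h(p^m)$, the cocycle identity, and $\widetilde K$-equivariance. The cocycle bookkeeping leaves two conditions, $c(h(p^m),y(t))c(y(t'),h(p^m))=1$ and $c(h(u),y(t))=c(h(u),y(t'))$, with $t=\delta p^r$ and $t'=\delta p^{r-2m}$. Both hold because $t/t'=p^{2m}$ is a square, and the torus factors cancel by symmetry of the Hilbert symbol, so no stray $(u,p)_p^m$ arises. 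The unramified and even ramified vanishing statements are also fine.

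The gap is the vanishing of $\mathcal Y_{2\ell}f_{\phi_w}$ in the odd ramified case. There the Hecke sum is $\sum_u\eta_\ell(u)\rho(h(u)y(\delta p^\ell))\phi_w$ with $\rho=\operatorname{Ind}_I^{K^0}\sigma$. In the $K^1$-picture this is $\rho^\alpha(h(u)y(\delta p^{\ell+1}))$. So what must vanish is the $\overline N_\ell$-average of $\phi_w$ in $\rho$, equivalently the $\overline N_{\ell+1}$-average in $\rho^\alpha$.

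Your hypotheses are one filtration step off. The condition $\Hom_{\overline N_\ell}(1,\rho^\alpha)=0$ comes from strong cuspidality, not from the defect, and it only controls $\overline N_{\ell-1}$ in $\rho$. Also, $\phi_w$ is $\overline N_{\ell+1}$-fixed only in the $K^0$-picture; as a vector of $\rho^\alpha$ it is not, since that would force $\sigma(n)w=w$ for all $n\in\overline N_\ell$.

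More seriously, the hypothesis that Lemma~\ref{lem:linear-supercuspidal-hecke} would actually need here is false. Every function in $\rho$ supported on $Iw(1)$ is $\overline N_\ell$-fixed, because $w(1)\overline N_\ell w(1)^{-1}=N_\ell\subset I_\ell$ and $\sigma|_{I_\ell}=1$. Equivalently, $w(p^{-1})$ conjugates $N_{\ell-1}$ onto $\overline N_{\ell+1}$, so $\Hom_{\overline N_{\ell+1}}(1,\rho^\alpha)\neq0$ is exactly the defect-one condition. Thus $d(\rho^\alpha)=1$ is the obstruction, not the source of the vanishing, and no argument using only $(c(\rho^\alpha),d(\rho^\alpha))$ can succeed.

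The missing idea is a vector-level argument that uses the $I$-support of $\phi_w$ and the Iwahori constituent $\sigma$ itself. First, $\phi_w$ is $\overline N_{\ell+1}$-fixed in $\rho$ because $\overline N_{\ell+1}\subset I_\ell$. Second, since $\overline N_\ell\subset I$, for $k\in I$ one has
\[
\int_{\overline N_\ell}[\rho(n)\phi_w](k)\,dn=\sigma(k)\int_{\overline N_\ell}\sigma(n)w\,dn=0 .
\]
The last equality uses $\Hom_{\overline N_\ell}(1,\sigma)=0$, which comes from Lansky--Raghuram's Remark~3.3.2 for the ambient ramified datum. For $k\notin I$ the integrand vanishes identically. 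This is how the paper closes the odd ramified case.
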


\begin{proof}
We compute the Hecke action at the distinguished point $g=h(p^m)$ and then use
$\widetilde K$-equivariance.  The right-coset decomposition used in the Hecke algebra gives
$$
{}[\mathcal V_{r,\delta}f]((g,1))
=
\sum_{u\in\mathbb Z_p^\times/\sqrt{1+p\mathbb Z_p}}\eta(u)
 f\bigl((g,1)(h(u),1)(y(\delta p^r),1)\bigr),
$$
with $r=2\ell-1$ in the unramified case and $r=2\ell$ in the ramified
case.

\medskip
\noindent\textbf{I. Unramified case, $\ell$ even.}
Here $m=\ell/2$ and $r=2\ell-1$.  Since
$$
h(p^m)h(u)y(\delta p^{2\ell-1})
=h(u)y(\delta p^{\ell-1})h(p^m),
$$ with $h(u)y(\delta p^{\ell-1})\in K$,
the only point to check is that the central sign obtained by multiplying the
three lifts is exactly cancelled by the cocycle $c(h(u)y(\delta
p^{\ell-1}),h(p^m))$ appearing in the definition of $f_w$.  Equivalently we
must prove
$$
c\bigl(h(p^m),h(u)y(\delta p^{2\ell-1})\bigr)
 c\bigl(h(u)y(\delta p^{\ell-1}),h(p^m)\bigr)=1.
$$
Put $t=\delta p^{2\ell-1}$ and
$t'=p^{-2m}t=\delta p^{\ell-1}$.  Since $h(p^m)h(u)=h(u)h(p^m)$ and
$h(p^m)y(t)=y(t')h(p^m)$, the cocycle identity gives
$$
\begin{aligned}
c(h(p^m),h(u)y(t))
&=\frac{c(h(p^m),h(u))c(h(p^m)h(u),y(t))}{c(h(u),y(t))}\\
&=\frac{c(h(u),h(p^m)y(t))c(h(p^m),y(t))}{c(h(u),y(t))},\\
c(h(u)y(t'),h(p^m))
&=\frac{c(h(u),y(t')h(p^m))c(y(t'),h(p^m))}{c(h(u),y(t'))}\\
&=\frac{c(h(u),h(p^m)y(t))c(y(t'),h(p^m))}{c(h(u),y(t'))}.
\end{aligned}
$$
Here we used $c(h(p^m),h(u))=c(h(u),h(p^m))$.  The diagonal--lower-unipotent formula is
$$
c\!\left(\begin{pmatrix}a&0\\0&a^{-1}\end{pmatrix},y(x)\right)
=
c\!\left(y(x),\begin{pmatrix}a&0\\0&a^{-1}\end{pmatrix}\right)
=
\begin{cases}
(a,a^{-1})_p,&2\nmid\operatorname{ord}_p(ax),\\
(a,x)_p,&2\mid\operatorname{ord}_p(ax).
\end{cases}
$$
Since $t/t'=p^{2m}$ is a square, it gives
$c(h(u),y(t))=c(h(u),y(t'))$ and
$c(h(p^m),y(t))=c(y(t'),h(p^m))$.  Therefore
$$
c(h(p^m),h(u)y(t))c(h(u)y(t'),h(p^m))=1.
$$
Thus the metaplectic factor is $1$, and therefore
$$
\begin{aligned}
{}[\mathcal V_{2\ell-1,\delta}f_w]((h(p^m),1))
&=
\sum_{u\in\mathbb Z_p^\times/\sqrt{1+p\mathbb Z_p}}\eta(u)
\widetilde\rho((h(u)y(\delta p^{\ell-1}),1))w\\
&=
\sum_{u\in\mathbb Z_p^\times/\sqrt{1+p\mathbb Z_p}}\eta(u)
\rho(h(u)y(\delta p^{\ell-1}))w\\
&=\mathcal T_{\ell,\delta}w.
\end{aligned}
$$

\medskip
\noindent\textbf{II. Unramified case, $\ell$ odd.}
Now $m=(\ell-1)/2$ and again $r=2\ell-1$.  We have
$$
h(p^m)h(u)y(\delta p^{2\ell-1})
=h(u)y(\delta p^\ell)h(p^m).
$$
The same cocycle calculation as above, with
$t=\delta p^{2\ell-1}$ and $t'=\delta p^\ell$, shows that the total
central factor is $1$.  Since
$$
\alpha h(u)y(\delta p^\ell)\alpha^{-1}
=h(u)y(\delta p^{\ell-1}),
$$
we get
$$
\begin{aligned}
{}[\mathcal V_{2\ell-1,\delta}f_w]((h(p^m),1))
&=
\sum_{u\in\mathbb Z_p^\times/\sqrt{1+p\mathbb Z_p}}
\eta(u)s^1(h(u)y(\delta p^\ell))\\
&\qquad{}\cdot \rho^\alpha(h(u)y(\delta p^\ell))w\\
&=
\sum_{u\in\mathbb Z_p^\times/\sqrt{1+p\mathbb Z_p}}
\eta(u)\left(\frac{u}{p}\right)\\
&\qquad{}\cdot \rho(h(u)y(\delta p^{\ell-1}))w\\
&=\mathcal T_{\ell,\delta}w.
\end{aligned}
$$
Here the second equality uses exactly the splitting relation
$$
s^1(h(u)y(t))=s^1(h(u))s^1(y(t))c(h(u),y(t)),
$$
with $s^1(y(t))=1$ and $s^1(h(u))=(\frac{u}{p})$, together with
$w\in W^U_{\eta_\ell}$.

\medskip
\noindent\textbf{III. Ramified case, $\ell$ even.}
Here $m=\ell/2$ and the highest operator in the level-$2\ell+1$ Hecke
algebra is $\mathcal V_{2\ell,\delta}$.  We have
$$
h(p^m)h(u)y(\delta p^{2\ell})
=h(u)y(\delta p^\ell)h(p^m).
$$
The metaplectic sign is
$$
c\bigl(h(p^m),h(u)y(\delta p^{2\ell})\bigr)
 c\bigl(h(u)y(\delta p^\ell),h(p^m)\bigr),
$$
and the same cocycle calculation as in Case I, now with
$t=\delta p^{2\ell}$ and $t'=\delta p^\ell$, shows that this product is $1$.
Therefore
$$
\begin{aligned}
{}[\mathcal V_{2\ell,\delta}f_{\phi_w}]((h(p^m),1))
&=
\sum_{u\in\mathbb Z_p^\times/\sqrt{1+p\mathbb Z_p}}\eta(u)
\widetilde\rho((h(u)y(\delta p^\ell),1))\phi_w\\
&=
\sum_{u\in\mathbb Z_p^\times/\sqrt{1+p\mathbb Z_p}}\eta(u)\left(\frac{u}{p}\right)
\rho(h(u)y(\delta p^\ell))\phi_w\\
&=
\sum_{u\in\mathbb Z_p^\times/\sqrt{1+p\mathbb Z_p}}\eta_\ell(u)^{-1}
\rho(h(u)y(\delta p^\ell))\phi_w\\
&=\mathcal T_{\ell+1,\delta}\phi_w.
\end{aligned}
$$
This is precisely the linear calculation of
Lemma~\ref{lem:linear-supercuspidal-hecke} with $\ell$ replaced by
$\ell+1$.

\medskip
\noindent\textbf{IV. Ramified case, $\ell$ odd.}
Now $m=(\ell-1)/2$ and again $r=2\ell$.  Thus
$$
h(p^m)h(u)y(\delta p^{2\ell})
=h(u)y(\delta p^{\ell+1})h(p^m).
$$
The $K^1$-type is the $\alpha$-conjugate one, and
$$
\alpha h(u)y(\delta p^{\ell+1})\alpha^{-1}
=h(u)y(\delta p^\ell).
$$
The same cocycle calculation applies with
$t=\delta p^{2\ell}$ and $t'=\delta p^{\ell+1}$; the extra $s^1$-factor
is cancelled by the splitting relation, exactly as in the odd unramified
calculation.  Hence
$$
\begin{aligned}
{}[\mathcal V_{2\ell,\delta}f_{\phi_w}]((h(p^m),1))
&=
\sum_{u\in\mathbb Z_p^\times/\sqrt{1+p\mathbb Z_p}}\eta(u)\left(\frac{u}{p}\right)
\rho(h(u)y(\delta p^\ell))\phi_w\\
&=
\sum_{u\in\mathbb Z_p^\times/\sqrt{1+p\mathbb Z_p}}\eta_\ell(u)^{-1}
\rho(h(u)y(\delta p^\ell))\phi_w\\
&=\mathcal T_{\ell+1,\delta}\phi_w.
\end{aligned}
$$

For an arbitrary point $kh(p^m)$ in the support, both sides of each asserted
identity are obtained from their value at $h(p^m)$ by the same
$\widetilde K$-equivariance, and both vanish off the corresponding support.
Thus the equalities hold as functions.

Finally, Lemma~\ref{lem:linear-supercuspidal-hecke} gives in the unramified
case
$$
\begin{aligned}
\bigl(I+\mathcal V_{2\ell-1,1}+\mathcal V_{2\ell-1,\lambda}\bigr)f_w
&=f_{(I+\mathcal T_{\ell,1}+\mathcal T_{\ell,\lambda})w}=0.
\end{aligned}
$$
\par\smallskip 
It remains to justify the vanishing in the odd ramified case.  Here the
entire induced $K^0$-representation need not have zero
$\overline N_\ell$-fixed space, so such vanishing does not hold in general.
Nevertheless, the averaging projection kills the particular vector
$\phi_w$.  Indeed, $\phi_w$ is again $\overline N_{\ell+1}$-fixed, because
$\overline N_{\ell+1}\subset I_\ell$ and $\sigma$ is trivial on $I_\ell$.
Moreover, $\phi_w$ is supported on $I$, and for $k\in I$,
$$
\int_{\overline N_\ell}[\rho(n)\phi_w](k)\,dn
=
\sigma(k)\int_{\overline N_\ell}\sigma(n)w\,dn=0,
$$
where the last equality follows from
$\Hom_{\overline N_\ell}(1,\sigma)=0$, as proved above using
\cite[Remark~3.3.2]{L-R}; outside $I$ the integral is also zero.  Thus the
same averaging calculation gives
$$
\begin{aligned}
\bigl(I+\mathcal V_{2\ell,1}+\mathcal V_{2\ell,\lambda}\bigr)f_{\phi_w}
&=f_{(I+\mathcal T_{\ell+1,1}+\mathcal T_{\ell+1,\lambda})\phi_w}=0.
\end{aligned}
$$
This proves the remaining odd ramified case.
\end{proof}

The packet descriptions used in the next proposition are those of
\cite[\S5.1]{Ishimoto}; the relevant minimal fixed-space dimensions are
given in \cite[Propositions 3.3.4, 3.3.6, 3.3.8]{L-R}.

\par\smallskip\noindent

We will use only a linear compact conjugation.
Let
$$
a_\lambda=\begin{pmatrix}\lambda&0\\0&1\end{pmatrix}
\in\GL_2(\mathbb Z_p).
$$
Conjugation by $a_\lambda$ normalizes the relevant determinant-one compact
subgroups, fixes every $h(u)$, and sends
$$
y(t)\longmapsto y(\lambda^{-1}t).
$$
Hence it preserves each $U$-weight space $W^U_{\eta_\ell}$ and interchanges
the two square classes in the linear operators of
Lemma~\ref{lem:linear-supercuspidal-hecke}.  If $A_\lambda$ denotes the
action of $a_\lambda$ in the ambient very cuspidal $\GL_2$-type, then on
the relevant $U$-weight spaces
$$
A_\lambda\mathcal T_{\ell,1}A_\lambda^{-1}
=\mathcal T_{\ell,\lambda},
\qquad
A_\lambda\mathcal T_{\ell,\lambda}A_\lambda^{-1}
=\mathcal T_{\ell,1}.
$$
Here we use that the formula in the proof of
Lemma~\ref{lem:linear-supercuspidal-hecke} shows that
$\mathcal T_{\ell,\delta}$ depends only on the square class of $\delta$.
In the cases where the restriction of the very cuspidal type splits into
two constituents, the same nonsquare-determinant conjugation exchanges
those two constituents; see \cite[\S3.3]{L-R}.  No lift of $a_\lambda$ to
the metaplectic group is needed below: Proposition~\ref{lem:metaplectic-linear-bridge}
transfers the resulting linear identities to the metaplectic newvectors.

\begin{prop}\label{lem:supercuspidal-terminal-signs}
Let $\eta_\ell$ be as above.
\begin{enumerate}[label=(\roman*)]
\item Suppose that the supercuspidal $L$-packet is unramified of cardinality
two.  Then $\dim W^U_{\eta_\ell}=2$.  Choose common Hecke eigenvectors
$w_1,w_2$ spanning this space.  The operator $\mathcal W_{2\ell-1}$ has
distinct eigenvalues on $f_{w_1}$ and $f_{w_2}$, namely
$$
\sqrt{p^*}\quad\text{and}\quad-\sqrt{p^*}
$$
in some order.

\item Suppose that the supercuspidal $L$-packet is the exceptional unramified
packet of cardinality four.  Then $\ell=1$ and the relevant restriction
splits as $\sigma_1\oplus\sigma_2$.  If $0\ne w_i\in (W_i)^U_{\eta_\ell}$
and $f_{w_i}$ denotes the corresponding newvector for the packet member
attached to $\sigma_i$, then $\mathcal W_1$ acts on these two newvectors by
opposite eigenvalues $\sqrt{p^*}$ and $-\sqrt{p^*}$.

\item Suppose that the supercuspidal $L$-packet is ramified.  Write the
restriction of the very cuspidal type to the Iwahori subgroup of $\SL_2$ as
$\sigma_1\oplus\sigma_2$.  If $0\ne w_i\in (W_i)^U_{\eta_\ell}$ and
$f_{\phi_{w_i}}$ is the corresponding newvector for the packet member
attached to $\sigma_i$, then $\mathcal W_{2\ell}$ acts on
$f_{\phi_{w_1}}$ and $f_{\phi_{w_2}}$ by opposite eigenvalues
$\sqrt{p^*}$ and $-\sqrt{p^*}$.
\end{enumerate}
\end{prop}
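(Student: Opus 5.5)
Via Proposition~\ref{lem:metaplectic-linear-bridge} we work entirely with the linear operators $\mathcal T_{r,\delta}$ on the $U$-weight spaces, where $r=\ell$ in the unramified cases and $r=\ell+1$ in the ramified ones. Write $\mathcal W$ for the terminal operator: $\mathcal W_{2\ell-1}$ in the unramified cases and $\mathcal W_{2\ell}$ in the ramified ones. The bridge gives $\mathcal W f_w=f_{(\mathcal T_{r,1}-\mathcal T_{r,\lambda})w}$ and $\mathcal Y f_w=0$. Since $\mathcal Y$ is the terminal element of the relevant level algebra, the relation $\mathcal W^2=\kro{-1}{p}(p\mathcal I-\mathcal Y)$ from Corollary~\ref{cor:VW} gives $\mathcal W^2=p^*$ on these newvectors. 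Hence the eigenvalues of $\mathcal W$ lie in $\{\pm\sqrt{p^*}\}$. Linearly, put $T_\pm:=\mathcal T_{r,1}\pm\mathcal T_{r,\lambda}$. Then $T_+=-I$ on the relevant vectors by Lemma~\ref{lem:linear-supercuspidal-hecke} (and, in the odd ramified case, by the vector-level averaging in Proposition~\ref{lem:metaplectic-linear-bridge}), and $T_-^2=p^*$. So everything reduces to showing that the two relevant vectors carry opposite signs of $T_-$.

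The tool for this is the conjugation $A_\lambda$ recorded before the statement. It preserves $W^U_{\eta_\ell}$ and satisfies $A_\lambda\mathcal T_{r,1}A_\lambda^{-1}=\mathcal T_{r,\lambda}$, hence $A_\lambda T_-A_\lambda^{-1}=-T_-$. Therefore $A_\lambda$ maps the $+\sqrt{p^*}$-eigenspace of $T_-$ onto the $-\sqrt{p^*}$-eigenspace, and conversely.

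\emph{Case (i).} Here $\dim W^U_{\eta_\ell}=2$, $A_\lambda$ acts on this single irreducible type, and $T_-$ is diagonalizable with eigenvalues in $\{\pm\sqrt{p^*}\}$. Since $A_\lambda$ interchanges the two eigenspaces, they have equal dimension, so each is one-dimensional. Equivalently, $T_-$ has trace $0$ and $T_-\neq\pm\sqrt{p^*}I$, because a scalar cannot anticommute with the invertible $A_\lambda$. A common eigenbasis $w_1,w_2$ therefore consists of one vector of each sign, and the bridge transfers this to $f_{w_1},f_{w_2}$.

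\emph{Cases (ii) and (iii).} Here the type splits as $\sigma_1\oplus\sigma_2$ with one-dimensional weight lines, and $A_\lambda$ interchanges the constituents (\cite[\S3.3]{L-R}). Thus $A_\lambda$ maps $(W_1)^U_{\eta_\ell}$ onto $(W_2)^U_{\eta_\ell}$. On $(W_1)^U_{\eta_\ell}$ the operator $T_-$ is a scalar $\varepsilon\sqrt{p^*}$, and then $T_-A_\lambda w_1=-A_\lambda T_-w_1=-\varepsilon\sqrt{p^*}A_\lambda w_1$. Applying the bridge to each packet member gives the opposite eigenvalues.

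In the ramified case one must check that the conjugation is compatible with the $I$-supported vector $\phi_w$, including through the $\alpha$-conjugation in case (iv). Conjugation by $a_\lambda$ normalizes $I$ and $I'$ and commutes with $\alpha$, so $A_\lambda\phi_{w_1}=\phi_{A_\lambda w_1}$ in the induced models. The same relation holds after the $w_1$-conjugation used in Lemma~\ref{lem:odd-ramified-defect}.

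\textbf{Main obstacle.} The delicate step is justifying that $A_\lambda$ really maps $T_-$ on one constituent to $-T_-$ on the other. This is subtle because $\mathcal T$ is defined on each constituent separately with the character $\eta_\ell$, and $a_\lambda$ is not in $\SL_2$. I would handle it on the ambient $\GL_2$-type $\tau$, where both $\mathcal T_{r,\delta}$ are restrictions of a single operator preserving each constituent. There the identity $A_\lambda\mathcal T_{r,1}=\mathcal T_{r,\lambda}A_\lambda$ is a direct computation using $a_\lambda h(u)a_\lambda^{-1}=h(u)$ and $a_\lambda y(t)a_\lambda^{-1}=y(\lambda^{-1}t)$, together with the square-class independence of $\mathcal T_{r,\delta}$. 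No metaplectic lift of $a_\lambda$ is needed, since the signs are computed linearly and then transported by Proposition~\ref{lem:metaplectic-linear-bridge}.
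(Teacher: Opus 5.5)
Your proposal is correct and follows essentially the same route as the paper. Both arguments transfer the problem to linear operators through Proposition~\ref{lem:metaplectic-linear-bridge}, use the anticommutation $A_\lambda D A_\lambda^{-1}=-D$ coming from the linear conjugation by $a_\lambda$ on the ambient $\GL_2$-type, and finish with the terminal relation $\mathcal W^2=p^*$ on the newvectors. Your eigenspace-swapping argument in case (i) is the direct form of the paper's contradiction argument (a scalar $D_\ell$ anticommuting with $A_\lambda$ must vanish). Your check that $A_\lambda\phi_w=\phi_{A_\lambda w}$ makes explicit what the paper calls ``identical'' in the ramified case.
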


\begin{proof}
Put
$$
D_\ell=\mathcal T_{\ell,1}-\mathcal T_{\ell,\lambda}.
$$
In (i), Proposition~\ref{lem:metaplectic-linear-bridge} gives
$$
\mathcal W_{2\ell-1}f_w=f_{D_\ell w}
$$
for every $w\in W^U_{\eta_\ell}$.  Suppose that
$\mathcal W_{2\ell-1}$ had the same eigenvalue $c$ on the two newvectors
$f_{w_1},f_{w_2}$.  Since $w_1,w_2$ form a basis of the two-dimensional
space $W^U_{\eta_\ell}$ and the map $w\mapsto f_w$ is injective,
Proposition~\ref{lem:metaplectic-linear-bridge} implies
$$
D_\ell=cI
$$
on $W^U_{\eta_\ell}$.  On the other hand, the linear conjugation above
gives
$$
A_\lambda D_\ell A_\lambda^{-1}=-D_\ell.
$$
Because $A_\lambda$ preserves $W^U_{\eta_\ell}$, conjugating the scalar
identity $D_\ell=cI$ yields $cI=-cI$, and hence $c=0$.  Thus $D_\ell=0$
on $W^U_{\eta_\ell}$.  Applying
Proposition~\ref{lem:metaplectic-linear-bridge} once more gives
$$
\mathcal W_{2\ell-1}=0
$$
on the metaplectic newspace, contradicting the terminal relation
$$
\mathcal W_{2\ell-1}^2=p^*I.
$$
Therefore the two eigenvalues are distinct, and the same terminal relation
shows that they are $\sqrt{p^*}$ and $-\sqrt{p^*}$.

For (ii), the two one-dimensional $U$-weight spaces lie in the two
constituents exchanged by $a_\lambda$.  The linear conjugation interchanges
$\mathcal T_{1,1}$ and $\mathcal T_{1,\lambda}$, and therefore sends
$$
D_1=\mathcal T_{1,1}-\mathcal T_{1,\lambda}
$$
to $-D_1$.  Hence the $D_1$-eigenvalues on the two $U$-weight lines are
opposite.  Proposition~\ref{lem:metaplectic-linear-bridge} transfers these
to opposite $\mathcal W_1$-eigenvalues on the corresponding metaplectic
newvectors.  Since $\mathcal W_1^2=p^*I$, those eigenvalues are
$\sqrt{p^*}$ and $-\sqrt{p^*}$.

The ramified case (iii) is identical, with
$$
D_{\ell+1}
=\mathcal T_{\ell+1,1}-\mathcal T_{\ell+1,\lambda}.
$$
The nonsquare-determinant conjugation exchanges the two linear constituents
and sends $D_{\ell+1}$ to $-D_{\ell+1}$.  By
Proposition~\ref{lem:metaplectic-linear-bridge}, their eigenvalues transfer
to opposite eigenvalues of $\mathcal W_{2\ell}$ on
$f_{\phi_{w_1}}$ and $f_{\phi_{w_2}}$.  Finally
$$
\mathcal W_{2\ell}^2=p^*I
$$
gives the two values $\sqrt{p^*}$ and $-\sqrt{p^*}$.
\end{proof}

\section{Relation with Ueda's Twisting Operator}\label{sec:Ueda-Ishimoto-Rp}

We finish by explaining how the operator $W_{m-1}$ of this paper is related to the twisting operators in Ueda's classical theory and to the operator $R_p$ in Ishimoto's \emph{Kohnen--Ueda local newforms} \cite{IshimotoKU}.  This comparison is not used in the proofs above.  It is included to clarify the expected global role of our compact Hecke operators.

Throughout this section, $\eta$ denotes one of the two genuine quadratic $K_m$-types considered in this paper, whose underlying character $\eta_0$ is either trivial or $(\frac{\cdot}{p})$.  In particular, $c(\eta_0)\leq 1$.

The point is the following.  Ueda's operator is a classical twisting operator on Fourier coefficients.  Ishimoto's $R_p$ is its local representation-theoretic analogue on the tower of $\eta$-fixed spaces.  Our operator $W_{m-1}$ is different from $R_p$ as an element or distribution: it is supported inside the compact group $K=\SL_2(\Z_p)$ and acts at one fixed level.  Nevertheless, after compressing Ishimoto's operator to a fixed $(K_m,\eta)$-type and then applying a lifted $\GL_2$-conjugation, one obtains $W_{m-1}$ in the twisted Hecke algebra with the conjugated character $\eta^{J_m}$, up to the explicit scalar below.  Thus the comparison is made after applying this conjugation; in odd depth it also changes the $K_m$-type.

\subsection{Ueda's operator and Ishimoto's local operator}
We recall Ueda's classical twisting operator in the special case needed here.  If
$$
f(z)=\sum_{n\geq0}a(n)e^{2\pi inz}
$$
is a half-integral weight modular form and $\phi$ is a Dirichlet character, Ueda defines
$$
f|R_\phi(z)=\sum_{n\geq0}\phi(n)a(n)e^{2\pi inz};
$$
see \cite[Section 0(c)]{Ueda1993} and \cite{Ueda1998}.  For the quadratic character $\phi=(\frac{\cdot}{p})$, this gives
$$
f|R_p(z)=\sum_{n\geq0}\left(\frac{n}{p}\right)a(n)e^{2\pi inz},
$$
where $(\frac{n}{p})=0$ when $p\mid n$.

Equivalently, if
$$
\tau_p=\sum_{a\in\mathbb F_p^*}\left(\frac{a}{p}\right)e^{2\pi ia/p}
$$
is the usual quadratic Gauss sum, then
$$
f|R_p(z)=
\frac{1}{\tau_p}
\sum_{a\in\mathbb F_p^*}
\left(\frac{a}{p}\right)f\!\left(z+\frac{a}{p}\right).
$$
Indeed, the coefficient of $e^{2\pi inz}$ on the right is
$$
\frac{1}{\tau_p}
\sum_{a\in\mathbb F_p^*}
\left(\frac{a}{p}\right)e^{2\pi ian/p}a(n)
=
\left(\frac{n}{p}\right)a(n),
$$
with value $0$ when $p\mid n$.

For an irreducible genuine smooth representation $(\pi_{\sigma_0},V)$ of
$\widetilde G_{\sigma_0}$, Ishimoto's operator $R_\varpi$ is the
corresponding local unipotent average.  Let $\iota$ be the isomorphism
between the two cocycle models defined in Section~2, and put
$\pi=\pi_{\sigma_0}\circ\iota$.  Since $s_p(x(t))=1$, we have
$$
\iota((x(t),1))=(x(t),1).
$$
Moreover, $\iota$ carries the canonical splitting of $K_m$ in our model to
the compact splitting used by Ishimoto.  Thus the corresponding
$(K_m,\eta)$-type spaces agree, and Ishimoto's integral has the same formula
in the $c$-model used in this paper.
We take the standard normalization $\operatorname{vol}(\Z_p)=1$.
Then, in the notation of \cite{IshimotoKU}, after specializing to $F=\Q_p$ and $\varpi=p$,  it is
$$
R_p^\pi v
=
\int_{\Z_p^\times}
\left(\frac{u}{p}\right)
\pi((x(p^{-1}u),1))v\,du;
$$
we are using the notation $R_p^\pi$ for $R_\varpi$ to distinguish it from Ueda's classical operator $R_p$.

On $V_\eta^{K_m}(\pi)$ the integrand is constant on each nonzero residue class modulo $p$: if $u'=u+pa$, then
$x(p^{-1}u')=x(p^{-1}u)x(a)$, 
and $(x(a),1)\in\ov K_m$ acts trivially on the $\eta$-type.  Each such residue class has measure $p^{-1}$.  Hence
\begin{equation}\label{eq:R_pIshimoto}
 R_p^\pi v
=
\frac1p\sum_{a\in\mathbb F_p^*}
\left(\frac{a}{p}\right)
\pi((x(p^{-1}\widetilde a),1))v
\end{equation}

We give a direct adelic-to-classical comparison.  We now assume that $f$ belongs to a classical half-integral-weight space whose adelization $\Phi_f$ has right $(K_m,\eta)$-type at $p$.  Equivalently, the $p$-component of its adelic nebentypus induces the character $\eta$ on $K_m$.

If $x_p(t)$ denotes the adelic metaplectic element whose $p$-component is $(x(t),1)$ and whose other components are the identity, define right translation at $p$ by
$$
[\rho_p((x(a/p),1))\Phi_f](\widetilde g)
=\Phi_f(\widetilde g\,x_p(a/p)),
\qquad \widetilde g\in\widetilde{\SL}_2(\mathbb A).
$$

To return to the classical realization, evaluate at $\widetilde g=(\widetilde g_\infty,1_f)$.  Let $\widetilde x(-a/p)$ denote the diagonally embedded rational lift.  Left automorphy gives
$$
\Phi_f((\widetilde g_\infty,1_f)x_p(a/p))
=
\Phi_f\bigl(\widetilde x(-a/p)(\widetilde g_\infty,1_f)x_p(a/p)\bigr).
$$
The $p$-components now cancel.  At every finite prime $\ell\ne p$, the element $x(-a/p)$ belongs to the chosen compact subgroup and, being upper unipotent, contributes no type-character factor.  At the real place it sends $z$ to $z-a/p$ and contributes no automorphy factor.  Hence the classical form corresponding to this right translate is $z\mapsto f(z-a/p)$.

Let 
$$
\tau_p^-=
\sum_{a\in\mathbb F_p^*}
\left(\frac{a}{p}\right)e^{-2\pi ia/p},
\qquad
g_p^-:=\operatorname{vol}(p\Z_p)\tau_p^-.
$$
Since the $p$-component of $\Phi_f$ has $(K_m,\eta)$-type, and  $x(\Z_p)\subset K_m$ and $\eta$ is trivial on this upper-unipotent subgroup, 
it follows that the classical form corresponding to $\rho_p(R_p)\Phi_f$ is
$$
\operatorname{vol}(p\Z_p)
\sum_{a\in\mathbb F_p^*}
\left(\frac{a}{p}\right)f\!\left(z-\frac{a}{p}\right)
=g_p^-\,(f|R_p)(z).
$$
Here $\rho_p(R_p)\Phi_f$ denotes the result of applying Ishimoto's local integral at the $p$-component.  Thus, after division by the full local Gauss factor $g_p^-$, Ishimoto's local operator becomes Ueda's classical twisting operator.  If one uses the opposite translation convention, $z-a/p$ is replaced by $z+a/p$ and $\tau_p^-$ by $\tau_p$.

On Whittaker coefficients the normalized integral multiplies the coefficient indexed by a $p$-adic unit by the Legendre symbol and kills the coefficients whose index is divisible by $p$.  In this normalized sense, Ishimoto's $R_p$ is the local realization of Ueda's twisting operator.  The comparison below explains how, after fixed-level compression and conjugation, this operator meets $W_{m-1}$.

\subsection{The lifted $\GL_2$-conjugation}
For $m\geq2$ put $K_m=K_0(p^m)$ and
$$
J_m=\begin{pmatrix}0&1\\-p^m&0\end{pmatrix}\in\GL_2(\Q_p).
$$
A direct calculation gives
$$
J_mK_mJ_m^{-1}=K_m,
\qquad
J_mx(t)J_m^{-1}=y(-p^mt),
$$
and
$$
J_my(t)J_m^{-1}=x(-p^{-m}t),
\qquad
J_mh(u)J_m^{-1}=h(u^{-1}).
$$
We now explain the lift of this conjugation to the double cover used in this paper.

Using the notation $\tau(A)$ from Section~2 for $A\in\GL_2(\Q_p)$, and following Kubota and Kazhdan--Patterson \cite{Kubota,KP}, consider the metaplectic double cover
$$
\widehat{\GL}_2(\Q_p)=\GL_2(\Q_p)\times\mu_2
$$
defined by the two-cocycle
$$
\sigma(A,B)
=
\left(
\frac{\tau(AB)}{\tau(A)},
\frac{\tau(AB)}{\tau(B)}\det A
\right)_p.
$$
On $G=\SL_2(\Q_p)$ its restriction is
$$
\sigma_0(g,h)
=
\left(
\frac{\tau(gh)}{\tau(g)},
\frac{\tau(gh)}{\tau(h)}
\right)_p
=
\bigl(\tau(gh)\tau(g),\tau(gh)\tau(h)\bigr)_p.
$$
The restriction $\widehat{\GL}_2(\Q_p)|_{\SL_2(\Q_p)}$ is therefore the
$\sigma_0$-model of the metaplectic cover.  We identify it with our
$c$-model by means of the isomorphism $\iota$ defined in Section~2.

Let $\widetilde J_m=(J_m,1)\in\widehat{\GL}_2(\Q_p)$ and put
$$
\alpha_m(g)=J_mgJ_m^{-1}.
$$
Since $\SL_2(\Q_p)$ is normal in $\GL_2(\Q_p)$, conjugation by $\widetilde J_m$ preserves the inverse image of $\SL_2(\Q_p)$.  We therefore obtain an automorphism of our cover by
$$
\Theta_m
=
\iota^{-1}\circ\operatorname{Ad}_{\widetilde J_m}\circ\iota:
\widetilde G_c\longrightarrow\widetilde G_c.
$$
Thus no literal invariance of the cocycle $c$ under $J_m$ is being assumed.

For completeness we record the corresponding correction cochain.  In the $\sigma$-model,
$$
\operatorname{Ad}_{\widetilde J_m}(g,\epsilon)
=
(\alpha_m(g),\epsilon r_m(g)),
$$
where
$$
r_m(g)
=
\frac{\sigma(J_m,g)\sigma(J_mg,J_m^{-1})}
{\sigma(J_m,J_m^{-1})}.
$$
Since $\tau(J_m)=-p^m$ and $\tau(J_m^{-1})=1$,
one has
$$
\sigma(J_m,J_m^{-1})=(-p^{-m},p^m)_p=1.
$$
Transporting back through $\iota$ yields
$$
\Theta_m((g,\epsilon))
=
(\alpha_m(g),\epsilon\kappa_m(g)),
$$
with
$$
\kappa_m(g)
=
s_p(g)s_p(\alpha_m(g))
\sigma(J_m,g)\sigma(J_mg,J_m^{-1}).
$$
Equivalently,
$$
c(\alpha_m(g),\alpha_m(h))
=
c(g,h)\frac{\kappa_m(gh)}{\kappa_m(g)\kappa_m(h)}.
$$
This is exactly the cocycle compatibility required for $J_m$-conjugation to lift to the metaplectic group.

\begin{lem}\label{lem:section5-Jm-lift}
For $t\in\Q_p$, $u\in\Q_p^\times$, and $\epsilon\in\{\pm1\}$,
$$
\Theta_m((x(t),\epsilon))=(y(-p^mt),\epsilon),
$$
$$
\Theta_m((y(t),\epsilon))=(x(-p^{-m}t),\epsilon),
$$
and
$$
\Theta_m((h(u),\epsilon))
=
\bigl(h(u^{-1}),\epsilon(u,p^m)_p\bigr).
$$
In particular, for $u\in\Z_p^\times$,
$$
(u,p^m)_p
=(u,p)_p^m
=\left(\frac{u}{p}\right)^m.
$$
\end{lem}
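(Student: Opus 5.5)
The plan is to evaluate the explicit correction factor
$$
\kappa_m(g)=s_p(g)\,s_p(\alpha_m(g))\,\sigma(J_m,g)\,\sigma(J_mg,J_m^{-1})
$$
on each of the three families of generators. The formula $\Theta_m((g,\epsilon))=(\alpha_m(g),\epsilon\kappa_m(g))$ derived just before the lemma then gives all three identities, once we combine it with the matrix identities $\alpha_m(x(t))=y(-p^mt)$, $\alpha_m(y(t))=x(-p^{-m}t)$ and $\alpha_m(h(u))=h(u^{-1})$. So the whole proof reduces to showing $\kappa_m(x(t))=\kappa_m(y(t))=1$ and $\kappa_m(h(u))=(u,p^m)_p$. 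For each computation I will record $\tau$ of every matrix involved and $\det J_m=p^m$, and then simplify the Hilbert symbols using bimultiplicativity, $(a,-a)_p=1$, and the invariance of $(a,b)_p$ under multiplying an entry by a square.

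For $g=x(t)$: we have $s_p(x(t))=1$ because its lower-left entry vanishes, and $s_p(y(-p^mt))=1$ because its lower-right entry is $1$. Next, $J_mx(t)=\left(\begin{smallmatrix}0&1\\-p^m&-p^mt\end{smallmatrix}\right)$ has $\tau=-p^m=\tau(J_m)$, while $\tau(x(t))=1$, so
$$
\sigma(J_m,x(t))=(1,\ast)_p=1.
$$
For the second factor, $J_mx(t)J_m^{-1}=y(-p^mt)$ has $\tau=-p^mt$ when $t\neq0$, and $\tau(J_m^{-1})=1$. This gives
$$
\sigma(J_mx(t),J_m^{-1})=(t,\,-p^mt\cdot p^m)_p=(t,-t)_p=1.
$$
The case $t=0$ is trivial. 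The case $g=y(t)$ is handled symmetrically. Here $J_my(t)=\left(\begin{smallmatrix}t&1\\-p^m&0\end{smallmatrix}\right)$ and $\alpha_m(y(t))$ is upper unipotent, so both $s_p$ factors are $1$. The two $\sigma$-factors again collapse to symbols of the form $(a,1)_p$ or $(a,-a)_p$ times squares, and I will check this in the same way.

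For $g=h(u)$: both $s_p$ factors are $1$, since $h(u)$ and $h(u^{-1})$ are diagonal. We have $\tau(h(u))=u^{-1}$ and $J_mh(u)=\left(\begin{smallmatrix}0&u^{-1}\\-p^mu&0\end{smallmatrix}\right)$ with $\tau=-p^mu$. Hence
$$
\sigma(J_m,h(u))=(u,\,-p^mu\cdot u\cdot p^m)_p=(u,-u)_p=1
$$
up to squares. The second factor uses $\tau(J_mh(u)J_m^{-1})=\tau(h(u^{-1}))=u$, which gives
$$
\sigma(J_mh(u),J_m^{-1})=\bigl(u/(-p^mu),\,u\cdot(-p^mu)\bigr)_p=\bigl(-p^{-m},-p^m\bigr)_p,
$$
and the precise combination has to be tracked carefully. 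This step is the main obstacle: I expect the bookkeeping of signs and of which argument carries $\det A$ to be where errors creep in. The target is that the product of the two factors is exactly $(u,p^m)_p$. I will first expand everything into elementary symbols $(u,u)_p$, $(u,p)_p$, $(p,p)_p$, $(-1,\cdot)_p$ and then cancel, using $(a,a)_p=(a,-1)_p$. As a consistency check, I will verify that $h(u)\mapsto(h(u^{-1}),(u,p^m)_p)$ is compatible with the torus cocycle $c(h(a),h(a'))=(a,a')_p$ and with the relation $c(\alpha_m(g),\alpha_m(h))=c(g,h)\kappa_m(gh)/(\kappa_m(g)\kappa_m(h))$.

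Finally, for $u\in\Z_p^\times$ and $p$ odd, bimultiplicativity gives $(u,p^m)_p=(u,p)_p^m$, and the standard formula $(u,p)_p=\kro{u}{p}$ for units gives the last assertion.
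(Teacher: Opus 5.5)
Your strategy is the same as the paper's: evaluate $\kappa_m$ on the three families of generators. Your unipotent computations are correct. The gap is in the torus case, which carries all the content of the lemma. There both of your displayed intermediate values are wrong, and the final combination is left undone.

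\textbf{The two errors.}
\begin{enumerate}
\item In $\sigma(J_m,h(u))$ the second slot is $\frac{\tau(J_mh(u))}{\tau(h(u))}\det J_m=-p^{2m}u^2$. Its square class is $-1$, not $-u$. So $\sigma(J_m,h(u))=(u,-1)_p$, and this is not $1$ for non-unit $u$ (e.g.\ $u=p$ with $p\equiv 3\pmod 4$).
\item In $\sigma(J_mh(u),J_m^{-1})$ the second slot must carry $\det(J_mh(u))=p^m$, not $\tau(J_mh(u))=-p^mu$. The correct value is $(-p^{-m},p^mu)_p$.
\end{enumerate}
With your values the product would be $(-p^{-m},-p^m)_p=\left(\frac{-1}{p}\right)^m$, independent of $u$. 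This already fails at $u=1$, where $\kappa_m(I)=\sigma(J_m,J_m^{-1})=(-p^{-m},p^m)_p=1$. So expanding and cancelling from your values cannot reach $(u,p^m)_p$.

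\textbf{The fix.} With the corrected values the step closes in one line:
\[
(u,-1)_p\,(-p^{-m},p^mu)_p=(u,-1)_p\,(-p^{-m},p^m)_p\,(-1,u)_p\,(p^{-m},u)_p=(u,p^m)_p .
\]
This uses three facts:
\begin{itemize}
\item $(-p^{-m},p^m)_p=(-p^m,p^m)_p=1$;
\item $(u,-1)_p(-1,u)_p=1$, by symmetry of the quadratic Hilbert symbol;
\item $p^{-m}\equiv p^m$ modulo squares.
\end{itemize}
This is exactly the paper's computation.

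Your proposed consistency check (compatibility with $c(h(a),h(a'))=(a,a')_p$) would catch your error, but it cannot determine the answer: any $\pm1$-valued character of $u$ passes it. The explicit Hilbert-symbol bookkeeping above is therefore unavoidable.
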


\begin{proof}
For the upper unipotent subgroup, direct substitution in the $\GL_2$ cocycle gives
$$
\sigma(J_m,x(t))=1.
$$
For $t\ne0$,
$$
\sigma(J_mx(t),J_m^{-1})
=(t,-p^{2m}t)_p
=(t,-t)_p
=1,
$$
and for $t=0$ the same value is $\sigma(J_m,J_m^{-1})=1$.  Moreover
$$
s_p(x(t))=s_p(y(-p^mt))=1.
$$
Hence $\kappa_m(x(t))=1$.  The calculation for the lower unipotent subgroup is analogous and gives
$$
\sigma(J_m,y(t))=\sigma(J_my(t),J_m^{-1})=1,
$$
with trivial $s_p$-factors.

For the torus,
$$
\sigma(J_m,h(u))=(u,-1)_p,
$$
whereas
$$
\sigma(J_mh(u),J_m^{-1})=(-p^{-m},p^mu)_p.
$$
Using $(-p^{-m},p^m)_p=1$, bilinearity and symmetry of the quadratic Hilbert symbol give
$$
\begin{aligned}
(u,-1)_p(-p^{-m},p^mu)_p
&=(u,-1)_p(-p^{-m},u)_p\\
&=(u,-1)_p(-1,u)_p(p^m,u)_p\\
&=(u,p^m)_p.
\end{aligned}
$$
Since $s_p(h(u))=s_p(h(u^{-1}))=1$, the torus formula follows.  For a unit $u$ and odd $p$ one has $(u,p)_p=(\frac{u}{p})$, proving the last assertion.
\end{proof}

The parity dependence of the conjugated twisted character is now explicit.  Suppose
$$
\eta((h(u),\epsilon))=\epsilon\eta_0(u^{-1}),
\qquad u\in\Z_p^\times,
$$
with $\eta_0$ quadratic, and define
$$
\eta^{J_m}(k)=\eta(\Theta_m^{-1}(k)),
\qquad k\in\ov{K_m}.
$$
By Lemma~\ref{lem:section5-Jm-lift}, a direct calculation gives
$$
\begin{aligned}
\eta^{J_m}((h(u),\epsilon))
&=\eta\bigl(\Theta_m^{-1}((h(u),\epsilon))\bigr)\\
&=\eta((h(u^{-1}),\epsilon(u,p^m)_p))\\
&=\epsilon (u,p^m)_p\eta_0(u).
\end{aligned}
$$
We therefore define $\eta_0^{J_m}$ by
$$
\eta_0^{J_m}(u^{-1})
=(u,p^m)_p\eta_0(u),
$$
so that
$$
\eta^{J_m}((h(u),\epsilon))
=\epsilon\eta_0^{J_m}(u^{-1}).
$$
Equivalently, replacing $u$ by $u^{-1}$ and using the fact that $\eta_0$ is quadratic,
$$
\eta_0^{J_m}(u)
=\eta_0(u)(u,p^m)_p
=\eta_0(u)\left(\frac{u}{p}\right)^m.
$$

Thus
$$
\eta_0^{J_m}
=
\begin{cases}
\eta_0,&m\text{ even},\\
\eta_0(\frac{\cdot}{p}),&m\text{ odd}.
\end{cases}
$$
There is no parity restriction on the automorphism $\Theta_m$; parity only changes the conjugated character.

Let
$$
\mathcal H_m^{\mathrm{full}}(\eta)
=
H(\widetilde G//\ov{K_m},\eta)
$$
denote the full twisted Hecke algebra, as opposed to the compact subalgebra supported on $\ov K$ studied in the body of the paper.  Since $J_mK_mJ_m^{-1}=K_m$, the automorphism $\Theta_m$ preserves $\ov{K_m}$ and induces
$$
\Theta_{m,*}:\mathcal H_m^{\mathrm{full}}(\eta)
\xrightarrow{\sim}
\mathcal H_m^{\mathrm{full}}(\eta^{J_m}),
\qquad
(\Theta_{m,*}f)(g)=f(\Theta_m^{-1}(g)).
$$

For $\delta\in\{1,\lambda\}$, Lemma~\ref{lem:section5-Jm-lift} gives
$$
\Theta_m((x(\delta p^{-1}),1))
=(y(-\delta p^{m-1}),1).
$$
The following admissibility check is needed before defining the corresponding twisted Hecke function.

\begin{lem}[Twisted admissibility]\label{lem:section5-affine-admissibility}
For $m\geq2$ and $\delta\in\Z_p^\times$, the double coset
$$
\ov{K_m}(x(\delta p^{-1}),1)\ov{K_m},
$$
supports a unique element of $\mathcal H_m^{\mathrm{full}}(\eta)$ whose value at $(x(\delta p^{-1}),1)$ is $1$.
\end{lem}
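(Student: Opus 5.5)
The plan is to use the same support criterion as in \eqref{eq:compact-support-criterion}, now for an element $g=x(\delta p^{-1})$ that lies outside $K$. The double coset $\ov{K_m}(g,1)\ov{K_m}$ supports a nonzero $\eta$-biequivariant function exactly when the two applications of biequivariance agree on the stabilizer. Concretely, for every $\tilde k\in\ov{K_m}\cap (g,1)\ov{K_m}(g,1)^{-1}$ we need
\[
\eta(\tilde k)=\eta\bigl((g,1)^{-1}\tilde k(g,1)\bigr).
\]
Once this holds, uniqueness is immediate. The value at $(g,1)$ then determines the function on the whole double coset through $f(\tilde k_1(g,1)\tilde k_2)=\ov\eta(\tilde k_1)\ov\eta(\tilde k_2)f((g,1))$. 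Normalizing this value to $1$ pins the function down, and the condition above says precisely that the assignment is well defined.

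\textbf{Step 1 (the linear stabilizer).} Take $k=\mat{a}{b}{c}{d}\in K_m$ with $g^{-1}kg\in K_m$, where $t=\delta p^{-1}$. A direct computation gives
\[
g^{-1}kg=\mat{a-tc}{b+t(a-d)-t^2c}{c}{d+tc}.
\]
Since $c\in p^m\Z_p$ and $m\ge 2$, we have $tc\in p^{m-1}\Z_p\subset p\Z_p$. So the lower-right entry $d+tc$ is congruent to $d$ modulo $p$, and the lower-left entry is still $c$. The integrality of the upper-right entry only cuts out the intersection and plays no further role. Because $\eta_0$ is trivial or $\kro{\cdot}{p}$, it depends only on $d\bmod p$. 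This gives $\eta_0(d+tc)=\eta_0(d)$ on the linear level.

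\textbf{Step 2 (the metaplectic sign).} This is the main obstacle. Here $g\notin K$, so the identity $(g,1)^{-1}(k,\epsilon)(g,1)=(g^{-1}kg,\epsilon\cdot\text{sign})$ may carry a nontrivial cocycle factor. That factor is $c(g^{-1},k)\,c(g^{-1}k,g)\,c(g^{-1},g)$, up to the splitting of $K_m$.

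The first approach is to avoid computing this factor directly. Instead, transport the question through $\Theta_m$ from Lemma~\ref{lem:section5-Jm-lift}, which satisfies $\Theta_m((x(\delta p^{-1}),1))=(y(-\delta p^{m-1}),1)$ and preserves $\ov{K_m}$. Under $\Theta_m$ the double coset maps to $\ov{K_m}(y(-\delta p^{m-1}),1)\ov{K_m}$, and the type $\eta$ maps to $\eta^{J_m}$. The element $y(-\delta p^{m-1})$ lies in $K$, and $\eta^{J_m}$ is again a quadratic type with underlying character trivial or $\kro{\cdot}{p}$. Proposition~\ref{prop:compact-support}, applied with $r=m-1$ and $\eta^{J_m}$ in place of $\eta$, then gives existence. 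There the cover splits and no cocycle appears.

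Two points need checking in this transport:
\begin{itemize}
\item the canonical splitting of $K_m$ corresponds under $\Theta_m$ to the splitting used to define $\eta^{J_m}$, that is, $\Theta_m$ sends $(k,1)$ to $(\alpha_m(k),1)$ for $k\in K_m$ up to the already-computed torus sign;
\item that torus sign is absorbed into $\eta^{J_m}$.
\end{itemize}
To verify these I will factor $K_m$ into $x(\Z_p)$, diagonal units, and $y(p^m\Z_p)$, and use Lemma~\ref{lem:section5-Jm-lift} on each factor.

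If the transport argument stalls, the fallback is to compute the sign directly. Write $k$ in this triangular factorization. Upper unipotents commute with $g$ and give no sign. For a diagonal unit, $h(u)^{-1}g\,h(u)=x(u^{-2}t)$ with $c(h(u),x(\cdot))=1$. The lower factor $y(s)$ with $s\in p^m\Z_p$ is the only one needing a Hilbert-symbol check. There the relevant symbols involve units and $1+p\Z_p$ elements, and so they equal $1$.

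**Step 3.** Combining the linear equality from Step 1 with a trivial metaplectic sign from Step 2 verifies the compatibility condition. This gives existence and uniqueness of the normalized element of $\mathcal H_m^{\mathrm{full}}(\eta)$.
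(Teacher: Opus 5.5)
Your proof is correct, but your main route differs from the paper's.

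\textbf{The paper's route.} The paper never leaves $x(\delta p^{-1})$. For $k$ in the stabilizer and $k'=g_\delta^{-1}kg_\delta$, it checks directly in the $\sigma_0$-model that $\sigma_0(k,g_\delta)=\sigma_0(g_\delta,k')=1$. It also checks that $s_p(k')/s_p(k)=(c,1+\delta p^{-1}cd^{-1})_p=1$, because $1+p\Z_p$ consists of squares. Hence $(k,1)(g_\delta,1)=(g_\delta,1)(k',1)$ on canonical lifts, and together with $d(k')/d(k)\in 1+p\Z_p$ this is the support condition.

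\textbf{Your route.} You pull the question back along $\Theta_m$ to the compact coset of $y(-\delta p^{m-1})$ and quote Proposition~\ref{prop:compact-support} for $\eta^{J_m}$. In effect you derive the lemma from the identity $\Theta_{m,*}(\mathcal X^{\eta}_{-1,\delta})=\mathcal V^{\eta^{J_m}}_{m-1,-\delta}$, which the paper records only afterwards.

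The two points you flag are in fact automatic:
\begin{itemize}
\item $\Theta_m$ restricts to an automorphism of $\ov{K_m}$ that is the identity on $\mu_2$. So $k\mapsto\eta^{J_m}((k,1))$ is a $\{\pm1\}$-valued character of $K_m$.
\item Such a character kills the pro-$p$ subgroups $x(\Z_p)$ and $y(p^m\Z_p)$. Hence it has the form $\eta'(d(k))$ with $\eta'$ trivial or $\kro{\cdot}{p}$, which is all Proposition~\ref{prop:compact-support} needs. Lemma~\ref{lem:section5-Jm-lift} then identifies $\eta'$ as $\eta_0^{J_m}$, though existence does not require this.
\end{itemize}

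\textbf{What each buys.} Your route is shorter and more conceptual, since it recycles the compact computation. The paper's route is self-contained and does not depend on the Kubota cover of $\GL_2$ or on the correction cochain $\kappa_m$. It also shows explicitly that $(g_\delta,1)$ intertwines the canonical lifts of $k$ and $k'$ with no sign.

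\textbf{A caution on your fallback, should you need it.} The compatibility condition compares two characters of the stabilizer
$K_m\cap gK_mg^{-1}=x(\Z_p)\,h(\pm(1+p\Z_p))\,y(p^m\Z_p)$, so it must be checked on these generators. Your displayed $h(u)^{-1}gh(u)$ is a conjugate of $g$, not of a stabilizer element. Moreover, a general diagonal unit $h(u)$ does not lie in the stabilizer.
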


\begin{proof}
Put $g_\delta=x(\delta p^{-1})$.  If
$$
 k=\begin{pmatrix}a&b\\c&d\end{pmatrix}
 \in K_m\cap g_\delta K_mg_\delta^{-1}
$$
and $k'=g_\delta^{-1}kg_\delta$, then
$$
k'=
\begin{pmatrix}
a-\delta p^{-1}c&
b+\delta p^{-1}(a-d)-\delta^2p^{-2}c\\
c&d+\delta p^{-1}c
\end{pmatrix}.
$$
In particular,
$$
d(k')d(k)^{-1}
=1+\delta p^{-1}cd^{-1}\in1+p\Z_p.
$$
Since $c(\eta_0)\leq1$, this gives $\eta_0(d(k'))=\eta_0(d(k))$.

It remains to check that no central sign is introduced by the chosen lifts.  In the $\sigma_0$-model, direct substitution gives
$$
\sigma_0(k,g_\delta)=\sigma_0(g_\delta,k')=1.
$$
For $c\ne0$, the compact splitting cochain satisfies
$$
\frac{s_p(k')}{s_p(k)}
=\left(c,\frac{d(k')}{d(k)}\right)_p
=\left(c,1+\delta p^{-1}cd^{-1}\right)_p
=1,
$$
because $1+p\Z_p\subset\Q_p^{\times2}$ for odd $p$; if $c=0$, both splitting factors are $1$.  Using
$c(g,h)=\sigma_0(g,h)s_p(g)s_p(h)s_p(gh)$ and $kg_\delta=g_\delta k'$, we conclude that
$$
(k,1)(g_\delta,1)=(g_\delta,1)(k',1)
$$
in the cocycle realization used in this paper.  Thus the two genuine type characters agree on the intersection.  This is precisely the twisted support condition, and it proves existence and uniqueness of the normalized function.
\end{proof}

We denote this normalized function by
$$
\mathcal X_{-1,\delta}^{\eta}
\in\mathcal H_m^{\mathrm{full}}(\eta)
$$
and put
$$
\mathcal R_m^{\eta}
=\mathcal X_{-1,1}^{\eta}-\mathcal X_{-1,\lambda}^{\eta}.
$$
The superscript records the twisted character and will be retained in this section.

Then
$$
\Theta_{m,*}(\mathcal X_{-1,\delta}^{\eta})
=\mathcal V_{m-1,-\delta}^{\eta^{J_m}},
$$
where the superscript indicates the character of the target twisted algebra.

\subsection{The Hecke element underlying $R_p$}
We first record the right-coset decomposition that will be used in the comparison.

\begin{lem}[Right-coset decomposition]\label{lem:section5-affine-cosets}
Let $m\ge2$ and $\delta\in\Z_p^\times$. Then
$$
\begin{aligned}
K_mx(\delta p^{-1})K_m
&=
\bigsqcup_{u\in\Z_p^\times/\sqrt{1+p\Z_p}}
 h(u)x(\delta p^{-1})K_m
\\
&=
\bigsqcup_{u\in\Z_p^\times/\sqrt{1+p\Z_p}}
 x(\delta u^2p^{-1})K_m.
\end{aligned}
$$
Here, as in Lemma~\ref{lem:sqrt},
$\sqrt{1+p\Z_p}=\{u\in\Z_p^\times:u^2\in1+p\Z_p\}=\{\pm1\}(1+p\Z_p)$; hence the quotient has $(p-1)/2$ elements.

\end{lem}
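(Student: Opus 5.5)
The plan is to transport Lemma~\ref{lem:cosetdecomp}(2) through the linear conjugation by $J_m$. We only need the statement for the linear group $\GL_2(\Q_p)$, so no metaplectic lift is required. From the identities recorded before Lemma~\ref{lem:section5-Jm-lift},
\[
J_mK_mJ_m^{-1}=K_m,\qquad
J_m x(t)J_m^{-1}=y(-p^mt),\qquad
J_mh(u)J_m^{-1}=h(u^{-1}).
\]
In particular $J_m x(\delta p^{-1})J_m^{-1}=y(-\delta p^{m-1})$. Conjugation by $J_m$ is a bijection of $\GL_2(\Q_p)$ preserving $K_m$. It therefore carries the double coset $K_mx(\delta p^{-1})K_m$ onto $K_my(-\delta p^{m-1})K_m$, and it carries left cosets $gK_m$ to left cosets $J_mgJ_m^{-1}K_m$. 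Disjointness of a decomposition is preserved in both directions.

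Next apply Lemma~\ref{lem:cosetdecomp}(2) with $r=m-1$ and with the unit $-\delta$ in place of $\ld$. The hypothesis $1\le r\le m-1$ is exactly $m\ge2$. The proof of that lemma uses only that the parameter is a $p$-adic unit, so it applies to either square class of $-\delta$. Since $p^{m-r}=p$, this gives
\[
K_my(-\delta p^{m-1})K_m=\bigsqcup_{u\in\Z_p^\times/\sqrt{1+p\Z_p}} h(u)\,y(-\delta p^{m-1})K_m .
\]
Conjugating back by $J_m^{-1}$ turns each piece into $h(u^{-1})x(\delta p^{-1})K_m$. The map $u\mapsto u^{-1}$ permutes $\Z_p^\times/\sqrt{1+p\Z_p}$, so this is the first asserted decomposition. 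Lemma~\ref{lem:sqrt} identifies $\sqrt{1+p\Z_p}=\{\pm1\}(1+p\Z_p)$ and gives the count $(p-1)/2$.

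For the second decomposition we use the identity
\[
h(u)x(\delta p^{-1})=x(\delta u^2p^{-1})h(u).
\]
Since $h(u)\in K_m$, this gives $h(u)x(\delta p^{-1})K_m=x(\delta u^2p^{-1})K_m$. Hence the two unions coincide coset by coset, and disjointness carries over.

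The only point needing care is the applicability of Lemma~\ref{lem:cosetdecomp}(2) at the extreme index $r=n-1$ with an arbitrary unit parameter. If there is doubt, I would instead verify disjointness directly. Suppose $x(-\delta p^{-1})h(u_1^{-1}u_2)x(\delta p^{-1})\in K_m$. The upper-right entry of this matrix is $\delta p^{-1}(v-v^{-1})$, where $v=u_1^{-1}u_2$. Integrality of this entry forces $v^2\in 1+p\Z_p$, that is, $u_1\equiv u_2$ in the quotient.
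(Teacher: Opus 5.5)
Your proof is correct, but it takes a different route from the paper.

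\textbf{The paper's route.} The paper argues directly. For $k=\mat{a}{b}{c}{d}\in K_m$ it computes $x(-\delta p^{-1})\,k\,x(\delta p^{-1})$ and finds that this lies in $K_m$ exactly when $a\equiv d\equiv\pm1\pmod p$. So the stabilizer $K_m\cap g_\delta K_m g_\delta^{-1}$ is the preimage of $\{\pm1\}$ under the reduction homomorphism $k\mapsto a\bmod p$. Orbit--stabilizer then gives the index $(p-1)/2$ and the representatives $h(u)$ at once.

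\textbf{Your route.} You transport Lemma~\ref{lem:cosetdecomp}(2) through the Fricke conjugation $J_m$, at the terminal index $r=m-1$ with unit parameter $-\delta$. This is legitimate for three reasons:
\begin{enumerate}
\item that lemma is stated for arbitrary $\lambda\in\Z_p$;
\item its disjointness step needs only that the parameter be a unit;
\item since $r=m-1\ge m/2$, you are in the easy case of its proof.
\end{enumerate}

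\textbf{Comparison.} Your route needs no new matrix computation. It also makes visible that this lemma is the linear shadow of the later identity $\Theta_{m,*}(\mathcal X_{-1,\delta}^{\eta})=\mathcal V_{m-1,-\delta}^{\eta^{J_m}}$. The paper's route is self-contained and produces the explicit stabilizer, essentially the same intersection computation that reappears in Lemma~\ref{lem:section5-affine-admissibility}.

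\textbf{On your fallback paragraph.} It only establishes disjointness. To stand alone it would also need exhaustion. This follows from writing $k=x(b/d)h(d^{-1})y(c/d)$ and checking that $x(-\delta p^{-1})y(t)x(\delta p^{-1})\in K_m$ for $t\in p^m\Z_p$, which uses $m\ge2$.
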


\begin{proof}
Put $g_\delta=x(\delta p^{-1})$ and write
$$
k=\begin{pmatrix}a&b\\c&d\end{pmatrix}\in K_m.
$$
Then
$$
g_\delta^{-1}kg_\delta
=
\begin{pmatrix}
a-\delta p^{-1}c &
 b+\delta p^{-1}(a-d)-\delta^2p^{-2}c\\
c&d+\delta p^{-1}c
\end{pmatrix}.
$$
Because $m\ge2$ and $c\in p^m\Z_p$, this belongs to $K_m$ if and only if
$$
a\equiv d\pmod p.
$$
Since $ad-bc=1$ and $c\equiv0\pmod p$, this is equivalent to
$$
a\equiv d\equiv\pm1\pmod p.
$$
Hence
$$
[K_m:K_m\cap g_\delta K_mg_\delta^{-1}]
=\frac{p-1}{2}.
$$
The reduction map
$$
K_m\longrightarrow\mathbb F_p^*,
\qquad
\begin{pmatrix}a&b\\c&d\end{pmatrix}\longmapsto a\pmod p,
$$
is surjective, and the intersection above is precisely the inverse image of $\{\pm1\}$.  Thus the diagonal elements $h(u)$, with
$$
u\in\Z_p^\times/\sqrt{1+p\Z_p},
$$
give representatives for the right cosets.  Finally,
$$
h(u)x(t)=x(u^2t)h(u),
$$
which gives the second decomposition.  On the corresponding canonical lifts, the cocycle on these unit-diagonal/upper-unipotent products is trivial, so the same decomposition holds in the metaplectic cover.
\end{proof}

For every genuine smooth representation $(\pi,V)$ and every $v\in V_\eta^{K_m}(\pi)$, the coset decomposition gives the action
$$
\mathcal X_{-1,\delta}^{\eta}v
=
\sum_{u\in\Z_p^\times/\sqrt{1+p\Z_p}}
\pi((x(\delta p^{-1}u^2),1))v.
$$
Indeed, on the canonical lifts we have
$$
(x(\delta u^2p^{-1}),1)
=(h(u),1)(x(\delta p^{-1}),1)(h(u)^{-1},1).
$$
The two type-character factors associated with $(h(u),1)$ and
$(h(u)^{-1},1)$ multiply to
$\eta(h(u))\eta(h(u)^{-1})=1$; hence this right coset contributes
$\pi((x(\delta u^2p^{-1}),1))v$ to the Hecke action.
Since $u^2$ runs through the nonzero squares modulo $p$, while $\lambda u^2$ runs through the nonsquares, subtraction yields
$$
\mathcal R_m^{\eta}v
=
\sum_{a\in\mathbb F_p^*}
\left(\frac{a}{p}\right)
\pi((x(p^{-1}\widetilde a),1))v,
$$
where $\widetilde a\in\Z_p^\times$ is any lift of $a$.
We retain the convolution Haar normalization
$\operatorname{vol}(\ov{K_m})=1$ from Section~2 and the additive
normalization $\operatorname{vol}(\Z_p)=1$ used in
\eqref{eq:R_pIshimoto}.  Comparing \eqref{eq:R_pIshimoto} with the
preceding formula gives
$$
\mathcal R_m^{\eta}v=pR_p^\pi v,
\qquad v\in V_\eta^{K_m}(\pi).
$$

Since $c(\eta_0)\leq1$ and $m\geq2$, Ishimoto's level statement \cite[Proposition~3.7(b)]{IshimotoKU} ensures that $R_p^\pi$ preserves $V_\eta^{K_m}(\pi)$.

\subsection{From $R_p$ to $W_{m-1}$}
Applying $\Theta_{m,*}$ gives
$$
\Theta_{m,*}(\mathcal R_m^{\eta})
=
\mathcal V_{m-1,-1}^{\eta^{J_m}}
-
\mathcal V_{m-1,-\lambda}^{\eta^{J_m}}.
$$
By the square-class dependence noted in Section~2, if
$\left(\frac{-1}{p}\right)=1$, then
$$
\mathcal V_{m-1,-1}^{\eta^{J_m}}
=\mathcal V_{m-1,1}^{\eta^{J_m}},
\qquad
\mathcal V_{m-1,-\lambda}^{\eta^{J_m}}
=\mathcal V_{m-1,\lambda}^{\eta^{J_m}},
$$
whereas if $\left(\frac{-1}{p}\right)=-1$, the two operators are
interchanged.  Consequently
$$
\Theta_{m,*}(\mathcal R_m^{\eta})
=
\left(\frac{-1}{p}\right)
\mathcal W_{m-1}^{\eta^{J_m}}.
$$
Combining this identity with the preceding comparison gives the following form of the relation with Ishimoto's operator.

For a Hecke function $F$ and a smooth representation $\rho$, write $T_F^\rho$ for its convolution action in $\rho$.

\begin{prop}[Hecke-algebra comparison with Ishimoto]\label{prop:section5-Rp-W}
Let $\eta$ be one of the two genuine quadratic $K_m$-types considered in this paper, and let $m\ge2$.  Put
$$
\mathcal R_m^{\eta}
=\mathcal X_{-1,1}^{\eta}-\mathcal X_{-1,\lambda}^{\eta}
\in\mathcal H_m^{\mathrm{full}}(\eta).
$$
Then
$$
\Theta_{m,*}(\mathcal R_m^{\eta})
=
\left(\frac{-1}{p}\right)
\mathcal W_{m-1}^{\eta^{J_m}}.
$$
Let $(\pi,V)$ be a genuine smooth representation of $\widetilde G_c$ and define
$\pi^{J_m}=\pi\circ\Theta_m^{-1}$.  The identity map on the underlying vector space $V$ identifies
$$
V_\eta^{K_m}(\pi)
=V_{\eta^{J_m}}^{K_m}(\pi^{J_m}).
$$
Under this identification,
$$
pR_p^\pi v
=
\left(\frac{-1}{p}\right)
T_{\mathcal W_{m-1}^{\eta^{J_m}}}^{\pi^{J_m}}v,
\qquad v\in V_\eta^{K_m}(\pi).
$$
In particular, the right-hand side is the action in the conjugated representation $\pi^{J_m}$.  If $m$ is odd, the two quadratic types are interchanged; thus the displayed identity is not an equality of operators on the same $(\pi,\eta)$-space without an additional intertwiner.  If $m$ is even, the type is unchanged.  We now show that in this case the automorphism $\Theta_m$ of the metaplectic cover is inner.
\end{prop}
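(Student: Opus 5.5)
The proof assembles the computations already made in Section~5 into three steps.

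\textbf{Step 1: the algebra identity.} Lemma~\ref{lem:section5-Jm-lift} gives $\Theta_m((x(\delta p^{-1}),1))=(y(-\delta p^{m-1}),1)$ with no central sign. Moreover $\Theta_m$ preserves $\ov{K_m}$ and carries $\eta$ to $\eta^{J_m}$ by definition. Hence the transported function
$$
(\Theta_{m,*}\mathcal X_{-1,\delta}^{\eta})(g)=\mathcal X_{-1,\delta}^{\eta}(\Theta_m^{-1}g)
$$
is $\eta^{J_m}$-biequivariant, supported on $\ov{K_m}(y(-\delta p^{m-1}),1)\ov{K_m}$, and takes the value $1$ there. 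Uniqueness of such normalized functions (Lemma~\ref{lem:section5-affine-admissibility} and the support criterion \eqref{eq:compact-support-criterion}) then gives $\Theta_{m,*}\mathcal X_{-1,\delta}^{\eta}=\mathcal V_{m-1,-\delta}^{\eta^{J_m}}$.

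Taking the difference over $\delta\in\{1,\lambda\}$, we use that $\mathcal V_{r,a}$ depends only on the square class of $a$. When $\kro{-1}{p}=1$ nothing changes; when $\kro{-1}{p}=-1$ the square classes of $-1$ and $-\lambda$ are swapped. In either case
$$
\Theta_{m,*}(\mathcal R_m^{\eta})=\kro{-1}{p}\mathcal W_{m-1}^{\eta^{J_m}}.
$$

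\textbf{Step 2: the identification of type spaces.} Let $v\in V$ and $k\in\ov{K_m}$. Then
$$
\pi^{J_m}(k)v=\pi(\Theta_m^{-1}k)v=\eta(\Theta_m^{-1}k)v=\eta^{J_m}(k)v
$$
whenever $v\in V_\eta^{K_m}(\pi)$. Since $\Theta_m$ is a bijection of $\ov{K_m}$, this shows $V_\eta^{K_m}(\pi)=V_{\eta^{J_m}}^{K_m}(\pi^{J_m})$ as subspaces of $V$.

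\textbf{Step 3: the action identity.} For a Hecke function $F$ we use the transport formula $T^{\pi^{J_m}}_{\Theta_{m,*}F}=T^\pi_F$. To prove it, substitute $g=\Theta_m(g')$ in
$$
\int F(\Theta_m^{-1}g)\,\pi(\Theta_m^{-1}g)v\,dg .
$$
This requires $\Theta_m$ to preserve the Haar measure. It does: $\Theta_m$ preserves $\ov{K_m}$, which has volume $1$, and a Haar measure is determined by the volume of one compact open subgroup.

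Combining the transport formula with Step 1 gives
$$
\kro{-1}{p}T^{\pi^{J_m}}_{\mathcal W_{m-1}^{\eta^{J_m}}}v=T^\pi_{\mathcal R_m^\eta}v.
$$
The preceding coset computation (Lemma~\ref{lem:section5-affine-cosets} together with \eqref{eq:R_pIshimoto}) gives $T^\pi_{\mathcal R_m^\eta}v=pR_p^\pi v$. Multiplying by $\kro{-1}{p}$, whose square is $1$, yields the stated identity.

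The parity remarks follow from the formula $\eta_0^{J_m}=\eta_0\kro{\cdot}{p}^m$ computed after Lemma~\ref{lem:section5-Jm-lift}: the type is unchanged for $m$ even and the two quadratic types are interchanged for $m$ odd.

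\textbf{Main obstacle.} The delicate point is the sign bookkeeping in Step 1. One must confirm that $\Theta_m$ introduces no central sign on the support representative. One must also confirm that $\eta^{J_m}$ is trivial on the upper unipotents, so that the double coset of $y(-\delta p^{m-1})$ supports a normalized $\eta^{J_m}$-function. For this, Lemma~\ref{lem:section5-Jm-lift} supplies $\kappa_m=1$ on unipotents, and Proposition~\ref{prop:compact-support} gives admissibility for both quadratic types.
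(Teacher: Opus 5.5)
Your proposal is correct and follows essentially the paper's route. You transport each normalized $\mathcal X_{-1,\delta}^{\eta}$ by $\Theta_{m,*}$ to $\mathcal V_{m-1,-\delta}^{\eta^{J_m}}$, fix the sign through the square class of $-1$, and combine with the coset computation $\mathcal R_m^{\eta}v=pR_p^\pi v$. Your explicit transport identity $T^{\pi^{J_m}}_{\Theta_{m,*}F}=T^{\pi}_{F}$, together with the check that $\Theta_m$ preserves Haar measure, spells out the step the paper only calls ``combining''.

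The one item you do not address is the closing assertion that $\Theta_m$ is inner for even $m$. The paper obtains it by writing $J_m=p^{m/2}g_m$ with $g_m\in\SL_2(\Q_p)$. With $z=p^{m/2}$, the lift of the scalar $zI$ centralizes the cover because $\sigma(zI,g)=(\tau(g),z)_p=(z,\tau(g))_p=\sigma(g,zI)$. Hence $\Theta_m=\operatorname{Ad}$ of the lift of $g_m$.
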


Assume that $m$ is even and set
$$
g_m:=p^{-m/2}J_m
=\begin{pmatrix}0&p^{-m/2}\\-p^{m/2}&0\end{pmatrix}
\in\SL_2(\Q_p),
$$
since $\det(J_m)=p^m$.  Put $z=p^{m/2}$, so that $J_m=zg_m$, and let
$$
\widetilde z=(zI,1),
\qquad
\widetilde g_m=(g_m,1)
$$
in the $\sigma$-model of $\widehat{\GL}_2(\Q_p)$.  The element
$\widetilde z$ centralizes the inverse image of $\SL_2(\Q_p)$.  Indeed,
for $g\in\SL_2(\Q_p)$ we have $\tau(zg)=z\tau(g)$, and hence
$$
\sigma(zI,g)
=\bigl(\tau(g),z^3\bigr)_p
=\bigl(\tau(g),z\bigr)_p,
\qquad
\sigma(g,zI)=\bigl(z,\tau(g)\bigr)_p.
$$
These two quantities are equal because the quadratic Hilbert symbol is
symmetric.  Although $\widetilde J_m$ and
$\widetilde z\,\widetilde g_m$ may differ by an element of the central
kernel $\mu_2$, this does not affect conjugation.  Therefore
$$
\operatorname{Ad}_{\widetilde J_m}
=\operatorname{Ad}_{\widetilde g_m}
$$
on $\widetilde G_{\sigma_0}$.  Passing through $\iota$, and writing
$$
\widetilde g_m^{,c}:=\iota^{-1}(\widetilde g_m)\in\widetilde G_c,
$$
we obtain
$$
\Theta_m=\operatorname{Ad}_{\widetilde g_m^{,c}}.
$$
Thus $\Theta_m$ itself is inner when $m$ is even, and
$\pi^{J_m}\simeq\pi$.  More precisely, if
$A_m=\pi(\widetilde g_m^{,c})$, then
$$
A_m\pi^{J_m}(\widetilde g)A_m^{-1}=\pi(\widetilde g),
\qquad \widetilde g\in\widetilde G_c.
$$
The factor $p^{-m/2}$ merely rescales the conjugating matrix; it is not a
normalization of either $R_p$ or $\mathcal W_{m-1}$.

\end{document}